\documentclass[11pt]{article}
\usepackage{amsfonts}
\usepackage{amsfonts,latexsym}
\usepackage{amsmath}
\usepackage{amsthm}
\usepackage{amssymb}
\usepackage{comment}
\usepackage{mathrsfs}
\usepackage{booktabs}
\usepackage{color}
\usepackage[]{cite}
\usepackage{dsfont}
\usepackage{graphicx}
\usepackage[toc]{appendix}
\usepackage{mathtools}
\allowdisplaybreaks[4]
\usepackage{indentfirst}
\usepackage{geometry}
\newtheorem{theorem}{Theorem}[section]
\newtheorem{proposition}[theorem]{Proposition}
\newtheorem{definition}[theorem]{Definition}

\newtheorem{lemma}[theorem]{Lemma}
\newtheorem{remark}[theorem]{Remark}
\newtheorem{corollary}[theorem]{Corollary}

\newcommand{\Hh}{\mathcal H}
\newcommand{\norm}[1]{\left\lVert #1\right\rVert}

\newcommand{\ip}[2]{\left\langle #1,#2\right\rangle}
\newcommand{\pvar}{p\text{-var}}
\newcommand{\qvar}{q\text{-var}}
\newcommand{\dist}{\operatorname{dist}}

\newcommand{\N}{\mathbb N}
\newcommand{\Z}{\mathbb Z}
\newcommand{\tnorm}[1]{%
  \left\lvert\!\left\lvert\!\left\lvert
  #1
  \right\rvert\!\right\rvert\!\right\rvert
}

\renewcommand{\pvar}{p\text{-var}}
\renewcommand{\qvar}{q\text{-var}}

\begin{document}

\title{Random Pullback Attractors and Numerical Approximation
for Nonlocal Lattice Delay Equations Driven by Rough Noise\footnote{This work was supported by the National Natural Science Foundation of China under grant 12371198}}
	\author{Lijuan Zhang$^\text{a}$, Jiangwei Zhang$^\text{b}$, Jianhua Huang$^\text{c}$\thanks{Corresponding author:jhhuang32@nudt.edu.cn}\\
	{\small  \it $^\text{a}$ School of Mathematics, Nanjing Audit University, Nanjing, 211815, P.R.China}\\
    {\small  \it $^\text{b}$ Institute of Applied Physics and Computational Mathematics, Beijing, 100088, P. R. China}\\
	{\small  \it $^\text{c}$ College of Science, National University of Defense Technology, Changsha, 410073, P.R.China}
}
\date{}


\renewcommand{\theequation}{\arabic{section}.\arabic{equation}}
\numberwithin{equation}{section}

\maketitle

\begin{abstract}
We investigate the pathwise asymptotic dynamics of a nonlocal lattice equation
with fixed delay on $\ell^2(\mathbb Z)$ driven by an infinite-dimensional
stationary weakly geometric rough path.
Using a rough-flow
Doss-Sussmann transformation and the method of steps, we establish
global well-posedness in the controlled-path framework and construct a
continuous random dynamical system. We further construct a tempered
pullback absorbing family and use a spatially localized
Lyapunov-Krasovskii functional, together with a sewing estimate for
the spatial tails of the driving rough path, to prove that the lattice
tails vanish uniformly.
The existence of a unique random pullback attractor is then
established by combining this lattice-tail estimate with an
Arzel\`a--Ascoli compactness argument.
Finally, for the delay-compatible compensated Euler schemes
and finite-dimensional lattice truncations,
we establish uniform
absorbing and lattice-tail estimates, which yield the existence of the approximate
random pullback attractors and their joint upper
semicontinuity, without a coupling condition between
the time step and the spatial truncation level.

\end{abstract}

\medskip
\noindent\textbf{Keywords:}
Rough path;
Nonlocal delay lattice equation; Lyapunov-Krasovskii functional; Random pullback attractor;
Numerical attractor;
Upper semicontinuity.

\medskip
\noindent\textbf{2020 Mathematics Subject Classification:} 37H10; 34K50; 37L30; 60L20; 65P40.

\section{Introduction}

We consider the following nonlocal lattice equation with fixed delay
$\tau>0$:
\begin{equation}\label{eq:main}
\begin{aligned}
du_i(t)
={}&
\left[
\nu\sum_{j\in\mathbb Z}w_{ij}\bigl(u_j(t)-u_i(t)\bigr)
-\lambda u_i(t)
+f_i\bigl(u_i(t),u_i(t-\tau)\bigr)
\right]dt\\
&+h_i\bigl(u_i(t)\bigr)\,dx^i(t),
\qquad i\in\mathbb Z,\quad t\geq0.
\end{aligned}
\end{equation}
The initial history is prescribed by
$u_i(s)=\xi_i(s)$ for $s\in[-\tau,0]$ and
$i\in\mathbb Z$, where \(\xi=(\xi_i)_{i\in\mathbb Z}\in
\Hh:=C([-\tau,0];\ell^2(\mathbb Z))\).
Here $\nu$ and $\lambda$ denote the coupling and damping
parameters, respectively, and the symmetric weights $w_{ij}$
describe the nonlocal interactions between lattice sites.
The driving signal $\mathbf x=(x,\mathbb X)$ is an
$\ell^2(\mathbb Z)$-valued weakly geometric $p$-rough path
with stationary increments, where $2<p<3$.
The rough term is interpreted as a controlled rough integral.

Delay lattice equations provide a standard mathematical model for processes involving time delays in spatially discrete systems, such as signal transmission between coupled oscillators \cite{Zanette2000}, population maturation \cite{MaZou2005}, recovery of excitable cells \cite{HanKloeden2016}, and gene expression within cells and protein exchange between cells \cite{Ribeiro2007}.
There exist numerous studies on delay lattice equations, particularly on their well-posedness and dynamics.
Zhao and Zhou gave necessary and
sufficient conditions for the existence of compact uniform
attractors for dissipative delayed lattice systems and
applied their results to retarded lattice Zakharov equations
\cite{ZhaoZhou2008}. Caraballo, Morillas and Valero proved
the existence of solutions to an abstract delay equation
in a Banach space and established a compact global
attractor for a retarded lattice system
\cite{CaraballoMorillasValero2014}. For a nonautonomous
lattice system with switching effects and delayed recovery,
Han and Kloeden established the existence of weak and
strong solutions and a pullback attractor
\cite{HanKloeden2016}. Wang, Kloeden and Yang proved global
existence and uniqueness of solutions and obtained a global
attractor for a delayed neural-field lattice model
\cite{WangKloedenYang2020}. Li and Wang proved global well-posedness and obtained a
pullback attractor and invariant measures for first-order
nonlocal lattice systems with variable and distributed
delays \cite{LiWang2024}.
In the stochastic setting, Yan,
Li and Ji established a random attractor for first-order
retarded lattice systems driven by additive Brownian noise \cite{YanLiJi2010}.

The well-posedness and long-time behavior of delay equations
under irregular forcing have attracted increasing attention.
Qu and Gao established existence and stability results for
delay rough evolution equations and studied the limit as
the delay vanishes \cite{QuGao2026}. For stochastic delay
evolution equations driven by fractional Brownian motion
with Hurst parameter greater than \(1/2\), Caraballo et al. proved existence, uniqueness
and exponential asymptotic behavior of mild solutions
\cite{CaraballoGarridoAtienzaTaniguchi2011}. In related
spatially discrete and rough evolution settings, Zeng et al. constructed pathwise solutions for rough
stochastic lattice systems and studied random attractors
for rough stochastic partial differential equations
\cite{ZengLinHuangYang2020,YangLinZeng2023}.
Blessing Neam\c{t}u et al. established the existence and
regularity of random pullback attractors for stochastic
evolution equations driven by rough noise
\cite{BlessingNeamtuSeitz2025}.
For related results on rough and Young equations
and attractor approximation, see \cite{NeuenkirchNourdinTindel2008,
CaoGao2025,
Ananova2023,Cong2025,DucKloeden2023,
HesseNeamtu2020,Cong2022, Zhou2004}.
However, to the best of our knowledge, the long-time
dynamics of delay lattice equations driven by low-regularity
rough noises remain unexplored. In particular, even for
fractional Brownian motion with low Hurst regularity, the
existence and approximation of random attractors for delay
lattice systems have not been investigated.

In this paper, we study the well-posedness and pullback
dynamics of the nonlocal lattice equation \eqref{eq:main}
with fixed delay driven by an infinite-dimensional stationary
weakly geometric rough path with low regularity, together with the approximation
of its random pullback attractor.
The presence of delay leads us to study the dynamics
on the history space $C([-\tau,0];\ell^2(\mathbb Z))$
and introduces additional delayed terms into the energy
estimates. Under the dissipativity condition
$\lambda>L_0+L_1$, these terms are controlled by a
Lyapunov--Krasovskii functional.
Moreover, the rough noise introduces a rough integral
term into the lattice-tail energy estimate. Estimating
this term directly using the rough-path norm does not
show that it vanishes as the spatial cutoff tends to
infinity. To obtain the uniform lattice-tail decay
required for pullback asymptotic compactness,
we establish a
projected sewing estimate and tempered pullback bounds to
show that the rough integral contribution to the lattice-tail
energy vanishes as the spatial cutoff increases.
For delay-compatible compensated Euler schemes and
finite-dimensional lattice truncations, the main difficulty
is to establish absorbing bounds and lattice-tail estimates
uniform in both approximation parameters.
We overcome this difficulty by using discrete
Lyapunov--Krasovskii estimates, a rough-flow
Doss--Sussmann transformation, and spatially localized
projected sewing estimates.
This work consists of three parts:
\begin{enumerate}
\item[\textbf{(i)}] The first part of this work focuses on the global well-posedness for the rough delay lattice equation \eqref{eq:main} in the controlled-path framework. Using a local rough-flow Doss--Sussmann transformation (Lemma~3.2), we transform the rough equation into an absolutely continuous system. Combining this system with the method of steps for the delay term, we obtain global controlled solutions and derive uniform controlled-path estimates (Lemma~3.4 and Corollary~3.5). In Theorem~3.6, we prove global existence, uniqueness, and continuous dependence on the initial history and the driving rough path. Finally, Theorem~3.7 shows that the solution map generates a continuous cocycle on the history space \(C ([-\tau,0];\ell^2(\mathbb Z))\).
\item[\textbf{(ii)}] The second part is devoted to the existence of a unique random pullback attractor for the rough delay lattice equation \eqref{eq:main}.
We first use a Lyapunov--Krasovskii functional to establish
a decay estimate for the deterministic delay equation. By
comparing the solutions of the rough and deterministic
equations through a local rough-flow Doss--Sussmann
transformation, we establish a dissipative estimate for the
cocycle on the history space (Lemmas~4.1--4.2 and
Proposition~4.3). This allows us to construct a tempered
pullback absorbing family (Theorem~4.5 and Lemma~4.6).
To prove pullback asymptotic compactness, we use a
spatially localized Lyapunov--Krasovskii functional and
projected sewing estimates to establish uniform decay of
the lattice tails along pullback trajectories
(Theorem~4.11). Combining this tail estimate with
equicontinuity of the history segments, we prove that
the rough delay lattice equation \eqref{eq:main} admits a unique random
pullback attractor (Theorems~4.13--4.14).

\item[\textbf{(iii)}]
The third part concerns approximations of the random pullback
attractor under time discretisation and spatial truncation.
For the delay-compatible compensated Euler scheme, we prove
that its solutions converge to those of the original equation
\eqref{eq:main} on finite time intervals, uniformly over
compact sets of initial histories (Proposition~5.7).
Using a discrete Lyapunov--Krasovskii estimate (Lemma~5.8)
and a local rough-flow Doss--Sussmann transformation, we
compare the compensated Euler solution with the deterministic
delay Euler solution (Lemma~5.9) and construct a discrete
pullback absorbing family (Proposition~5.10). Using a projected sewing estimate (Lemma~5.11), we then establish the uniform lattice-tail decay for pullback
trajectories of the compensated Euler scheme
(Theorem~5.13). This gives a discrete pullback
attractor (Theorem~5.14) and its upper semicontinuity as
the time step tends to zero (Theorem~5.16). For the spatially
truncated equation, we prove convergence of its solutions
to those of \eqref{eq:main} on finite time intervals
(Proposition~5.17). Finally, upper semicontinuity of the
attractors under spatial truncation and joint time and
spatial approximation follows (Theorem~5.19 and
Corollary~5.20).

 \end{enumerate}

The paper is organized as follows. Section~2 introduces
the rough-path framework, the nonlocal operator, and the
standing assumptions. Section~3 establishes global
well-posedness and the continuous cocycle property.
Section~4 constructs a tempered pullback absorbing family
and proves the existence of a unique random pullback attractor.
In Section~5, we study compensated Euler schemes and
finite-dimensional lattice truncations. We prove convergence
of their solutions on finite time intervals and establish
the existence and upper semicontinuity of their random
pullback attractors under time discretisation, spatial
truncation, and joint approximation.

\section{Preliminaries}

Let \(U:=\ell^2(\mathbb Z)\) and let \(V\) be a Banach
space. Denote by \(U\widehat\otimes_\pi U\) the completed
projective tensor product. Every bounded bilinear map
\(B:U\times U\to V\) admits a unique bounded linear
extension
$\widetilde B:U\widehat\otimes_\pi U\to V$,
such that
$\widetilde B(u\otimes v)=B(u,v)$ and \(\|\widetilde B\|=\|B\|\).
By a slight abuse of notation, we continue to denote this
extension by $B$.

For $r\geq 1$, denote by
$C^{r\text{-var}}([a,b];V)\subset C([a,b];V)$ the space of all
continuous paths $y:[a,b]\to V$ of finite $r$-variation
$\tnorm{y}_{r\text{-var},[a,b]}
:=
\left(
\sup_{\Pi}
\sum_{i=1}^{n}
\|y_{t_i,t_{i+1}}\|^r
\right)^{1/r}
<\infty,$
where the supremum is taken over the whole class of finite
partition of $[a,b]$. Then $C^{r\text{-var}}([a,b];V)$ with the equipped $r$-var norm
$\|y\|_{r\text{-var},[a,b]}:=\|y_{a}\|+
\tnorm{y}_{r\text{-var},[a,b]}$ is a Banach space.

Fix $p\in(2,3)$ and set \(q:=p/2\). A pair $\mathbf x=(x,\mathbb X)$,
with $x\in C^{p\text{-var}}([a,b];U)$ and $\mathbb X\in
C_2^{q\text{-var}}
\bigl([a,b];U\widehat{\otimes}_{\pi}U\bigr)$, is called a $p$-rough path if Chen's relation
\begin{align}\label{eq2.1}
\mathbb X_{s,t}
-
\mathbb X_{s,r}
-
\mathbb X_{r,t}
=
x_{s,r}\otimes x_{r,t},
\qquad
a\leq s\leq r\leq t\leq b,
\end{align}
holds. Here,
$C_2^{q\text{-var}}
([a,b];U\widehat{\otimes}_{\pi}U)$
denotes the space of continuous two-index maps
\[
\mathbb X:
\Delta_{[a,b]}
\longrightarrow
U\widehat{\otimes}_{\pi}U,
\qquad
\Delta_{[a,b]}
:=
\{(s,t):a\leq s\leq t\leq b\},
\]
such that $\mathbb X_{t,t}=0$ for every $t\in[a,b]$, and
$\tnorm{\mathbb X}_{q\text{-var},[a,b]}<\infty$.
The bracket of $\mathbf x$ is defined by
\begin{align}\label{eq2.2}
[\mathbf x]_{s,t}
:=
x_{s,t}\otimes x_{s,t}
-
2\operatorname{Sym}\mathbb X_{s,t}
\in U\widehat{\otimes}_{\pi}U.
\end{align}
The rough path $\mathbf x$ is called weakly geometric if $\operatorname{Sym}\mathbb X_{s,t}=\frac12x_{s,t}^{\otimes2}$, or, equivalently, $[\mathbf x]_{s,t}=0$.
On an interval $I\subset[a,b]$, we use the inhomogeneous
rough-path seminorm
\begin{align}\label{eq2.3}
\tnorm{\mathbf x}_{p\text{-var},I}
:=
\tnorm{x}_{p\text{-var},I}
+
\tnorm{\mathbb X}_{q\text{-var},I}^{1/2},
\qquad q=\frac{p}{2}.
\end{align}
For \(a\leq s\leq t\leq b\), define the continuous
control \(\omega_{\mathbf x}\) by
\begin{equation}\label{eq2.4}
 \omega_{\mathbf x}(s,t)
 :=
 (t-s)
 +
 \tnorm{x}_{p\text{-var},[s,t]}^p
 +
 \tnorm{\mathbb X}_{q\text{-var},[s,t]}^q.
\end{equation}
All greedy partitions used below are constructed with
respect to \(\omega_{\mathbf x}\).

We write \(\mathcal L(U,V)\) for the space of bounded
linear operators from \(U\) to \(V\), equipped with the
operator norm. Fix \(x\in C^{p\text{-var}}([a,b];U)\).
A pair
$(y,y')
 \in
 C([a,b];V)
 \times
 C^{p\text{-var}}
 \bigl([a,b];\mathcal L(U,V)\bigr)$
is said to be controlled by \(x\) if
\begin{equation}\label{eq2.5}
 R^{y}_{s,t}
 :=
 y_{s,t}-y'(s)x_{s,t},
 \qquad a\leq s\leq t\leq b,
\end{equation}
belongs to \(C_2^{q\text{-var}}([a,b];V)\).
The path $y'$ is called a Gubinelli derivative of $y$
with respect to $x$, and
$R^y$ is called the controlled remainder. Let \(\mathscr D_x^p([a,b];V)\) denote the space of all pairs $(y,y')$ controlled by $x$. Equipped with the norm
\begin{equation}\label{eq2.7}
 \norm{(y,y')}_{\mathscr D_x^p([a,b];V)}
 :=
 \norm{y}_{\infty,[a,b]}
 +
 \norm{y'}_{\infty,[a,b]}
 +
 \tnorm{y'}_{p\text{-var},[a,b]}
 +
 \tnorm{R^{y;x}}_{q\text{-var},[a,b]},
\end{equation}
the space $\mathscr D_x^p([a,b];V)$ is a Banach space. For $R>0$, define the closed ball
\begin{align}\label{eq2.8}
B_R^{\mathscr D_x^p([a,b];V)}
:=
\left\{
(y,y')\in\mathscr D_x^p([a,b];V):
\|(y,y')\|_{\mathscr D_x^p([a,b];V)}
\leq R
\right\}.
\end{align}

Let \(Y:[a,b]\to\mathcal L(U,V)\) be controlled by \(x\),
with Gubinelli derivative \(Y'\). Using the canonical
isometric identification
$\mathcal L\bigl(U,\mathcal L(U,V)\bigr)
 \cong
 \mathcal L(U\widehat\otimes_\pi U,V)$,
we regard
$Y'
 \in
 C^{p\text{-var}}
 \bigl(
 [a,b];
 \mathcal L(U\widehat\otimes_\pi U,V)
 \bigr)$.
Accordingly,
$R^Y_{s,t}
 :=
 Y_{s,t}
 -
 Y'(s)(x_{s,t}\otimes\cdot)
 \in\mathcal L(U,V)$
belongs to
\(C_2^{q\text{-var}}([a,b];\mathcal L(U,V))\).
By the sewing lemma, the rough integral of \(Y\) against
\(\mathbf x=(x,\mathbb X)\) is well defined by
\begin{equation}\label{eq2.11}
 \int_a^bY(r)\,dx(r)
 :=
 \lim_{|\Pi|\to0}
 \sum_{[s,t]\in\Pi}
 \left(
 Y(s)x_{s,t}
 +
 Y'(s)\mathbb X_{s,t}
 \right).
\end{equation}
Moreover, there exists \(c_p>0\), depending only on
\(p\), such that
\begin{align}\label{eq2.12}
 &\left\|
 \int_s^tY(r)\,dx(r)
 -
 Y(s)x_{s,t}
 -
 Y'(s)\mathbb X_{s,t}
 \right\|\notag\\
 &\quad\leq
 c_p\left(
 \tnorm{Y'}_{p\text{-var},[s,t]}
 \tnorm{\mathbb X}_{q\text{-var},[s,t]}
 +
 \tnorm{R^Y}_{q\text{-var},[s,t]}
 \tnorm{x}_{p\text{-var},[s,t]}
 \right).
\end{align}

\begin{definition}\label{def2.1}
Let \(E\) be a Banach space. A nonnegative random
variable \(R\) is called tempered if, for every \(c>0\),
\begin{equation}\label{eq:tempered-random-variable}
 \lim_{t\to\infty}
 e^{-ct}R(\theta_{-t}\omega)
 =
 0
 \qquad\text{for almost every }\omega.
\end{equation}
A random bounded set \(D(\omega)\subset E\) is called
tempered if its radius
$r_D(\omega)
 :=
 \sup_{z\in D(\omega)}
 \|z\|_E$
is tempered. The collection of all tempered random
bounded subsets of \(E\) is denoted by \(\mathcal D(E)\).
\end{definition}

Throughout the paper, we impose the following assumptions
on the coefficients and the driving rough path in the lattice system
\eqref{eq:main}.

\begin{enumerate}

\item[\textbf{(A1)}]
The weights \(w_{ij}\), \(i,j\in\mathbb Z\), satisfy
$w_{ij}\geq0$, $w_{ij}=w_{ji}$ and
\[
 \sup_{i\in\mathbb Z}\sum_{j\in\mathbb Z}w_{ij}<\infty,
 \qquad
 \sup_{i\in\mathbb Z}\sum_{j\in\mathbb Z}
 w_{ij}|i-j|<\infty.
\]

\item[\textbf{(A2)}]
For every \(i\in\mathbb Z\),
\(f_i\in C^1(\mathbb R^2;\mathbb R)\), and
\[
 \sup_i\norm{\partial_1f_i}_\infty\leq L_0,
 \qquad
 \sup_i\norm{\partial_2f_i}_\infty\leq L_1.
\]
Moreover,
$F_0:=(f_i(0,0))_{i\in\mathbb Z}\in U$ and $\lambda>L_0+L_1$.

\item[\textbf{(A3)}]
The diagonal map \(h:U\to\mathcal L(U)\), defined by
\((h(u)z)_i=h_i(u_i)z_i\), belongs to
\(C_b^4(U;\mathcal L(U))\) and satisfies
\[
 \max_{0\leq m\leq4}\norm{D^mh}_\infty\leq C_h,
\]
where \(D^m h\) denotes the \(m\)-th Fr\'{e}chet derivative,
with \(D^0h:=h\).

\item[\textbf{(A4)}]
Let \((\Omega,\mathcal F,\mathbb P,\theta)\) be a metric
dynamical system carrying a measurable weakly geometric
\(p\)-rough path \(\mathbf x=(x,\mathbb X)\) over \(U\).
Assume that there exist \(1/p<\beta<1\) and a
\(\theta\)-invariant set \(\Omega_0\in\mathcal F\) with
\(\mathbb P(\Omega_0)=1\) such that, for every
\(\omega\in\Omega_0\), \(\mathbf x(\omega)\) is locally
\(\beta\)-H\"older and
\[
 x_{a,b}(\theta_s\omega)=x_{a+s,b+s}(\omega),
 \qquad
 \mathbb X_{a,b}(\theta_s\omega)
 =\mathbb X_{a+s,b+s}(\omega)
\]
for all \(a\leq b\) and \(s\in\mathbb R\). Moreover, for
every bounded interval \([a,b]\), the random variables
$\tnorm{x(\omega)}_{p\text{-var},[a,b]}$
and
$\tnorm{\mathbb X(\omega)}_{q\text{-var},[a,b]}$
are tempered.

\end{enumerate}

Henceforth, we work on \(\Omega_0\), endowed with the
restrictions of \(\mathcal F\), \(\mathbb P\), and
\(\theta\).

Define the nonlocal operator \(\mathcal L:U\to U\) by
\[
 (\mathcal Lu)_i
 :=
 \nu\sum_{j\in\mathbb Z}w_{ij}(u_i-u_j),
 \qquad i\in\mathbb Z.
\]
Assumption \textbf{(A1)} implies that \(\mathcal L\) is
bounded, self-adjoint and nonnegative. Indeed, for all
\(u,v\in U\),
\begin{align}\label{eq2.14}
 \ip{\mathcal Lu}{v}=\frac{\nu}{2}\sum_{i,j\in\Z}w_{ij}(u_i-u_j)(v_i-v_j),
 \quad \ip{\mathcal Lu}{u}\geq0,
 \quad \norm{\mathcal L}\leq2\nu \sup_{i\in\Z}\sum_{j\in\Z}w_{ij}.
\end{align}
Hence, \(-\mathcal L\) is dissipative.
Define \(F:U\times U\to U\) by
$F(u,v):=(f_i(u_i,v_i))_{i\in\mathbb Z}$.
Then it follows from Assumption \textbf{(A2)} that
\begin{align}
 \norm{F(u_1,v_1)-F(u_2,v_2)}
 &\leq
 L_0\norm{u_1-u_2}
 +
 L_1\norm{v_1-v_2},
 \label{eq2.15}\\
 \norm{F(u,v)}
 &\leq
 L_0\norm{u}
 +
 L_1\norm{v}
 +
 \norm{F_0}.
 \label{eq2.16}
\end{align}
With the above notation, the lattice system \eqref{eq:main} can be reformulated as
\begin{equation}\label{eq:vector}
\left\{
\begin{aligned}
&du(t)=\bigl[-\mathcal Lu(t)-\lambda u(t)
+F(u(t),u(t-\tau))\bigr]dt+h(u(t))dx(t),~~t>0,\\
&u(s)=\xi(s),~~s\in[-\tau,0].
\end{aligned}
\right.
\end{equation}
For a solution \(u\), its history segment at time \(t\)
is defined by
$u_t(r):=u(t+r)$ for
$r\in[-\tau,0]$.

\section{Global well-posedness}

In this section, we establish the global well-posedness
of \eqref{eq:vector}. Fix \(v\in C([a,b];U)\) and define
\begin{equation}
\label{eq:local-integral-0}
 G(z,w):=-\mathcal Lz-\lambda z+F(z,w),
 \qquad
 b(t,z;v):=G(z,v(t)).
\end{equation}
Consider
\begin{equation}\label{eq:local-integral}
 du(t)
 =
 b(t,u(t);v)\,dt+h(u(t))\,dx(t),
 \qquad t\in[a,b],
\end{equation}
with initial condition \(u(a)=u_a\in U\).
By \eqref{eq2.14}--\eqref{eq2.16},
we observe that
\begin{align}\label{eq2.40}
 \norm{b(t,z_1;v)-b(t,z_2;v)}\leq L_b\norm{z_1-z_2},
~~~
 \norm{b(t,z;v)}\leq L_b\norm z+L_1\norm v(t)+\norm{F_0},
\end{align}
where $L_b:=\norm{\mathcal L}+\lambda+L_0$.

We first establish several estimates that will be used
repeatedly in the sequel.

\begin{lemma}\label{lem3.1}
Let $(u,u')\in\mathscr D_x^p([a,b];U)$. Then the
operator-valued path $h(u)$ is controlled by $x$, with Gubinelli
derivative $(h(u))'(t)=Dh(u(t))u'(t)$. Moreover, there exists a constant $C_R>0$ such that, for every $(u,u')\in B_R^{\mathscr D_x^p([a,b];U)}$ and every $[s,t]\subset[a,b]$,
\begin{align}
\tnorm{(h(u))'}_{p\text{-var},[s,t]}
&\leq
C_R\left(
\tnorm{u}_{p\text{-var},[s,t]}
+
\tnorm{u'}_{p\text{-var},[s,t]}
\right),
\label{eq3.1}\\
\tnorm{R^{h(u)}}_{q\text{-var},[s,t]}
&\leq
C_R\left(
\tnorm{R^u}_{q\text{-var},[s,t]}
+
\tnorm{u}_{p\text{-var},[s,t]}^2
\right).
\label{eq3.2}
\end{align}
Furthermore, for any $(u,u'),(v,v')
 \in B_R^{\mathscr D_x^p([a,b];U)}$,
\begin{align}
&\|h(u)-h(v)\|_{\infty,[a,b]}
+\|Dh(u)u'-Dh(v)v'\|_{\infty,[a,b]}
\notag\\
&\quad
+\tnorm{Dh(u)u'-Dh(v)v'}_{p\text{-var},[a,b]}
+\tnorm{R^{h(u)}-R^{h(v)}}_{q\text{-var},[a,b]}
\notag\\
&\qquad\leq
C_R
\|(u,u')-(v,v')\|_{\mathscr D_x^p([a,b];U)}.
\label{eq3.3}
\end{align}
\end{lemma}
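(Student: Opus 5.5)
The plan is to run the standard composition argument for controlled paths (as in Friz--Hairer), using Taylor expansions of $h$ in Fr\'echet form with the bounds of \textbf{(A3)}. Throughout, $R$ controls $\|u\|_\infty$, $\|u'\|_\infty$, $\tnorm{u'}_{p\text{-var}}$ and $\tnorm{R^u}_{q\text{-var}}$. Since $u_{s,t}=u'(s)x_{s,t}+R^u_{s,t}$, we also get
\[
\tnorm{u}_{p\text{-var},[s,t]}\le R\tnorm{x}_{p\text{-var},[s,t]}+\tnorm{R^u}_{q\text{-var},[s,t]}.
\]
The constant $C_R$ is therefore allowed to depend on $R$, $C_h$ and $\tnorm{\mathbf x}_{p\text{-var},[a,b]}$; I regard the latter as fixed together with $x$.

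\textbf{Gubinelli derivative and \eqref{eq3.1}.} We set $(h(u))'(t)=Dh(u(t))u'(t)$, viewed in $\mathcal L(U\widehat\otimes_\pi U,U)$ through $z\otimes z'\mapsto Dh(u(t))[u'(t)z]z'$. Its increment splits as
\[
(Dh(u(t))-Dh(u(s)))u'(t)+Dh(u(s))u'_{s,t}.
\]
The mean value inequality gives $\|Dh(u(t))-Dh(u(s))\|\le C_h\|u_{s,t}\|$. Hence $\|(h(u))'_{s,t}\|\le C_h R\|u_{s,t}\|+C_h\|u'_{s,t}\|$, and taking $p$-th powers and summing over partitions of $[s,t]$ yields \eqref{eq3.1}.

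\textbf{Remainder and \eqref{eq3.2}.} By Taylor's formula with integral remainder,
\[
h(u(t))-h(u(s))=Dh(u(s))u_{s,t}+\int_0^1(1-\theta)D^2h(u(s)+\theta u_{s,t})[u_{s,t},u_{s,t}]\,d\theta.
\]
Substituting $u_{s,t}=u'(s)x_{s,t}+R^u_{s,t}$ gives
\[
R^{h(u)}_{s,t}=Dh(u(s))R^u_{s,t}+O\bigl(C_h\|u_{s,t}\|^2\bigr),
\]
so $\|R^{h(u)}_{s,t}\|\le C_h\|R^u_{s,t}\|+\tfrac12C_h\|u_{s,t}\|^2$. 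Taking $q$-th powers with $q=p/2$ makes $\|u_{s,t}\|^{2q}=\|u_{s,t}\|^p$, so superadditivity gives \eqref{eq3.2}. Both terms are finite, so $h(u)$ is controlled.

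\textbf{Local Lipschitz estimate \eqref{eq3.3}.} This is the main obstacle, mainly bookkeeping. The sup-norm terms follow from $\|D^mh\|_\infty\le C_h$ and the mean value inequality. For the $p$-variation of $Dh(u)u'-Dh(v)v'$, I would write the difference of increments as
\[
\bigl[Dh(u)\bigr]_{s,t}u'(t)-\bigl[Dh(v)\bigr]_{s,t}v'(t)+Dh(u(s))u'_{s,t}-Dh(v(s))v'_{s,t}.
\]
For the first pair I use
\[
Dh(u(t))-Dh(u(s))=\int_0^1D^2h(u(s)+\theta u_{s,t})u_{s,t}\,d\theta,
\]
and the analogue for $v$, then estimate the difference of these integrands with $D^3h$. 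This bounds it by $C(\|u_{s,t}-v_{s,t}\|+\|u-v\|_\infty\|v_{s,t}\|+\|u'-v'\|_\infty\|v_{s,t}\|)$.

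For the remainder, I take the difference of the two Taylor integral terms, which needs $D^3h$:
\[
D^2h(\cdot_u)[u_{s,t},u_{s,t}]-D^2h(\cdot_v)[v_{s,t},v_{s,t}].
\]
This is bounded by $C(\|u-v\|_\infty\|v_{s,t}\|^2+\|u_{s,t}-v_{s,t}\|(\|u_{s,t}\|+\|v_{s,t}\|))$. The other piece is
\[
Dh(u(s))R^u_{s,t}-Dh(v(s))R^v_{s,t},
\]
bounded by $C(\|R^u_{s,t}-R^v_{s,t}\|+\|u-v\|_\infty\|R^v_{s,t}\|)$. Finally, I control $\tnorm{u-v}_{p\text{-var}}\le \|u'-v'\|_\infty\tnorm{x}_{p\text{-var}}+\tnorm{R^u-R^v}_{q\text{-var}}$, and apply H\"older/Minkowski in variation norms, for example $\bigl(\sum(\|a\|\|b\|)^q\bigr)^{1/q}\le\tnorm{a}_{p}\tnorm{b}_{p}$. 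Summing over partitions then gives \eqref{eq3.3}. The $C_b^4$ assumption is more than enough; $C^3_b$ suffices here.
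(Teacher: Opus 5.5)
Your proposal is correct and follows essentially the same route as the paper: a Taylor expansion of $h(u(t))-h(u(s))$ with integral remainder, termwise bounds summed over partitions using $2q=p$, and, for \eqref{eq3.3}, subtracting the two expansions and then using $\tnorm{u-v}_{p\text{-var}}\le\|u'-v'\|_\infty\tnorm{x}_{p\text{-var}}+\tnorm{R^u-R^v}_{q\text{-var}}$. You spell out the difference estimates that the paper only sketches, and you correctly state that the constant in \eqref{eq3.3} must also depend on $\tnorm{x}_{p\text{-var},[a,b]}$, which the paper leaves implicit.
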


\begin{proof}
For \(a\leq s\leq t\leq b\), Taylor's formula yields
\begin{align*}
 h(u(t))-h(u(s))
 &=
 Dh(u(s))u_{s,t}
 +
 \int_0^1(1-r)
 D^2h\bigl(u(s)+ru_{s,t}\bigr)
 [u_{s,t},u_{s,t}]\,dr\\
 &=
 Dh(u(s))u'(s)x_{s,t}
 +
 R^{h(u)}_{s,t},
\end{align*}
where
\[
 R^{h(u)}_{s,t}
 :=
 Dh(u(s))R^u_{s,t}
 +
 \int_0^1(1-r)
 D^2h\bigl(u(s)+ru_{s,t}\bigr)
 [u_{s,t},u_{s,t}]\,dr.
\]
By Assumption~\textbf{(A3)} and the definition of
\(B_R^{\mathscr D_x^p([a,b];U)}\), we have
\begin{align*}
 \left\|
 Dh(u(t))u'(t)-Dh(u(s))u'(s)
 \right\|
 &\leq
 C_R\bigl(
 \|u_{s,t}\|+\|u'_{s,t}\|
 \bigr)\\
 \left\|R^{h(u)}_{s,t}\right\|
 &\leq
 C_R\bigl(
 \|R^u_{s,t}\|+\|u_{s,t}\|^2
 \bigr).
\end{align*}
Taking the \(p\)- and \(q\)-variation seminorms,
respectively, and using \(2q=p\), we obtain
\eqref{eq3.1} and \eqref{eq3.2}. Consequently,
\(h(u)\) is controlled by \(x\), with Gubinelli
derivative
$(h(u))'(t)=Dh(u(t))u'(t)$.

Furthermore, for any $(u,u'),(v,v')
\in B_R^{\mathscr D_x^p([a,b];U)}$, subtracting the corresponding Taylor
expansions and using Assumption~\textbf{(A3)}, we conclude that
\begin{align*}
&\|h(u)-h(v)\|_{\infty,[a,b]}
+\|Dh(u)u'-Dh(v)v'\|_{\infty,[a,b]}
\\
&\quad
+\tnorm{Dh(u)u'-Dh(v)v'}_{p\text{-var},[a,b]}
+\tnorm{R^{h(u)}-R^{h(v)}}_{q\text{-var},[a,b]}
\\
&\leq
C_R\Bigl(
\|u-v\|_{\infty,[a,b]}
+\|u'-v'\|_{\infty,[a,b]}
+\tnorm{u-v}_{p\text{-var},[a,b]}
\\
&\hspace{4.2cm}
+\tnorm{u'-v'}_{p\text{-var},[a,b]}
+\tnorm{R^u-R^v}_{q\text{-var},[a,b]}
\Bigr),
\end{align*}
which, together with
\[
 \tnorm{u-v}_{p\text{-var},[a,b]}
 \leq
 \|u'-v'\|_{\infty,[a,b]}
 \tnorm{x}_{p\text{-var},[a,b]}
 +
 \tnorm{R^u-R^v}_{q\text{-var},[a,b]},
\]
implies \eqref{eq3.3}.
\end{proof}

\begin{lemma}\label{lem3.2}
There exist constants \(\gamma_0\in(0,1)\) and \(C>0\), depending
only on \(p\) and \(C_h\), such that the following assertions hold. Let $I=[s,t]$ and suppose that
$\omega_{\mathbf x}(s,t)\leq\gamma_0$. The pure rough differential equation
\begin{align}\label{eq3.4-0}
 dy(r)=h(y(r))\,d\mathbf x(r),~~y(s)=\zeta,~~r\in I,
\end{align}
generates a \(C^2\)-flow $(\psi_{s,r})_{r\in I}$ of diffeomorphisms of \(U\).
Set
$\rho_I:=\omega_{\mathbf x}(s,t)^{1/p}$. Then
\begin{align}
\sup_{r\in I, \zeta\in U}
\left\|
\psi_{s,r}(\zeta)-\zeta
\right\|
&\leq
C\rho_I,
\label{eq3.4}\\
\sup_{r\in I, \zeta\in U}
(\|D\psi_{s,r}(\zeta)-\operatorname{Id}_U
\|&+\|
D(\psi_{s,r}^{-1})(\zeta)-\operatorname{Id}_U
\|)\leq C\rho_I,
\label{eq3.5}\\
\sup_{r\in I, \zeta\in U}
(\|D\psi_{s,r}(\zeta)
\|+\|D(\psi_{s,r}^{-1})(\zeta)
\|&+\|D^2\psi_{s,r}(\zeta)
\|+\|
D^2(\psi_{s,r}^{-1})(\zeta)\|
)\leq C.\label{eq3.6}
\end{align}
Furthermore, let \(u\) be a controlled solution of
\eqref{eq:local-integral} on \(I\), and set
$z(r):=\psi_{s,r}^{-1}(u(r))$. Then $z$ is absolutely continuous with \(z(s)=u(s)\), and for almost every
\(r\in I\),
\begin{align}\label{eq3.7}
 \dot z(r)=D\psi_{s,r}^{-1}(u(r))\,b(r,u(r);v),
~~~u(r)=\psi_{s,r}(z(r)).
\end{align}

\end{lemma}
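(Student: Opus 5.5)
We plan to obtain the flow, prove the bounds, and then derive the transformed equation.

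\textbf{Flow and estimates \eqref{eq3.4}--\eqref{eq3.6}.} Since $h\in C_b^4(U;\mathcal L(U))$ by \textbf{(A3)}, the standard well-posedness theory for rough differential equations with $C_b^3$ (here $C_b^4$) vector fields applies to \eqref{eq3.4-0}. It yields a unique global solution $\psi_{s,r}(\zeta)$ for every $\zeta\in U$.

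To get the flow structure, we use the same theory for the augmented systems solved by $(y,Dy)$ and $(y,Dy,D^2y)$. These are the equations obtained by formally differentiating in $\zeta$. Their vector fields involve $h$, $Dh$, $D^2h$, $D^3h$, which is why the loss of one derivative forces us to assume $C_b^4$.

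Differentiability of $\zeta\mapsto\psi_{s,r}(\zeta)$ up to order two then follows from the locally Lipschitz dependence of RDE solutions on the initial datum. We apply it to difference quotients, as in the standard proof of flow differentiability (Friz--Victoir / Friz--Hairer).

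For the quantitative bounds, we work on $I$ with $\omega_{\mathbf x}(s,t)\le\gamma_0$. Here a single-step a priori estimate is available. Write $y=\psi_{s,\cdot}(\zeta)$ with Gubinelli derivative $y'=h(y)$. Combining \eqref{eq2.12} with Lemma~\ref{lem3.1}, we obtain the bound
\[
\tnorm{y}_{p\text{-var},[s,r]}\le C_h\rho_I+C\bigl(\tnorm{y}_{p\text{-var}}+\tnorm{R^y}_{q\text{-var}}\bigr)\rho_I .
\]
Choosing $\gamma_0$ small absorbs the right-hand side and gives \eqref{eq3.4}. The constants are uniform in $\zeta$ because only $\|D^mh\|_\infty$ enters.

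The linear equation $dJ=Dh(y)J\,d\mathbf x$, $J(s)=\mathrm{Id}$, is handled the same way. This gives $\|J-\mathrm{Id}\|\le C\rho_I$, which is \eqref{eq3.5} for $D\psi$. The equation for $D^2\psi$ is linear with a forcing term bounded by $D^2h[J,J]$, so it gives the $D^2\psi$ part of \eqref{eq3.6}.

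\textbf{Inverse map.} Invertibility is obtained by solving the RDE backward in time, i.e.\ driven by the time-reversed rough path on $I$. This is again a weakly geometric rough path with the same control. Its solution map inverts $\psi_{s,r}$ by uniqueness, so $\psi_{s,r}$ is a $C^2$-diffeomorphism.

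By choosing $\gamma_0$ so that $C\rho_I\le 1/2$, $D\psi$ is invertible with $\|(D\psi)^{-1}\|\le2$. Moreover, $(D\psi)^{-1}-\mathrm{Id}$ is controlled by a Neumann series. Since $D(\psi^{-1})(\zeta)=(D\psi(\psi^{-1}\zeta))^{-1}$, this gives \eqref{eq3.5} and the first-order part of \eqref{eq3.6} for the inverse. The identity
\[
D^2(\psi^{-1})=-(D\psi)^{-1}D^2\psi[(D\psi)^{-1}\cdot,(D\psi)^{-1}\cdot]
\]
then bounds the second derivative of the inverse.

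\textbf{Transformed equation \eqref{eq3.7}.} This is the main obstacle: a rough It\^o/chain rule for $z(r)=\Phi(r,u(r))$ with $\Phi(r,\cdot)=\psi_{s,r}^{-1}$, where $\Phi$ is itself rough in time. We would first prove it for smooth drivers, where it is the classical Doss--Sussmann computation. Differentiating $\psi_{s,r}(z(r))=u(r)$ gives
\[
D\psi_{s,r}(z)\dot z+h(\psi_{s,r}(z))\dot x=b\,+\,h(u)\dot x ,
\]
so the noise terms cancel and $\dot z=D\psi_{s,r}(z)^{-1}b=D\psi_{s,r}^{-1}(u)\,b$.

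For general $\mathbf x$, we approximate it by smooth rough paths $\mathbf x^n\to\mathbf x$ in $p$-variation; this is possible since $\mathbf x$ is weakly geometric and the geometric case can be reached, with the bracket vanishing. By the continuity of the It\^o--Lyons map, the flows $\psi^n$ converge in $C^2$ locally uniformly. The solutions $u^n$ of \eqref{eq:local-integral} converge to $u$, using uniqueness of the controlled solution.

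Then $z^n(r)=z^n(s)+\int_s^r D(\psi^n_{s,\theta})^{-1}(u^n)\,b(\theta,u^n;v)\,d\theta$. Passing to the limit with the uniform bounds of \eqref{eq3.6} gives the integral form of \eqref{eq3.7}. Absolute continuity of $z$ follows, together with $z(s)=\psi_{s,s}^{-1}(u(s))=u(s)$.

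If one prefers to avoid the approximation argument, the alternative is to verify directly, via a two-parameter expansion of $\Phi(r,u(r))$ and \eqref{eq2.12}, that the increments of $z$ have $o(|t-s|)$ rough parts. This requires $C^3$ control of the flow, hence $h\in C_b^4$.
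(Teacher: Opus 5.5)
Your treatment of the flow and of \eqref{eq3.4}--\eqref{eq3.6} is essentially the paper's. Existence and $C^2$-regularity come from the general theory; the paper cites Bailleul's Banach-space flow theorem and the Friz--Victoir differentiability argument. The bounds then come from the first and second variational equations with absorption on intervals where $\omega_{\mathbf x}(s,t)\leq\gamma_0$, the Neumann series for $[D\psi_{s,r}]^{-1}$, and the chain-rule formula for $D^2(\psi_{s,r}^{-1})$. One small point: your displayed a priori estimate omits the quadratic term $\rho_I\tnorm{y}_{p\text{-var}}^2$ coming from \eqref{eq3.2}, so the absorption must be done by a continuity argument in $r$. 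That is routine.

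You differ from the paper in two places.

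For invertibility, your time-reversal argument works, because the reversed forward trajectory solves the reversed equation (in Davie's sense) exactly since $\mathbb X_{a,b}+\mathbb X_{a,b}^{\top}=x_{a,b}\otimes x_{a,b}$. A shorter route is also available. Once $\norm{D\psi_{s,r}-\operatorname{Id}_U}\leq\frac12$, the map $\psi_{s,r}-\operatorname{Id}_U$ is $\frac12$-Lipschitz, so $\psi_{s,r}$ is a bijection of $U$ by the contraction principle.

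For \eqref{eq3.7}, the paper gives no argument beyond naming the Doss--Sussmann correspondence, so you supply more than it does. However, your primary route should be dropped, for two reasons:
\begin{itemize}
\item Smooth approximation in $p$-variation fails for general weakly geometric $p$-rough paths even in finite dimensions. One needs some $p'>p$, which the H\"older margin in \textbf{(A4)} would permit.
\item Over $\ell^2(\mathbb Z)$ with the projective tensor product, approximating weakly geometric rough paths by canonical lifts of smooth paths is not an off-the-shelf fact. This is precisely why one relies on an approximation-free Banach-space flow theory.
\end{itemize}

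Your fallback is the right argument, and it needs less than you say. Write $\psi_{s,b}^{-1}=\psi_{s,a}^{-1}\circ\psi_{a,b}^{-1}$ and compare $u$ on $[a,b]$ with the pure flow trajectory $\psi_{a,\cdot}(u(a))$. The rough integral of $h(u)-h(\psi_{a,\cdot}(u(a)))$ has vanishing local expansion terms at $a$, since both paths and their Gubinelli derivatives agree there. Hence this rough integral is $O\bigl((b-a)\,\omega_{\mathbf x}(a,b)^{1/p}\bigr)$. Combining this with \eqref{eq3.5}--\eqref{eq3.6} gives
\[
 z(b)-z(a)
 =D\psi_{s,a}^{-1}(u(a))\int_a^b b(r,u(r);v)\,dr
 +O\bigl((b-a)\,\omega_{\mathbf x}(a,b)^{1/p}+(b-a)^2\bigr).
\]
Summing over partitions with mesh tending to zero, and using the continuity of $r\mapsto D\psi_{s,r}^{-1}(u(r))$, yields the integral form of \eqref{eq3.7}. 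Only the $C^2$ bounds on the flow enter, not $C^3$ control as you claim.
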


\begin{proof}
By using \cite[Theorem~6]{Bailleul2014} and an argument
analogous to that of
\cite[Proposition~11.13]{FrizVictoir2010},
we obtain that \eqref{eq3.4-0} generates a \(C^2\)-flow of
diffeomorphisms
\((\psi_{s,r})_{s\leq r}\) on \(U\), with
$\psi_{s,s}=\operatorname{Id}_U$.

Moreover,
the rough Taylor expansion gives
\[
 \psi_{s,r}(\zeta)-\zeta
 =
 h(\zeta)x_{s,r}
 +
 H(\zeta)\mathbb X_{s,r}
 +
 R_{s,r}^{\psi}(\zeta),
\]
where \(H(\zeta)=Dh(\zeta)h(\zeta)\). In view of \(h\in C_b^4(U;\mathcal L(U))\), by the sewing estimate we have
$\sup_{\zeta\in U}
\|
R_{s,r}^{\psi}(\zeta)\|
\leq
C\omega_{\mathbf{x}}(s,r)^{3/p}$.
Consequently,
\[
\begin{aligned}
\sup_{\zeta\in U}
\left\|
\psi_{s,r}(\zeta)-\zeta
\right\|
&\leq
\sup_{\zeta\in U}
\left\|
h(\zeta)x_{s,r}\right\|
+
\sup_{\zeta\in U}
\left\|
Dh(\zeta)h(\zeta)\mathbb X_{s,r}
\right\|
+
\sup_{\zeta\in U}
\|
R_{s,r}^{\psi}(\zeta)\|
\\
&\leq
C\left(
\omega_{\mathbf{x}}(s,r)^{1/p}
+
\omega_{\mathbf{x}}(s,r)^{2/p}
+
\omega_{\mathbf{x}}(s,r)^{3/p}
\right)\leq C\rho_I,
\end{aligned}
\]
where in the last inequality we have used the superadditivity
of \(\omega_{\mathbf x}\) and \(\omega_{\mathbf x}(s,t)\leq\gamma_0<1\).
Taking the supremum over \(r\in I\) proves
\eqref{eq3.4}.

Taking the Fr\'echet derivative of the integral equation
for \(\psi_{s,r}(\zeta)\) with respect to \(\zeta\), we
obtain the first variational equation
\begin{align}\label{eq3.7-1}
 D\psi_{s,r}(\zeta)-\operatorname{Id}_U
 =
 \int_s^r
 Dh\bigl(\psi_{s,\ell}(\zeta)\bigr)
 D\psi_{s,\ell}(\zeta)
 d\mathbf x(\ell),
\end{align}
where the equation is understood in the operator sense.
It follows from \(h\in C_b^4(U;\mathcal L(U))\) that
\begin{align*}
&\sup_{\zeta\in U}
\left\|
D\psi_{s,r}(\zeta)-\operatorname{Id}_U
\right\|
\leq
C\rho_I\sup_{\zeta\in U, \ell\in[s,r]}
 \left\|D\psi_{s,\ell}(\zeta)\right\|
\leq
C\rho_I
\left(
1+
\sup_{\zeta\in U, \ell\in[s,r]}
\left\|
D\psi_{s,\ell}(\zeta)-\operatorname{Id}_U
\right\|
\right).
\end{align*}
By choosing \(\gamma_0>0\) sufficiently small such that $C\rho_I\leq C\gamma_0^{1/p}\leq
\frac12$, we obtain that
\begin{equation}\label{eq3.7-0-1}
\sup_{r\in I, \zeta\in U}
\left\|
D\psi_{s,r}(\zeta)-\operatorname{Id}_U
\right\|
\leq
C\rho_I.
\end{equation}
Reducing \(\gamma_0\) further, if necessary, we may
assume that the right-hand side of
\eqref{eq3.7-0-1} is at most \(\frac12\).
The Neumann series then implies that
\(D\psi_{s,r}(\zeta)\) is invertible for all
\(r\in I\) and \(\zeta\in U\), with
$\sup_{r\in I,\zeta\in U}
\|[D\psi_{s,r}(\zeta)]^{-1}\|
\leq C$ and $\sup_{r\in I,\zeta\in U}
\|D\psi_{s,r}(\zeta)\|\leq C$.

For \(y\in U\), set
\(\zeta=\psi_{s,r}^{-1}(y)\). The Fr\'echet chain rule
gives
$D(\psi_{s,r}^{-1})(y)=
 [D\psi_{s,r}(\zeta)]^{-1}$.
Hence
\begin{align}
&\sup_{r\in I, y\in U}\left\|
D(\psi_{s,r}^{-1})(y)-\operatorname{Id}_U
\right\|
=\sup_{r\in I, \zeta\in U}
\left\|
[D\psi_{s,r}(\zeta)]^{-1}-\operatorname{Id}_U
\right\|\nonumber\\
&\leq\sup_{r\in I, \zeta\in U}
\left\|
[D\psi_{s,r}(\zeta)]^{-1}
\right\|\sup_{r\in I, \zeta\in U}
\left\|
D\psi_{s,r}(\zeta)-\operatorname{Id}_U
\right\|
\leq
C\rho_I,\nonumber
\end{align}
which together with \eqref{eq3.7-0-1} proves \eqref{eq3.5}.

Taking the Fr\'echet derivative of
\eqref{eq3.7-1} once more with respect to \(\zeta\), we
obtain
\begin{align}
 D^2\psi_{s,r}(\zeta)
 ={}&
 \int_s^r
 \Bigl\{
 Dh\bigl(\psi_{s,\ell}(\zeta)\bigr)
 D^2\psi_{s,\ell}(\zeta)
+
 D^2h\bigl(\psi_{s,\ell}(\zeta)\bigr)
 \bigl[
 D\psi_{s,\ell}(\zeta),
 D\psi_{s,\ell}(\zeta)
 \bigr]
 \Bigr\}
 \,d\mathbf x(\ell),
 \label{eq:second-variational}
\end{align}
with \(D^2\psi_{s,s}(\zeta)=0\).
Thanks to \(h\in C_b^4(U;\mathcal L(U))\) and $\sup_{r\in I,\zeta\in U}
\|D\psi_{s,r}(\zeta)\|\leq C$, from
\eqref{eq:second-variational} we deduce that
\[
 \sup_{r\in I,\zeta\in U}
 \left\|
 D^2\psi_{s,r}(\zeta)
 \right\|
 \leq
 C\rho_I
 \left(
 \sup_{r\in I,\zeta\in U}
 \left\|
 D^2\psi_{s,r}(\zeta)
 \right\|
 +1
 \right).
\]
By reducing \(\gamma_0\) once more, if necessary, we may
ensure that
$C\rho_I
 \leq
 C\gamma_0^{1/p}
 \leq
 \frac12$.
It then follows that
$\sup_{r\in I,\zeta\in U}
 \left\|D^2\psi_{s,r}(\zeta)\right\|
 \leq C$.
Applying the second-order Fr\'echet chain rule to
$\psi_{s,r}^{-1}\circ\psi_{s,r}
=\operatorname{Id}_U$
yields, for \(y=\psi_{s,r}(\zeta)\),
\[
 D^2(\psi_{s,r}^{-1})(y)
 =
 -[D\psi_{s,r}(\zeta)]^{-1}
 D^2\psi_{s,r}(\zeta)
 \left[
 [D\psi_{s,r}(\zeta)]^{-1},
 [D\psi_{s,r}(\zeta)]^{-1}
 \right],
\]
where the right-hand side is understood as a bilinear
map.
Hence
$\sup_{r\in I, \zeta\in U}
\|D^2(\psi_{s,r}^{-1})(\zeta)\|
\leq C$, which proves \eqref{eq3.6}.

Finally, the rough Doss--Sussmann transformation associated with
the flow \(\psi_{s,r}\) establishes a one-to-one
correspondence between controlled solutions \(u\) of
\eqref{eq:local-integral} and absolutely continuous
solutions \(z\) of
\[
 \dot z(r)
 =
 [D\psi_{s,r}(z(r))]^{-1}
 b\bigl(r,\psi_{s,r}(z(r));v\bigr),
\]
through $z(r)=\psi_{s,r}^{-1}(u(r))$
and $u(r)=\psi_{s,r}(z(r))$. We complete the proof.
\end{proof}

\begin{lemma}\label{lem3.3}
Let $a=t_0<t_1<\cdots<t_N=b$ be a finite partition of $[a,b]$, and let $I_j:=[t_j,t_{j+1}]$, $j=0,\ldots,N-1$.
Suppose that $x\in C^{p\text{-var}}([a,b];U)$, $y\in C([a,b];V)$, $y'\in C\bigl([a,b];\mathcal L(U,V)\bigr)$, and $(y,y')$ is controlled by $x$ on each interval $I_j$ with
$y'|_{I_j}
\in
C^{p\text{-var}}
\bigl(I_j;\mathcal L(U,V)\bigr)$, $R^{y,j}
\in
C_2^{q\text{-var}}(I_j;V)$, $
q=\frac p2$.
Then $(y,y')$ is controlled by $x$ on $[a,b]$. Moreover,
\begin{align}
\tnorm{y'}_{p\text{-var},[a,b]}
&\leq (N+1)M_{y'},
\label{eq3.30}\\
\tnorm{R^y}_{q\text{-var},[a,b]}
&\leq
C_p(N+1)^2
\left(
M_R
+
M_{y'}\tnorm{x}_{p\text{-var},[a,b]}
\right),
\label{eq3.31}\\
\tnorm{y}_{p\text{-var},[a,b]}
&\leq
\|y'\|_{\infty,[a,b]}
\tnorm{x}_{p\text{-var},[a,b]}
+
\tnorm{R^y}_{q\text{-var},[a,b]},
\label{eq3.32}
\end{align}
where $C_p>0$ depends only on $p$ and is independent of the
partition,
$M_R:=(\sum_{j=0}^{N-1}
\tnorm{R^{y,j}}_{q\text{-var},I_j}^{q})^{1/q}$ and
$M_{y'}:=(\sum_{j=0}^{N-1}
\tnorm{y'}_{p\text{-var},I_j}^{p})^{1/p}$.
\end{lemma}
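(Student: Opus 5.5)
We prove the three bounds by a direct partition-splitting argument; everything reduces to elementary inequalities on the sums defining the variation seminorms.

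\textbf{Step 1: the bound \eqref{eq3.30}.} Fix an arbitrary finite partition $\Pi=\{s_0<\dots<s_m\}$ of $[a,b]$ and refine it by adding the points $t_j$. For a subinterval $[s_k,s_{k+1}]$ of $\Pi$ containing interior grid points $t_{j_1}<\dots<t_{j_\ell}$, write $y'_{s_k,s_{k+1}}$ as a telescoping sum of at most $\ell+1$ increments, each lying inside a single $I_j$. Apply the elementary inequality $(\sum_{i=1}^{n}a_i)^p\le n^{p-1}\sum_i a_i^p$. Each $I_j$ hosts pieces from at most two subintervals of $\Pi$. The total count of pieces over all subintervals is bounded by $m+N$, and the count inside any one subinterval is at most $N+1$. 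The pieces lying inside $I_j$ are increments over disjoint subintervals of $I_j$, so their $p$-th powers sum to at most $\tnorm{y'}^p_{p\text{-var},I_j}$. Summing gives $\sum_k\|y'_{s_k,s_{k+1}}\|^p\le (N+1)^{p-1}\cdot 2\,M_{y'}^p\le (N+1)^p M_{y'}^p$ after crude bookkeeping (using $p>2$). Taking $p$-th roots yields \eqref{eq3.30}. Continuity of $y'$ is used so that $y'$ is well defined at the $t_j$.

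\textbf{Step 2: the remainder bound \eqref{eq3.31}.} This is the main step. For $s<t$ with grid points $t_{j_1}<\dots<t_{j_\ell}$ in $(s,t)$, set $r_0=s$, $r_i=t_{j_i}$ and $r_{\ell+1}=t$. We use the algebraic identity
\[
R^y_{s,t}=\sum_{i=0}^{\ell}R^{y}_{r_i,r_{i+1}}+\sum_{i=0}^{\ell}\bigl(y'(r_i)-y'(s)\bigr)x_{r_i,r_{i+1}},
\]
which follows from $y_{s,t}=\sum_i y_{r_i,r_{i+1}}$ and the definition of $R^y$ on each piece. Each $R^{y}_{r_i,r_{i+1}}$ equals $R^{y,j}$ on a subinterval of $I_j$. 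The correction term is bounded by $\sum_i\|y'_{s,r_i}\|\,\|x_{r_i,r_{i+1}}\|\le (N+1)\tnorm{y'}_{p\text{-var},[s,t]}\tnorm{x}_{p\text{-var},[s,t]}$. By Step 1 applied on $[s,t]$, $\tnorm{y'}_{p\text{-var},[s,t]}\le (N+1)M_{y'}$.

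We then take $q$-th powers and sum over a partition of $[a,b]$. The first sum is handled exactly as in Step 1, producing a factor $(N+1)$ times $M_R$. For the second sum, we must not lose the superadditivity. The plan is to use $\|y'_{s,r_i}\|\le\tnorm{y'}_{p\text{-var},[s,t]}$ together with the fact that $\omega(s,t):=\tnorm{y'}^p_{p\text{-var},[s,t]}$ and $\tnorm{x}^p_{p\text{-var},[s,t]}$ are superadditive controls. Hölder's inequality then gives
\[
\sum_k\omega(s_k,s_{k+1})^{q/p}\,\tnorm{x}^{q}_{p\text{-var},[s_k,s_{k+1}]}\le\Bigl(\sum_k\omega\Bigr)^{1/2}\Bigl(\sum_k\tnorm{x}^p\Bigr)^{1/2},
\]
since $q/p=1/2$. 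This yields a term $(N+1)^2M_{y'}\tnorm{x}_{p\text{-var},[a,b]}$. Only subintervals that contain grid points carry the correction term, and there are at most $N$ of them, which keeps the constants polynomial in $N$. Collecting the constants from $(\sum a_i)^q\le n^{q-1}\sum a_i^q$ gives \eqref{eq3.31} with some $C_p$ depending only on $p$. Finally, a $q$-variation estimate on $R^y$ over $[a,b]$ shows that $R^y\in C_2^{q\text{-var}}$. Since Step 1 shows $y'\in C^{p\text{-var}}$, $(y,y')$ is controlled on $[a,b]$.

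\textbf{Step 3: the bound \eqref{eq3.32}.} Write $y_{s,t}=y'(s)x_{s,t}+R^y_{s,t}$. Then Minkowski's inequality in $\ell^p$ over the partition gives
\[
\Bigl(\sum\|y_{s,t}\|^p\Bigr)^{1/p}\le\|y'\|_\infty\tnorm{x}_{p\text{-var}}+\Bigl(\sum\|R^y_{s,t}\|^p\Bigr)^{1/p},
\]
and $\ell^q\subset\ell^p$ (since $q<p$) bounds the last sum by $\tnorm{R^y}_{q\text{-var}}$. The expected obstacle is the careful bookkeeping in Step 2, in particular ensuring that the cross term is controlled through superadditivity and does not pick up a factor depending on the number of points of the arbitrary partition $\Pi$.
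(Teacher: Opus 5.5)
Your proposal is correct and follows essentially the same route as the paper: the same splitting identity at the grid points $t_j$, power-mean inequalities with at most $N+1$ pieces per subinterval, Cauchy--Schwarz exploiting $q/p=1/2$, and Minkowski together with $q<p$ for \eqref{eq3.32}; the only variation is that you bound the cross term through the superadditive quantity $\tnorm{y'}^p_{p\text{-var},[s,t]}$ and then invoke \eqref{eq3.30}, whereas the paper applies H\"older directly to the refined pieces, and both routes give the factor $(N+1)^2$. One small correction: in Step~1 the claim that each $I_j$ hosts pieces from at most two subintervals of $\Pi$ is false (many subintervals of $\Pi$ can lie inside a single $I_j$), but you do not need it, since all pieces together form one partition of $[a,b]$ refining $\{t_j\}$, which gives $(N+1)^{p-1}M_{y'}^p$ directly without the spurious factor $2$.
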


\begin{proof}
For any $a\leq s\leq t\leq b$, define
$R^y_{s,t}
:=
y_{s,t}-y'(s)x_{s,t}.$
If $s,t\in I_j$, then the local controlled decomposition implies
$R^y_{s,t}=R^{y,j}_{s,t}$. If $s\in I_i$ and $t\in I_j$ with $i\neq j$,
then insert into $(s,t)$ all the points of the fixed
partition $\{t_j\}_{j=0}^N$ lying in this interval, and relabel
the resulting subdivision as $s=r_0<r_1<\cdots<r_m=t$.
Each subinterval $[r_\ell,r_{\ell+1}]$ is contained in one of
the original partition intervals $I_j$. Hence,
\[
y_{s,t}
=\sum_{\ell=0}^{m-1}y_{r_\ell,r_{\ell+1}}
=
\sum_{\ell=0}^{m-1}\bigg(
y'(r_\ell)x_{r_\ell,r_{\ell+1}}
+
R^y_{r_\ell,r_{\ell+1}}\bigg).
\]
It follows that
\begin{align}\label{eq3.33}
R^y_{s,t}
=
y_{s,t}-y'(s)x_{s,t}
=
\sum_{\ell=0}^{m-1}
R^y_{r_\ell,r_{\ell+1}}
+
\sum_{\ell=1}^{m-1}
\bigl(y'(r_\ell)-y'(s)\bigr)
x_{r_\ell,r_{\ell+1}}.
\end{align}
Let $\Pi=\{a=s_0<s_1<\cdots<s_M=b\}$ be an arbitrary finite partition of $[a,b]$. For each $[s_i,s_{i+1}]\in\Pi$, insert the fixed partition points $\{t_{j}\}_{j=0}^{N}$ lying in $[s_i,s_{i+1}]$ and relabel the resulting subdivision
as
$s_i=r_0<r_1<\cdots<r_m=s_{i+1}$.
Then by $(\sum_{\ell=0}^{m-1}a_\ell)^p\leq
m^{p-1}\sum_{\ell=0}^{m-1}a_\ell^p$, $a_\ell\geq0$, and $m\leq N+1$, we obtain that
\begin{align}
&\sum_{[s_i,s_{i+1}]\in\Pi}
\left\|y'_{s_i,s_{i+1}}\right\|^p
\leq
(N+1)^{p-1}
\sum_{[s_i,s_{i+1}]\in\Pi}
\sum_{\ell=0}^{m-1}
\left\|y'_{r_\ell,r_{\ell+1}}\right\|^p
\leq
(N+1)^{p-1}
\sum_{j=0}^{N-1}
\tnorm{y'}_{p\text{-var},I_j}^{p}.\nonumber
\end{align}
Consequently, taking the supremum over all finite partitions
$\Pi$ of $[a,b]$ in the above inequality yields
\eqref{eq3.30}.

In view of \eqref{eq3.33}, using $(a+b)^q
\leq2^{q-1}(a^q+b^q)$, $a,b\geq0$, H\"{o}lder's inequality and $2q=p$, we have
\begin{align}\label{eq3.34}
&\|R^y_{s_i,s_{i+1}}\|^q
\leq
2^{q-1}\bigg\|\sum_{\ell=0}^{m-1}
R^y_{r_\ell,r_{\ell+1}}\bigg\|^q
+
2^{q-1}\bigg\|\sum_{\ell=1}^{m-1}
\bigl(y'(r_\ell)-y'(s_i)\bigr)
x_{r_\ell,r_{\ell+1}}\bigg\|^q\\
&\leq
2^{q-1}
(N+1)^{q-1}\bigg[
\sum_{\ell=0}^{m-1}
\left\|R^y_{r_\ell,r_{\ell+1}}\right\|^q
+\left(
\sum_{\ell=1}^{m-1}
\left\|y'(r_\ell)-y'(s_i)\right\|^p
\right)^{q/p}
\left(
\sum_{\ell=1}^{m-1}
\left\|x_{r_\ell,r_{\ell+1}}\right\|^p
\right)^{q/p}\bigg].\nonumber
\end{align}
Thanks to
\begin{align}
\sum_{\ell=1}^{m-1}\left\|y'(r_\ell)-y'(s_i)\right\|^p
\leq
(N+1)^{p-1}
\sum_{\ell=1}^{m-1}
\sum_{k=0}^{\ell-1}
\left\|y'_{r_k,r_{k+1}}\right\|^p
\leq
(N+1)^p
\sum_{k=0}^{m-1}
\left\|y'_{r_k,r_{k+1}}\right\|^p,\nonumber
\end{align}
we deduce that
\begin{align}\label{eq3.35}
&\sum_{[s_i,s_{i+1}]\in\Pi}
\left(
\sum_{\ell=1}^{m-1}
\left\|y'(r_\ell)-y'(s_i)\right\|^p
\right)^{q/p}
\left(
\sum_{\ell=1}^{m-1}
\left\|x_{r_\ell,r_{\ell+1}}\right\|^p
\right)^{q/p}\nonumber\\
&\leq
(N+1)^q
\left(\sum_{[s_i,s_{i+1}]\in\Pi}
\sum_{k=0}^{m-1}
\left\|y'_{r_k,r_{k+1}}\right\|^p\right)^{q/p}
\left(\sum_{[s_i,s_{i+1}]\in\Pi}
\sum_{\ell=1}^{m-1}
\left\|x_{r_\ell,r_{\ell+1}}\right\|^p
\right)^{q/p}\nonumber\\
&\leq (N+1)^q
M_{y'}^q
\tnorm{x}_{p\text{-var},[a,b]}^q,
\end{align}
where we have again used H\"older's inequality and $2q=p$. Substituting \eqref{eq3.35}, together with $\sum_{[s_i,s_{i+1}]\in\Pi}
\sum_{\ell=0}^{m-1}
\left\|R^y_{r_\ell,r_{\ell+1}}\right\|^q
\leq
\sum_{j=0}^{N-1}
\tnorm{R^{y,j}}_{q\text{-var},I_j}^{q}$,
into \eqref{eq3.34} yields
\[
\sum_{[s_i,s_{i+1}]\in\Pi}
\|R^y_{s_i,s_{i+1}}\|^q
\leq
C_p(N+1)^{2q}
\left(
M_R^q
+
M_{y'}^q
\tnorm{x}_{p\text{-var},[a,b]}^q
\right).
\]
Therefore \eqref{eq3.31} follows immediately by taking the supremum over all finite partitions $\Pi$ of
$[a,b]$ in the above inequality.
Finally, by \eqref{eq3.30}-\eqref{eq3.31} and the decomposition
$y_{s,t}=y'(s)x_{s,t}+R^y_{s,t}$ for $a\leq s\leq t\leq b$, we conclude that
$(y,y')$ is controlled by $x$ on $[a,b]$. Moreover,
Minkowski's inequality and the fact that $q<p$ yield
\eqref{eq3.32}. This completes the proof.
\end{proof}

\begin{lemma}\label{lem3.4}
For every $T>0$, there exists a constant $C_T>0$ such that
every solution $u$ of \eqref{eq:local-integral} on an interval
$[a,b]$ with $b-a\leq T$ satisfies
\begin{align}\label{eq3.36}
&\|u\|_{\infty,[a,b]}
+\tnorm{u}_{p\text{-var},[a,b]}
+\tnorm{R^u}_{q\text{-var},[a,b]}\nonumber\\
&\qquad\leq
C_T
\bigl(
1+\|u(a)\|+\|v\|_{\infty,[a,b]}
\bigr)
\left(
1+\tnorm{\mathbf{x}}_{p\text{-var},[a,b]}^{5p}
\right),
\end{align}
where the constant $C_T$ depends only on $p$, $T$, $L_b$, $L_1$ and $C_h$.
\end{lemma}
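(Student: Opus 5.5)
We will prove \eqref{eq3.36} by a greedy partition with respect to \(\omega_{\mathbf x}\), local estimates on each piece, and a gluing step. Fix \(\gamma\in(0,\gamma_0]\) small, to be chosen depending only on \(p,L_b,L_1,C_h\). Let \(a=t_0<t_1<\cdots<t_N=b\) be the greedy partition with \(\omega_{\mathbf x}(t_j,t_{j+1})=\gamma\) for \(j<N-1\) and \(\omega_{\mathbf x}(t_{N-1},t_N)\leq\gamma\). By superadditivity of \(\omega_{\mathbf x}\),
\[
N\leq 1+\gamma^{-1}\omega_{\mathbf x}(a,b)\leq 1+\gamma^{-1}\bigl(T+\tnorm{\mathbf x}_{p\text{-var},[a,b]}^p\bigr),
\]
where we use \(\tnorm{x}^p+\tnorm{\mathbb X}^q\leq C\tnorm{\mathbf x}^p\).

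\textbf{Step 1 (sup bound).} On each \(I_j\), apply Lemma~\ref{lem3.2} and set \(z=\psi_{t_j,\cdot}^{-1}(u)\). By \eqref{eq3.6} and \eqref{eq2.40},
\[
\|\dot z\|\leq C\bigl(L_b\|\psi_{t_j,r}(z)\|+L_1\|v\|_\infty+\|F_0\|\bigr),
\]
and \(\|\psi_{t_j,r}(z)\|\leq\|z\|+C\rho_{I_j}\) by \eqref{eq3.4}. Gronwall on \(I_j\) then gives
\[
\|z\|_{\infty,I_j}\leq e^{CL_b|I_j|}\bigl(\|u(t_j)\|+C|I_j|(1+\|v\|_\infty)+C\bigr).
\]
Iterating over \(j\), the multiplicative factors \(e^{CL_b|I_j|}\) telescope to \(e^{CL_bT}\). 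The additive error \(C\rho_{I_j}\) accumulates at most \(N\) times, so
\[
\|u\|_{\infty,[a,b]}\leq C_T\bigl(1+\|u(a)\|+\|v\|_\infty\bigr)(1+N).
\]
This step is crucial: no exponential in \(N\) appears, because the Doss--Sussmann error is additive, by \eqref{eq3.4}.

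\textbf{Step 2 (local controlled bounds).} On each \(I_j\), \(u'=h(u)\), so \(\|u'\|_\infty\leq C_h\). Write \(K:=1+\|u\|_{\infty,[a,b]}+\|v\|_\infty\). Using \(R^u_{s,t}=\int_s^t b\,dr+\bigl(\int_s^t h(u)\,dx-h(u(s))x_{s,t}\bigr)\) and the sewing estimate \eqref{eq2.12} with \(Y=h(u)\), we combine this with \eqref{eq3.1}--\eqref{eq3.2}. These apply since \(\|u\|\), \(\|u'\|\) are controlled and \(h\in C^4_b\), so \(h\) itself (not only its restriction to a ball) has bounded derivatives and the constant is uniform. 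Abbreviating \(\tnorm{\cdot}_{I_j}\) for the relevant variation seminorms on \(I_j\), we get
\[
\tnorm{R^u}_{I_j}\leq K|I_j|+C\bigl(\tnorm{u}_{I_j}+\tnorm{u'}_{I_j}\bigr)\tnorm{\mathbb X}_{I_j}+C\bigl(\tnorm{R^u}_{I_j}+\tnorm{u}_{I_j}^2\bigr)\tnorm{x}_{I_j}+C\tnorm{\mathbb X}_{I_j},
\]
together with \(\tnorm{u}_{I_j}\leq C\tnorm{x}_{I_j}+\tnorm{R^u}_{I_j}\) and \(\tnorm{u'}_{I_j}\leq C\tnorm{u}_{I_j}\). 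Since all the \(\mathbf x\)-quantities are at most \(\gamma^{1/p}\), choosing \(\gamma\) small absorbs the remainder and yields
\[
\tnorm{R^u}_{q\text{-var},I_j}+\tnorm{u}_{p\text{-var},I_j}+\tnorm{u'}_{p\text{-var},I_j}\leq C K.
\]
The quadratic term \(\tnorm{u}^2\) is handled by first working on a subinterval where \(\tnorm{u}\leq1\) and then using continuity; alternatively, it can be avoided by reading the local bounds directly off the flow representation \(u=\psi(z)\).

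\textbf{Step 3 (gluing).} Apply Lemma~\ref{lem3.3}, using \(M_{y'},M_R\leq CKN^{1/q}\). This gives
\[
\tnorm{R^u}_{q\text{-var},[a,b]}\leq C(N+1)^{2+1/q}K\bigl(1+\tnorm{\mathbf x}\bigr),
\]
and \eqref{eq3.32} gives the \(p\)-variation bound. Collecting the powers: \(K\lesssim(1+\|u(a)\|+\|v\|)(1+N)\), so the total growth is a power of \(N\) times \((1+\tnorm{\mathbf x})\). Since \(N\lesssim1+\tnorm{\mathbf x}^p\), this power is at most about \(1+2+2+\ldots\), which fits within \(5p\) factors of \(\tnorm{\mathbf x}\) because \(p>2\). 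A careful count yields an exponent below \(5p\).

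\textbf{Main obstacle.} The delicate part is Step 2: closing the local a priori estimate in the presence of the quadratic remainder term and the \(\tnorm{u}^2\) contribution without letting the constants depend on \(\|u\|_\infty\). Our first attempt will derive all local increments from \(u=\psi_{t_j,\cdot}(z)\), using \eqref{eq3.4}--\eqref{eq3.6} and the rough Taylor expansion of \(\psi\). This linearises the dependence, with \(z\) Lipschitz with constant \(CK\).
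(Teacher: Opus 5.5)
Your proposal is correct, and along the route you commit to in your last paragraph it is essentially the paper's proof: local increments and remainders read off $u=\psi_{t_j,\cdot}(z)$ via the flow property and \eqref{eq3.4}--\eqref{eq3.6}, the additive Doss--Sussmann errors giving the $(1+N)$ sup bound, then gluing with Lemma~\ref{lem3.3} (your cruder bound $M_{y'},M_R\le CKN^{1/q}$ only moves the final exponent to about $3p+3<5p$). Drop the sewing/continuity alternative in Step~2, though: closing it needs pieces with $K|I_j|\lesssim 1$, because the drift contributes $K|I_j|$ to $\tnorm{u}_{p\text{-var},I_j}$ and $\gamma$ does not control this, and the resulting $\sim KT$ extra pieces enter the $(N+1)^2$ factor of Lemma~\ref{lem3.3}, destroying the linear dependence on $1+\|u(a)\|+\|v\|_{\infty,[a,b]}$.
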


\begin{proof}
Let $\gamma_0$ be as in Lemma~\ref{lem3.2}, and fix
$\gamma\in(0,\gamma_0]$. Starting from $\sigma_0=a$, define recursively $\sigma_{k+1}
:=\inf\{r>\sigma_k:
\omega_{\mathbf x}(\sigma_k,r)\geq\gamma\}
\wedge b$.
This yields the greedy partition $a=\sigma_0<\sigma_1<\cdots<\sigma_{N_{\mathrm g}}=b$. By the continuity and superadditivity of $\omega_{\mathbf x}$, we have  $(N_{\mathrm g}-1)\gamma=\sum_{k=0}^{N_{\mathrm g}-2}
\omega_{\mathbf x}(\sigma_k,\sigma_{k+1})
\leq
\omega_{\mathbf x}(a,b)$.
We next refine the greedy partition by inserting all unit-grid
points lying in $(a,b)$ and relabel the resulting partition as
$a=\tau_0<\tau_1<\cdots<\tau_N=b$. It follows from $b-a\leq T$ that
\begin{align}\label{eq3.37}
N
\leq
N_{\mathrm g}+\lceil T\rceil
\leq
1+\frac{\omega_{\mathbf x}(a,b)}{\gamma}
+\lceil T\rceil
\leq
C_{T,\gamma}
\bigl(1+\omega_{\mathbf x}(a,b)\bigr).
\end{align}

On each interval $I_j=[\tau_j,\tau_{j+1}]$, define $z(r):=\psi_{\tau_j,r}^{-1}(u(r))$ for $r\in I_j$,
where $\psi_{\tau_j,r}$ is the $C^2$ rough flow from Lemma~\ref{lem3.2}. By Lemma \ref{lem3.2}, \eqref{eq2.40} and $\rho_j=
\omega_{\mathbf x}(\tau_j,\tau_{j+1})^{1/p}\leq\gamma^{1/p}\leq1$,
we obtain that for
$r\in I_j$,
\begin{align}\label{eq3.38}
\|\dot z(r)\|
&\leq
\left\|D\psi_{\tau_j,r}^{-1}(u(r))
\right\|
\left\|b(r,u(r);v)\right\|\nonumber\\
&\leq
C\left(
1+\|u(r)\|+\|v\|_{\infty,[a,b]}
\right)
\leq C\left(
1+\|z(r)\|+\|v\|_{\infty,[a,b]}
\right).
\end{align}
Then using Gronwall's inequality yields
\begin{align}\label{eq3.39}
1+\|z(r)\|+\|v\|_{\infty,[a,b]}
\leq
e^{C(r-\tau_j)}
\left(
1+\|u(\tau_j)\|+\|v\|_{\infty,[a,b]}
\right),
\qquad r\in I_j.
\end{align}
Taking $r=\tau_{j+1}$ in \eqref{eq3.39} and using $u(\tau_{j+1})
=\psi_{\tau_j,\tau_{j+1}}\bigl(z(\tau_{j+1})\bigr)$ together with
$\|\psi_{\tau_j,\tau_{j+1}}
\bigl(z(\tau_{j+1})\bigr)
-z(\tau_{j+1})\|\leq C\rho_j$,
we obtain
\begin{align}\label{eq3.40}
1+\|u(\tau_{j+1})\|+\|v\|_{\infty,[a,b]}
\leq
e^{C|I_j|}
\left(
1+\|u(\tau_j)\|
+\|v\|_{\infty,[a,b]}
\right)
+
C\rho_j.
\end{align}
Iterating \eqref{eq3.40} and using $\sum_{j=0}^{N-1}|I_j|
=b-a\leq T$ and $\rho_j\leq1$, we obtain that
\begin{align}\label{eq3.41}
1+\|u(\tau_k)\|
+\|v\|_{\infty,[a,b]}
&\leq
e^{C\sum_{j=0}^{k-1}|I_j|}
\left(
1+\|u(a)\|
+\|v\|_{\infty,[a,b]}+C\sum_{j=0}^{k-1}\rho_j
\right)\nonumber\\
&\leq
C_T
\left(
1+\|u(a)\|
+\|v\|_{\infty,[a,b]}
\right)(1+N),\qquad
k=1,\ldots,N.
\end{align}
For \(r\in I_j\), by using
\(u(r)=\psi_{\tau_j,r}(z(r))\),
\eqref{eq3.4}, \eqref{eq3.39} and \eqref{eq3.41}, we have
\[
 \|u(r)\|
 \leq
 \|z(r)\|+C\rho_j
 \leq
 C_T
 \bigl(
 1+\|u(a)\|+\|v\|_{\infty,[a,b]}
 \bigr)(1+N).
\]
Taking the supremum over \(r\in[a,b]\) yields
\begin{align}\label{eq3.42}
 \|u\|_{\infty,[a,b]}
 \leq
 C_T
 \bigl(
 1+\|u(a)\|
 +\|v\|_{\infty,[a,b]}
 \bigr)(1+N).
\end{align}
Fix $j\in\{0,\ldots,N-1\}$ and let $s,t\in I_j$ with
$s\leq t$. Then
\begin{align}\label{eq3.43}
u_{s,t}
=
\Bigl[
\psi_{\tau_j,t}(z(t))
-
\psi_{\tau_j,t}(z(s))
\Bigr]+
\Bigl[
\psi_{\tau_j,t}(z(s))
-
\psi_{\tau_j,s}(z(s))
\Bigr].
\end{align}
From \eqref{eq3.6}, it holds that $\|
\psi_{\tau_j,t}(z(t))
-\psi_{\tau_j,t}(z(s))\|\leq C\|z_{s,t}\|$. By the flow property $\psi_{\tau_j,t}
=\psi_{s,t}\circ\psi_{\tau_j,s}$ and \eqref{eq3.4},
we obtain that
\begin{align}\label{eq3.45}
\left\|\psi_{\tau_j,t}(z(s))
-
\psi_{\tau_j,s}(z(s))\right\|
=
\left\|\psi_{s,t}(\psi_{\tau_j,s}(z(s)))-\psi_{\tau_j,s}(z(s))\right\|
\leq
C\omega_{\mathbf x}(s,t)^{1/p}.
\end{align}
Let $\Pi=\{\tau_j=s_0<s_1<\cdots<s_M=\tau_{j+1}\}$ be an arbitrary finite partition of $I_j$. Combining \eqref{eq3.43} and \eqref{eq3.45}, and using
Minkowski's inequality,
$\left(\sum_i a_i^p\right)^{1/p}
 \leq
 \sum_i a_i$, $a_i\geq0$,
together with the superadditivity of
\(\omega_{\mathbf x}\), we have
\begin{align}\label{eq3.46}
\left(
\sum_{i=0}^{M-1}
\|u_{s_i,s_{i+1}}\|^p
\right)^{1/p}
&\leq
C\left(
\sum_{i=0}^{M-1}
\|z_{s_i,s_{i+1}}\|^p
\right)^{1/p}+
C\left(
\sum_{i=0}^{M-1}
\omega_{\mathbf x}(s_i,s_{i+1})
\right)^{1/p}\nonumber\\
&\leq
C\tnorm{z}_{1\text{-var},I_j}
+C\left(\omega_{\mathbf x}(\tau_j,\tau_{j+1})\right)^{1/p}
\leq
C\tnorm{z}_{1\text{-var},I_j}+C\rho_j.
\end{align}
From \eqref{eq3.38}, it follows that
\begin{align}\label{eq3.44}
\tnorm{z}_{1\text{-var},I_j}
\leq
\int_{\tau_j}^{\tau_{j+1}}
\|\dot z(r)\|\,dr
\leq
C|I_j|
\left(
1+\|u\|_{\infty,I_j}
+\|v\|_{\infty,[a,b]}
\right).
\end{align}
Substituting
\eqref{eq3.44} into
\eqref{eq3.46}, and then taking the supremum
over all finite partitions $\Pi$ of $I_j$, we deduce that
\begin{align}\label{eq3.47}
\tnorm{u}_{p\text{-var},I_j}
\leq
C\rho_j
+
C|I_j|
\left(
1+\|u\|_{\infty,I_j}
+\|v\|_{\infty,[a,b]}
\right).
\end{align}
Consequently,
\begin{align}\label{eq3.47-1}
\left(
\sum_j
\tnorm{u}_{p\text{-var},I_j}^{p}
\right)^{1/p}
\leq
C\omega_{\mathbf x}(a,b)^{1/p}
+
C_T(1+\|u\|_{\infty,[a,b]}+\|v\|_{\infty,[a,b]}).
\end{align}

Observe that for $s,t\in I_j$,
\begin{equation}\label{eq3.48}
R^u_{s,t}=
u_{s,t}-h(u(s))x_{s,t}
=\bigg[\psi_{\tau_j,t}(z(t))
-
\psi_{\tau_j,t}(z(s))\bigg]
+\bigg[\psi_{\tau_j,t}(z(s))
-
\psi_{\tau_j,s}(z(s))-h(u(s))x_{s,t}\bigg].
\end{equation}
By the flow property $\psi_{\tau_j,t}=
\psi_{s,t}\circ\psi_{\tau_j,s}$ and $u(s)=\psi_{\tau_j,s}(z(s))$,
we obtain that
\begin{align}
\left\|\psi_{\tau_j,t}(z(s))
-
\psi_{\tau_j,s}(z(s))-h(u(s))x_{s,t}\right\|
=\left\|H(u(s))\mathbb X_{s,t}
+
\mathcal R^\psi_{s,t}(u(s))\right\|\leq
C\omega_{\mathbf x}(s,t)^{2/p},\nonumber
\end{align}
where in the last inequality we have used \eqref{eq2.4}, Assumption \textbf{(A3)} and sewing lemma. Therefore, the similar arguments as in \eqref{eq3.47}-\eqref{eq3.47-1} can be used to give
\begin{align}\label{eq3.49}
\left(
\sum_j\tnorm{R^{u}}_{q\text{-var},I_j}^{q}
\right)^{1/q}
\leq
C\omega_{\mathbf x}(a,b)^{2/p}
+
C_T(1+\|u\|_{\infty,[a,b]}+\|v\|_{\infty,[a,b]}).
\end{align}

Since $u'=h(u)$ and $Dh$ is bounded, we have
$\tnorm{u'}_{\pvar,I_j}\leq C\tnorm u_{\pvar,I_j}$. In view of \eqref{eq3.47-1},
applying Lemma~\ref{lem3.3} yields
\begin{equation}\label{eq3.50}
\tnorm{u}_{p\text{-var},[a,b]}
+
\tnorm{R^u}_{q\text{-var},[a,b]}
\leq
C_p(N+1)^2(1+\|u\|_{\infty,[a,b]}+\|v\|_{\infty,[a,b]})
\bigl(1+\omega_{\mathbf x}(a,b)\bigr).
\end{equation}
Therefore the assertion follows immediately by inserting \eqref{eq3.42},
$1+N\leq C_T(1+\norm{\mathbf x}_{\pvar,[a,b]}^p)$ and
$1+\omega_{\mathbf x}(a,b)\leq C_T(1+\norm{\mathbf x}_{\pvar,[a,b]}^p)$ into \eqref{eq3.50}.
\end{proof}

\begin{corollary}\label{cor3.5}
For every $T>0$, there exist constants $C_T>0$ and $m_T>0$, such that every controlled solution $u$ of \eqref{eq:vector} with
initial history $\xi\in\Hh$ satisfies
\begin{align}\label{eq3.51-10}
\sup_{-\tau\leq t\leq T}\norm{u(t)}
\leq
C_T\bigl(1+\norm{\xi}_{\Hh}\bigr)
\left(
1+\tnorm{\mathbf x}_{\pvar,[0,T]}^{m_T}
\right),
\end{align}
where $m_T$ is independent of $\xi$, $\mathbf x$
and the particular solution $u$.
\end{corollary}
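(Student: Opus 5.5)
We argue by the method of steps, applying Lemma~\ref{lem3.4} on consecutive delay intervals. Set $K:=\lceil T/\tau\rceil$ and $t_k:=k\tau\wedge T$ for $k=0,\ldots,K$. On each interval $[t_k,t_{k+1}]$, the delayed argument $u(t-\tau)$ lies in $[t_k-\tau,t_{k+1}-\tau]\subset[-\tau,t_k]$, so it is already determined. Thus, with $v(t):=u(t-\tau)$, the restriction of $u$ to $[t_k,t_{k+1}]$ is a controlled solution of \eqref{eq:local-integral} with $a=t_k$, $b=t_{k+1}$ and initial value $u(t_k)$. Since $b-a\leq\tau\leq T$ (and we may take $T\geq\tau$ without loss of generality, since otherwise $K=1$ and the same argument applies), Lemma~\ref{lem3.4} applies with the constant $C_T$ uniform in $k$.

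Define $M_k:=\sup_{-\tau\leq t\leq t_k}\|u(t)\|$, so that $M_0=\|\xi\|_{\Hh}$. Both $\|u(t_k)\|$ and $\|v\|_{\infty,[t_k,t_{k+1}]}$ are bounded by $M_k$. By \eqref{eq3.36}, and using the monotonicity $\tnorm{\mathbf x}_{\pvar,[t_k,t_{k+1}]}\leq\tnorm{\mathbf x}_{\pvar,[0,T]}=:A$, we get
\[
1+M_{k+1}\leq 1+\max\bigl(M_k,\|u\|_{\infty,[t_k,t_{k+1}]}\bigr)\leq 3C_T\,(1+M_k)\bigl(1+A^{5p}\bigr).
\]
Iterating this $K$ times yields
\[
1+M_K\leq (3C_T)^K(1+\|\xi\|_{\Hh})\bigl(1+A^{5p}\bigr)^K\leq C'_T(1+\|\xi\|_{\Hh})\bigl(1+A^{5pK}\bigr),
\]
where the last step uses $(1+y)^K\leq 2^{K-1}(1+y^K)$. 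This gives \eqref{eq3.51-10} with $m_T:=5p\lceil T/\tau\rceil$. This exponent depends only on $p$, $T$ and $\tau$, not on $\xi$, $\mathbf x$ or $u$.

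The only point requiring care is justifying that the restriction of a controlled solution of \eqref{eq:vector} to each step interval is a controlled solution of \eqref{eq:local-integral} with frozen $v$. This holds because restrictions of controlled paths remain controlled, and the drift coincides with $b(t,u(t);v)$ by construction. One should also check that $v=u(\cdot-\tau)$ is continuous on $[t_k,t_{k+1}]$, which holds because $\xi\in\Hh$ is continuous and $u$ is continuous on $[0,t_k]$ with $u(0)=\xi(0)$. Beyond this, no genuine obstacle arises. The multiplicative growth of the constant is harmless, since $K$ is fixed by $T$ and $\tau$, and this is exactly why $m_T$ grows linearly in $T/\tau$.
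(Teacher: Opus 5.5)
Your proof is correct and follows the paper's argument. Both use the method of steps on the delay intervals $[(j-1)\tau,j\tau]\cap[0,T]$, apply Lemma~3.4 with the frozen delayed path $v(t)=u(t-\tau)$, iterate the resulting affine bound $\lceil T/\tau\rceil$ times, and obtain $m_T=5p\lceil T/\tau\rceil$. The only differences are cosmetic: you propagate just the sup norm $M_k$ where the paper carries the full quantity $\mathcal U_j$ (which also includes the $p$-variation and remainder seminorms, not needed here), and your factor $3C_T$ tacitly assumes $C_T\geq 1$, which is harmless.
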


\begin{proof}
Let
$M:=\lceil T/\tau\rceil$,
$J_j:=[(j-1)\tau,j\tau]\cap[0,T]$,
$j=1,\ldots,M$.
On \(J_1=[0,\tau\wedge T]\), we have
\(t-\tau\in[-\tau,0]\) and hence
\(u(t-\tau)=\xi(t-\tau)\). Set
\(v_1(t):=\xi(t-\tau)\). By
Lemma~\ref{lem3.4},
\begin{align}\label{eq3.51}
 \|u\|_{\infty,J_1}
 +\tnorm{u}_{p\text{-var},J_1}
 +\tnorm{R^u}_{q\text{-var},J_1}
 \leq
 C_T
 \left(1+\|\xi\|_{\Hh}\right)
 \left(
 1+\tnorm{\mathbf x}_{p\text{-var},[0,T]}^{5p}
 \right).
\end{align}
We proceed by induction. Suppose that an estimate
analogous to \eqref{eq3.51} has been established on \(J_k\),
\(1\leq k\leq j\), for some \(1\leq j\leq M-1\). Define
\[
 \mathcal U_j
 :=
 \max_{1\leq k\leq j}
 \left\{
 \|u\|_{\infty,J_k}
 +\tnorm{u}_{p\text{-var},J_k}
 +\tnorm{R^u}_{q\text{-var},J_k}
 \right\}.
\]
For \(t\in J_{j+1}\), set
\(v_{j+1}(t):=u(t-\tau)\). Since
\(t-\tau\in J_j\),
$\|v_{j+1}\|_{\infty,J_{j+1}}
 \leq
 \|u\|_{\infty,J_j}
 \leq\mathcal U_j$.
Applying Lemma~\ref{lem3.4} on \(J_{j+1}\) yields
\begin{align*}
 &\|u\|_{\infty,J_{j+1}}
 +\tnorm{u}_{p\text{-var},J_{j+1}}
 +\tnorm{R^u}_{q\text{-var},J_{j+1}}
 \leq
 C_T(1+\mathcal U_j)
 \left(1+\tnorm{\mathbf x}_{p\text{-var},[0,T]}^{5p}
 \right).
\end{align*}
Furthermore,
\begin{equation}\label{eq3.52}
 \mathcal U_{j+1}
 \leq
 C_T(1+\mathcal U_j)
 \left(
 1+\tnorm{\mathbf x}_{p\text{-var},[0,T]}^{5p}
 \right).
\end{equation}
Iterating \eqref{eq3.52} with the initial bound
\eqref{eq3.51}, we obtain that
\[
 \mathcal U_M
 \leq
 C_T(1+\|\xi\|_{\Hh})
 \left(
 1+\tnorm{\mathbf x}_{p\text{-var},[0,T]}^{5p}
 \right)^M.
\]
Consequently,
\[
 \sup_{-\tau\leq t\leq T}\|u(t)\|
 \leq\mathcal U_M
 \leq
 C_T(1+\|\xi\|_{\Hh})
 \left(
 1+\tnorm{\mathbf x}_{p\text{-var},[0,T]}^{5pM}
 \right).
\]
Therefore, the assertion follows immediately by choosing
$m_T=5pM=5p\lceil T/\tau\rceil$.
\end{proof}

\begin{theorem}\label{thm3.8}
Under Assumptions~\textbf{(A1)}-\textbf{(A4)}, for every $\xi\in\Hh$ and every $\omega\in\Omega_0$, the delay equation \eqref{eq:vector}, driven by $\mathbf{x}(\omega)$, admits a unique global solution
$u\in C([ -\tau,\infty);U)$ with $u|_{[-\tau,0]}=\xi$.
Moreover,
for every $T>0$, $(u,h(u))\in\mathscr D_x^p([0,T];U)$, and the solution map
$(\xi,\mathbf{x})
\mapsto
u|_{[-\tau,T]}$
is continuous with respect to the norm topology of
\(\Hh\) and the local \(p\)-rough-path
topology, with values in \(C([-\tau,T];U)\).
\end{theorem}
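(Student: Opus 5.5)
The proof uses the method of steps in the delay together with the local Doss--Sussmann transformation of Lemma~\ref{lem3.2}, which reduces each step to a non-delayed rough equation of the form \eqref{eq:local-integral}.

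\textbf{Existence and uniqueness on one delay step.} Fix $\omega\in\Omega_0$ and $T>0$, and set $J_k:=[(k-1)\tau,k\tau]\cap[0,T]$. On $J_1$ the delayed argument is the known continuous path $v_1(t)=\xi(t-\tau)$. So \eqref{eq:vector} reduces to \eqref{eq:local-integral} with $v=v_1$ and a drift $b$ that is globally Lipschitz in $z$, by \eqref{eq2.40}.

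Take the greedy partition $\{\tau_j\}$ of $J_1$ with respect to $\omega_{\mathbf x}$ at level $\gamma_0$. On each $I_j=[\tau_j,\tau_{j+1}]$, use Lemma~\ref{lem3.2} to pass to the transformed equation
\[
\dot z(r)=[D\psi_{\tau_j,r}(z(r))]^{-1}\,b\bigl(r,\psi_{\tau_j,r}(z(r));v_1\bigr),
\qquad z(\tau_j)=u(\tau_j).
\]
By \eqref{eq3.6} and the chain rule, the right-hand side is globally Lipschitz in $z$, with constant $C L_b$, uniformly in $r$. It is also continuous in $r$, by continuity of the flow in time. The Picard--Lindel\"of theorem in $U$ therefore gives a unique global $C^1$ solution $z$ on $I_j$. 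Setting $u(r):=\psi_{\tau_j,r}(z(r))$ and invoking the one-to-one correspondence in Lemma~\ref{lem3.2}, $u$ is the unique controlled solution on $I_j$, with Gubinelli derivative $u'=h(u)$.

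These local solutions are then concatenated across the finitely many $I_j$. Lemma~\ref{lem3.3} shows that $(u,h(u))\in\mathscr D^p_x(J_1;U)$, and Lemma~\ref{lem3.4} bounds its norm. Uniqueness on $J_1$ follows because any controlled solution, restricted to $I_j$, must coincide with the unique $z$ there.

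\textbf{Induction over the delay intervals.} Given the solution on $J_1\cup\dots\cup J_k$, set $v_{k+1}(t):=u(t-\tau)$. This is continuous, so the same argument extends $u$ to $J_{k+1}$. Controlledness on $[0,T]$ follows from Lemma~\ref{lem3.3} applied to the partition $\{J_k\}$. Corollary~\ref{cor3.5} gives the a priori bound \eqref{eq3.51-10}, so the solution cannot blow up and is defined for all $T$. Consistency for different $T$ comes from uniqueness, which yields $u\in C([-\tau,\infty);U)$.

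\textbf{Continuity of the solution map.} This is the step I expect to be hardest. Let $(\xi_n,\mathbf x_n)\to(\xi,\mathbf x)$. By Corollary~\ref{cor3.5}, all solutions $u_n$ lie in a common ball of radius $R$ in $C([-\tau,T];U)$, and the rough-path norms $\tnorm{\mathbf x_n}_{p\text{-var},[0,T]}$ are uniformly bounded. One can then choose a common partition $\{\tau_j\}$ with $\omega_{\mathbf x_n}(\tau_j,\tau_{j+1})\le\gamma_0$ for all large $n$; this uses uniform continuity of the controls and the convergence $\omega_{\mathbf x_n}\to\omega_{\mathbf x}$.

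The route I would try first works in transformed variables. On each piece, compare $z_n$ with $z$ through Gronwall's inequality. This requires continuity of the flow $\psi^{\mathbf x}_{s,r}$, its derivative $D\psi$, and its inverse with respect to $\mathbf x$, locally uniformly in $\zeta$. That continuity is the standard stability of rough flows (Friz--Victoir, Bailleul) under the $C^4_b$ regularity in \textbf{(A3)}. It should give
\[
\|z_n-z\|_{\infty,I_j}\le C\bigl(\|u_n(\tau_j)-u(\tau_j)\|+\|v_n-v\|_{\infty}+\epsilon_n\bigr),
\qquad \epsilon_n\to0.
\]
Iterating this estimate over $j$ and over the delay steps $k$ then gives $\|u_n-u\|_{\infty,[-\tau,T]}\to0$.

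The delicate points are two. First, the flow convergence must be uniform on the bounded set where the solutions live, and this is the place where the boundedness from Corollary~\ref{cor3.5} is essential. Second, the number of partition intervals must stay uniform in $n$.

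If flow stability is awkward to justify, the alternative is a direct stability estimate in $\mathscr D^p_x$. This would use the Lipschitz bound \eqref{eq3.3} of Lemma~\ref{lem3.1} and the sewing estimate \eqref{eq2.12} on short intervals, combined with the standard rough-path comparison for solutions driven by different rough paths.
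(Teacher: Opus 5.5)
Your proposal follows essentially the same route as the paper: the local rough Doss--Sussmann transformation on a greedy partition, concatenation via Lemma~\ref{lem3.3}, the method of steps over the delay intervals, and continuity via a common partition for $\mathbf x_n$ and $\mathbf x$, comparing the transformed equations using continuity of the rough flow in the driver. One small correction: the transformed field $z\mapsto[D\psi_{\tau_j,r}(z)]^{-1}b(r,\psi_{\tau_j,r}(z);v)$ is in general only locally Lipschitz with linear growth, not globally Lipschitz with constant $CL_b$, because the derivative of $[D\psi_{\tau_j,r}]^{-1}$ multiplies $b$, which grows linearly in $z$; this weaker property is exactly what the paper uses, and it still gives a unique global solution on each piece.
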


\begin{proof}
\textbf{Step~1.}
We first consider \eqref{eq:local-integral} with a
prescribed path \(v\in C([a,b];U)\), which will later
represent the delayed term \(u(\cdot-\tau)\). Let
$a=t_0<t_1<\cdots<t_N=b$
be a finite greedy partition such that
$\omega_{\mathbf x}(t_j,t_{j+1})\leq\gamma_0$,
$j=0,\ldots,N-1$,
where \(\gamma_0\) is given by
Lemma~\ref{lem3.2}.
Fix \(j\in\{0,\ldots,N-1\}\), and let
\((\psi_{t_j,r})_{r\in[t_j,t_{j+1}]}\) be the
corresponding rough flow. On \([t_j,t_{j+1}]\), the
rough Doss--Sussmann technique gives a one-to-one
correspondence between controlled solutions \(u\) of
\eqref{eq:local-integral} and absolutely continuous
solutions \(z_j\) of
\begin{equation}\label{eq3.55}
 \dot z_j(r)
 =
 [D\psi_{t_j,r}(z_j(r))]^{-1}
 b\bigl(r,\psi_{t_j,r}(z_j(r));v\bigr),
 \qquad
 z_j(t_j)=u(t_j),
\end{equation}
through
$u(r)=\psi_{t_j,r}(z_j(r))$.
By Lemma~\ref{lem3.2} and \eqref{eq2.40}, the vector
field in \eqref{eq3.55} is locally Lipschitz in \(z_j\)
and of at most linear growth, uniformly with respect to
\(r\in[t_j,t_{j+1}]\). Hence \eqref{eq3.55} admits a unique
solution \(z_j\), and consequently
\eqref{eq:local-integral} admits a unique controlled
solution on \([t_j,t_{j+1}]\).

Starting from \(u(a)=u_a\), these local solutions can be
concatenated successively to yield a unique continuous
solution \(u\) of \eqref{eq:local-integral} on \([a,b]\). Since \((u,h(u))\) is controlled by \(x\) on
each \([t_j,t_{j+1}]\), Lemma~\ref{lem3.3} yields
$(u,h(u))\in\mathscr D_x^p([a,b];U)$.
Moreover, Lemma~\ref{lem3.4} provides the estimate
\eqref{eq3.36} for the solution \(u\) of
\eqref{eq:local-integral} on $[a,b]$.

\textbf{Step~2.}
On \(J_0:=[0,\tau]\), set
$v_0(t):=\xi(t-\tau)$.
Since \(\xi\in\Hh\), the path \(v_0\) is continuous.
Applying Step~1 with \(v=v_0\) and \(u(0)=\xi(0)\)
yields a unique controlled solution of
\eqref{eq:vector} on \(J_0\).
Suppose that the solution has been constructed uniquely
on \([-\tau,m\tau]\). On
$J_m:=[m\tau,(m+1)\tau]$,
the delayed path \(v_m(t):=u(t-\tau)\) is already known
and continuous.
Applying
Step~1 to \eqref{eq:local-integral} with \(v=v_m\) and
initial value \(u(m\tau)\) uniquely extends the solution
of \eqref{eq:vector} to \(J_m\). Iterating this construction and using
Lemma~\ref{lem3.3}, we obtain, for every
\(T>0\), a unique solution \(u\) of \eqref{eq:vector}
such that
$u\in C([-\tau,T];U)$,
$u|_{[-\tau,0]}=\xi$ and
$(u,h(u))\in\mathscr D_x^p([0,T];U)$.
Since \(T>0\) is arbitrary, \eqref{eq:vector} admits a
unique global solution on \([-\tau,\infty)\).

\textbf{Step~3.}
Fix \(T>0\), and let
\(\xi_n\to\xi\) in \(\Hh\) and
\(\mathbf x_n\to\mathbf x\) in the local
\(p\)-rough-path topology. Denote the corresponding
solutions of \eqref{eq:vector} by \(u_n\) and \(u\),
respectively.

On \(J_0:=[0,\tau\wedge T]\), set
$v_n(t):=\xi_n(t-\tau)$ and
$v(t):=\xi(t-\tau)$.
Then
$\|v_n-v\|_{\infty,J_0}
 \leq\|\xi_n-\xi\|_{\Hh}\rightarrow0$
and
$u_n(0)\longrightarrow u(0)$.
We construct a common finite
partition  $0=t_0<t_1<\cdots<t_N=\tau\wedge T$
such that, for every $j=0,\ldots,N-1$ and all sufficiently large $n$,
\[
\omega_{\mathbf{x}}(t_j,t_{j+1})
\leq
\gamma_0
\quad\text{and}\quad
\omega_{\mathbf{x}_n}(t_j,t_{j+1})
\leq
\gamma_0,
\]
where \(\gamma_0\) is given by
Lemma~\ref{lem3.2}. For \(I_j=[t_j,t_{j+1}]\), let
\(\psi^n_{t_j,r}\) and \(\psi_{t_j,r}\) be the rough
flows generated by \(\mathbf x_n\) and \(\mathbf x\),
respectively, and let \(z_n\) and \(z\) be the
corresponding Doss--Sussmann transforms.
By Lemma~\ref{lem3.2}, the It\^o--Lyons continuity
theorem and \eqref{eq2.40}, we have for any $r\in I_j$,
\[
\begin{aligned}
 \|u_n(r)-u(r)\|
 &\leq
 \left\|
 \psi^n_{t_j,r}\bigl(z_n(r)\bigr)
 -
 \psi^n_{t_j,r}\bigl(z(r)\bigr)
 \right\|+
 \left\|
 \psi^n_{t_j,r}\bigl(z(r)\bigr)
 -
 \psi_{t_j,r}\bigl(z(r)\bigr)
 \right\|\\
 &\leq
 C\|z_n-z\|_{\infty,I_j}
 +
 \sup_{\ell\in I_j}
 \left\|
 \psi^n_{t_j,\ell}\bigl(z(\ell)\bigr)
 -
 \psi_{t_j,\ell}\bigl(z(\ell)\bigr)
 \right\|\rightarrow0~~\mbox{as}
~n\rightarrow\infty.
\end{aligned}
\]
Starting from \(u_n(0)\to u(0)\) and proceeding
successively over the finite partition, we obtain
$\|u_n-u\|_{\infty,J_0}\rightarrow0$.
Repeating this argument over the finitely many
method-of-steps intervals contained in \([0,T]\), we
conclude that
\[
 \|u_n-u\|_{\infty,[-\tau,T]}
 \rightarrow0
 \qquad\text{as }n\to\infty.
\]
Hence the solution map $(\xi,\mathbf{x})
\mapsto
u|_{[-\tau,T]}$ is continuous from the product of $\mathcal H$ and the local
$p$-rough-path topology into $C([-\tau,T];U)$. This completes the proof.
\end{proof}

\begin{theorem}\label{thm3.9}
For \(t\geq0\), \(\omega\in\Omega_0\), and \(\xi\in\Hh\), define
\[\Phi(t,\omega)\xi:=
u_t(\cdot;\omega,\xi).\]
Then \(\Phi\) is a continuous random dynamical system on \(\Hh\).
Moreover, for every
\(\omega\in\Omega_0\), the map
$(t,\xi)\longmapsto\Phi(t,\omega)\xi$
is continuous from \([0,\infty)\times\Hh\) into \(\Hh\).

\end{theorem}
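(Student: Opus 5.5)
To prove Theorem~\ref{thm3.9}, I will check the defining properties of a continuous random dynamical system in turn: the identity property $\Phi(0,\omega)=\operatorname{Id}_{\Hh}$, the cocycle property, measurability, and joint continuity.

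\textbf{Identity.} This is immediate, since $u_0=\xi$ by construction.

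\textbf{Cocycle property.} I would show
\[
\Phi(t+s,\omega)\xi=\Phi(t,\theta_s\omega)\Phi(s,\omega)\xi,
\qquad t,s\geq0.
\]
Fix $s\geq0$ and set $\tilde u(r):=u(s+r;\omega,\xi)$ for $r\geq-\tau$. Then $\tilde u|_{[-\tau,0]}=u_s=\Phi(s,\omega)\xi$. The drift part of \eqref{eq:vector} is autonomous, so it is invariant under the shift $r\mapsto s+r$. For the noise, I would show that $\tilde u$ solves \eqref{eq:vector} driven by $\mathbf x(\theta_s\omega)$. Since the rough integral \eqref{eq2.11} is a limit of Riemann-type sums built only from increments $x_{a,b}$ and $\mathbb X_{a,b}$, Assumption~\textbf{(A4)} gives
\[
x_{a,b}(\theta_s\omega)=x_{a+s,b+s}(\omega),\qquad
\mathbb X_{a,b}(\theta_s\omega)=\mathbb X_{a+s,b+s}(\omega).
\]
Hence
\[
\int_0^r h(\tilde u)\,dx(\theta_s\omega)=\int_s^{s+r}h(u)\,dx(\omega).
\]
The controlledness of $(\tilde u,h(\tilde u))$ with respect to $x(\theta_s\omega)$ also transfers, because the remainder is shifted in the same way. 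Uniqueness in Theorem~\ref{thm3.8} then yields $\tilde u=u(\cdot;\theta_s\omega,\Phi(s,\omega)\xi)$. Taking the segment at time $t$ gives the cocycle identity.

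\textbf{Joint continuity.} Fix $\omega$ and take $(t_n,\xi_n)\to(t,\xi)$. Choose $T>\sup_n t_n$. By Theorem~\ref{thm3.8} with the driving path held fixed, $u_n\to u$ in $C([-\tau,T];U)$. Then
\[
\|\Phi(t_n,\omega)\xi_n-\Phi(t,\omega)\xi\|_{\Hh}
\leq\|u_n-u\|_{\infty,[-\tau,T]}+\sup_{r\in[-\tau,0]}\|u(t_n+r)-u(t+r)\|.
\]
The second term tends to zero by uniform continuity of $u$ on the compact interval $[-\tau,T]$.

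\textbf{Measurability.} I expect this to be the main obstacle. The map $\omega\mapsto\mathbf x(\omega)$ is measurable into the local $p$-rough-path space by \textbf{(A4)}, and Theorem~\ref{thm3.8} makes $(\xi,\mathbf x)\mapsto u|_{[-\tau,T]}$ continuous. The composition
\[
(\omega,\xi)\mapsto(\xi,\mathbf x(\omega))\mapsto u|_{[-\tau,T]}
\]
is therefore $\mathcal F\otimes\mathcal B(\Hh)$-measurable. Evaluating the segment map $u|_{[-\tau,T]}\mapsto u_t$ is continuous into $\Hh$. Combined with continuity in $t$, this upgrades to $\mathcal B([0,\infty))\otimes\mathcal F\otimes\mathcal B(\Hh)$-measurability of $(t,\omega,\xi)\mapsto\Phi(t,\omega)\xi$, since the map is Carathéodory: continuous in $(t,\xi)$ and measurable in $\omega$.

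The delicate point is measurability of $\omega\mapsto\mathbf x(\omega)$ into the (nonseparable) $p$-variation rough-path topology. I would handle it as follows:
\begin{itemize}
\item Use the local $\beta$-Hölder regularity from \textbf{(A4)} to work in a separable closure, such as the $p$-variation closure of smoother paths, which is available since $\beta>1/p$.
\item Alternatively, note that continuity of the solution map only needs to hold along sequences, and realise $u$ as a pointwise limit of measurable approximations, namely the Doss--Sussmann/greedy-partition construction with measurable partition points.
\end{itemize}
Finally, the cocycle identity must be verified on the $\theta$-invariant set $\Omega_0$, which is legitimate by the standing restriction to $\Omega_0$.
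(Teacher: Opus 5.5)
Your proposal follows the paper's proof essentially step by step: the shifted solution $r\mapsto u(s+r;\omega,\xi)$, the stationarity in \textbf{(A4)} and uniqueness from Theorem~\ref{thm3.8} give the cocycle identity; the same two-term splitting plus uniform continuity of $u$ on $[-\tau,T]$ gives joint continuity; and measurability comes from composing $\omega\mapsto\mathbf x(\omega)$ with the continuous solution map. Your Carath\'eodory step (measurable in $\omega$ for fixed $(t,\xi)$, continuous in $(t,\xi)$ on the separable space $[0,\infty)\times\Hh$) is a slightly more careful version of the paper's direct composition and already settles the nonseparability worry, so once \textbf{(A4)} is read as Borel measurability into the local rough-path space, the extra remedies in your bullet points are unnecessary.
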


\begin{proof}
By Theorem~\ref{thm3.8}, for every
\(\omega\in\Omega_0\) and \(\xi\in\Hh\),
$u(\cdot;\omega,\xi)\in C([-\tau,\infty);U)$.
Consequently, \(u_t(\cdot;\omega,\xi)\in\Hh\) for every
\(t\geq0\), and hence
\(\Phi(t,\omega):\Hh\to\Hh\) is well defined.

Fix $s\geq0$ and set
$v(r):=u(s+r;\omega,\xi)$. By
Assumptions~\textbf{(A4)},
$d\mathbf x_{s+r}(\omega)=d\mathbf x_r(\theta_s\omega)$. Consequently,
the function $v$ solves the same delay equation driven by $\theta_s\omega$:
\[
dv(r)
=
\left[
-\mathcal L v(r)
-\lambda v(r)
+F\bigl(v(r),v(r-\tau)\bigr)
\right]\,dr
+
h\bigl(v(r)\bigr)\,d\mathbf{x}_{r}
(\theta_s\omega),
\]
where its initial history is given by
$v(\iota)=u(s+\iota;\omega,\xi)=
\bigl[\Phi(s,\omega)\xi\bigr](\iota)$,
 $\iota\in[-\tau,0]$.
Define $\bar{v}(r):=u\bigl(r;\theta_s\omega,
\Phi(s,\omega)\xi\bigr)$. By the existence and uniqueness of solution to the delay equation
\eqref{eq:vector}, established in Theorem~\ref{thm3.8}, we obtain
$v(r)=\bar{v}(r)$ for any $r\geq -\tau$.
For every $\xi\in\mathcal H$ and
$\iota\in[-\tau,0]$, we find that
$[\Phi(0,\omega)\xi](\iota)
=u_0(\iota;\omega,\xi)
=u(\iota;\omega,\xi)
=\xi(\iota)$. Then $\Phi(0,\omega)=\operatorname{Id}_{\mathcal H}$. In addition, we have for any $t\geq0$ and
$\iota\in[-\tau,0]$,
\begin{align*}
v(t+\iota)
&=u(s+t+\iota;\omega,\xi)
=u_{s+t}(\iota;\omega,\xi)
=[\Phi(s+t,\omega)\xi](\iota),\\
\bar{v}(t+\iota)
&=u\bigl(t+\iota;\theta_s\omega,
\Phi(s,\omega)\xi\bigr)=
[\Phi(t,\theta_s\omega)
\bigl(\Phi(s,\omega)\xi\bigr)](\iota).
\end{align*}
In view of $v(r)=\bar{v}(r)$ for any $r\geq -\tau$, we have
$\Phi(s+t,\omega)\xi
=
\Phi(t,\theta_s\omega)
\bigl(\Phi(s,\omega)\xi\bigr)$.
Since this identity holds for every $\xi\in\mathcal H$, it follows that
\[
\Phi(s+t,\omega)
=
\Phi(t,\theta_s\omega)\circ\Phi(s,\omega),
\qquad s,t\geq0.
\]

Let \(t_n\to t\) in \([0,T]\), \(\xi_n\to\xi\) in \(\Hh\), and
\(\mathbf{x}_n\to\mathbf{x}\) in the local \(p\)-rough-path topology.
Denote by \(u_n\) and \(u\) the solutions corresponding to
\((\xi_n,\mathbf{x}_n)\) and \((\xi,\mathbf{x})\), respectively. Then using Theorem~\ref{thm3.8} and the uniform continuity of \(u\) on
\([-\tau,T]\), we obtain that
\[
\begin{aligned}
\|(u_n)_{t_n}-u_t\|_{\Hh}
&=
\sup_{\iota\in[-\tau,0]}
\|u_n(t_n+\iota)-u(t+\iota)\|
\\
&\leq
\|u_n-u\|_{\infty,[-\tau,T]}
+
\sup_{\iota\in[-\tau,0]}
\|u(t_n+\iota)-u(t+\iota)\|
\rightarrow0\qquad\text{as }n\to\infty.
\end{aligned}
\]
Therefore, for every \(T>0\), the map
$(t,\xi,\mathbf{x})
\mapsto
u_t(\cdot;\mathbf{x},\xi)$
is jointly continuous from the product of \([0,T]\), \(\Hh\), and
the local \(p\)-rough-path topology into \(\Hh\).
By Assumption~\textbf{(A4)}, we know that
$\omega\mapsto\mathbf x(\omega)$
is measurable. Since the solution map
$(t,\xi,\mathbf{x})
\mapsto
u_t(\cdot;\mathbf{x},\xi)$
is continuous, their composition
$(t,\omega,\xi)\mapsto
 u_t(\cdot;\mathbf x(\omega),\xi)
 =\Phi(t,\omega)\xi$
is jointly measurable on
\(\mathbb{R}_+\times\Omega_0\times\Hh\). This completes the proof.
\end{proof}

\section{Random pullback attractor}

\subsection{Pullback absorbing set}

We first consider the deterministic delay equation obtained from
\eqref{eq:vector} by removing the rough forcing. Let \(\mu\) denote
the solution of
\begin{equation}\label{eq:deterministic}
\dot{\mu}(t)
=
-\mathcal L\mu(t)
-\lambda\mu(t)
+
F\bigl(\mu(t),\mu(t-\tau)\bigr),
\qquad t>0,
\end{equation}
subject to the initial history
$\mu(\theta)
=
\xi(\theta)$,
$\theta\in[-\tau,0]$.
Since \(\lambda>L_0+L_1\), if \(L_1>0\), we may choose
\(\sigma>0\) sufficiently small so that
$L_1e^{\sigma\tau}<
\lambda-L_0$;
if \(L_1=0\), choose any
$\sigma\in(0,\lambda-L_0)$.
Define
$\delta_*:=\min\{
\lambda-L_0-L_1e^{\sigma\tau},
\sigma\}$.

\begin{lemma}\label{lem4.1}
For \(t\geq0\), define
\[
 \mathcal E_\mu(t)
 :=
 \norm{\mu(t)}^2
 +
 L_1e^{\sigma\tau}
 \int_{t-\tau}^{t}
 e^{-\sigma(t-s)}
 \norm{\mu(s)}^2\,ds.
\]
Then
\begin{equation}\label{eq4.1}
 \mathcal E_\mu(t)
 \leq
 e^{-\delta_*t}\mathcal E_\mu(0)
 +
 \frac{\norm{F_0}^2}
 {(\lambda-L_0-L_1)\delta_*},
 \qquad t\geq0.
\end{equation}
Moreover, for every \(t\geq\tau\),
\begin{equation}\label{eq4.2}
 \norm{\mu(t+\cdot)}_{\Hh}^2
 \leq
 \bigl(1+\tau L_1e^{\sigma\tau}\bigr)
 e^{-\delta_*(t-\tau)}
 \norm{\xi}_{\Hh}^2
 +
 \frac{\norm{F_0}^2}
 {(\lambda-L_0-L_1)\delta_*}.
\end{equation}
\end{lemma}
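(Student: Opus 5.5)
The plan is a standard Lyapunov--Krasovskii argument: differentiate $\mathcal E_\mu$ along solutions, show $\frac{d}{dt}\mathcal E_\mu\le-\delta_*\mathcal E_\mu+K$ with $K:=\norm{F_0}^2/(\lambda-L_0-L_1)$, and apply Gronwall. Then \eqref{eq4.2} follows by bounding $\norm{\mu(s)}^2\le\mathcal E_\mu(s)$ on the window $[t-\tau,t]$.

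\textbf{Regularity.} The drift $G$ is globally Lipschitz on $U\times U$ by \eqref{eq2.40}. So, by the method of steps (or Theorem~\ref{thm3.8} with the trivial rough path), \eqref{eq:deterministic} has a unique solution $\mu\in C([-\tau,\infty);U)$ that is $C^1$ on $(0,\infty)$ and continuous up to $t=0$. Hence $t\mapsto\norm{\mu(t)}^2$ is $C^1$ for $t>0$. The integral term is also $C^1$ in $t$, since its integrand is continuous.

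\textbf{Energy inequality.} Using $\ip{\mathcal L\mu}{\mu}\ge0$ from \eqref{eq2.14} and \eqref{eq2.16}, I would estimate
\[
\frac{d}{dt}\norm{\mu}^2\le-2\lambda\norm{\mu}^2+2L_0\norm{\mu}^2+2L_1\norm{\mu(t)}\norm{\mu(t-\tau)}+2\norm{\mu}\norm{F_0}.
\]
I apply Young's inequality in two places: $2L_1ab\le L_1a^2+L_1b^2$, and $2\norm{\mu}\norm{F_0}\le\varepsilon\norm{\mu}^2+\varepsilon^{-1}\norm{F_0}^2$ with $\varepsilon:=\lambda-L_0-L_1>0$. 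This yields
\[
\frac{d}{dt}\norm{\mu}^2\le-(\lambda-L_0)\norm{\mu(t)}^2+L_1\norm{\mu(t-\tau)}^2+K .
\]
Write $I(t)$ for the integral term. Differentiating it directly gives
\[
I'(t)=L_1e^{\sigma\tau}\norm{\mu(t)}^2-L_1\norm{\mu(t-\tau)}^2-\sigma I(t).
\]
Adding the two, the delayed terms cancel exactly, which is why the weight $e^{\sigma\tau}$ appears in $I$. The result is
\[
\mathcal E_\mu'\le-(\lambda-L_0-L_1e^{\sigma\tau})\norm{\mu}^2-\sigma I+K\le-\delta_*\mathcal E_\mu+K .
\]
Gronwall then gives $\mathcal E_\mu(t)\le e^{-\delta_*t}\mathcal E_\mu(0)+K/\delta_*$, which is \eqref{eq4.1}.

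\textbf{History bound.} Since $e^{-\sigma(0-s)}\le1$ for $s\in[-\tau,0]$, we have
\[
\mathcal E_\mu(0)\le\norm{\xi(0)}^2+\tau L_1e^{\sigma\tau}\norm{\xi}_{\Hh}^2\le(1+\tau L_1e^{\sigma\tau})\norm{\xi}_{\Hh}^2 .
\]
For $t\ge\tau$ and $s\in[t-\tau,t]$ we have $s\ge0$, so $\norm{\mu(s)}^2\le\mathcal E_\mu(s)\le e^{-\delta_* s}\mathcal E_\mu(0)+K/\delta_*\le e^{-\delta_*(t-\tau)}\mathcal E_\mu(0)+K/\delta_*$. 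Taking the supremum over $s$ gives \eqref{eq4.2}.

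The only step needing care is justifying the differentiation near $t=0$, where $\mu$ may fail to be differentiable. To avoid it, I would integrate the inequality on $[\epsilon,t]$ and let $\epsilon\downarrow0$ using the continuity of $\mathcal E_\mu$, which makes the Gronwall step rigorous.
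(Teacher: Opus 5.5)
Your proposal is correct and follows essentially the same Lyapunov--Krasovskii argument as the paper. The shared steps are the Young splittings giving $\frac{d}{dt}\norm{\mu}^2\le-(\lambda-L_0)\norm{\mu}^2+L_1\norm{\mu(t-\tau)}^2+K$, the exact cancellation of the delayed term through the weight $e^{\sigma\tau}$, Gronwall, and $\mathcal E_\mu(0)\le(1+\tau L_1e^{\sigma\tau})\norm{\xi}_{\Hh}^2$; your explicit bound $\norm{\mu(s)}^2\le\mathcal E_\mu(s)\le e^{-\delta_*(t-\tau)}\mathcal E_\mu(0)+K/\delta_*$ for $s\in[t-\tau,t]$ spells out what the paper's phrase ``inserting this into \eqref{eq4.1}'' leaves implicit.
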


\begin{proof}
By \eqref{eq2.14}, $\ip{\mathcal L\mu}{\mu}\geq0$. Using \eqref{eq2.15} and Young's inequality, we obtain
\begin{align}\label{eq4.2-1}
 \frac{d}{dt}\|\mu(t)\|^2
 &= -2\ip{\mathcal L\mu(t)}{\mu(t)}-2\lambda\|\mu(t)\|^2
 +2\ip{F(\mu(t),\mu(t-\tau))-
 F_0}{\mu(t)}+2\ip{F_0}{\mu(t)}
 \nonumber\\
 &\leq-2\lambda\|\mu(t)\|^2+2L_0\|\mu(t)\|^2
 +2L_1\norm{\mu(t-\tau)}\norm{\mu(t)}
 \nonumber\\
 &+(\lambda-L_0-L_1)\norm{\mu(t)}^2+
 (\lambda-L_0-L_1)^{-1}\norm{F_0}^2
 \nonumber\\
 &\leq-(\lambda-L_0)\norm{\mu(t)}^2+
 L_1\norm{\mu(t-\tau)}^2+
 (\lambda-L_0-L_1)^{-1}\norm{F_0}^2.
\end{align}
In addition, it holds that
\[
 \frac{d}{dt}\int_{t-\tau}^{t}e^{-\sigma(t-s)}\norm{\mu(s)}^2ds
 =\norm{\mu(t)}^2-e^{-\sigma\tau}\norm{\mu(t-\tau)}^2
 -\sigma\int_{t-\tau}^{t}e^{-\sigma(t-s)}\norm{\mu(s)}^2ds.
\]
Consequently,
\[
 \frac{d}{dt}\mathcal E_\mu(t)
\leq-\delta_*\mathcal E_\mu(t)+(\lambda-L_0-L_1)^{-1}
\norm{F_0}^2.
\]
Applying Gronwall's inequality to above inequality yields \eqref{eq4.1}. Notice that $\mathcal E_\mu(0)\leq(1+\tau L_1e^{\sigma\tau})\norm\xi_{\Hh}^2$. Then inserting this into \eqref{eq4.1} gives  \eqref{eq4.2}.
\end{proof}

\begin{lemma}\label{lem4.2}
Let \(u\) and \(\mu\) be the solutions of \eqref{eq:vector} and
\eqref{eq:deterministic}, respectively, with the same initial history
\(\xi\).  Then, for every \(r>0\) and
\(\varepsilon\in(0,1/4)\), there exist constants
\(C_r>0\) and \(C_{r,\varepsilon}>0\) such that
\begin{equation}\label{eq4.4}
 \norm{u-\mu}_{\infty,[0,r]}
 \leq C_r\varepsilon\bigl(1+\norm\xi_{\Hh}\bigr)
 +C_{r,\varepsilon}\bigl(1+\norm{\mathbf x}_{\pvar,[0,r]}^{p}\bigr).
\end{equation}

\end{lemma}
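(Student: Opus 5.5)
We compare \(u\) and \(\mu\) with the local rough-flow Doss--Sussmann transformation of Lemma~\ref{lem3.2}. We work on a greedy partition of \([0,r]\) whose mesh is adapted to \(\varepsilon\). The rough perturbation then enters only through \(\psi_{s,t}-\operatorname{Id}\) and \(D\psi^{-1}-\operatorname{Id}\), which are \(O(\rho_j)\) by \eqref{eq3.4}--\eqref{eq3.5}. The number of partition intervals is controlled by \(\omega_{\mathbf x}(0,r)\), which explains the factor \(1+\norm{\mathbf x}_{\pvar,[0,r]}^p\).

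\textbf{Step 1 (partition).} Fix \(\gamma\in(0,\gamma_0]\), to be chosen depending on \(\varepsilon\), and build the greedy partition \(0=\tau_0<\dots<\tau_N=r\) of \([0,r]\) with respect to \(\omega_{\mathbf x}\) at level \(\gamma\). Refine it so that it contains all multiples of \(\tau\) in \([0,r]\) and has mesh at most \(1\). As in \eqref{eq3.37}, \(N\leq C_{r,\gamma}(1+\omega_{\mathbf x}(0,r))\le C_{r,\gamma}(1+\norm{\mathbf x}_{\pvar,[0,r]}^p)\), and \(\rho_j\leq\gamma^{1/p}\).

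\textbf{Step 2 (one-interval comparison).} On \(I_j\) set \(z(t)=\psi_{\tau_j,t}^{-1}(u(t))\). By \eqref{eq3.7},
\[
\dot z-\dot\mu=\bigl(D\psi^{-1}_{\tau_j,t}(u)-\operatorname{Id}\bigr)b(t,u;u(\cdot-\tau))+G(u,u(t-\tau))-G(\mu,\mu(t-\tau)).
\]
Write \(e(t):=\sup_{s\le t}\norm{u(s)-\mu(s)}\), noting that \(e\equiv0\) on \([-\tau,0]\). The first term is bounded by \(C\rho_j(1+\norm{u}_{\infty,[-\tau,r]})\). The second is bounded by \(L_b\norm{u-\mu}+L_1\norm{u(t-\tau)-\mu(t-\tau)}\le C e(t)\). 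Also \(\norm{u-\mu}\le\norm{z-\mu}+C\rho_j\) by \eqref{eq3.4}. Gronwall on \(I_j\) then gives
\[
e(\tau_{j+1})\le e^{C|I_j|}\bigl(e(\tau_j)+C\rho_j(1+\norm{u}_\infty)|I_j|+C\rho_j\bigr).
\]
The delayed term is handled directly, since \(e\) is a running supremum; alternatively one can argue step by step in \(\tau\).

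\textbf{Step 3 (iteration, the main obstacle).} Summing over \(j\) gives \(e(r)\le C_r\sum_j\rho_j(1+|I_j|\norm{u}_\infty)\). This is where the main difficulty lies. The terms \(C\rho_j\), coming from the mismatch \(\psi-\operatorname{Id}\) between the \(z\)- and \(u\)-variables, add up to \(CN\gamma^{1/p}\). This is harmless: it depends only on \(\mathbf x\) and is absorbed into \(C_{r,\varepsilon}(1+\norm{\mathbf x}^p_{\pvar,[0,r]})\). The dangerous term is \(\rho_j|I_j|\norm{u}_\infty\). Using Corollary~\ref{cor3.5} for \(\norm u_\infty\) would produce the large power \(\norm{\mathbf x}^{m_r}\) multiplied by \(\norm\xi\), which the statement does not allow.

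To avoid this, I would replace \(\norm{u}_\infty\) by \(\norm{\mu}_\infty+e\), with \(\norm{\mu}_{\infty,[-\tau,r]}\le C_r(1+\norm{\xi}_{\Hh})\) by Lemma~\ref{lem4.1}. The term \(\rho_j|I_j|\,e\) is absorbed by Gronwall; since \(\sum|I_j|=r\), it only produces a factor \(e^{C_r\gamma^{1/p}}\). The remaining term is \(\sum_j\rho_j|I_j|(1+\norm\xi)\le \gamma^{1/p}r(1+\norm\xi)\). Choosing \(\gamma=\min\{\gamma_0,\varepsilon^p\}\) turns this into \(C_r\varepsilon(1+\norm\xi_{\Hh})\). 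The factor \(e^{C_r\gamma^{1/p}}\) is bounded by a constant because \(\varepsilon<1/4\).

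Collecting the bounds yields \eqref{eq4.4}, with \(C_{r,\varepsilon}\) depending on \(\gamma=\gamma(\varepsilon)\) through \(N\). Finally, the pointwise estimate inside each interval follows from the Gronwall bound together with \(\norm{u-z}\le C\rho_j\le C\).
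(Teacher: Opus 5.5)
Your proposal is correct and follows essentially the same route as the paper: a Doss--Sussmann transformation on a greedy partition at level $\gamma_\varepsilon$ with $\gamma_\varepsilon^{1/p}\le\varepsilon$, Gronwall on each block, the $\rho_i$-weighted drift controlled by the bound on $\mu$ from Lemma~\ref{lem4.1} rather than any bound on $u$, and the accumulated mismatches $\sum_i\rho_i\le N\gamma_\varepsilon^{1/p}$ absorbed into $C_{r,\varepsilon}(1+\norm{\mathbf x}^p_{\pvar,[0,r]})$. The differences are cosmetic: the paper writes $\dot e=D(\psi^{-1})(u)[G(u,\cdot)-G(\mu,\cdot)]+[D(\psi^{-1})(u)-\operatorname{Id}_U]G(\mu,\cdot)$, so $G(\mu,\cdot)$ sits directly in the small term, whereas you reach the same point via $\norm u\le\norm\mu+e$; and the paper refines to mesh at most $\tau\wedge1$ so the delayed error is already known from earlier blocks, whereas your running supremum handles it inside Gronwall.
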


\begin{proof}
By Lemma~\ref{lem4.1}, we obtain that for $0\leq s\leq t\leq r$,
\begin{equation}\label{eq4.5}
 \norm\mu_{\infty,[-\tau,r]}\leq C_r (1+\norm\xi_{\Hh})~~
\mbox{and}~~
 \tnorm\mu_{1\text{-var},[s,t]}\leq C_r(1+\norm\xi_{\Hh})(t-s).
\end{equation}
Let \(\gamma_0\) be given in Lemma~\ref{lem3.2} and let  \(\gamma_\varepsilon\in(0,\gamma_0]\) be chosen sufficiently small so that $\gamma_\varepsilon^{1/p}
\leq\varepsilon$.
Starting from the greedy partition of \([0,r]\) associated with the control \(\omega_{\mathbf{x}}\) at level \(\gamma_\varepsilon\), we adjoin all points of the form \(j\tau\) lying in \([0,r]\) and further refine the resulting partition so that every subinterval has length at most \(\tau\wedge1\).
Denote the resulting partition by
$0=\tau_0<\tau_1<\cdots<\tau_N=r$
and set
$I_i:=[\tau_i,\tau_{i+1}]$,
$\rho_i:=
\omega_{\mathbf{x}}(\tau_i,\tau_{i+1}
)^{1/p}$, $i=0,\ldots,N-1$.
Then $\rho_i\leq\gamma_\varepsilon^{1/p}$, $i=0,\ldots,N-1$.
By using a similar argument as in \eqref{eq3.37}, we have
\begin{equation}\label{eq4.6}
 N\leq C_{r,\varepsilon}\left(1+
\tnorm{x}_{\pvar,[0,r]}^p
 +\tnorm{\mathbb X}_{\qvar,[0,r]}^q\right)
\leq C_{r,\varepsilon}\left(1+
 \tnorm{\mathbf x}_{\pvar,[0,r]}^p
 \right).
\end{equation}

Fix \(i\in\{0,\ldots,N-1\}\). For \(t\in I_i\), let $
\psi_{\tau_i,t}$
denote the local rough flow, and define
$z(t):=
\psi_{\tau_i,t}^{-1}(u(t))$,
$e(t):=z(t)-\mu(t)$.
It follows from \eqref{eq:local-integral-0}, \eqref{eq3.7} and \eqref{eq:deterministic} that
\begin{align*}
\dot e(t)
={}&
D(\psi_{\tau_i,t}^{-1})(u(t))
\Bigl[
G\bigl(u(t),u(t-\tau)\bigr)
-
G\bigl(\mu(t),\mu(t-\tau)\bigr)
\Bigr]
\\
&+
\Bigl[
D(\psi_{\tau_i,t}^{-1})(u(t))
-
\operatorname{Id}_U
\Bigr]
G\bigl(\mu(t),\mu(t-\tau)\bigr)
\end{align*}
for almost every \(t\in I_i\).
By Lemma \ref{lem3.2}, \eqref{eq2.40}, \eqref{eq4.5} and
$u(t)=\psi_{\tau_i,t}(z(t))$, we deduce that
\begin{equation}\label{eq4.7}
\|\dot e(t)\|
\leq
C\|e(t)\|
+
C\|u(t-\tau)-\mu(t-\tau)\|
+
C\rho_i
+
C_r\rho_i
\bigl(1+\|\xi\|_{\Hh}\bigr)
\end{equation}
for almost every \(t\in I_i\),
where we have used
$\norm{u(t)-\mu(t)}
\leq\norm{e(t)}+C\rho_i$.

Since
$|I_i|
\leq
\tau,$
we have
$
-\tau
\leq
t-\tau
\leq
\tau_i$ for
$t\in I_i.$
Set
$
K_i
:=
\sup_{-\tau\leq s\leq\tau_i}
\|u(s)-\mu(s)\|$. It follows that
$\|u(t-\tau)-\mu(t-\tau)\|
\leq
K_i$ for
$t\in I_i$,
and
$
\|e(\tau_i)\|=\|u(\tau_i)-\mu(\tau_i)\|
\leq
K_i.
$
Applying Gronwall's inequality to \eqref{eq4.7}, and using
\(|I_i|\leq1\), we obtain that
\begin{align*}
\sup_{t\in I_i}\|e(t)\|
\leq
\bigl(1+C|I_i|\bigr)K_i
+
C\rho_i|I_i|
+
C_r\rho_i|I_i|
\bigl(1+\|\xi\|_{\Hh}\bigr).
\end{align*}
By $\norm{u(t)-\mu(t)}\leq\norm{e(t)}+C\rho_i$ for $t\in I_i$ and $
K_{i+1}
=\max
\left\{
K_i,
\sup_{t\in I_i}\|u(t)-\mu(t)\|
\right\}$, we have
\begin{equation}\label{eq4.9}
 K_{i+1}\leq(1+C|I_i|)K_i+C\rho_i
 +C_r\rho_i|I_i|(1+\norm\xi_{\Hh}).
\end{equation}
In view of \(K_0=0\), iterating
\eqref{eq4.9}, and using
$\sum_i|I_i|=r$ and
$\rho_i\leq\gamma_\varepsilon^{1/p}$,
the discrete Gronwall inequality gives
\begin{align}\label{eq4.10}
 \norm {u-\mu}_{\infty,[0,r]}
 \leq C_r\sum_{i=0}^{N-1}\rho_i
 +C_r(1+\norm\xi_{\Hh})
 \sum_{i=0}^{N-1}\rho_i|I_i|
 \leq C_r\gamma_\varepsilon^{1/p} (1+\norm\xi_{\Hh})+
 C_r\gamma_\varepsilon^{1/p}N.
\end{align}
Inserting \eqref{eq4.6} into \eqref{eq4.10} proves \eqref{eq4.4}.
\end{proof}

\begin{proposition}\label{prop4.3}
There exist $r>\tau$, $\eta\in(0,1)$ and \(\varepsilon\in(0,1/4)\), such that for every \(\omega\in\Omega_0\) and
\(\xi\in\Hh\),
\begin{equation}\label{eq4.11}
 \norm{\Phi(r,\omega)\xi}_{\Hh}^2
 \leq\eta\norm\xi_{\Hh}^2+P_{r,\varepsilon}(\omega),
\end{equation}
where $P_{r, \varepsilon}(\omega)=C_{r, \varepsilon}\bigl(1+\tnorm{\mathbf x(\omega)}_{\pvar,[0,r]}^{2p}\bigr)$ for some constant $C_{r, \varepsilon}$.
\end{proposition}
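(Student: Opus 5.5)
The plan is to compare the rough solution with the deterministic solution started from the same history. We use Lemma~\ref{lem4.1} for the contraction and Lemma~\ref{lem4.2} for the error, then choose the parameters in a definite order. Fix $\omega\in\Omega_0$ and $\xi\in\Hh$, let $u$ solve \eqref{eq:vector} driven by $\mathbf x(\omega)$, and let $\mu$ solve \eqref{eq:deterministic}, both with initial history $\xi$. For $r>\tau$, the segment $u_r=\Phi(r,\omega)\xi$ only involves times in $[r-\tau,r]\subset[0,r]$. Hence
\[
\norm{\Phi(r,\omega)\xi}_{\Hh}\leq \norm{\mu(r+\cdot)}_{\Hh}+\norm{u-\mu}_{\infty,[0,r]}.
\]
Squaring with $(a+b)^2\leq(1+\kappa)a^2+(1+\kappa^{-1})b^2$ for some small $\kappa>0$ to be fixed later gives
\[
\norm{\Phi(r,\omega)\xi}_{\Hh}^2\leq(1+\kappa)\norm{\mu(r+\cdot)}_{\Hh}^2+(1+\kappa^{-1})\norm{u-\mu}_{\infty,[0,r]}^2 .
\]

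First I choose $r$. By \eqref{eq4.2},
\[
\norm{\mu(r+\cdot)}_{\Hh}^2\leq(1+\tau L_1e^{\sigma\tau})e^{-\delta_*(r-\tau)}\norm\xi_{\Hh}^2+C_0,
\qquad C_0=\frac{\norm{F_0}^2}{(\lambda-L_0-L_1)\delta_*}.
\]
I take $\kappa=1$ and choose $r>\tau$ large enough that $2(1+\tau L_1e^{\sigma\tau})e^{-\delta_*(r-\tau)}\leq 1/4$. This $r$ depends only on the structural constants, not on $\omega$ or $\xi$.

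Next, with $r$ fixed, I apply Lemma~\ref{lem4.2}:
\[
\norm{u-\mu}_{\infty,[0,r]}^2\leq 2C_r^2\varepsilon^2(1+\norm\xi_{\Hh})^2+2C_{r,\varepsilon}^2\bigl(1+\norm{\mathbf x}_{\pvar,[0,r]}^p\bigr)^2 .
\]
Using $(1+\norm\xi)^2\leq 2+2\norm\xi^2$, the $\xi$-dependent part of the error term is at most $2\cdot 4C_r^2\varepsilon^2\norm\xi_{\Hh}^2$, plus constants. The constant $C_r$ depends on $r$ but not on $\varepsilon$. So I now pick $\varepsilon\in(0,1/4)$ with $8C_r^2\varepsilon^2\leq 1/4$.

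Collecting terms gives
\[
\norm{\Phi(r,\omega)\xi}_{\Hh}^2\leq\tfrac12\norm\xi_{\Hh}^2+P_{r,\varepsilon}(\omega).
\]
Here $P_{r,\varepsilon}$ absorbs $2C_0$, the constants $8C_r^2\varepsilon^2$, and $4C_{r,\varepsilon}^2(1+\norm{\mathbf x}^p)^2\leq 8C_{r,\varepsilon}^2(1+\norm{\mathbf x}^{2p})$, all of which are of the form $C_{r,\varepsilon}(1+\tnorm{\mathbf x(\omega)}_{\pvar,[0,r]}^{2p})$. We then set $\eta=1/2$.

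The only delicate point is the order of choices. We need that $C_r$ in Lemma~\ref{lem4.2} is independent of $\varepsilon$, so that after fixing $r$ the factor $\varepsilon$ can be made small. Only the $\omega$-dependent term is allowed to depend on $\varepsilon$, and Lemma~\ref{lem4.2} is stated in exactly this form. A minor check is that the inhomogeneous norm $\norm{\mathbf x}_{\pvar,[0,r]}$ in Lemma~\ref{lem4.2} is controlled by $\tnorm{\mathbf x}_{\pvar,[0,r]}$ up to constants. This holds because the bound \eqref{eq4.6} used to derive Lemma~\ref{lem4.2} only involves the seminorm.
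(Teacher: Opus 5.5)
Your proposal is correct and follows the paper's proof essentially step for step. You bound the segment $u_r$ by $\norm{\mu(r+\cdot)}_{\Hh}+\norm{u-\mu}_{\infty,[0,r]}$, fix $r$ via Lemma~\ref{lem4.1} so the deterministic decay factor is below $1/4$, and then choose $\varepsilon$ using that $C_r$ in Lemma~\ref{lem4.2} does not depend on $\varepsilon$. Your bookkeeping is slightly more careful than the paper's: you make $\eta=1/2$ explicit, and you note that \eqref{eq4.6} actually yields the seminorm $\tnorm{\mathbf x}_{\pvar,[0,r]}$ even though Lemma~\ref{lem4.2} is written with $\norm{\cdot}$.
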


\begin{proof}
For every \(t\in[r-\tau,r]\), in view of Lemmas~\ref{lem4.1} and
\ref{lem4.2}, we have
\[
 \norm{u(t)}\leq
 \norm{\mu(t)}+\norm{u(t)-\mu(t)}
 \leq Ce^{-\delta_*(r-\tau)/2}\norm\xi_{\Hh}
 +C_r\varepsilon\norm\xi_{\Hh}
 +C_{r,\varepsilon}(1+\tnorm{\mathbf x}_{\pvar,[0,r]}^{p}).
\]
Notice that $\|\Phi(r,\omega)\xi\|_{\Hh}
=
\|u_r\|_{\Hh}
=
\sup_{t\in[r-\tau,r]}
\|u(t)\|$. Then
\[
\begin{aligned}
\|\Phi(r,\omega)\xi\|_{\Hh}^2
\leq{}&
\left(
Ce^{-\delta_*(r-\tau)}
+
C_r\varepsilon^2
\right)
\|\xi\|_{\Hh}^2
+
C_{r,\varepsilon}
\left(
1+\tnorm{\mathbf{x}}_{\pvar,[0,r]}^{p}
\right)^2.
\end{aligned}
\]
By first choosing \(r>\tau\) sufficiently large so that
$Ce^{-\delta_*(r-\tau)}
<
\frac14$,
and then choosing \(\varepsilon>0\) sufficiently small so that
$C_r\varepsilon^2
<
\frac14$, we obtain \eqref{eq4.11} for some \(\eta\in(0,1)\).
\end{proof}

\begin{lemma}\label{lem4.3}
Let \(R:\Omega_0\to(0,\infty)\) be a positive tempered random variable,
and assume that the orbit map
$s\mapsto R(\theta_s\omega)$ is continuous for every $\omega\in\Omega_0$. Then for every  $L>0$,
$R_L^{\#}(\omega):=\sup_{|s|\leq L}R(\theta_s\omega)$
is a finite, measurable and tempered random variable.
\end{lemma}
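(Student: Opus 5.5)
The proof splits into three parts: finiteness, measurability, and temperedness. The first two use only the continuity of the orbit map; temperedness needs a short reduction.

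\emph{Finiteness.} For fixed $\omega\in\Omega_0$, the map $s\mapsto R(\theta_s\omega)$ is continuous on the compact interval $[-L,L]$. It therefore attains its maximum, so $R_L^{\#}(\omega)<\infty$. Moreover $R_L^{\#}(\omega)\geq R(\omega)>0$.

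\emph{Measurability.} By continuity of the orbit map, the supremum over $[-L,L]$ equals the supremum over the countable dense set $[-L,L]\cap\mathbb Q$:
\[
R_L^{\#}(\omega)=\sup_{s\in[-L,L]\cap\mathbb Q}R(\theta_s\omega).
\]
For each fixed $s$, the map $\omega\mapsto R(\theta_s\omega)$ is measurable, since $\theta_s$ is measurable and $R$ is a random variable. A countable supremum of measurable functions is measurable, so $R_L^{\#}$ is measurable. The set $\Omega_0$ is $\theta$-invariant, so $\theta_s\omega\in\Omega_0$ and all quantities are defined.

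\emph{Temperedness.} Fix $c>0$ and $\omega\in\Omega_0$ in the full-measure set where $e^{-ct}R(\theta_{-t}\omega)\to0$ for every rational $c$. The reduction to rational $c$ is harmless because the condition is monotone in $c$. For $t\geq L$, the cocycle property $\theta_s\theta_{-t}=\theta_{s-t}$ gives
\[
e^{-ct}R_L^{\#}(\theta_{-t}\omega)=\sup_{|s|\leq L}e^{-ct}R(\theta_{-(t-s)}\omega)=\sup_{|s|\leq L}e^{-c(t-s)}e^{-cs}R(\theta_{-(t-s)}\omega)\leq e^{cL}\sup_{t'\in[t-L,t+L]}e^{-ct'}R(\theta_{-t'}\omega).
\]
As $t\to\infty$, the interval $[t-L,t+L]$ moves to infinity. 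Since $g(t'):=e^{-ct'}R(\theta_{-t'}\omega)\to0$ as $t'\to\infty$, we get $\sup_{t'\geq t-L}g(t')\to0$, and hence $e^{-ct}R_L^{\#}(\theta_{-t}\omega)\to0$.

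The main subtlety is the quantifier on $c$. Definition~\ref{def2.1} requires the limit for every $c>0$ almost surely, and the null set may a priori depend on $c$. One can either read the definition as stating that for almost every $\omega$ the limit holds for all $c$, or intersect the exceptional sets over rational $c$ and use monotonicity in $c$. Either way, the exceptional set for $R_L^{\#}$ is contained in that for $R$.
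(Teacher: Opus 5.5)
Your proof is correct and follows the same route as the paper. It uses continuity on $[-L,L]$ for finiteness, a supremum over $\mathbb Q\cap[-L,L]$ for measurability, and the shift $\theta_s\theta_{-t}=\theta_{s-t}$ with $t-s\ge t-L$ for temperedness. The only difference is cosmetic: you bound directly by $e^{cL}\sup_{t'\ge t-L}e^{-ct'}R(\theta_{-t'}\omega)$, whereas the paper first turns temperedness into $R(\theta_{-v}\omega)\le e^{cv/2}$ for $v\ge T$. Your remark about the null set depending on $c$ is a harmless extra.
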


\begin{proof}
For each \(\omega\in\Omega_0\), the map
\(s\mapsto R(\theta_s\omega)\) is continuous on the
compact interval \([-L,L]\). Hence its maximum is
attained, and therefore \(R_L^\#(\omega)<\infty\).
Moreover, the continuity of the orbit map
\(s\mapsto R(\theta_s\omega)\), together with the
density of \(\mathbb Q\) in \(\mathbb R\), implies that
$R_L^\#(\omega)
 =
 \sup_{q\in\mathbb Q\cap[-L,L]}
 R(\theta_q\omega)$.
Since \(\omega\mapsto R(\theta_q\omega)\) is measurable
for every fixed \(q\), the countable supremum
\(R_L^\#\) is measurable.

Fix \(c>0\). By the temperedness of \(R\), for almost
every \(\omega\) there exists \(T=T(c,\omega)>0\) such
that
$R(\theta_{-v}\omega)\leq e^{cv/2}$ for all
$v\geq T$.
If \(t\geq T+L\) and \(|s|\leq L\), then \(t-s\geq T\),
and hence
\[
 e^{-ct}R(\theta_{s-t}\omega)
 \leq
 e^{-ct}e^{c(t-s)/2}
 \leq
 e^{-ct/2+cL/2}.
\]
Using the group property of \(\theta\), we obtain that
\[
\begin{aligned}
 0
 \leq
 e^{-ct}R_L^\#(\theta_{-t}\omega)
 =
 \sup_{|s|\leq L}
 e^{-ct}R(\theta_{s-t}\omega)
 \leq
 e^{-ct/2+cL/2}
 \rightarrow0
\end{aligned}
\]
as \(t\to\infty\), which implies \(R_L^\#\) is tempered,
since \(c>0\) was arbitrary.
\end{proof}

\begin{theorem}\label{thm4.6}
Let $r>\tau$, $\eta\in(0,1)$, and $P_{r,\varepsilon}$ be as in
Proposition~\ref{prop4.3}. Let
$\mathcal D(\Hh)$ denote the universe of tempered random
sets introduced in Definition~\ref{def2.1}. Define
\begin{equation}\label{eq4.15-1}
R_0(\omega)
:=
2
+
2\sum_{j=0}^{\infty}
\eta^j
P_{r,\varepsilon}\bigl(\theta_{-(j+1)r}\omega\bigr).
\end{equation}
Then there exists a positive tempered random variable
$C_r^{\#}(\omega)$, whose orbit map
$s\mapsto C_r^\#(\theta_s\omega)$ is continuous, such that
\begin{equation}\label{eq4.15}
\left\|
\Phi(s,\theta_{-s}\omega)\zeta
\right\|_{\Hh}^2
\leq
C_r^{\#}(\omega)
\left(
1+\|\zeta\|_{\Hh}^2
\right)
\end{equation}
for every $\zeta\in\Hh$ and every $s\in[0,r]$. Moreover, the random
variable
\begin{equation}\label{eq4.16}
 \overline R(\omega):=
 2+2C_r^{\#}(\omega)
 \left(1+\sup_{0\leq s\leq r}R_0(\theta_{-s}\omega)\right)
\end{equation}
is measurable and tempered. The closed random ball
\begin{equation}\label{eq4.17}
 B(\omega):=\{\xi\in\Hh:\norm\xi_{\Hh}^2\leq\overline R(\omega)\}
\end{equation}
is pullback absorbing with respect to $\mathcal D(\Hh)$. More precisely, for
every $D\in\mathcal D(\Hh)$ and every $\omega\in\Omega_0$, there exists a
random time $T_D(\omega)>0$ such that
$\Phi(t,\theta_{-t}\omega)
D(\theta_{-t}\omega)
\subset B(\omega)$
for all $t\geq T_D(\omega)$.

\end{theorem}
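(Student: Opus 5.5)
Throughout, I would write $\eta$, $r$ and $P:=P_{r,\varepsilon}$ as in Proposition~\ref{prop4.3}, and build everything from the one-step contraction \eqref{eq4.11} together with the cocycle property in Theorem~\ref{thm3.9}.

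\emph{Construction of $C_r^\#$.} For $s\in[0,r]$ I would apply Corollary~\ref{cor3.5} with $T=r$ to the path driven by $\theta_{-s}\omega$. Since $\Phi(s,\theta_{-s}\omega)\zeta$ is a segment of that solution on $[-\tau,s]\subset[-\tau,r]$, this gives
\[
\|\Phi(s,\theta_{-s}\omega)\zeta\|_{\Hh}^2\leq C\bigl(1+\|\zeta\|_{\Hh}^2\bigr)\bigl(1+\tnorm{\mathbf x(\theta_{-s}\omega)}_{\pvar,[0,r]}^{2m_r}\bigr).
\]
By (A4), $\tnorm{\mathbf x(\theta_{-s}\omega)}_{\pvar,[0,r]}=\tnorm{\mathbf x(\omega)}_{\pvar,[-s,r-s]}\leq \tnorm{\mathbf x(\omega)}_{\pvar,[-r,r]}$. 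I would therefore set
\[
C_r^\#(\omega):=C\bigl(1+\tnorm{x(\omega)}_{\pvar,[-r,r]}+\tnorm{\mathbb X(\omega)}_{\qvar,[-r,r]}^{1/2}\bigr)^{2m_r}.
\]
Its orbit map $s\mapsto C_r^\#(\theta_s\omega)$ involves $p$-variation over the sliding window $[s-r,s+r]$. This is continuous because $\omega_{\mathbf x}$ is a continuous control: the variation over $[a,b]$ depends continuously on $(a,b)$. Temperedness follows from (A4) applied on $[-r,r]$, since a polynomial in tempered variables is tempered.

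\emph{Temperedness of $\overline R$.} The variable $P$ is tempered for the same reason, and its orbit is continuous. For a fixed $c>0$, choose $c'<c$ with $\eta e^{c' r}<1$ and bound $P(\theta_{-v}\omega)\leq K_{c'}(\omega)e^{c'v}$ for all $v\geq0$. This makes the series defining $R_0$ converge, and
\[
e^{-ct}R_0(\theta_{-t}\omega)\leq 2e^{-ct}+2K_{c'}(\omega)e^{-(c-c')t}\sum_j(\eta e^{c'r})^j\to0 .
\]
So $R_0$ is tempered, and it is measurable as a countable sum. Next, $\sup_{0\leq s\leq r}R_0(\theta_{-s}\omega)$ is tempered by the argument of Lemma~\ref{lem4.3}; that argument only uses uniform bounds of the form $e^{cv}$, which $R_0$ satisfies. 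Measurability of this supremum can be handled by restricting to rational $s$, after first checking lower semicontinuity of $s\mapsto R_0(\theta_{-s}\omega)$ (or continuity, via uniform convergence of the series on compacts). Products and sums of tempered variables are tempered, so $\overline R$ is tempered and measurable.

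\emph{Absorption.} Fix $D\in\mathcal D(\Hh)$ and $\zeta\in D(\theta_{-t}\omega)$, and write $t=nr+s$ with $s\in[0,r)$. By the cocycle property,
\[
\Phi(t,\theta_{-t}\omega)\zeta=\Phi(s,\theta_{-s}\omega)\Phi(nr,\theta_{-t}\omega)\zeta .
\]
Iterating \eqref{eq4.11} along $\theta_{-s-nr}\omega,\dots,\theta_{-s-r}\omega$ gives
\[
\|\Phi(nr,\theta_{-t}\omega)\zeta\|^2\leq\eta^n r_D(\theta_{-t}\omega)^2+\sum_{j=0}^{n-1}\eta^jP(\theta_{-s-(j+1)r}\omega)\leq \eta^n r_D(\theta_{-t}\omega)^2+\tfrac12R_0(\theta_{-s}\omega).
\]
Since $\eta^n\leq \eta^{-1}e^{-(t/r)\log(1/\eta)}$ and $r_D$ is tempered, the first term is at most $1$ for $t\geq T_D(\omega)$. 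Applying \eqref{eq4.15} then yields
\[
\|\Phi(t,\theta_{-t}\omega)\zeta\|^2\leq C_r^\#(\omega)\bigl(2+\sup_{0\leq s\leq r}R_0(\theta_{-s}\omega)\bigr)\leq\overline R(\omega).
\]

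\emph{Main obstacle.} The delicate step is the bookkeeping in the shifts, namely matching the $\theta$-arguments in the iterated inequality with the definition of $R_0$. I would also check carefully that the measurability and continuity of the orbit of $C_r^\#$ are justified from the continuity of $\omega_{\mathbf x}$.
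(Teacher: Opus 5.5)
Your proposal is correct and follows essentially the same route as the paper. You obtain $C_r^\#$ from Corollary~\ref{cor3.5} and the shift relation in (A4); you get temperedness and finiteness of $R_0$ from a bound $P_{r,\varepsilon}(\theta_{-v}\omega)\lesssim e^{\kappa v}$ with $\eta e^{\kappa r}<1$; and you prove absorption by writing $t=nr+s$, iterating \eqref{eq4.11} along $\theta_{-s-(j+1)r}\omega$, and finishing with \eqref{eq4.15}. The only differences are cosmetic: you apply Corollary~\ref{cor3.5} with $T=r$ on the window $[-r,r]$ rather than $[-r,0]$, and you observe that lower semicontinuity of $s\mapsto R_0(\theta_{-s}\omega)$ already gives measurability of the supremum.
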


\begin{proof}
Since \(r>0\) is fixed and \(\eta\in(0,1)\), we may choose
\(\kappa>0\) sufficiently small such that $\eta e^{\kappa r}<1$.
Fix \(L>0\) and define
$P_{r,L}^{\#}(\omega)
:=
\sup_{|s|\leq L}
P_{r,\varepsilon}(\theta_s\omega)$.
Then by Assumption~\textbf{(A4)} and Lemma~\ref{lem4.3}, we obtain that \(P_{r,L}^{\#}\) is tempered. Moreover, by
the group property, $\sup_{|s|\leq L}
P_{r,\varepsilon}(\theta_{s-(j+1)r}\omega)
=
P_{r,L}^{\#}(\theta_{-(j+1)r}\omega)$.
Thanks to the temperedness of \(P_{r,L}^{\#}\), there exists a positive random
variable \(C_{\kappa,L}(\omega)\) such that
$P_{r,L}^{\#}(\theta_{-(j+1)r}\omega)
\leq
C_{\kappa,L}(\omega)e^{\kappa(j+1)r}$ for all $j\in\mathbb N$.
Consequently,
\[
\sup_{|s|\leq L}
\eta^jP_{r,\varepsilon}(\theta_{s-(j+1)r}\omega)
\leq
C_{\kappa,L}(\omega)
\eta^je^{\kappa(j+1)r}
=
C_{\kappa,L}(\omega)e^{\kappa r}
\left(
\eta e^{\kappa r}
\right)^j.
\]
In view of
$\eta e^{\kappa r}<1$,
the right-hand side is summable with respect to \(j\). Hence the series in \(R_0(\theta_s\omega)\)
converges uniformly on \(|s|\leq L\). Since \(L>0\)
was arbitrary, the series converges locally uniformly in
\(s\in\mathbb R\).
This together with the continuity of $s\mapsto P_{r,\varepsilon}(\theta_{s}\omega)$ yields $s\mapsto R_0(\theta_s\omega)$ is continuous.
In addition, the temperedness of \(P_{r,\varepsilon}\) gives that $R_0$ is tempered. In fact,
fix \(\iota>0\) and choose \(\kappa\in(0,\iota)\) sufficiently
small such that
$\eta e^{\kappa r}<1$.
By the temperedness of \(P_{r,\varepsilon}\), there
exists a finite positive random variable
\(C_\kappa(\omega)\) such that
$P_{r,\varepsilon}(\theta_{-v}\omega)
 \leq
 C_\kappa(\omega)e^{\kappa v}$
for $v\geq0$.
Consequently, for \(t\geq0\),
\begin{align*}
 R_0(\theta_{-t}\omega)
 \leq
 2+
 2C_\kappa(\omega)e^{\kappa t}
 \sum_{j=0}^{\infty}
 \eta^je^{\kappa(j+1)r}
 =
 2+
 \frac{
 2C_\kappa(\omega)e^{\kappa(t+r)}
 }{
 1-\eta e^{\kappa r}
 }.
\end{align*}
It follows that
$e^{-\iota t}R_0(\theta_{-t}\omega)
 \rightarrow0$ as $t\to\infty$.
Since \(\iota>0\) was arbitrary, \(R_0\) is tempered.

Using Corollary~\ref{cor3.5}, we deduce that for $0\leq s\leq r$,
\begin{align*}
\left\|
\Phi(s,\theta_{-s}\omega)\zeta
\right\|_{\mathcal H}
&\leq
\sup_{-\tau\leq \iota\leq s}
\left\|
u(\iota; \theta_{-s}\omega, \zeta)
\right\|\leq C_s\left(1+\|\zeta\|_{\mathcal H}\right)
\left(
1+
\tnorm{
\mathbf{x}(\theta_{-s}\omega)}_{p\text{-var},[0,s]}^{m_s}
\right)\\
&\leq C_r\left(1+\|\zeta\|_{\mathcal H}\right)
\left(
1+
\tnorm{
\mathbf{x}(\omega)}
_{p\text{-var},[-r,0]}^{m_r}
\right),
\end{align*}
where $m_s=5p\lceil s/\tau\rceil$ and
$m_r=5p\lceil r/\tau\rceil$.
Therefore \eqref{eq4.15} follows immediately by choosing $C_r^{\#}(\omega)=C_r
(1+\tnorm{\mathbf x(\omega)}_{\pvar,[-r,0]}^{2m_r})$, which is a continuous tempered random variable.
Lemma~\ref{lem4.3} now implies that
\eqref{eq4.16} is measurable and tempered.

Iterating \eqref{eq4.11} in pullback form, we conclude that
\begin{equation}\label{eq4.19}
 \norm{\Phi(nr,\theta_{-nr}\omega)\xi}_{\Hh}^2
 \leq\eta^n\norm\xi_{\Hh}^2
 +\sum_{j=0}^{n-1}
\eta^j P_{r,\varepsilon}\bigl(\theta_{-(j+1)r}\omega\bigr).
\end{equation}
Let $D\in\mathcal D(\Hh)$ and write
$d_D(\omega):=\sup_{z\in D(\omega)}\norm z_{\Hh}^2$. Choose the same $\kappa>0$
with $\eta e^{\kappa r}<1$. Since $d_D$ is tempered,
\begin{align}
 &\sup_{0\leq s<r}\eta^n d_D(\theta_{-(nr+s)}\omega)
\leq e^{\kappa r}(\eta e^{\kappa r})^n
 \sup_{0\leq s<r}
 \left[e^{-\kappa(nr+s)}d_D(\theta_{-(nr+s)}\omega)\right]
\rightarrow0~~\text{as }n\to\infty.
 \label{eq4.20}
\end{align}
Replacing $\omega$ by $\theta_{-s}\omega$ in \eqref{eq4.19} and using
\eqref{eq4.20} together with \eqref{eq4.15-1}, we find an integer
$n_0=n_0(\omega,D)$ such that, for every $n\geq n_0$, $s\in[0,r)$ and
$\xi\in D(\theta_{-(nr+s)}\omega)$,
\begin{align}\label{eq4.21}
\left\|
 \Phi\bigl(nr,\theta_{-(nr+s)}\omega\bigr)\xi
 \right\|_{\Hh}^{2}
 &\leq
 \eta^{n}\|\xi\|_{\Hh}^{2}
 +
 \sum_{j=0}^{n-1}
 \eta^{j}
 P_{r,\varepsilon}
 \bigl(\theta_{-s-(j+1)r}\omega\bigr)
 \nonumber\\
 &\leq \eta^n d_D(\theta_{-(nr+s)}\omega)
 +
 \sum_{j=0}^{\infty}
 \eta^{j}
 P_{r,\varepsilon}
 \bigl(\theta_{-s-(j+1)r}\omega\bigr)
 \leq R_{0}(\theta_{-s}\omega).
\end{align}
Set $t=nr+s$. By the cocycle identity, we have
$\Phi(t,\theta_{-t}\omega)
 =\Phi(s,\theta_{-s}\omega)
\Phi(nr,\theta_{-(nr+s)}\omega)$.
For $\xi\in D(\theta_{-t}\omega)$, set
$\zeta:=\Phi(nr,\theta_{-(nr+s)}\omega)\xi$. Using \eqref{eq4.15}
and \eqref{eq4.21}, we have
\[\norm{\Phi(s,\theta_{-s}\omega)
\Phi(nr,\theta_{-(nr+s)}\omega)\xi}_{\Hh}^2
\leq C_r^{\#}(\omega)
(1+\norm{\zeta}_{\Hh}^2)
\leq C_r^{\#}(\omega)
(1+R_0(\theta_{-s}\omega)).\]
Therefore, there exists $T_{D}(\omega)>0$ such that for all $t\geq T_{D}(\omega)$,
\[
 \sup_{\xi\in D(\theta_{-t}\omega)}
 \norm{\Phi(t,\theta_{-t}\omega)\xi}_{\Hh}^2
 \leq C_r^{\#}(\omega)
 \left(1+\sup_{0\leq v\leq r}R_0(\theta_{-v}\omega)\right)
 <\overline R(\omega),
\]
which implies that
$\Phi(t,\theta_{-t}\omega)
D(\theta_{-t}\omega)\subset B(\omega)$ for all $t\geq T_{D}(\omega)$.
Consequently, for every tempered family $D$, there exists a random
time $T_D(\omega)>0$ such that
$\Phi(t,\theta_{-t}\omega)D(\theta_{-t}\omega)
\subset B(\omega)$
for all $t\geq T_D(\omega)$.
Hence, $B$ is a pullback absorbing set for the universe of tempered
random sets.
\end{proof}

To obtain uniform estimates along pullback trajectories, we introduce a
positively invariant pathwise absorbing family \(\{\widehat B(\omega)\}_{\omega\in\Omega_0}\).

\begin{lemma}\label{lem4.7}
There exist a pathwise family $\widehat B
 :=
 \{\widehat B(\omega)\}_{\omega\in\Omega_0}$ of nonempty closed bounded subsets in $\Hh$ and a positive
measurable tempered random variable $\widehat R$ such that the following
properties hold.

\begin{enumerate}
\item The family $\widehat B$ is positively invariant, that is for $t\geq0$ and $\omega\in\Omega_0$,
$\Phi(t,\omega)\widehat B(\omega)
 \subset \widehat B(\theta_t\omega)$.

\item The family $\widehat B$ is pullback absorbing with respect to
$\mathcal D(\Hh)$. More precisely, for every $D\in\mathcal D(\Hh)$ and
$\omega\in\Omega_0$, there exists
$T_D(\omega)>0$ such that for any $t\geq T_D(\omega)$,
$\Phi(t,\theta_{-t}\omega)D(\theta_{-t}\omega)
 \subset \widehat B(\omega)$.

\item The family $\widehat B$ satisfies the pathwise bound:
$\sup_{\xi\in\widehat B(\omega)}
 \norm{\xi}_{\Hh}^{2}
 \leq \widehat R(\omega)$ for
 $\omega\in\Omega_0$.
Moreover, for every $L>0$,
$\widehat R_L^{\#}(\omega)
 :=
 \sup_{|s|\leq L}\widehat R(\theta_s\omega)$
is finite, measurable, and tempered.
\end{enumerate}
\end{lemma}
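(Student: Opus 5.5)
The natural construction is the ``forward closure'' of the absorbing ball from Theorem~\ref{thm4.6}. Set
\[
 \widehat B(\omega)
 :=
 \overline{\bigcup_{t\geq T_B(\theta_{-t}\omega)}\Phi(t,\theta_{-t}\omega)B(\theta_{-t}\omega)}^{\,\Hh}
 \cup B(\omega)
\]
or, more simply and avoiding the random entrance time, define the radius directly through the pullback iteration. My concrete choice is to work with the discrete-time dissipation \eqref{eq4.11} and set
\[
 \widehat B(\omega):=\Bigl\{\xi\in\Hh:\ \sup_{t\geq0}\bigl\|\Phi(t,\omega)\xi\bigr\|_{\Hh}^2/\widetilde R(\theta_t\omega)\leq1\Bigr\},
\]
where $\widetilde R$ is a tempered random variable with continuous orbit map satisfying the \emph{forward} invariance inequality: $\|\xi\|_{\Hh}^2\leq \widetilde R(\omega)$ implies $\|\Phi(t,\omega)\xi\|_{\Hh}^2\leq \widetilde R(\theta_t\omega)$ for all $t\geq0$. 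With such $\widetilde R$, the set $\widehat B(\omega)$ is automatically positively invariant by the cocycle property (property~1). It is closed by continuity of $\xi\mapsto\Phi(t,\omega)\xi$ (Theorem~\ref{thm3.9}), nonempty, and contained in the ball of radius $\widetilde R(\omega)$ (take $t=0$). This gives property~3 with $\widehat R=\widetilde R$, the $\widehat R^\#_L$ claim following from Lemma~\ref{lem4.3}.

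To build $\widetilde R$, I would first iterate \eqref{eq4.11} forward: for $\|\xi\|^2\leq \rho(\omega)$ we get $\|\Phi(nr,\omega)\xi\|^2\leq \eta^n\rho(\omega)+\sum_{j=0}^{n-1}\eta^{j}P_{r,\varepsilon}(\theta_{(n-1-j)r}\omega)$. I would then fill in intermediate times $s\in[0,r]$ using \eqref{eq4.15} in its forward form $\|\Phi(s,\omega)\zeta\|^2\leq C_r^\#(\theta_s\omega)(1+\|\zeta\|^2)$. The fixed-point requirement is met by the choice
\[
 \widetilde R(\omega):=\sup_{0\leq s\leq r}C_r^\#(\omega)\bigl(1+R_0(\theta_{-s}\omega)\bigr)\cdot 2+2,
\]
essentially $\overline R$ from \eqref{eq4.16}. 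To verify that it propagates, I would write $\theta_t\omega=\theta_{-(nr+s)}(\theta_{t+nr+s}\omega)$, which is not directly helpful. The cleaner way is to define
\[
 \widehat B(\omega):=\overline{\bigcup_{t\geq0}\Phi(t,\theta_{-t}\omega)B(\theta_{-t}\omega)}\cap\{\|\xi\|^2\leq\widehat R(\omega)\},
\]
with $\widehat R(\omega):=\sup_{t\geq0}\sup_{\xi\in B(\theta_{-t}\omega)}\|\Phi(t,\theta_{-t}\omega)\xi\|^2$. This makes positive invariance immediate: $\Phi(t,\omega)\Phi(s,\theta_{-s}\omega)B(\theta_{-s}\omega)=\Phi(t+s,\theta_{-(t+s)}\theta_t\omega)B(\cdots)$ by the cocycle identity, and then I pass to the closure by continuity.

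The main obstacle is showing that this $\widehat R$ is finite, tempered, and measurable. For finiteness and temperedness, I would split $t=nr+s$ and apply \eqref{eq4.21}-type bounds with $D=B$. Since $\overline R$ is tempered, $\eta^n\overline R(\theta_{-(nr+s)}\omega)$ is bounded by $C_\kappa(\omega)$ uniformly in $n$, so
\[
 \widehat R(\omega)\leq C_r^\#(\omega)\Bigl(1+\sup_{n,s}\eta^n\overline R(\theta_{-nr-s}\omega)+\sup_{0\leq s\leq r}R_0(\theta_{-s}\omega)\Bigr).
\]
Temperedness of this bound follows as in the proof of Theorem~\ref{thm4.6}: the function $\sup_{v\geq0}\eta^{v/r}e^{-\kappa v}\cdot(\cdot)$ of a tempered variable is tempered by the same geometric-series argument, combined with Lemma~\ref{lem4.3}. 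I would therefore define $\widehat R$ as this explicit upper bound rather than as the exact supremum, which also settles measurability, since all ingredients are countable suprema of measurable continuous-orbit quantities. I would also check that the bound's orbit map is continuous so that Lemma~\ref{lem4.3} applies to give $\widehat R^\#_L$.

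Absorption (property~2) then follows from Theorem~\ref{thm4.6}. For $D\in\mathcal D(\Hh)$, absorption into $B$ at time $T_D(\theta_{-1}\omega)$, followed by one more unit step, places the image in $\Phi(1,\theta_{-1}\omega)B(\theta_{-1}\omega)\subset\widehat B(\omega)$. The clean bookkeeping uses the cocycle identity with $t=t'+1$ and requires $t'\geq T_D(\theta_{-1}\omega)$.
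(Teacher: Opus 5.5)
Your final construction (the hull $\overline{\bigcup_{t\geq0}\Phi(t,\theta_{-t}\omega)B(\theta_{-t}\omega)}$ with an explicit dominating radius) is correct, but it takes a different route from the paper. Your two earlier candidates can simply be discarded.

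\textbf{The paper's route.} The paper does not take a hull over all times. From the series \eqref{eq4.15-1} it reads off the inequality $R_0(\theta_r\omega)\geq\eta R_0(\omega)+P_{r,\varepsilon}(\omega)$. Combined with \eqref{eq4.11}, this shows that the ball $\tilde B(\omega)=\{\xi:\|\xi\|_{\Hh}^2\leq R_0(\omega)\}$ satisfies $\Phi(r,\omega)\tilde B(\omega)\subset\tilde B(\theta_r\omega)$. This $r$-step forward invariance is exactly what your second attempt was missing: a radius that is forward-invariant for \emph{all} $t$ is not needed. The paper then sets $\widehat B(\omega)=\overline{\bigcup_{0\leq s\leq r}\Phi(s,\theta_{-s}\omega)\tilde B(\theta_{-s}\omega)}$ and proceeds as follows:
\begin{itemize}
\item positive invariance, by writing $t+s=nr+q$ and iterating the $r$-step inclusion;
\item absorption, from \eqref{eq4.21};
\item the radius bound, from Corollary~\ref{cor3.5} on a window of length $r$, so that $\widehat R$ involves only $R_{0,r}^{\#}$ and $\tnorm{\mathbf x(\omega)}_{\pvar,[-r,r]}$.
\end{itemize}

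\textbf{Your route.} Taking the hull over all $t\geq0$ of the absorbing ball $B$ makes positive invariance a direct consequence of the cocycle identity. Absorption is immediate from the $t=0$ term, since $B(\omega)\subset\widehat B(\omega)$ and $B$ already absorbs. So neither the unit-step shift nor the intersection with the ball is needed; the ball contains the hull anyway. The price is a long-time radius estimate. You need \eqref{eq4.19} on the whole half-line, plus an extra temperedness argument for $\sup_{v\geq0}\eta^{\lfloor v/r\rfloor}\overline R(\theta_{-v}\omega)$. That argument does work with $\eta e^{\kappa r}<1$, as you indicate.

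\textbf{One technical correction.} With the step weight $\eta^{\lfloor v/r\rfloor}$, the orbit map of that supremum can jump, so Lemma~\ref{lem4.3} does not apply directly. Replace the weight by the larger continuous one $\eta^{v/r-1}$. Then $t\mapsto\sup_{w\leq t}\eta^{(t-w)/r}\overline R(\theta_w\omega)$ is a running supremum of a continuous function (the orbit map of $\overline R$ is continuous by construction), hence continuous.

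In short, the paper's route gives a smaller set whose radius depends only on finite-window data. Yours trades this for a softer invariance argument at the cost of controlling the infinite past.
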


\begin{proof}
Let $r>\tau$ and $\eta\in(0,1)$ be as in
Proposition~\ref{prop4.3}. Define
\[
 \tilde{B}(\omega)
 :=
 \left\{
 \xi\in\mathcal H:
 \|\xi\|_{\mathcal H}^2\leq R_0(\omega)
 \right\},
\]
where $R_0(\omega)$ is given in \eqref{eq4.15-1}. By Theorem \ref{thm4.6}, $R_0$ is a
positive measurable tempered random variable with continuous orbit
map. Thanks to
\begin{align*}
R_0(\theta_r\omega)
=
2+2P_{r,\varepsilon}(\omega)
+2\eta\sum_{j=0}^{\infty}
\eta^jP_{r,\varepsilon}\bigl(\theta_{-(j+1)r}\omega
\bigr)\geq
\eta R_0(\omega)+P_{r,\varepsilon}(\omega),
\end{align*}
from \eqref{eq4.11} we obtain that for every $\xi\in \tilde{B}(\omega)$,
\[
 \|\Phi(r,\omega)\xi\|_{\mathcal H}^2
 \leq
 \eta\|\xi\|_{\mathcal H}^2+P_{r,\varepsilon}(\omega)
 \leq
 \eta R_0(\omega)+P_{r,\varepsilon}(\omega)
 \leq
 R_0(\theta_r\omega).
\]
Consequently,
\begin{equation}\label{eq4.27}
 \Phi(r,\omega)\tilde{B}(\omega)
 \subset \tilde{B}(\theta_r\omega).
\end{equation}
Define
\begin{equation}\label{eq4.28}
 \widehat B(\omega)
 :=
 \overline{
 \bigcup_{0\leq s\leq r}
 \Phi(s,\theta_{-s}\omega)
 \tilde{B}(\theta_{-s}\omega)
 }^{\mathcal H},
\end{equation}
where the superscript $\mathcal H$ denotes closure in $\mathcal H$.
For $0\leq s\leq r$,
let $\psi_{1}\in \tilde{B}(\theta_{-s}\omega)$ and $\psi_{2}:=\Phi(s,\theta_{-s}\omega)
\psi_{1}$.
For any $t\geq0$ and $s\in [0,r)$, there exist $q\in[0,r)$ and $n\in\mathbb N_0$ such that
$t+s=nr+q$. By the cocycle identity and repeated application of
\eqref{eq4.27}, we have
\begin{align*}
 \Phi(t,\omega)\psi_{2}
 &=
 \Phi(q,\theta_{nr-s}\omega)
 \Phi(nr,\theta_{-s}\omega)\psi_{1}
 \in
 \Phi(q,\theta_{t-q}\omega)
 \tilde{B}(\theta_{t-q}\omega)\subset
 \widehat B(\theta_t\omega).
\end{align*}
Since $\Phi(t,\omega)$ is continuous with respect to the initial value,
this inclusion extends to the closure in \eqref{eq4.28}. Therefore,
$\Phi(t,\omega)\widehat B(\omega)
 \subset\widehat B(\theta_t\omega)$,
$t\geq0.$

Let $D\in\mathcal D(\mathcal H)$. By \eqref{eq4.21}, there exists
$n_0=n_0(\omega,D)\in\mathbb N$ such that, whenever
$t=nr+s$ with $n\geq n_0$ and $s\in[0,r)$, the cocycle identity
yields
\begin{align*}
 \Phi(t,\theta_{-t}\omega)D(\theta_{-t}\omega)
 &=
 \Phi(s,\theta_{-s}\omega)
 \Phi\bigl(nr,\theta_{-(nr+s)}\omega\bigr)
 D\bigl(\theta_{-(nr+s)}\omega\bigr)
 \subset
 \Phi(s,\theta_{-s}\omega)\tilde{B}(\theta_{-s}\omega)
 \subset\widehat B(\omega).
\end{align*}
Thus, $\widehat B$ pullback absorbs every $D\in\mathcal D(\mathcal H)$.

By Theorem~\ref{thm4.6}, $R_0$ is tempered and the map
$s\mapsto R_0(\theta_s\omega)$ is continuous. Hence,
Lemma~\ref{lem4.3} implies that, for every $L>0$,
$
 R_{0,L}^{\#}(\omega)
 :=
 \sup_{|s|\leq L}R_0(\theta_s\omega)
$
is a finite, measurable, and tempered random variable. Applying
Corollary~\ref{cor3.5} on intervals of length at most $r$ and using the rough-path shift relation, we obtain that
\begin{align}\label{eq4.30}
 \sup_{0\leq s\leq r}
 \sup_{\xi\in \tilde{B}(\theta_{-s}\omega)}
 \norm{\Phi(s,\theta_{-s}\omega)\xi}_{\Hh}^{2}
 \leq
 C\bigl(1+R_{0,r}^{\#}(\omega)\bigr)
 \left(
 1+\tnorm{\mathbf x(\omega)}_{\pvar,[-r,r]}^{2m_r}
 \right).
\end{align}
Define
$\widehat R(\omega)
 :=
 1+
 C\bigl(1+R_{0,r}^{\#}(\omega)\bigr)
 (
 1+\tnorm{\mathbf x(\omega)}_{\pvar,[-r,r]}^{2m_r})$.
Since $\widehat B(\omega)$ is the closure of the union in
\eqref{eq4.28}, the preceding estimate \eqref{eq4.30} yields
$\sup_{\xi\in\widehat B(\omega)}
 \norm{\xi}_{\Hh}^{2}
 \leq\widehat R(\omega)$.
Moreover, $\widehat R$ is a positive measurable tempered random
variable with a continuous orbit map. Therefore, another application
of Lemma~\ref{lem4.3} shows that, for every $L>0$,
$\widehat R_L^{\#}(\omega)
 :=
 \sup_{|s|\leq L}\widehat R(\theta_s\omega)$
is finite, measurable, and tempered.
\end{proof}

\subsection{Pullback asymptotic compactness}

Choose $\rho\in C^1([0,\infty);[0,1])$ such that
\[
 \rho(s)=0\quad(0\leq s\leq1),\qquad
 \rho(s)=1\quad(s\geq2),\qquad
 \norm{\rho'}_\infty\leq C_\rho,
\]
and set $\chi=\rho^2$, so $\norm{\chi'}_\infty\leq C_\chi:=2C_\rho$. For \(K\geq1\), define the diagonal operators
\[
 (S_Ku)_i:=
 \rho\left(\frac{|i|}{K}\right)
 u_i,~~~
 (C_Ku)_i:=\chi\left(\frac{|i|}{K}\right)
 u_i,
\]
so that \(C_K=S_K^2\).
Moreover,
\begin{equation}\label{eq4.31}
 V_K(u):=\ip{u}{C_Ku}=\norm{S_Ku}^2
 =\sum_{i\in\Z}
 \chi\left(\frac{|i|}{K}\right)|u_i|^2.
\end{equation}
Let $[S_K,\mathcal L]:=S_K\mathcal L-\mathcal L S_K$ denote the commutator of \(S_K\) and \(\mathcal L\). For every $R\geq0$, define the projections $P_R,Q_R:U\to U$ by
\begin{equation}\label{eq4.32-0}
(P_Ru)_i
:=
\mathbf 1_{\{|i|\leq R\}}u_i,
\qquad
(Q_Ru)_i
:=
\mathbf 1_{\{|i|>R\}}u_i,
\qquad i\in\mathbb Z.
\end{equation}
Then
\begin{equation}\label{eq4.32}
 \norm{Q_{2K}u}^2
 \leq V_K(u)\leq\norm{Q_Ku}^2.
\end{equation}

\begin{lemma}\label{lem4.8}
For every \(K\geq1\) and \(u,v\in U\), the following
estimates hold:
\begin{align*}
-\ip{\mathcal Lu}{C_Ku}
 &\leq\frac{\nu C_\chi \norm u^2}{2K}
 \bigg(\sup_{i}\sum_{j}w_{ij}|i-j|\bigg),\\
\norm{[S_K,\mathcal L]u}
 &\leq\frac{\nu C_\rho \norm u }{K}
 \bigg(\sup_{i}\sum_{j}w_{ij}|i-j|\bigg),
 \\
\norm{S_K\mathcal Lu}
 &\leq\norm{\mathcal L}\,V_K(u)^{1/2}
 +\frac{\nu C_\rho \norm u}{K}
 \bigg(\sup_{i}\sum_{j}w_{ij}|i-j|
 \bigg),\\
 2\ip{F(u,v)}{C_Ku}
 &\leq(\lambda+L_0)V_K(u)
 +L_1V_K(v)+(\lambda-L_0-L_1)^{-1}
 \norm{Q_KF_0}^2.
\end{align*}
\end{lemma}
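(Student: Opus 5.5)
We prove the four estimates separately by writing everything coordinatewise and using the symmetric form \eqref{eq2.14} of \(\mathcal L\). Throughout we abbreviate \(\chi_i:=\chi(|i|/K)\), \(\rho_i:=\rho(|i|/K)\) and \(W_1:=\sup_i\sum_jw_{ij}|i-j|\). The one elementary fact needed is the mean-value bound \(\bigl||i|-|j|\bigr|\leq|i-j|\). It gives
\[
|\chi_i-\chi_j|\leq C_\chi|i-j|/K,\qquad |\rho_i-\rho_j|\leq C_\rho|i-j|/K .
\]

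\textbf{First estimate.} Apply \eqref{eq2.14} with \(v=C_Ku\), using \(\chi_iu_i-\chi_ju_j=\chi_i(u_i-u_j)+(\chi_i-\chi_j)u_j\). This gives
\[
\ip{\mathcal Lu}{C_Ku}=\frac\nu2\sum_{i,j}w_{ij}\chi_i(u_i-u_j)^2+\frac\nu2\sum_{i,j}w_{ij}(u_i-u_j)(\chi_i-\chi_j)u_j .
\]
The first sum is nonnegative, so \(-\ip{\mathcal Lu}{C_Ku}\) is at most the absolute value of the second sum. To handle it more cleanly, symmetrize: interchanging \(i\) and \(j\) and averaging, using \(w_{ij}=w_{ji}\), turns the second sum into
\[
-\frac\nu4\sum_{i,j}w_{ij}(u_i-u_j)^2(\chi_i-\chi_j)\cdot(\ldots)
\]
after rearrangement. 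In practice the simplest route is the identity
\[
\ip{\mathcal Lu}{C_Ku}=\nu\sum_{i,j}w_{ij}(u_i-u_j)\chi_iu_i .
\]
From it, \(-\ip{\mathcal Lu}{C_Ku}\) is bounded by \(\frac\nu2\bigl|\sum w_{ij}(\chi_i-\chi_j)u_iu_j\bigr|\) plus a term \(\frac\nu2\sum w_{ij}(\chi_i-\chi_j)u_i^2\), which vanishes by antisymmetry. Then \(|u_iu_j|\leq\frac12(u_i^2+u_j^2)\), the Lipschitz bound on \(\chi\), and the row-sum bound \(W_1\) (used for both orderings by symmetry of \(w_{ij}\)) yield \(\frac{\nu C_\chi}{2K}W_1\norm u^2\).

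Getting the constant exactly \(\tfrac12\) needs care in which split is used. This bookkeeping is the main obstacle. I would commit to the decomposition
\[
\chi_iu_i(u_i-u_j)=\tfrac12\chi_i(u_i-u_j)^2+\tfrac12\chi_i(u_i^2-u_j^2),
\]
symmetrize the second part, and obtain \(\frac14\sum w_{ij}(\chi_i-\chi_j)(u_i^2-u_j^2)\). Its absolute value is bounded by \(\frac{C_\chi}{4K}\sum w_{ij}|i-j|(u_i^2+u_j^2)\leq\frac{C_\chi W_1}{2K}\norm u^2\). With the factor \(\nu\) in front, this gives exactly \(\frac{\nu C_\chi}{2K}W_1\norm u^2\).

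\textbf{Second estimate.} Coordinatewise,
\[
([S_K,\mathcal L]u)_i=\nu\sum_jw_{ij}(\rho_j-\rho_i)u_j .
\]
The diagonal parts \(\rho_iu_i\) cancel. By Schur's test with kernel \(k_{ij}=\nu w_{ij}|\rho_i-\rho_j|\), whose row and column sums are both at most \(\nu C_\rho W_1/K\) by symmetry, the operator norm is at most \(\nu C_\rho W_1/K\).

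\textbf{Third estimate.} Write \(S_K\mathcal Lu=\mathcal LS_Ku+[S_K,\mathcal L]u\). Then use \(\norm{\mathcal LS_Ku}\leq\norm{\mathcal L}\norm{S_Ku}=\norm{\mathcal L}V_K(u)^{1/2}\) together with the second estimate.

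\textbf{Fourth estimate.} Since \(F\) is diagonal, \(\ip{F(u,v)}{C_Ku}=\sum_i\chi_if_i(u_i,v_i)u_i\). The mean-value theorem and (A2) give
\[
|f_i(u_i,v_i)-f_i(0,0)|\leq L_0|u_i|+L_1|v_i| .
\]
Hence
\[
2\ip{F(u,v)}{C_Ku}\leq\sum_i\chi_i\bigl(2L_0u_i^2+2L_1|u_i||v_i|+2|f_i(0,0)||u_i|\bigr).
\]
We bound the last two terms by Young's inequality:
\[
2L_1|u_i||v_i|\leq L_1(u_i^2+v_i^2),\qquad 2|f_i(0,0)||u_i|\leq(\lambda-L_0-L_1)u_i^2+(\lambda-L_0-L_1)^{-1}f_i(0,0)^2 .
\]
Collecting the \(u_i^2\) coefficients gives \(2L_0+L_1+\lambda-L_0-L_1=\lambda+L_0\), which produces \((\lambda+L_0)V_K(u)+L_1V_K(v)\). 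Finally, since \(\chi_i\leq\mathbf 1_{\{|i|>K\}}\), we have \(\sum_i\chi_if_i(0,0)^2\leq\norm{Q_KF_0}^2\).
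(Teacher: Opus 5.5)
Your committed argument is correct and is essentially the paper's proof. The split $\chi_iu_i(u_i-u_j)=\tfrac12\chi_i(u_i-u_j)^2+\tfrac12\chi_i(u_i^2-u_j^2)$, followed by symmetrization, produces exactly the remainder $\tfrac{\nu}{4}\sum_{i,j}w_{ij}(\chi_i-\chi_j)(u_i^2-u_j^2)$ that the paper bounds. Your Schur test is the paper's Cauchy--Schwarz computation for the commutator, and the third and fourth estimates match; using pointwise Young instead of Cauchy--Schwarz on $V_K$ makes no difference.

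You should delete the intermediate remarks that precede the committed decomposition, namely the display containing $(\ldots)$ and the ``simplest route'' paragraph. That paragraph has the antisymmetry backwards: it is $\sum_{i,j}w_{ij}(\chi_i-\chi_j)u_iu_j$ that vanishes, whereas $\sum_{i,j}w_{ij}(\chi_i-\chi_j)u_i^2$ does not vanish in general.
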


\begin{proof}
In view of \eqref{eq2.14}, it holds that
\begin{align}
 \ip{\mathcal Lu}{C_Ku}
 &=\frac{\nu}{2}\sum_{i,j}
 w_{ij}(u_i-u_j)
 \left[\chi\left(\frac{|i|}{K}\right)
 u_i-\chi\left(\frac{|j|}{K}\right)
 u_j\right]\notag\\
 &=\frac{\nu}{4}\sum_{i,j}w_{ij}
 \left[\chi\left(\frac{|i|}{K}\right)+
 \chi\left(\frac{|j|}{K}\right)\right]
 (u_i-u_j)^2
 +\frac{\nu}{4}\sum_{i,j}
 w_{ij}
 \left[\chi\left(\frac{|i|}{K}\right)-
 \chi\left(\frac{|j|}{K}\right)\right](u_i^2-u_j^2).
 \nonumber
\end{align}
The mean-value theorem yields
$|\chi(|i|/K)-
\chi(|j|/K)|
 \leq\frac{C_\chi}{K}\bigl||i|-|j|\bigr|
 \leq\frac{C_\chi}{K}|i-j|$.
Hence, using \(w_{ij}=w_{ji}\), we have
\begin{align*}
-\ip{\mathcal Lu}{C_Ku}
 &\leq\frac{\nu C_\chi}{4K}
 \sum_{i,j}w_{ij}|i-j|\,|u_i^2-u_j^2|
 \leq\frac{\nu C_\chi}{4K}
 \sum_{i,j}w_{ij}|i-j|(u_i^2+u_j^2)\\
 &=\frac{\nu C_\chi}{2K}
 \sum_i u_i^2\sum_jw_{ij}|i-j|
 \leq\frac{\nu C_\chi \norm u^2}{2K}
 \bigg(\sup_{i}\sum_{j}w_{ij}|i-j|\bigg).
\end{align*}

A direct calculation gives $([S_K,\mathcal L]u)_i
 =\nu\sum_jw_{ij}(
 \rho(|j|/K)-\rho(|i|/K))u_j$.
Then by Cauchy-Schwarz inequality and $|\rho(|i|/K)-\rho(|j|/K)|
\leq
\frac{C_\rho}{K}|i-j|$, we deduce that
\begin{align*}
 \norm{[S_K,\mathcal L]u}^2
 &\leq\nu^2\sum_i
 \left(\sum_jw_{ij}
 \left|\rho\left(\frac{|i|}{K}\right)
 -\rho\left(\frac{|j|}{K}\right)
 \right|\right)
 \left(\sum_jw_{ij}\left|
 \rho\left(\frac{|i|}{K}\right)
 -\rho\left(\frac{|j|}{K}\right)
 \right||u_j|^2\right)\\
 &\leq\frac{\nu^2C_\rho }{K}
 \bigg(\sup_{i}\sum_{j}w_{ij}|i-j|\bigg)
 \sum_j|u_j|^2\sum_iw_{ij}
 \left|\rho\left(\frac{|i|}{K}\right)-
 \rho\left(\frac{|j|}{K}\right)\right|\\
 &\leq\frac{\nu^2C_\rho^2 \norm u^2}{K^2}
 \bigg(\sup_{i}\sum_{j}w_{ij}|i-j|\bigg)^{2}.
\end{align*}
The third estimate follows from
$S_K\mathcal L u=\mathcal L S_Ku+[S_K,\mathcal L]u$ and
$\norm{S_Ku}=V_K(u)^{1/2}$.

Finally, recall that  $F(u,v)=(f_{i}(u_{i},v_{i}))_{i\in \mathbb{Z}}$ and
$|f_i(r,s)-f_i(0,0)|\leq L_0|r|+L_1|s|$. Applying the Cauchy--Schwarz and Young inequalities yields
\begin{align}\label{eq4.35--}
 &2\ip{F(u,v)-F_0}{C_Ku}
 \leq2L_0\sum_i
 \chi\left(\frac{|i|}{K}\right)|u_i|^2
 +2L_1\sum_i
 \chi\left(\frac{|i|}{K}\right)|v_i||u_i|\nonumber\\
 &\leq2L_0V_K(u)+2L_1V_K(v)^{1/2}
 V_K(u)^{1/2}
 \leq(2L_0+L_1)V_K(u)+L_1V_K(v).
\end{align}
Since $C_KF_0=C_KQ_KF_0$, it follows from Young's inequality that
\begin{align}\label{eq4.36--}
 2\ip{F_0}{C_Ku}
 &\leq
 2\norm{S_KQ_KF_0}\norm{S_Ku}
 \leq
 2\norm{Q_KF_0}V_K(u)^{1/2}\nonumber\\
 &\leq
 (\lambda-L_0-L_1)V_K(u)+
 (\lambda-L_0-L_1)^{-1}\norm{Q_KF_0}^{2}.
\end{align}
Combining \eqref{eq4.35--} and \eqref{eq4.36--}
proves the fourth estimate and completes the proof.
\end{proof}

Let \(u\) be the controlled solution of
\eqref{eq:vector}. For each \(K\geq1\), define the
tail Lyapunov--Krasovskii functional by
\begin{equation}\label{eq4.34-3}
 \Psi_K(t)
 :=
 V_K(u(t))
 +
 L_1e^{\sigma\tau}
 \int_{t-\tau}^{t}
 e^{-\sigma(t-s)}V_K(u(s))\,ds,
 \qquad t\geq0.
\end{equation}
Here, when \(L_1>0\), \(\sigma>0\) is chosen sufficiently
small that
$L_1e^{\sigma\tau}<\lambda-L_0$;
when \(L_1=0\), any
\(\sigma\in(0,\lambda-L_0)\) may be used.

\begin{lemma}\label{lem4.11}
Let \(u\) be the controlled solution of
\eqref{eq:vector}.
For the above choice of \(\sigma\), set
$\delta_*:=\min\left\{
 \lambda-L_0-L_1e^{\sigma\tau},
 \sigma
 \right\}>0$.
Then, for every
$K\in\mathbb N$ and $t>0$,
 \begin{align*}
 &2\ip{-\mathcal Lu(t)-\lambda u(t)
 +F(u(t),u(t-\tau))}{C_Ku(t)}
 +L_1e^{\sigma\tau}\frac{d}{dt}
 \int_{t-\tau}^{t}e^{-\sigma(t-s)}V_K(u(s))\,ds\\
 &\leq-\delta_*\Psi_K(t)
 +\frac{\nu C_\chi \norm{u(t)}^2}{K}
 \bigg(\sup_{i}\sum_{j}w_{ij}|i-j|\bigg)
 +(\lambda-L_0-L_1)^{-1}\norm{Q_KF_0}^2.
 \end{align*}
\end{lemma}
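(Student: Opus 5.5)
We will prove this pointwise in $t$ by adding up the estimates of Lemma~\ref{lem4.8}. We then mimic the proof of Lemma~\ref{lem4.1}, with $\|\cdot\|^2$ replaced by the tail functional $V_K$. Since $u$ is continuous, $s\mapsto V_K(u(s))$ is continuous. Therefore the integral term in $\Psi_K$ is differentiable in $t$, and
\[
\frac{d}{dt}\int_{t-\tau}^{t}e^{-\sigma(t-s)}V_K(u(s))\,ds
=V_K(u(t))-e^{-\sigma\tau}V_K(u(t-\tau))
-\sigma\int_{t-\tau}^{t}e^{-\sigma(t-s)}V_K(u(s))\,ds .
\]
Multiplying by $L_1e^{\sigma\tau}$ produces $L_1e^{\sigma\tau}V_K(u(t))-L_1V_K(u(t-\tau))-\sigma\bigl(\Psi_K(t)-V_K(u(t))\bigr)$.

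Next we treat the drift pairing term by term. The first estimate of Lemma~\ref{lem4.8} gives
\[
-2\ip{\mathcal Lu(t)}{C_Ku(t)}\leq \frac{\nu C_\chi\norm{u(t)}^2}{K}\Big(\sup_i\sum_j w_{ij}|i-j|\Big).
\]
The damping term contributes $-2\lambda\ip{u}{C_Ku}=-2\lambda V_K(u(t))$. For the nonlinearity we use the intermediate bounds \eqref{eq4.35--}--\eqref{eq4.36--} rather than the packaged fourth estimate. This yields
\[
2\ip{F(u(t),u(t-\tau))}{C_Ku(t)}\leq (2L_0+L_1)V_K(u(t))+L_1V_K(u(t-\tau))+(\lambda-L_0-L_1)V_K(u(t))+(\lambda-L_0-L_1)^{-1}\norm{Q_KF_0}^2 .
\]
The coefficient of $V_K(u(t))$ from these pieces is $-2\lambda+2L_0+L_1+\lambda-L_0-L_1=-(\lambda-L_0)$, exactly as in \eqref{eq4.2-1}.

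Adding the two contributions, the delayed terms $\pm L_1V_K(u(t-\tau))$ cancel. The coefficient of $V_K(u(t))$ becomes $-(\lambda-L_0-L_1e^{\sigma\tau})+\sigma$, and the memory term carries the factor $-\sigma$. Rewriting $-\sigma\bigl(\Psi_K-V_K\bigr)$, the total is
\[
-(\lambda-L_0-L_1e^{\sigma\tau})V_K(u(t))-\sigma L_1e^{\sigma\tau}\int_{t-\tau}^{t}e^{-\sigma(t-s)}V_K(u(s))\,ds\le-\delta_*\Psi_K(t),
\]
since both coefficients are at least $\delta_*$ and both quantities are nonnegative. Together with the commutator term and the $Q_KF_0$ term, this is the claimed inequality.

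The only delicate point is the bookkeeping of the $V_K(u(t))$ coefficient. The fourth estimate of Lemma~\ref{lem4.8} as stated, with coefficient $\lambda+L_0$, would give $-(\lambda-L_0)$ only after combining with $-2\lambda$. So either form can be used, provided the cross term has been split with Young's inequality at weight $1$ so that the delayed coefficient is exactly $L_1$ and cancels. Differentiability of the memory integral needs only continuity of $u$; no rough-path regularity enters, because the statement concerns the deterministic drift part only.
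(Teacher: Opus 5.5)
Your proof is correct and follows essentially the same route as the paper: you combine the bounds of Lemma~\ref{lem4.8} with the derivative identity for the memory integral. The delayed terms $\pm L_1V_K(u(t-\tau))$ then cancel, and the remaining coefficients $\lambda-L_0-L_1e^{\sigma\tau}$ and $\sigma$ are bounded below by $\delta_*$. Your use of the intermediate bounds \eqref{eq4.35--}--\eqref{eq4.36--} instead of the packaged fourth estimate changes nothing, since they sum to the same coefficient $\lambda+L_0$. Your remark that continuity of $u$ suffices to differentiate the memory integral is a detail the paper leaves implicit.
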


\begin{proof}
Using Lemma~\ref{lem4.8}, we have
\begin{align}\label{eq4.34}
 &2\ip{-\mathcal Lu-\lambda u+F(u,u(t-\tau))}{C_Ku}\leq-2\lambda V_K(u)
 +(\lambda+L_0)V_K(u)\nonumber\\
 &+L_1V_K(u(t-\tau))
 +\frac{\nu C_\chi \norm u^2 }{K}
 \bigg(\sup_{i}\sum_{j}w_{ij}|i-j|\bigg)
 +(\lambda-L_0-L_1)^{-1}
 \norm{Q_KF_0}^2\\
 &=-(\lambda-L_0)V_K(u)+L_1V_K(u(t-\tau))
 +\frac{\nu C_\chi \norm u^2 }{K}
 \bigg(\sup_{i}\sum_{j}w_{ij}|i-j|\bigg)
 +(\lambda-L_0-L_1)^{-1}\norm{Q_KF_0}^2.
\nonumber
\end{align}
Set $M_K(t):=\int_{t-\tau}^{t}
e^{-\sigma(t-s)}V_K(u(s))ds$. Then
\begin{equation}\label{eq4.35}
 \dot M_K(t)=V_K(u(t))-e^{-\sigma
 \tau}V_K(u(t-\tau))
 -\sigma M_K(t).
\end{equation}
Multiplying \eqref{eq4.35} by $L_1e^{\sigma\tau}$ and adding it to
\eqref{eq4.34}, we obtain that
\begin{align*}
 &2\ip{-\mathcal Lu-\lambda u+F(u,u(t-\tau))}{C_Ku}
 +L_1e^{\sigma\tau}\dot M_K\leq-(\lambda-L_{0}-L_1e^{\sigma\tau})
 V_K(u)\\
 &-L_1e^{\sigma\tau}\sigma M_K
 +\frac{\nu C_\chi \norm u^2}{K}
 \bigg(\sup_{i}\sum_{j}w_{ij}|i-j|\bigg)
 +(\lambda-L_0-L_1)^{-1}\norm{Q_KF_0}^2\\
 &\leq-\delta_*\bigl(V_K(u)+
 L_1e^{\sigma\tau} M_K\bigr)
 +\frac{\nu C_\chi \norm u^2}{K}
 \bigg(\sup_{i}\sum_{j}w_{ij}|i-j|\bigg)
 +(\lambda-L_0-L_1)^{-1}\norm{Q_KF_0}^2,
\end{align*}
where $\delta_*=\min\{
\lambda-L_0-L_1e^{\sigma\tau},
\sigma\}$.
\end{proof}

For each $v\in U$, define the linear and bilinear forms
\begin{align}\label{eq4.36-1}
I_K(v)z
 :=
 \ip{h(v)z}{C_Kv},~~
 J_K(v)(z_1,z_2)
 :=
 \ip{C_Kh(v)z_1}{h(v)z_2}
 ~~~~z,z_1,z_2\in U.
\end{align}
Let $u$ be the solution of \eqref{eq:vector}. Since $u$ is
controlled by $x$ with Gubinelli derivative $u'=h(u)$, the
path $I_K(u)$ is also controlled by $x$, with Gubinelli
derivative
\begin{align}\label{eq4.36}
 \bigl(I_K(u)\bigr)'(z_1,z_2)
 = DI_K(u)[h(u)z_1]z_2
 = \ip{ Dh(u)[h(u)z_1]z_2
 }{C_Ku}+ \ip{h(u)z_2}{C_Kh(u)z_1}.
\end{align}
Moreover, for $z,z_{1}, z_{2}\in U$,
 \begin{align*}
 I_K(u)z=I_K(u)Q_K z,~~~
 R^{I_K(u)}_{s,t} z=R^{I_K(u)}_{s,t}Q_K z,~~~ J_K(u)(z_{1}, z_{2})=J_K(u)(Q_Kz_{1}, Q_Kz_{2}),\\
 \bigl(I_K(u)\bigr)'(z_{1}, z_{2})
 =
 \bigl(I_K(u)\bigr)'
 (Q_K z_{1}, z_{2})
 =
 \bigl(I_K(u)\bigr)'
 (z_{1}, Q_K z_{2})
 =
 \bigl(I_K(u)\bigr)'(Q_Kz_{1}, Q_Kz_{2}).
 \end{align*}

Since
$V_K(u)=\ip{u}{C_Ku},
$
the first and second Fr\'echet derivatives of $V_K$ are given, for
\(u,z,z_1,z_2\in U\), by
$DV_K(u)[z]
 =
 2\ip{z}{C_Ku}$,
$D^2V_K(u)[z_1,z_2]
 =
 2\ip{C_Kz_1}{z_2}$.
Moreover, since $\mathbf x$ is weakly geometric,
$[\mathbf x]_{s,t}
 =
 x_{s,t}\otimes x_{s,t}
 -2\operatorname{Sym}\mathbb X_{s,t}
 =0$.
Therefore, applying the rough chain rule to \(V_K(u)\) and using
\eqref{eq:vector} and \eqref{eq:local-integral-0}, we obtain that for
\(a<t\),
\begin{align}\label{eq4.38-0}
 V_K(u(t))-V_K(u(a))
 =
 2\int_a^t
 \ip{G(u(s),u(s-\tau))}{C_Ku(s)}\,ds
 +
 2\int_a^t I_K(u(s))\,d\mathbf x(s).
\end{align}
Set $M_K(t)
 :=
 \int_{t-\tau}^{t}
 e^{-\sigma(t-s)}V_K(u(s))\,ds$ and
recall that
$\Psi_K(t)=
V_K(u(t))+L_1e^{\sigma\tau}M_K(t)$. Then by Lemma~\ref{lem4.11} and \eqref{eq4.38-0}, we have for $a<t$,
\begin{align}\label{eq4.38} e^{\delta_*t}\Psi_K(t)& \leq e^{\delta_*a}\Psi_K(a) +\int_a^te^{\delta_*s} \bigg(\frac{\nu C_\chi \norm{u(s)}^2}{K} \bigg(\sup_{i}\sum_{j}w_{ij}|i-j|\bigg) \nonumber\\ & +(\lambda-L_0-L_1)^{-1} \norm{Q_KF_0}^2\bigg)ds+ 2\int_a^te^{\delta_*s} I_K(u(s))dx(s).
\end{align}

\begin{lemma}\label{lem4.13}
Let \(Y\) be an operator-valued path controlled by \(x\) on
\([a,b]\), with Gubinelli derivative \(Y'\), and let \(Q_K\)
be the orthogonal projection defined above. Assume that
\[
 Y(s)=Y(s)Q_K,\qquad
 R^Y_{s,t}=R^Y_{s,t}Q_K,\qquad
 Y'(s)=Y'(s)(Q_K\otimes Q_K)
\]
for all \(a\leq s\leq t\leq b\). Then, for every
\(a\leq s<t\leq b\),
\begin{align}\label{eq4.41}
 &\left\|
 \int_s^tY(r)\,dx(r)
 -Y(s)Q_Kx_{s,t}
 -Y'(s)(Q_K\otimes Q_K)\mathbb X_{s,t}
 \right\|
 \notag\\
 &\quad\leq
 c_p
 \bigg(
 \tnorm{R^Y}_{\qvar,[s,t]}
 \tnorm{Q_Kx}_{\pvar,[s,t]}
 +
 \tnorm{Y'}_{\pvar,[s,t]}
 \tnorm{(Q_K\otimes Q_K)\mathbb X}_{\qvar,[s,t]}
 \bigg).
\end{align}
Moreover,
\begin{align}\label{eq4.42}
 \tnorm{(Q_K\otimes Q_K)\mathbb X}_{\qvar,[s,t]}
 \leq
 \min\bigl\{
 \tnorm{(Q_K\otimes\operatorname{Id}_U)
 \mathbb X}_{\qvar,[s,t]},
 \tnorm{(\operatorname{Id}_U\otimes Q_K)
 \mathbb X}_{\qvar,[s,t]}
 \bigr\}.
\end{align}
\end{lemma}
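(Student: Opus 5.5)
The plan is to observe that the projection identities let us replace the rough path $(x,\mathbb X)$ by the \emph{projected pair} $\mathbf x^K:=(Q_Kx,(Q_K\otimes Q_K)\mathbb X)$ without changing the integral. Then \eqref{eq4.41} becomes the standard sewing estimate \eqref{eq2.12} applied to $\mathbf x^K$.

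First, since $Q_K$ is a bounded linear operator on $U$, the path $Q_Kx$ has finite $p$-variation. Likewise $(Q_K\otimes Q_K)\mathbb X$ has finite $q$-variation, because $\|Q_K\otimes Q_K\|_{\mathcal L(U\widehat\otimes_\pi U)}\le 1$. Applying $Q_K\otimes Q_K$ to Chen's relation \eqref{eq2.1} gives
\[
(Q_K\otimes Q_K)\mathbb X_{s,t}-(Q_K\otimes Q_K)\mathbb X_{s,r}-(Q_K\otimes Q_K)\mathbb X_{r,t}=Q_Kx_{s,r}\otimes Q_Kx_{r,t}.
\]
Hence $\mathbf x^K$ is a $p$-rough path.

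Next, use the assumptions $Y=YQ_K$, $R^Y_{s,t}=R^Y_{s,t}Q_K$ and $Y'=Y'(Q_K\otimes Q_K)$. They show that $Y$ is controlled by $Q_Kx$ with the same Gubinelli derivative:
\[
R^Y_{s,t}=Y_{s,t}-Y'(s)(x_{s,t}\otimes\cdot),
\]
and, applying both sides to $Q_Kz$, the remainder relative to $Q_Kx$ coincides with $R^Y$. Moreover the compensated Riemann sums in \eqref{eq2.11} coincide term by term:
\[
Y(s)x_{s,t}+Y'(s)\mathbb X_{s,t}=Y(s)Q_Kx_{s,t}+Y'(s)(Q_K\otimes Q_K)\mathbb X_{s,t}.
\]
So $\int Y\,dx$ against $\mathbf x$ equals the rough integral against $\mathbf x^K$. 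Since the seminorms of $Y'$ and $R^Y$ are unchanged, applying \eqref{eq2.12} to $\mathbf x^K$ yields \eqref{eq4.41} with the same constant $c_p$. The only delicate point is to check that the sewing lemma's $c_p$ depends only on $p$ and on the two seminorms. It does not require a weakly geometric or otherwise special structure, so the projected pair (no longer geometric in $U$-coordinates, but still satisfying Chen) suffices.

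For \eqref{eq4.42}, write $Q_K\otimes Q_K=(Q_K\otimes\operatorname{Id}_U)(\operatorname{Id}_U\otimes Q_K)=(\operatorname{Id}_U\otimes Q_K)(Q_K\otimes\operatorname{Id}_U)$. The cross-norm property of the projective tensor product gives $\|A\otimes B\|\le\|A\|\|B\|$, so $\|\operatorname{Id}_U\otimes Q_K\|\le1$ and $\|Q_K\otimes\operatorname{Id}_U\|\le1$ on $U\widehat\otimes_\pi U$. Therefore, for each increment,
\[
\|(Q_K\otimes Q_K)\mathbb X_{s_i,s_{i+1}}\|\le\min\bigl\{\|(Q_K\otimes\operatorname{Id}_U)\mathbb X_{s_i,s_{i+1}}\|,\|(\operatorname{Id}_U\otimes Q_K)\mathbb X_{s_i,s_{i+1}}\|\bigr\}.
\]
Summing the $q$-th powers over a partition and taking the supremum gives \eqref{eq4.42}.
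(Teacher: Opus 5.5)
Your proof is correct and is essentially the paper's argument. The paper applies the sewing lemma directly to the germ $\Xi_{s,t}=Y(s)x_{s,t}+Y'(s)\mathbb X_{s,t}$, after using Chen's relation and the projection identities to write $\delta\Xi_{s,r,t}=-R^Y_{s,r}Q_Kx_{r,t}+(Y'(s)-Y'(r))(Q_K\otimes Q_K)\mathbb X_{r,t}$; you instead recognise that germ as the compensated germ of the projected rough path $\mathbf x^K$ and invoke \eqref{eq2.12} as a black box, and your proof of \eqref{eq4.42} coincides with the paper's.

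One side remark needs correcting, though it does not affect the proof: $\mathbf x^K$ is in fact still weakly geometric, because $\operatorname{Sym}(Q_K\otimes Q_K)\mathbb X_{s,t}=(Q_K\otimes Q_K)\operatorname{Sym}\mathbb X_{s,t}=\tfrac12(Q_Kx_{s,t})^{\otimes2}$. As you correctly say, this property is not needed for the sewing estimate.
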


\begin{proof}
For $a\leq s\leq r \leq t\leq b$, set  $\Xi_{s,t}:=Y(s)x_{s,t}+Y'(s)\mathbb X_{s,t}$ and $\delta\Xi_{s,r,t}:=\Xi_{s,t}-\Xi_{s,r}
-\Xi_{r,t}$.
By Chen's relation \eqref{eq2.1}, we obtain
\begin{align}\label{eq4.43}
\delta\Xi_{s,r,t}
&=
\bigl(Y(s)-Y(r)\bigr)x_{r,t}
+
\bigl(Y'(s)-Y'(r)\bigr)\mathbb X_{r,t}
+
Y'(s)\bigl(x_{s,r}\otimes x_{r,t}\bigr)\nonumber\\
&=-R^Y_{s,r}x_{r,t}
 +\bigl(Y'(s)-Y'(r)\bigr)\mathbb X_{r,t}\nonumber\\
&=-R^Y_{s,r}Q_Kx_{r,t}
 +\bigl(Y'(s)-Y'(r)\bigr)(Q_K\otimes Q_K)\mathbb X_{r,t}.
\end{align}
The first term on the right-hand side of \eqref{eq4.43}
combines a $q$-variation increment of $R^Y$ with a
$p$-variation increment of $Q_Kx$, whereas the second combines
a $p$-variation increment of $Y'$ with a $q$-variation
increment of $(Q_K\otimes Q_K)\mathbb X$. Since $q=p/2$,
both terms have sewing exponent
$\frac{1}{q}+\frac{1}{p}=\frac{3}{p}>1$.
Hence, \eqref{eq4.41} follows immediately from the sewing lemma.
Finally, since \(Q_K\) is an orthogonal projection on \(U\),
we have \(\norm{Q_K}_{\mathcal L(U)}=1\) and
\[
 Q_K\otimes Q_K
 =
 (\operatorname{Id}_U\otimes Q_K)
 (Q_K\otimes\operatorname{Id}_U)
 =
 (Q_K\otimes\operatorname{Id}_U)
 (\operatorname{Id}_U\otimes Q_K).
\]
Hence, for every
\(\zeta\in U\widehat\otimes_\pi U\),
it holds that
\[
 \norm{(Q_K\otimes Q_K)\zeta}
 \leq
 \min\bigl\{
 \norm{(Q_K\otimes\operatorname{Id}_U)\zeta},
 \norm{(\operatorname{Id}_U\otimes Q_K)\zeta}
 \bigr\}.
\]
Taking the \(q\)-variation seminorm yields
\eqref{eq4.42}.
\end{proof}

To derive uniform spatial-tail estimates for the solution $u$ of
\eqref{eq:vector}, we impose the following tail condition on the
driving rough path \(\mathbf x=(x,\mathbb X)\). This condition will
be used to establish pullback asymptotic compactness.

\begin{enumerate}
\item[\textbf{(A5)}]
For $K\geq0$ and every bounded interval $I$, set
\begin{align*}
 \Gamma_K(\omega;I)
 &:=
 \tnorm{Q_K x(\omega)}_{\pvar,I}
 +\tnorm{(Q_K\otimes\operatorname{Id}_U)
       \mathbb X(\omega)}_{\qvar,I}
 +\tnorm{(\operatorname{Id}_U\otimes Q_K)\mathbb X(\omega)}_{\qvar,I}.
\end{align*}
Assume that, for every \(\omega\in\Omega_0\) and every
bounded interval \(I\), $\Gamma_K(\omega;I)\rightarrow0$ as $K\rightarrow\infty$.
\end{enumerate}

\begin{lemma}\label{lem4.14}
Let $\widehat B$ be the positively invariant pathwise
pullback absorbing family constructed in
Lemma~\ref{lem4.7}, and let \(u\) be the controlled solution of
\eqref{eq:vector}. For each $s\in I_n=[n,n+1]$, define
$u_s(\vartheta):=u(s+\vartheta)$ for
$\vartheta\in[-\tau,0]$.
Suppose that $u_s\in\widehat B(\theta_s\omega)$ for $s\in I_n$.
Then there exists a tempered random variable
$A(\omega)\geq1$, independent of $K$ and $n$, such that, for every
$K\in\mathbb N$,
\begin{equation}\label{eq:rough-tail}
 \left|
 \int_n^{n+1}
 e^{\delta_*s}I_K(u(s))\,dx(s)
 \right|
 \leq C
 e^{\delta_*(n+1)}
 A(\theta_n\omega)
 \gamma_{\lfloor K/2\rfloor}(\theta_n\omega),
\end{equation}
where $\gamma_{\lfloor K/2\rfloor}(\theta_n\omega)
 :=\Gamma_{\lfloor K/2\rfloor}(\theta_n\omega;[0,1])
 +\Gamma_{\lfloor K/2\rfloor}(\theta_n\omega;[0,1])^2$ and $C$ is independent of $K$.
\end{lemma}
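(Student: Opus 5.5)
The plan is to apply the projected sewing estimate of Lemma~\ref{lem4.13} to $Y(s):=e^{\delta_*s}I_K(u(s))$ on $[n,n+1]$, but with the projection $Q_{\lfloor K/2\rfloor}$ in place of $Q_K$. Since $\chi(|i|/K)=0$ for $|i|\leq K$, every Fourier coordinate with $|i|\le K$ is annihilated by $C_K$. Consequently $Y=YQ_{K'}$, $R^Y=R^YQ_{K'}$ and $Y'=Y'(Q_{K'}\otimes Q_{K'})$ for any $K'\le K$, by the identities listed after \eqref{eq4.36}. Here $e^{\delta_*s}$ is a smooth scalar factor, so $Y$ is controlled with $Y'(s)=e^{\delta_*s}(I_K(u))'(s)$ and $R^Y_{s,t}=e^{\delta_*s}R^{I_K(u)}_{s,t}+(e^{\delta_*t}-e^{\delta_*s})I_K(u(t))$. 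With $K'=\lfloor K/2\rfloor$, Lemma~\ref{lem4.13} gives
\[
\Bigl|\int_n^{n+1}Y\,dx\Bigr|\le |Y(n)Q_{K'}x_{n,n+1}|+|Y'(n)(Q_{K'}\otimes Q_{K'})\mathbb X_{n,n+1}|+c_p\bigl(\tnorm{R^Y}_{\qvar}\tnorm{Q_{K'}x}_{\pvar}+\tnorm{Y'}_{\pvar}\tnorm{(Q_{K'}\otimes Q_{K'})\mathbb X}_{\qvar}\bigr),
\]
with all norms taken on $[n,n+1]$. By \eqref{eq4.42} and the shift relation in \textbf{(A4)}, each rough-path factor on the right is at most $\Gamma_{K'}(\theta_n\omega;[0,1])$. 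The two products of the sewing remainder are linear in $\Gamma$, and the quadratic term $\Gamma^2$ in $\gamma_{K'}$ absorbs any products arising if one also splits $x=P_{K'}x+Q_{K'}x$ inside the remainders. The factor $e^{\delta_*s}\le e^{\delta_*(n+1)}$ is pulled out.

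The remaining task is to bound, uniformly in $K$, the controlled-path data $\sup|I_K(u)|$, $\sup|(I_K(u))'|$, $\tnorm{(I_K(u))'}_{\pvar,[n,n+1]}$ and $\tnorm{R^{I_K(u)}}_{\qvar,[n,n+1]}$ by a tempered $A(\theta_n\omega)$. Since $\|C_K\|\le1$ and $h\in C^4_b$, formula \eqref{eq4.36} and the Taylor arguments of Lemma~\ref{lem3.1} bound these by a polynomial in $\|u\|_{\infty,[n,n+1]}$, $\tnorm{u}_{\pvar,[n,n+1]}$ and $\tnorm{R^u}_{\qvar,[n,n+1]}$, with constants depending only on $C_h$. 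These are in turn controlled by Lemma~\ref{lem3.4} applied on $[n,n+1]$, with $v=u(\cdot-\tau)$ and $\|v\|_\infty\le\sup_{s\in I_n}\|u_s\|_{\Hh}$. The hypothesis $u_s\in\widehat B(\theta_s\omega)$ gives $\|u_s\|_{\Hh}^2\le\widehat R(\theta_s\omega)\le\widehat R_1^\#(\theta_n\omega)$, using $s-n\in[0,1]$. The shift relation also gives $\tnorm{\mathbf x(\omega)}_{\pvar,[n,n+1]}=\tnorm{\mathbf x(\theta_n\omega)}_{\pvar,[0,1]}$.

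This leads to the definition
\[A(\omega):=1+C\bigl(1+\widehat R_1^\#(\omega)\bigr)^{m}\bigl(1+\tnorm{\mathbf x(\omega)}_{\pvar,[0,1]}^{m'}\bigr)\]
for suitable fixed powers $m,m'$. This random variable is independent of $K$ and $n$. It is tempered because $\widehat R_1^\#$ is tempered by Lemma~\ref{lem4.7}, the rough-path norm is tempered by \textbf{(A4)}, and products and powers of tempered variables are tempered, since $e^{-ct}$ can be split into factors $e^{-ct/k}$.

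The main obstacle is the uniform-in-$K$ control of $\tnorm{R^{I_K(u)}}_{\qvar}$. Its second-order Taylor terms come from $D^2$ of $v\mapsto\langle h(v)z,C_Kv\rangle$, and the task there is to verify that $C_K$ never enters with a norm larger than $1$, so that only the fixed bounds on $h,Dh,D^2h$ and the controlled-path norms of $u$ appear. The second concern is whether the statement's constant $C$ may absorb $c_p$ and the polynomial degrees. I would handle this by putting all $u$-dependent factors into $A$, leaving $C$ to depend only on $p$, $C_h$, $\delta_*$ and $T=1$.
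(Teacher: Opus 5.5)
Your proposal is correct and follows essentially the same route as the paper. You apply the projected sewing estimate of Lemma~\ref{lem4.13} with $Q_{\lfloor K/2\rfloor}$ to $Y_K(s)=e^{\delta_*s}I_K(u(s))$, bound the controlled-path data of $I_K(u)$ uniformly in $K$ (via $\|C_K\|\le 1$) through Lemma~\ref{lem3.4} and the absorbing radius of Lemma~\ref{lem4.7}, and collect these factors into a tempered $A$ (the paper takes $A=A_0^3$); your Taylor expansion of $v\mapsto\langle h(v)z,C_Kv\rangle$ at $u(s)$ is a clean way to obtain the remainder bound \eqref{eq4.40-0}.
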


\begin{proof}
By Lemmas~\ref{lem3.4} and \ref{lem4.7}, there exists a
tempered random variable $A_0(\omega)\geq1$, independent of $K$,
such that
\begin{equation}\label{eq4.46}
 \begin{aligned}
 &1+\sup_{s\in I_n}\norm{u_s}_{\Hh}
 +\tnorm{ u}_{\pvar,I_n}+\tnorm{R^u}_{\qvar,I_n}
 \leq A_0(\theta_n\omega).
 \end{aligned}
\end{equation}
Thanks to the definition of $I_K$ in \eqref{eq4.36-1}, Cauchy--Schwarz inequality and
Assumption~\textbf{(A3)}, we have
\begin{align}\label{eq4.46-0}
\norm{I_K(u)}_{\infty,I_n}
 \leq C\norm{u}_{\infty,I_n}
 \leq C A_0(\theta_n\omega).
\end{align}
In view of \eqref{eq4.36} and Assumption \textbf{(A3)}, it holds that $\norm{(I_K(u))'}_{\infty,I_n}
\leq C(1+\norm{u}_{\infty,I_n})$ and
\begin{align}\label{eq4.39}
\tnorm{(I_K(u))'}_{\pvar,I_{n}}
 \leq
 C(1+\norm{u}_{\infty,I_{n}})
 \tnorm{u}_{\pvar,I_{n}}
 \leq
 C A_0(\theta_n\omega)^{2}.
\end{align}
Moreover, we notice that $h(u(t))-h(u(s))
 =
 Dh(u(s))[u_{s,t}]
 +
 \mathcal R^h_{s,t}$ with
$\mathcal R^h_{s,t}
 =
 \int_0^1(1-r)
 D^2h(u(s)+ru_{s,t})
 [u_{s,t},u_{s,t}]\,dr$ and
 $u_{s,t}=
 h(u(s))x_{s,t}+R^u_{s,t}$. Then for every $z\in U$,
\begin{align*}
 \bigl(I_K(u(t))-I_K(u(s))\bigr)z
 &=
 \ip{(h(u(t))-h(u(s)))z}{C_Ku(t)}
 +
 \ip{h(u(s))z}{C_Ku_{s,t}}\\
 &=
 \bigl(I_K(u)\bigr)'(s)(x_{s,t},z)
 +
 R^{I_K(u)}_{s,t}z,
\end{align*}
where
\begin{equation}\label{eq4.40-0}
 \norm{R^{I_K(u)}_{s,t}}
 \leq
 C(1+\norm{u}_{\infty,I_{n}})
 \left(
 \norm{R^u_{s,t}}
 +
 \norm{u_{s,t}}^2
 \right).
\end{equation}
It follows that
\begin{equation}\label{eq4.40}
 \tnorm{R^{I_K(u)}}_{\qvar,I_{n}}
 \leq
 C(1+\norm{u}_{\infty,I_{n}})
 \left(
 \tnorm{R^u}_{\qvar,I_{n}}
 +
 \tnorm{u}_{\pvar,I_{n}}^2
 \right)\leq
 C A_0(\theta_n\omega)^{3}.
\end{equation}

 Set $Y_K(s):=e^{\delta_*s}I_K(u(s))$. Then \(Y_K\) is controlled by \(x\), with Gubinelli derivative
$Y_K'(s)=e^{\delta_*s}(I_K(u))'(s)$, and
\begin{align*}
 R^{Y_K}_{s,t}
 =e^{\delta_*s}R^{I_K(u)}_{s,t}
 +(e^{\delta_*t}-e^{\delta_*s})I_K(u(t)).
\end{align*}
By \eqref{eq4.46-0}-\eqref{eq4.39} and \eqref{eq4.40},
we deduce that
\begin{align}
|Y_K(n)Q_{\lfloor K/2\rfloor}x_{n,n+1}|
&\leq \|Y_K(n)\|
 \tnorm{Q_{\lfloor K/2\rfloor}x}_{\pvar,I_n}\nonumber\\
 &\leq
 Ce^{\delta_*(n+1)}A_0(\theta_n\omega)
 \tnorm{Q_{\lfloor K/2\rfloor}x}_{\pvar,I_n},
 \label{eq4.47}\\
|Y_K'(n)(Q_{\lfloor K/2\rfloor}\otimes Q_{\lfloor K/2\rfloor})\mathbb X_{n,n+1}|
&\leq
\|Y_K'(n)\|
\|(Q_{\lfloor K/2\rfloor}\otimes Q_{\lfloor K/2\rfloor})\mathbb X_{n,n+1}\|\nonumber\\
 &\leq C
e^{\delta_*(n+1)}A_0(\theta_n\omega)^2
 \tnorm{(Q_{\lfloor K/2\rfloor}\otimes \operatorname{Id}_U)\mathbb X}_{\qvar,I_n},
 \label{eq4.48}\\
\tnorm{Y_K'}_{\pvar,I_n}
 +\tnorm{R^{Y_K}}_{\qvar,I_n}
 &\leq Ce^{\delta_*(n+1)} A_0(\theta_n\omega)^3.
 \label{eq4.49}
\end{align}
Moreover, by $Q_{K}=Q_{K}Q_{\lfloor K/2\rfloor}$, it holds that
$Y_K(s)=Y_K(s)Q_{\lfloor K/2\rfloor}$,
$R^{Y_K}_{s,t}=R^{Y_K}_{s,t}
Q_{\lfloor K/2\rfloor}$ and
$Y_K'(s)=Y_K'(s)(Q_{\lfloor K/2\rfloor}\otimes Q_{\lfloor K/2\rfloor}).$
Applying Lemma~\ref{lem4.13}, and using \eqref{eq4.42}, \eqref{eq4.47}-\eqref{eq4.49} and Assumption~\textbf{(A4)},  we have
\begin{align*}
&\bigg|
\int_n^{n+1}Y_K(s)dx(s)
\bigg|
\leq
\left|
Y_K(n)Q_{\lfloor K/2\rfloor}x_{n,n+1}
\right|+
\left|
Y_K'(n)
(Q_{\lfloor K/2\rfloor}\otimes Q_{\lfloor K/2\rfloor})
\mathbb X_{n,n+1}
\right|\\
&+
c_p
\tnorm{R^{Y_K}}_{\qvar,I_n}
\tnorm{Q_{\lfloor K/2\rfloor}x}_{\pvar,I_n}+
c_p
\tnorm{Y_K'}_{\pvar,I_n}
\tnorm{
(Q_{\lfloor K/2\rfloor}\otimes Q_{\lfloor K/2\rfloor})\mathbb X
}_{\qvar,I_n}\\
&\leq C e^{\delta_*(n+1)}
 \gamma_{\lfloor K/2\rfloor}(\theta_n\omega)
 A_0(\theta_n\omega)^3.
\end{align*}
Finally, the desired estimate \eqref{eq:rough-tail}
follows immediately by setting
\(A(\omega):=A_0(\omega)^3\), which is tempered and
independent of \(n\) and \(K\).
\end{proof}

\begin{theorem}\label{thm4.15}
Let $D\in\mathcal D(\mathcal H)$. For every $\varepsilon>0$ and
$\omega\in\Omega_0$, there exist
$K_0=K_0(\varepsilon,\omega,D)$ and
$T_0=T_0(\varepsilon,\omega,D)$ such that
\begin{equation}\label{eq4.52}
 \sup_{\xi\in D(\theta_{-t}\omega)}
 \sup_{\vartheta\in[-\tau,0]}
 \sum_{|i|\geq2K_0}
 \left|
 \bigl(\Phi(t,\theta_{-t}\omega)\xi\bigr)(\vartheta)_i
 \right|^2
 <\varepsilon
\end{equation}
for all $t\geq T_0$.
\end{theorem}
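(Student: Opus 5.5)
The plan is to first absorb the trajectory into the positively invariant family $\widehat B$ of Lemma~\ref{lem4.7}, and then run the tail Lyapunov--Krasovskii inequality \eqref{eq4.38} along unit intervals. The rough integrals will be controlled by Lemma~\ref{lem4.14}, and the resulting tail bound $\Psi_K$ will be transferred to the history segment.

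\emph{Entering $\widehat B$.} Fix $D\in\mathcal D(\Hh)$ and $\omega$. By Lemma~\ref{lem4.7}(2) there is $T_D(\theta_{-m}\omega)$ after which pullback trajectories lie in $\widehat B$. To avoid dependence on the shifted absorption time, I would instead use that $\widehat B$ absorbs $D$ at $\omega$ itself. Concretely, for $t$ large write $t=T_1+\ell$ with $\ell$ a large integer. The state at time $T_1$ started from $\theta_{-t}\omega$ is $\Phi(T_1,\theta_{-t}\omega)\xi$. It should lie in $\widehat B(\theta_{-\ell}\omega)$ for $T_1\geq T_D(\theta_{-\ell}\omega)$, but that requires uniformity in $\ell$. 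The cleaner route is to use \eqref{eq4.21}: for $n\geq n_0(\omega,D)$ the trajectory at time $nr$ enters $\tilde B$, and hence $\widehat B$, at the appropriate shifted fibre. Combined with positive invariance (Lemma~\ref{lem4.7}(1)), the trajectory $u(\cdot;\theta_{-t}\omega,\xi)$ has $u_s\in\widehat B(\theta_{s-t}\omega)$ for all $s\in[t-\ell,t]$, where $\ell$ may be taken as large as desired, say $\ell=\lfloor t-T'\rfloor$ with $T'$ depending only on $(\omega,D)$. I would verify that the entrance time is uniform by redoing \eqref{eq4.20}--\eqref{eq4.21} along the fibres $\theta_{-\ell}\omega$ using temperedness of $d_D$.

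\emph{Tail energy inequality.} Apply \eqref{eq4.38} on $[a,t]$ with $a=t-\ell$, split the rough integral over unit intervals $[a+k,a+k+1]$, and invoke Lemma~\ref{lem4.14} on each (after shifting time so the intervals are $[n,n+1]$ in the relevant fibre). Multiplying by $e^{-\delta_*t}$ gives
\[
\Psi_K(t)\leq e^{-\delta_*\ell}\Psi_K(a)+\frac{C}{K}\sup_{s}\|u(s)\|^2+C\|Q_KF_0\|^2+C\sum_{k=1}^{\ell}e^{-\delta_*(k-1)}A(\theta_{-k}\omega)\gamma_{\lfloor K/2\rfloor}(\theta_{-k}\omega).
\]
The terms are handled as follows:
\begin{itemize}
\item The first term is bounded by $Ce^{-\delta_*\ell}\widehat R(\theta_{-\ell}\omega)$, which tends to $0$ by temperedness.
\item The second is at most $\frac{C}{K}\sum_k e^{-\delta_* k}\widehat R(\theta_{-k}\omega)$, which is finite by temperedness and $O(1/K)$.
\item The third vanishes as $K\to\infty$ since $F_0\in U$.
\end{itemize}

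\emph{The obstacle.} The main difficulty is the last sum. I would split it at a level $k\leq N$ versus $k>N$. For $k>N$, use $\gamma_{\lfloor K/2\rfloor}\leq C(1+\tnorm{\mathbf x}_{\pvar,[0,1]}^2)$ (uniform in $K$), which is tempered, so the tail is at most $C\sum_{k>N}e^{-\delta_*k/2}$ and is small for large $N$. For $k\leq N$, finitely many terms each tend to $0$ as $K\to\infty$ by (A5). This yields $K_0$ with $\Psi_{K}(s)<\varepsilon$ for all $K\geq K_0$ and all end times $s\in[t-\tau,t]$. The same argument is applied at each $s$, using a sup over the shifted window, controlled by the $\widehat R_L^{\#}$-type sup of Lemma~\ref{lem4.3}.

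\emph{Conclusion.} Finally, \eqref{eq4.32} gives $\|Q_{2K_0}u(t+\vartheta)\|^2\leq V_{K_0}(u(t+\vartheta))\leq\Psi_{K_0}(t+\vartheta)<\varepsilon$, which is \eqref{eq4.52}.
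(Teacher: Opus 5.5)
Your architecture is the paper's: absorb into $\widehat B$, integrate \eqref{eq4.38} over a window, bound the rough integrals on unit intervals by Lemma~\ref{lem4.14}, and conclude via temperedness and \textbf{(A5)}. Your split of the last series at level $N$ is just the dominated-convergence step behind \eqref{eq:GK-zero}.

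The step that fails is the entrance into $\widehat B$ as you set it up. You want a $T'=T'(\omega,D)$ such that, for all large $t$, the segment at time $t-\ell$ with $\ell=\lfloor t-T'\rfloor$ lies in $\widehat B(\theta_{-\ell}\omega)$. This amounts to $\sup_{\ell}T_D(\theta_{-\ell}\omega)<\infty$, and neither \eqref{eq4.21} nor the temperedness of $d_D$ gives it. Redoing \eqref{eq4.20}--\eqref{eq4.21} at the fibre $\theta_{-\ell}\omega$ needs $\eta^n d_D(\theta_{-(nr+s+\ell)}\omega)$ to be bounded. Since $d_D(\theta_{-v}\omega)$ may grow subexponentially in $v$, the admissible $n_0(\theta_{-\ell}\omega,D)$ is in general unbounded in $\ell$. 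Put differently, data of size $d_D(\theta_{-t}\omega)^{1/2}$, unbounded as $t\to\infty$, cannot be contracted into a tempered ball within the bounded elapsed time $t-\ell\leq T'+1$. Moreover, \eqref{eq4.21} only places the state in $\tilde B(\theta_{-s}\omega)$ with $s\in[0,r)$, i.e.\ near the final fibre, not at the start of a long window.

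The window does not need to grow with $t$, and this is how the paper proceeds. Apart from the initial term, every term in your inequality is bounded by a full series over $j\geq0$, hence uniformly in the window length:
\begin{itemize}
\item $\frac{C}{K}\sum_j e^{-\delta_*j}\bigl(1+\widehat R_1^{\#}(\theta_{-j}\omega)\bigr)$. You need this weighted sum, as in your bullet, not $\frac{C}{K}\sup_s\|u(s)\|^2$ as in your display, which is not controlled over long windows.
\item The $\|Q_KF_0\|^2$ term.
\item Your last sum extended to all $k\geq1$.
\end{itemize}

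So first fix $K_0$ making these three terms smaller than $\varepsilon/2$. Then fix an integer $N>\tau$ with $e^{\delta_*\tau}e^{-\delta_*N}(1+L_1e^{\sigma\tau}\tau)\widehat R(\theta_{-N}\omega)<\varepsilon/2$, and set $T_0:=N+T_D(\theta_{-N}\omega)$. For $t\geq T_0$, Lemma~\ref{lem4.7}(2) at the fibre $\theta_{-N}\omega$ puts the segment at time $t-N$ into $\widehat B(\theta_{-N}\omega)$. Positive invariance then keeps the trajectory in the corresponding fibres of $\widehat B$ up to time $t$. The dependence on the shifted absorption time is harmless, because $N$ is fixed before $T_0$ is chosen.

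Finally, keep the start point $t-N$ fixed and let the end point range over $[t-\tau,t]$; the resulting factor $e^{\delta_*\tau}$ is just a constant. This also spares you the re-shifted windows for each end point and the extra sup-over-shift control of $A$ and $\gamma_{\lfloor K/2\rfloor}$ they would require.
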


\begin{proof}
Fix \(t>0\) and \(\xi\in D(\theta_{-t}\omega)\), and define
$v_t(s):=u(t+s;\theta_{-t}\omega,\xi)$ for $-t-\tau\leq s\leq0$.
By Assumption~\textbf{(A4)}, we have
$\mathbf x_{t+s_1,t+s_2}(\theta_{-t}\omega)
 =
 \mathbf x_{s_1,s_2}(\omega)$.
Then, it follows from  \eqref{eq:vector} that \(v_t\) solves \[
 dv_t(s)
 =
 \left[
 -\mathcal L v_t(s)
 -\lambda v_t(s)
 +F\bigl(v_t(s),v_t(s-\tau)\bigr)
 \right]\,ds
 +
 h\bigl(v_t(s)\bigr)\,dx(s;\omega),
 ~~~~s\in[-t,0].
\]

Let $N>\tau$ be an integer to be chosen later. Define $(v_t)_{-N}(s):=v_t(-N+
s)$ for $s\in [-\tau,0]$.
Then from Lemma \ref{lem4.7}, there exists
$T_D(\theta_{-N}\omega)>0$ such that for  $t-N\geq T_D(\theta_{-N}\omega)$,
\[(v_t)_{-N}=
\Phi(t-N,\theta_{-t}\omega)\xi=
\Phi(t-N,\theta_{-(t-N)}
(\theta_{-N}\omega))\xi\in
\widehat B(\theta_{-N}\omega),\]
which implies $\norm{(v_t)_{-N}}_{\Hh}^2
 \leq
 \widehat R(\theta_{-N}\omega)$.
By the positive invariance of $\widehat B$, we have for any $\xi\in D(\theta_{-t}\omega)$ and $s\in [-N,0]$,
\begin{align}\label{eq4.53}
 (v_t)_s
 =\Phi(t+s,\theta_{-t}\omega)\xi
 &=\Phi(s+N,\theta_{-N}\omega)
 \Phi(t-N,\theta_{-t}\omega)\xi
 \nonumber\\
 &=
 \Phi(s+N,\theta_{-N}\omega)(v_t)_{-N}
 \in
 \widehat B(\theta_s\omega).
\end{align}
For $s\in[-\tau,0]$, applying
\eqref{eq4.34-3} and \eqref{eq4.38} to $v_t$ gives
\begin{align}
 \Psi_K(s)
 &\leq
 e^{-\delta_*(s+N)}\Psi_K(-N)
 +\frac{\nu C_\chi }{K}
 \bigg(\sup_{i}\sum_{j}w_{ij}|i-j|\bigg)
 \int_{-N}^{s}
 e^{-\delta_*(s-r)}\norm{v_t(r)}^2\,dr
 \notag\\
 &\quad+
 \frac{\norm{Q_KF_0}^2}
 {(\lambda-L_0-L_1)\delta_*}
 +e^{-\delta_*s}
 \left|
 2\int_{-N}^{s}
 e^{\delta_*r}I_K(v_t(r))\,dx(r)
 \right|.
 \label{eq4.54}
\end{align}

\textbf{Step~1.} We first estimate the first term on the right-hand side
of \eqref{eq4.54}.
In view of \eqref{eq4.31}, we have
$V_K(v_t(r))
 \leq
 \norm{v_t(r)}^2
 \leq
 \norm{(v_t)_{-N}}_{\Hh}^2$
for each $r\in[-N-\tau,-N]$.
Consequently,
it follows from \eqref{eq4.34-3} that
\begin{align*}
 \Psi_K(-N)
 &=
 V_K(v_t(-N))
 +L_1e^{\sigma\tau}
 \int_{-N-\tau}^{-N}
 e^{-\sigma(-N-r)}
 V_K(v_t(r))\,dr\\
 &\leq
 (1+L_1e^{\sigma\tau}\tau)
 \norm{(v_t)_{-N}}_{\Hh}^2
 \leq
 (1+L_1e^{\sigma\tau}\tau)
 \widehat R(\theta_{-N}\omega).
\end{align*}
Since $\widehat R$ is tempered, we have
\begin{equation}\label{eq4.56}
 \sup_{s\in[-\tau,0]}
 e^{-\delta_*(s+N)}\Psi_K(-N)
 \leq
 e^{\delta_*\tau}e^{-N\delta_*}
 (1+L_1e^{\sigma\tau}\tau)\widehat R(\theta_{-N}\omega)
 \rightarrow0~~\mbox{as}~~N\to+\infty.
\end{equation}

\textbf{Step~2.}
We next estimate the second term on the right-hand side
of \eqref{eq4.54}. By \eqref{eq4.53} and
Lemma~\ref{lem4.7}, we have
\begin{equation}\label{eq4.57}
 \norm{v_t(r)}^2
 \leq
 \norm{(v_t)_r}_{\Hh}^2
 \leq
 \widehat R(\theta_r\omega),
 \qquad -N\leq r\leq0.
\end{equation}
For $j\in\mathbb N_0$, let
$I_j:=[-j-1,-j]$.
If $r\in I_j$, then
\begin{align*}
 \widehat R(\theta_r\omega)
 &=
 \widehat R\bigl(
 \theta_{r+j}(\theta_{-j}\omega)
 \bigr)\leq
 \sup_{|\rho|\leq1}
 \widehat R\bigl(
 \theta_\rho(\theta_{-j}\omega)
 \bigr)
 =
 \widehat R_1^{\#}(\theta_{-j}\omega),
\end{align*}
where $\widehat R_1^{\#}(\omega)$ is given in
Lemma~\ref{lem4.7}.
Splitting $[-N,s]$ into its intersections with the unit
intervals $\{I_j\}_{j\geq0}$ and using
\eqref{eq4.57}, we obtain that
\begin{align}
 &\int_{-N}^{s}
 e^{-\delta_*(s-r)}
 \norm{v_t(r)}^2\,dr=
 \sum_{j=0}^{N-1}
 \int_{[-N,s]\cap I_j}
 e^{-\delta_*(s-r)}
 \norm{v_t(r)}^2\,dr\nonumber\\
 &\leq
 C_{\tau,\delta_*}
 \sum_{j=0}^{\infty}
 e^{-\delta_*j}
 \bigl(
 1+\widehat R_1^{\#}(\theta_{-j}\omega)
 \bigr)=
 C_{\tau,\delta_*}\mathfrak R(\omega),
 \label{eq4.58}
\end{align}
where the constant $C_{\tau,\delta_*}$ is independent of
$t$, $N$, $K$, $\xi$ and $s\in[-\tau,0]$.
Since \(\widehat R_1^{\#}\) is tempered,
\(\mathfrak R(\omega)<\infty\) for every
\(\omega\in\Omega_0\).

\textbf{Step~3.} We next estimate the rough integral term on the
right-hand side of \eqref{eq4.54}.
By Lemma~\ref{lem4.14}, we find that
\begin{align}
&\sup_{s\in[-\tau,0]}
\bigg\{e^{-\delta_*s}
 \left|
 2\int_{-N}^{s}
 e^{\delta_*r}I_K(v_t(r))\,dx(r)\right|
 \bigg\}\nonumber\\
 &\leq \sup_{s\in[-\tau,0]}
\bigg\{2
 e^{-\delta_*s}\sum_{j=0}^{N-1}
 \left|
 \int_{[-N,s]\cap I_j}
 e^{\delta_*r}I_K(v_t(r))\,dx(r)\right|
 \bigg\}
 \nonumber\\
 &\leq\sup_{s\in[-\tau,0]}
\bigg\{
 Ce^{-\delta_*s}
 \sum_{j=0}^{N-1}
 e^{-\delta_*j}
 A(\theta_{-j-1}\omega)
 \gamma_{\lfloor K/2\rfloor}
 (\theta_{-j-1}\omega)\bigg\}
 \nonumber\\
&\leq C_{\tau,\delta_*}
\sum_{j=0}^{\infty}
 e^{-\delta_*j}
 A(\theta_{-j-1}\omega)
 \gamma_{\lfloor K/2\rfloor}
 (\theta_{-j-1}\omega)
:=C_{\tau,\delta_*}
\mathfrak G_K(\omega).
 \label{eq4.59}
\end{align}
Set
$M(\omega)
 :=
 \tnorm{x(\omega)}_{\pvar,[0,1]}
 +2\tnorm{\mathbb X(\omega)}_{\qvar,[0,1]}$.
Then
$\sup_{K\geq1}
 \gamma_{\lfloor K/2\rfloor}(\omega)
 \leq M(\omega)+M(\omega)^2$.
Fix $\varsigma\in(0,\delta_*/3)$. Since $A(\omega)$ and $M(\omega)+M(\omega)^2$ are
tempered, there exists a finite random variable
\(\Lambda_\varsigma(\omega)\) such that, for every
\(j\in\mathbb N_0\),
\[
 A(\theta_{-j-1}\omega)
 \leq
 \Lambda_\varsigma(\omega)e^{\varsigma(j+1)},
 \qquad
 \sup_{K\geq1}
 \gamma_{\lfloor K/2\rfloor}
 (\theta_{-j-1}\omega)
 \leq
 \Lambda_\varsigma(\omega)e^{\varsigma(j+1)}.
\]
Consequently,
\begin{align*}
e^{-\delta_*j}
A(\theta_{-j-1}\omega)
\gamma_{\lfloor K/2\rfloor}
(\theta_{-j-1}\omega)
\leq
\Lambda_\varsigma(\omega)^{2}e^{2\varsigma}
e^{-(\delta_*-2\varsigma)j}.
\end{align*}
Since $\delta_*-2\varsigma>0$, the right-hand side is summable
in $j$ and independent of $K$. Thus
$\mathfrak G_K(\omega)<\infty$ for every $K\geq1$.
For each fixed $j$, Assumption~\textbf{(A5)} implies
$\gamma_{\lfloor K/2\rfloor}
 (\theta_{-j-1}\omega)
 \rightarrow0$ as
$K\to\infty$.
Hence, using the dominated convergence theorem yields
\begin{equation}\label{eq:GK-zero}
 \mathfrak G_K(\omega)\rightarrow0
 \qquad\text{as }K\to\infty
\end{equation}
for every $\omega\in\Omega_0$.

\textbf{Step~4.}
Combining \eqref{eq4.54} and \eqref{eq4.58}--\eqref{eq4.59}, we obtain that for
$N>\tau$ and
$t\geq N+T_D(\theta_{-N}\omega)$,
\begin{align}\label{eq4.60}
 \sup_{s\in[-\tau,0]}\Psi_K(s)
 &\leq
 \sup_{s\in[-\tau,0]}
 e^{-\delta_*(s+N)}\Psi_K(-N)
 +\frac{\nu C_\chi C_{\tau,\delta_*}\mathfrak R(\omega)}{K}
 \bigg(\sup_{i}\sum_{j}w_{ij}|i-j|\bigg)
 \nonumber\\
 &+\frac{\norm{Q_KF_0}^2}
 {(\lambda-L_0-L_1)\delta_*}
 +C_{\tau,\delta_*}
\mathfrak G_K(\omega).
\end{align}
Since $\mathfrak R(\omega)<\infty$,
$\norm{Q_KF_0}\to0$, and
$\mathfrak G_K(\omega)\to0$ as $K\to\infty$, we may choose
$K_0$ sufficiently large such that the sum of the last three
terms on the right-hand side of \eqref{eq4.60} is less than
$\varepsilon/2$.
By \eqref{eq4.56}, we may then choose an integer $N>\tau$
sufficiently large such that
$e^{\delta_*\tau}e^{-\delta_*N}
 \bigl(1+L_1e^{\sigma\tau}\tau\bigr)
 \widehat R(\theta_{-N}\omega)
 <
 \frac{\varepsilon}{2}$.
Set
$T_0:=N+T_D(\theta_{-N}\omega)$.
Then, for every $t\geq T_0$ and every
$\xi\in D(\theta_{-t}\omega)$, the pullback absorption
property gives
$(v_t)_{-N}\in\widehat B(\theta_{-N}\omega)$.
Consequently,
$\sup_{s\in[-\tau,0]}
 e^{-\delta_*(s+N)}\Psi_{K_0}(-N)
 <
 \frac{\varepsilon}{2}$.
It now follows from \eqref{eq4.60} that
$\sup_{s\in[-\tau,0]}
\Psi_{K_0}(s)<\varepsilon$.
Finally, \eqref{eq4.32} yields
\[\sup_{s\in[-\tau,0]}
 \sum_{|i|\geq2K_0}|v_t(s)_i|^2
 \leq
 \sup_{s\in[-\tau,0]}V_{K_0}(v_t(s))\leq
 \sup_{s\in[-\tau,0]}\Psi_{K_0}(s)
 <\varepsilon.\]
Since
$v_t(\vartheta)
 =
 \bigl(\Phi(t,\theta_{-t}\omega)
 \xi\bigr)(\vartheta)$,
 $\vartheta\in[-\tau,0]$,
we obtain \eqref{eq4.52}. This completes the proof.
\end{proof}

\begin{lemma}\label{lem4.16}
Fix $\omega\in\Omega_0$ and let $D\in\mathcal D(\mathcal H)$. Suppose
that $t_n\to\infty$ and
$\xi_n\in D(\theta_{-t_n}\omega)$, and set
\[
 z_n:=\Phi(t_n,\theta_{-t_n}\omega)\xi_n
 \in\Hh.
\]
Then the sequence $\{z_n\}$ is eventually equicontinuous on
$[-\tau,0]$. More precisely, there exist $n_0\in\mathbb N$
and a function
$\rho_\omega:[0,\tau]\to[0,\infty)$, independent of
$n\geq n_0$, such that
$\lim_{\delta\downarrow0}\rho_\omega(\delta)=0
$
and
\begin{align*}
 \sup_{n\geq n_0}
 \norm{z_n(\vartheta_2)-z_n(\vartheta_1)}
 \leq
 \rho_\omega\bigl(|\vartheta_2-\vartheta_1|\bigr)
\end{align*}
for all
\(\vartheta_1,\vartheta_2\in[-\tau,0]\).
\end{lemma}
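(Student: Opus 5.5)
We will prove the eventual equicontinuity by writing $z_n(\vartheta)=u(t_n+\vartheta;\theta_{-t_n}\omega,\xi_n)$. Then we reuse the shifted solution $v_n(s):=u(t_n+s;\theta_{-t_n}\omega,\xi_n)$ from the proof of Theorem~\ref{thm4.15}, which solves \eqref{eq:vector} on $[-t_n,0]$ driven by $\mathbf x(\omega)$ itself. Thus $z_n(\vartheta)=v_n(\vartheta)$, and it suffices to control the modulus of continuity of $v_n$ on $[-\tau,0]$ uniformly in $n$.

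\textbf{Step 1 (uniform bound on the history).} Fix an integer $N>\tau$, say $N=\lceil\tau\rceil+1$. By Lemma~\ref{lem4.7}, there is $n_0$ such that $t_n-N\geq T_D(\theta_{-N}\omega)$ for $n\geq n_0$. Hence $(v_n)_{-N}=\Phi(t_n-N,\theta_{-t_n}\omega)\xi_n\in\widehat B(\theta_{-N}\omega)$, so that $\norm{(v_n)_{-N}}_{\Hh}^2\leq\widehat R(\theta_{-N}\omega)$. Positive invariance, exactly as in \eqref{eq4.53}, then gives $\sup_{s\in[-N,0]}\norm{v_n(s)}^2\leq \widehat R^{\#}_N(\omega)=:\mathcal M(\omega)$. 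This bound is finite, depends only on $\omega$, and is independent of $n\geq n_0$.

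\textbf{Step 2 (controlled-path estimate on a fixed window).} On $[-\tau,0]$, the path $v_n$ solves \eqref{eq:local-integral} with delayed input $v(t)=v_n(t-\tau)$. We have $\norm{v}_{\infty,[-\tau,0]}\leq\mathcal M^{1/2}$ and $\norm{v_n(-\tau)}\leq\mathcal M^{1/2}$. Lemma~\ref{lem3.4}, applied with $[a,b]=[-\tau,0]$ and the fixed rough path $\mathbf x(\omega)$, yields
\[
\tnorm{R^{v_n}}_{\qvar,[-\tau,0]}+\norm{v_n}_{\infty,[-\tau,0]}\leq C_\tau\bigl(1+2\mathcal M^{1/2}\bigr)\bigl(1+\tnorm{\mathbf x(\omega)}_{\pvar,[-\tau,0]}^{5p}\bigr)=:A_\omega,
\]
again uniformly in $n\geq n_0$. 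A bound on the $p$-variation norm alone is not enough for equicontinuity, because it gives no uniform modulus of continuity. So we should instead use the local quantitative estimate.

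\textbf{Step 3 (modulus of continuity; the main point).} Rather than the global $p$-variation bound, we use the pointwise structure from the proof of Lemma~\ref{lem3.4}. For $-\tau\leq\vartheta_1<\vartheta_2\leq0$, the controlled decomposition gives
\[
\norm{v_n(\vartheta_2)-v_n(\vartheta_1)}\leq \norm{h}_\infty\norm{x_{\vartheta_1,\vartheta_2}}+\norm{R^{v_n}_{\vartheta_1,\vartheta_2}}.
\]
The first term is bounded by $C_h\,\omega_{\mathbf x}(\vartheta_1,\vartheta_2)^{1/p}$, which is independent of $n$. For the remainder term, the cleanest route is to bound it on short intervals directly via Lemma~\ref{lem3.2}. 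If $\omega_{\mathbf x}(\vartheta_1,\vartheta_2)\leq\gamma_0$, set $z=\psi^{-1}_{\vartheta_1,\cdot}(v_n)$. Then \eqref{eq3.43}--\eqref{eq3.45} and \eqref{eq3.38} give
\[
\norm{v_n(\vartheta_2)-v_n(\vartheta_1)}\leq C\int_{\vartheta_1}^{\vartheta_2}\norm{\dot z}\,dr+C\omega_{\mathbf x}(\vartheta_1,\vartheta_2)^{1/p}\leq C(1+2\mathcal M^{1/2})|\vartheta_2-\vartheta_1|+C\omega_{\mathbf x}(\vartheta_1,\vartheta_2)^{1/p}.
\]
If instead $\omega_{\mathbf x}(\vartheta_1,\vartheta_2)>\gamma_0$, we simply use $2\mathcal M^{1/2}\leq 2\mathcal M^{1/2}\gamma_0^{-1/p}\omega_{\mathbf x}(\vartheta_1,\vartheta_2)^{1/p}$. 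Combining the two cases, we define
\[
\rho_\omega(\delta):=C\bigl(1+\mathcal M(\omega)^{1/2}\bigr)\Bigl(\delta+\gamma_0^{-1/p}\sup_{-\tau\leq a\leq b\leq0,\;b-a\leq\delta}\omega_{\mathbf x(\omega)}(a,b)^{1/p}\Bigr).
\]
This function is independent of $n\geq n_0$. Uniform continuity of the continuous control $\omega_{\mathbf x}$ on the compact simplex over $[-\tau,0]$, together with $\omega_{\mathbf x}(a,a)=0$, gives $\rho_\omega(\delta)\to0$ as $\delta\downarrow0$. Alternatively, the local $\beta$-Hölder property in \textbf{(A4)} gives the explicit rate $\omega_{\mathbf x}(a,b)\leq C_\omega(b-a)$.

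The only delicate point is Step 1. The estimate must use the pathwise absorbing family $\widehat B$ at the fixed fibres $\theta_s\omega$, $s\in[-N,0]$, rather than bounds depending on $\theta_{-t_n}\omega$. Otherwise the constants would vary with $n$, and no uniform modulus would follow.
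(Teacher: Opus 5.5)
Your proof is correct, but Step~3 takes a different route from the paper. Step~1 matches the paper, which absorbs at the fibre $\theta_{-2\tau-1}\omega$ and uses $\widehat R^{\#}_{2\tau+1}(\omega)$.

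The paper then proceeds as follows:
\begin{itemize}
\item It invokes Lemma~\ref{lem3.4} and stationarity to get the $n$-uniform controlled-path bound \eqref{eq4.62} on $[t_n-\tau,t_n]$.
\item It splits $z_n(\vartheta_2)-z_n(\vartheta_1)$ into the drift integral and the rough integral $\int h(u^{(n)})\,dx^{(n)}$.
\item It bounds the rough integral on $[\vartheta_1,\vartheta_2]$ by the sewing estimate \eqref{eq2.12}, with $\tnorm{(h(u))'}_{\pvar}$ and $\tnorm{R^{h(u)}}_{\qvar}$ controlled through Lemma~\ref{lem3.1}.
\end{itemize}
The modulus is then read off in a single formula from $\tnorm{x}_{\pvar,[a,b]}$ and $\tnorm{\mathbb X}_{\qvar,[a,b]}$ over short intervals.

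You bypass the controlled-path norms altogether. On intervals with $\omega_{\mathbf x}(\vartheta_1,\vartheta_2)\le\gamma_0$, the Doss--Sussmann transform of Lemma~\ref{lem3.2} has flow bounds uniform in the starting point, so the increment bound uses only the sup-norm bound $\mathcal M$. On the remaining intervals, the trivial bound $2\mathcal M^{1/2}$ is converted into a multiple of $\gamma_0^{-1/p}\omega_{\mathbf x}^{1/p}$.

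Your argument is more elementary, and it makes your Step~2 (and the controlled decomposition at the start of Step~3) unnecessary, so you can drop them. The price is the case distinction and the factor $\gamma_0^{-1/p}$. The paper's argument needs the uniform controlled bounds \eqref{eq4.62} but no case split. Both arguments rely on the continuity of the variation controls on the compact simplex (or on the H\"older bound from \textbf{(A4)}) to get $\rho_\omega(\delta)\to0$.

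One detail to tidy: the delayed input $v_n(r-\tau)$, $r\in[-\tau,0]$, lives on $[-2\tau,-\tau]$. This interval need not lie in $[-N,0]$ for $N=\lceil\tau\rceil+1$. Record the bound in Step~1 for history segments instead, $\sup_{s\in[-N,0]}\norm{(v_n)_s}_{\Hh}^2\le\widehat R^{\#}_N(\omega)$. This is exactly what \eqref{eq4.53} gives, and it covers $[-N-\tau,0]\supset[-2\tau,0]$.
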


\begin{proof}
Set $u^{(n)}(r):=u(r;\theta_{-t_n}
\omega,\xi_n)$.
Then $z_{n}(\vartheta)=u^{(n)}(\vartheta+t_n)$.
By the pullback absorbing property of \(\widehat B\), there
exists \(n_0\in\mathbb N\) such that, for every
\(n\geq n_0\),
\[
 (u^{(n)})_{t_n-2\tau-1}
 \in
 \widehat B(\theta_{-2\tau-1}\omega).
\]
Using the cocycle identity of $\Phi$ and
the positive invariance of \(\widehat B\), we obtain that for \(s\in[-2\tau-1,0]\),
\[
\begin{aligned}
 (u^{(n)})_{t_n+s}
 =
 \Phi(t_n+s,\theta_{-t_n}\omega)\xi_n
 &=
 \Phi(s+2\tau+1,\theta_{-2\tau-1}\omega)
 \Phi(t_n-2\tau-1,\theta_{-t_n}\omega)\xi_n\\
 &=
 \Phi(s+2\tau+1,\theta_{-2\tau-1}\omega)
 (u^{(n)})_{t_n-2\tau-1}
 \in
 \widehat B(\theta_s\omega).
\end{aligned}
\]
In view of Lemma~\ref{lem4.7}, we have $\|(u^{(n)})_{t_n+s}\|_{\Hh}^{2}\leq \widehat{R}(\theta_{s}\omega)$. Define $\widehat{R}_{2\tau+1}^{\#}(\omega):=
\sup_{|s|\leq 2\tau+1}
\widehat{R}(\theta_{s}\omega)$. Then
\begin{equation}\label{eq4.62-0}
 \begin{aligned}
\sup_{n\geq n_0}
\sup_{-2\tau-1\leq s\leq 0}
\bigl\|(u^{(n)})_{t_n+s}\bigr\|_{\Hh}
\leq
\left(
\widehat{R}_{2\tau+1}^{\#}(\omega)
\right)^{1/2}.
 \end{aligned}
\end{equation}
Furthermore, using \eqref{eq4.62-0},  Lemma~\ref{lem3.4} and the stationarity identity
$\tnorm{\mathbf x(\theta_{-t_n}\omega)}
 _{p\text{-var},[t_n-\tau,t_n]}=
 \tnorm{\mathbf x(\omega)}
 _{p\text{-var},[-\tau,0]}$,
we find that there exists a finite random variable
\(C(\omega)>0\), independent of \(n\), such that
\begin{equation}\label{eq4.62}
 \begin{aligned}
 &\sup_{n\geq n_0}\Bigl(
 \norm{u^{(n)}}_{\infty,[t_n-2\tau-1,t_n]}
 +\tnorm{u^{(n)}}_{\pvar,[t_n-\tau,t_n]}
 +\tnorm{R^{u^{(n)}}}_{\qvar,[t_n-\tau,t_n]}
 \Bigr)
 \leq C(\omega).
 \end{aligned}
\end{equation}
For
\(\vartheta_1,\vartheta_2\in[-\tau,0]\).
Without loss of generality, we may assume that
\(\vartheta_1\leq\vartheta_2\).
Let $r_{j}:=t_{n}+\vartheta_j$, $j=1,2$, and set $\mathbf{x}^{(n)}
:=\mathbf{x}(\theta_{-t_n}\omega)
=\bigl(x^{(n)},\mathbb{X}^{(n)}\bigr)$.
In view of \eqref{eq:vector}, it holds that
\[
 \norm{z_n(\vartheta_2)-z_n(\vartheta_1)}
 \leq
 \left\|
 \int_{r_1}^{r_2}G(u^{(n)}(r),u^{(n)}(r-\tau)
 )dr
 \right\|
 +
 \left\|
 \int_{r_1}^{r_2}
 h\bigl(u^{(n)}(r)\bigr)\,dx^{(n)}(r)
 \right\|.
\]
Moreover, applying the sewing estimate to the
rough integral, and using \eqref{eq4.62}, Lemma \ref{lem3.1}, Assumptions~\textbf{(A3)}-\textbf{(A4)}, we conclude that
\begin{align}
 \norm{z_n(\vartheta_2)-z_n(\vartheta_1)}
 &\leq C(\omega)(\vartheta_2-\vartheta_1)
+C_h\tnorm{x}_{\pvar,[\vartheta_1,\vartheta_2]}
 +C_h^2\tnorm{\mathbb X}_{\qvar,[\vartheta_1,\vartheta_2]}
 \notag\\
 &\quad+C(\omega)
 \left(
 \tnorm{\mathbb X}_{\qvar,[\vartheta_1,\vartheta_2]}
 +\tnorm{x}_{\pvar,[\vartheta_1,\vartheta_2]}
 \right),
 \label{eq4.63}
\end{align}
from which, we define
\begin{align*}
 \rho_\omega(\delta)
 &:=
 C(\omega)\delta+
 \bigl(C_h+C(\omega)\bigr)
 \sup_{\substack{-\tau\leq a\leq b\leq0\\
 b-a\leq\delta}}
 \tnorm{x}_{\pvar,[a,b]}+
 \bigl(C_h^2+C(\omega)\bigr)
 \sup_{\substack{-\tau\leq a\leq b\leq0\\
 b-a\leq\delta}}
 \tnorm{\mathbb X}_{\qvar,[a,b]}
\end{align*}
for $\delta\in[0,\tau]$. Then \eqref{eq4.63} immediately implies
\[
 \norm{z_n(\vartheta_2)-z_n(\vartheta_1)}
 \leq
 \rho_\omega\bigl(|\vartheta_2-\vartheta_1|\bigr).
\]
Since the maps
$(a,b)\mapsto
 \tnorm{x}_{p\text{-var},[a,b]}^p$
 and
$(a,b)\mapsto
 \tnorm{\mathbb X}_{q\text{-var},[a,b]}^q$
are continuous on
\(\{(a,b):-\tau\leq a\leq b\leq0\}\) and vanish whenever
\(a=b\), we have
\(\rho_\omega(\delta)\to0\) as \(\delta\downarrow0\).
This completes the proof.
\end{proof}

\begin{theorem}\label{thm4.18}
The cocycle $\Phi$ is pullback asymptotically compact in $\Hh$.
\end{theorem}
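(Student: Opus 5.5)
We prove that for every $\omega\in\Omega_0$, every $D\in\mathcal D(\Hh)$, every $t_n\to\infty$ and every $\xi_n\in D(\theta_{-t_n}\omega)$, the sequence $z_n:=\Phi(t_n,\theta_{-t_n}\omega)\xi_n$ is precompact in $\Hh=C([-\tau,0];U)$. We use a vector-valued Arzel\`a--Ascoli argument: a bounded subset of $C([-\tau,0];U)$ is relatively compact iff it is equicontinuous and, for each $\vartheta$ in a dense set (in fact for all $\vartheta$), $\{z_n(\vartheta)\}$ is relatively compact in $U$. Equicontinuity (for $n\ge n_0$) is exactly Lemma~\ref{lem4.16}. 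Boundedness follows from Lemma~\ref{lem4.7}: for large $n$, $z_n\in\widehat B(\omega)$, so $\|z_n\|_{\Hh}^2\le\widehat R(\omega)$.

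The pointwise relative compactness comes from the uniform tail estimate. Fix $\varepsilon>0$ and apply Theorem~\ref{thm4.15} with $\varepsilon^2$. This gives $K_0$ and $T_0$ such that
\[
\sup_{\vartheta\in[-\tau,0]}\|Q_{2K_0}z_n(\vartheta)\|<\varepsilon \qquad\text{for all } t_n\ge T_0,
\]
up to the harmless boundary index $|i|=2K_0$, since \eqref{eq4.52} sums over $|i|\ge2K_0$. The finite-dimensional parts $P_{2K_0}z_n(\vartheta)$ are bounded by $\widehat R(\omega)^{1/2}$ in the finite-dimensional space $P_{2K_0}U$. Hence $\{z_n(\vartheta)\}_{n}$ has a finite $2\varepsilon$-net in $U$, uniformly in $\vartheta$. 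Since $\varepsilon$ is arbitrary, each $\{z_n(\vartheta)\}$ is totally bounded.

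To avoid quoting the vector-valued Arzel\`a--Ascoli theorem, we can argue directly. First extract a diagonal subsequence along which $z_n(\vartheta)$ converges for all rational $\vartheta$. Equicontinuity, via the modulus $\rho_\omega$ of Lemma~\ref{lem4.16}, then upgrades this to a uniform Cauchy property on $[-\tau,0]$. Completeness of $\Hh$ yields a limit. Alternatively, cover $[-\tau,0]$ by finitely many points $\vartheta_k$ with mesh $\delta$ such that $\rho_\omega(\delta)<\varepsilon$, and combine this with the uniform tail and finite-dimensional bound to show that $\{z_n\}$ is totally bounded in $\Hh$ directly: the $Q_{2K_0}$-part is uniformly small, and the $P_{2K_0}$-part lies in a classical compact subset of $C([-\tau,0];\mathbb R^{4K_0+1})$ by the scalar Arzel\`a--Ascoli theorem, since $P_{2K_0}$ preserves the equicontinuity modulus.

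The main point to check is that a single index threshold $n_1$ works simultaneously for Lemma~\ref{lem4.16} ($n\ge n_0$), for absorption into $\widehat B(\omega)$, and for Theorem~\ref{thm4.15} ($t_n\ge T_0(\varepsilon)$). Since the $\varepsilon$-dependence only discards finitely many terms, total boundedness is unaffected. I expect no real obstacle here, since the difficult analytic work is already contained in Theorem~\ref{thm4.15} and Lemma~\ref{lem4.16}.
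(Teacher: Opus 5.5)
Your proof is correct and uses the same ingredients as the paper's proof: boundedness from absorption into $\widehat B(\omega)$ (Lemma~\ref{lem4.7}), the uniform lattice-tail estimate of Theorem~\ref{thm4.15}, the equicontinuity of Lemma~\ref{lem4.16}, and Arzel\`a--Ascoli applied to the finite-dimensional projections $P_{2K}z_n$. The paper argues sequentially: it applies Arzel\`a--Ascoli to $P_{2K_m}z_n$ at each cutoff level $m$, extracts a diagonal subsequence over $m$, and then proves a Cauchy estimate by splitting into $P_{2K_m}$ and $Q_{2K_m}$ parts. This is the sequential form of your total-boundedness variant, and your vector-valued Arzel\`a--Ascoli route is an equivalent repackaging of the same argument.
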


\begin{proof}
Fix $\omega\in\Omega_0$ and $D\in\mathcal D(\mathcal H)$. Let
$t_n\to\infty$, $\xi_n\in D(\theta_{-t_n}\omega)$, and set
$z_n:=\Phi(t_n,\theta_{-t_n}\omega)\xi_n$.
By Lemma~\ref{lem4.7},
there
exists \(n_0\in\mathbb N\) such that, for every
\(n\geq n_0\),
$z_n
 \in\widehat B(\omega)$.
Hence,
\begin{equation}\label{eq4.64}
 \sup_{n\geq n_0}\norm{z_n}_{\Hh}^2
 \leq\widehat R(\omega).
\end{equation}
For each $m\in\mathbb N$, by Theorem~\ref{thm4.15}, there
exist $K_m\in\mathbb N$ and $N_m\in\mathbb N$ such that
\begin{equation}\label{eq4.65}
 \sup_{n\geq N_m}
 \sup_{\vartheta\in[-\tau,0]}
 \norm{Q_{2K_m}z_n(\vartheta)}
 \leq2^{-m-3}.
\end{equation}

For each fixed $m$, the sequence
$\{P_{2K_m}z_n\}_{n\geq n_0}$ takes values in the
finite-dimensional space $P_{2K_m}\ell^2(\mathbb{Z})$.
After increasing \(n_0\), if necessary, we obtain from
\eqref{eq4.64}, Lemma~\ref{lem4.16}, and the contraction
property of \(P_{2K_m}\) that
\begin{align*}
 \sup_{n\geq n_0}
 \norm{P_{2K_m}z_n}_{\Hh}
 &\leq\widehat R(\omega)^{1/2},\\
\sup_{n\geq n_0}
 \norm{
 P_{2K_m}z_n(\vartheta_2)
 -
 P_{2K_m}z_n(\vartheta_1)}
 &\leq
 \rho_\omega\bigl(
 |\vartheta_2-\vartheta_1|
 \bigr).
\end{align*}
Thus, for each fixed \(m\),
\(\{P_{2K_m}z_n\}_{n\geq n_0}\) is uniformly bounded
and equicontinuous. Applying the Arzel\`a--Ascoli theorem
for each \(m\), followed by a diagonal argument, we extract
a subsequence
$\{z_{n_j}\}$ such that, for every fixed $m$,
$P_{2K_m}z_{n_j}$
converges in
$C([-\tau,0];P_{2K_m}\ell^2(\mathbb{Z}))$
as $j\to\infty$.

Let $\varepsilon>0$ and choose $m\in\mathbb N$ such that
$2^{-m-2}<\frac{\varepsilon}{2}$.
There exists $j_0\in\mathbb N$ such that,
for all $j,k\geq j_0$,
\[
 \sup_{\vartheta\in[-\tau,0]}
 \norm{
 P_{2K_m}\bigl(
 z_{n_j}(\vartheta)-z_{n_k}(\vartheta)
 \bigr)}
 <
 \frac{\varepsilon}{2}.
\]
Increasing $j_0$ if necessary, we may also assume that
$n_j,n_k\geq N_m$ whenever $j,k\geq j_0$. Then
by \eqref{eq4.65},
we have
\begin{align*}
 \norm{z_{n_j}-z_{n_k}}_{\Hh}
 &\leq\sup_{\vartheta\in[-\tau,0]}\norm{P_{2K_m}
 (z_{n_j}(\vartheta)-z_{n_k}(\vartheta))}+\sup_{\vartheta\in[-\tau,0]}\norm{Q_{2K_m}
 z_{n_j}(\vartheta)}\\
 &\quad
 +\sup_{\vartheta\in[-\tau,0]}\norm{Q_{2K_m}z_{n_k}
 (\vartheta)}
 <\frac{\varepsilon}{2}+2^{-m-2}<\varepsilon.
\end{align*}
Thus \(\{z_{n_j}\}\) is a Cauchy sequence in \(\Hh\).
Since \(\Hh\) is complete, it converges in \(\Hh\). This proves the pullback
asymptotic compactness of $\Phi$.
\end{proof}

Combining the preceding results, we obtain the main result of this section.

\begin{theorem}\label{thm:attractor}
The cocycle \(\Phi\) admits a unique
\(\mathcal D(\Hh)\)-random pullback attractor
\(\mathcal A=\{\mathcal A(\omega)\}_{\omega\in\Omega_0}\)
in \(\Hh\), given by
\begin{equation}\label{eq4.70}
 \mathcal A(\omega)
 =
 \bigcap_{s\geq0}
 \overline{
 \bigcup_{t\geq s}
 \Phi(t,\theta_{-t}\omega)
 B(\theta_{-t}\omega)
 }^{\Hh},
\end{equation}
where \(B(\omega)\) is defined in \eqref{eq4.17}.
\end{theorem}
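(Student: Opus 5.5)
The plan is to invoke the abstract existence theorem for random pullback attractors, in the form found for instance in Bates--Lu--Wang or Crauel--Flandoli. That theorem says: suppose a continuous cocycle on a Polish space has a closed, measurable, tempered, pullback $\mathcal D$-absorbing random set $B\in\mathcal D$ and is $\mathcal D$-pullback asymptotically compact. Then it has a unique $\mathcal D$-random pullback attractor, given by the omega-limit set of $B$, which is exactly \eqref{eq4.70}. So the proof reduces to checking these hypotheses one by one for $\Phi$ on $\Hh$. The space $\Hh=C([-\tau,0];\ell^2(\mathbb Z))$ is separable and complete, hence Polish. Theorem~\ref{thm3.9} provides the continuous random dynamical system, including the joint measurability in $(t,\omega,\xi)$.

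Next I check the absorbing set. Theorem~\ref{thm4.6} shows that $B(\omega)=\{\xi:\norm{\xi}_{\Hh}^2\le\overline R(\omega)\}$ is pullback absorbing for $\mathcal D(\Hh)$, and that $\overline R$ is measurable and tempered. A closed ball with measurable random radius is a closed random set, since $\omega\mapsto\dist(\zeta,B(\omega))=(\norm\zeta_{\Hh}-\overline R(\omega)^{1/2})^+$ is measurable. Its radius $\overline R^{1/2}$ is tempered, so $B\in\mathcal D(\Hh)$. Pullback asymptotic compactness is exactly Theorem~\ref{thm4.18}.

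The construction then goes as follows. The omega-limit set $\mathcal A(\omega)$ defined in \eqref{eq4.70} is nonempty and compact, by asymptotic compactness applied to sequences taken from $B$. It attracts every $D\in\mathcal D(\Hh)$: first $D$ is absorbed into $B$, then the standard contradiction argument with asymptotic compactness gives attraction of $B$. Invariance $\Phi(t,\omega)\mathcal A(\omega)=\mathcal A(\theta_t\omega)$ follows from the cocycle property and continuity of $\Phi(t,\omega)$. Temperedness holds because $\mathcal A(\omega)\subset B(\omega)$. Uniqueness holds because any two invariant, attracting tempered families attract each other, and hence coincide.

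I expect measurability of $\mathcal A$ to be the main obstacle, since $\Omega_0$ is not assumed complete. My first approach is to use the version of the abstract theorem that needs only a measurable cocycle together with a closed measurable absorbing set. In that version, $\mathcal A(\omega)=\bigcap_{m}\overline{\bigcup_{t\ge m,\,t\in\mathbb Q}\Phi(t,\theta_{-t}\omega)B(\theta_{-t}\omega)}$. Restricting to rational times is justified by continuity of $t\mapsto\Phi(t,\theta_{-t}\omega)\xi$, which follows from Theorem~\ref{thm3.9} and the stationarity in Assumption~\textbf{(A4)}. If measurability still fails on the given $\sigma$-algebra, I would pass to the $\mathbb P$-completion, as is customary.
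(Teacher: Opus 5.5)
Your proposal is correct and takes the same route as the paper: it checks that $\Phi$ is a continuous cocycle (Theorem~\ref{thm3.9}), that $B$ is a closed, measurable, tempered absorbing set (Theorem~\ref{thm4.6}), and that $\Phi$ is pullback asymptotically compact (Theorem~\ref{thm4.18}), and then applies the abstract existence result of Bates--Lu--Wang (their Proposition~2.7). The one hypothesis of that proposition you leave implicit is that the universe $\mathcal D(\Hh)$ is inclusion-closed, which the paper states explicitly; your additional discussion of the measurability of $\mathcal A$ goes beyond anything the paper addresses.
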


\begin{proof}
By Theorem~\ref{thm3.9}, \(\Phi\) is a continuous cocycle
on \(\Hh\). Theorem~\ref{thm4.6} provides a closed
measurable pullback absorbing set
$B=\{B(\omega)\}_{\omega\in\Omega_0}
 \in\mathcal D(\Hh)$,
and Theorem~\ref{thm4.18} shows that \(\Phi\) is
\(\mathcal D(\Hh)\)-pullback asymptotically compact in
\(\Hh\). Since the tempered universe $\mathcal D(\mathcal H)$ is
inclusion-closed, by the standard existence theorem for random pullback
attractors
\cite[Proposition~2.7]{BatesLuWang2009},
we conclude that \(\Phi\) admits a unique $\mathcal D(\mathcal H)$-random pullback attractor
$\mathcal A\in\mathcal D(\mathcal H)$, which is given by
\eqref{eq4.70}.
\end{proof}

\section{Euler and finite-dimensional approximations}

Let $m\in\N$, $\Delta:=\tau/m$, $t_k:=k\Delta$, and write
$x_k:=x_{t_k,t_{k+1}}$,
$\mathbb X_k:=\mathbb X_{t_k,t_{k+1}}$.
The compensated rough Euler scheme is given by
\begin{equation}\label{eq:euler}
 \begin{aligned}
 u_{k+1}^{\Delta}
 =&u_k^{\Delta}
 +\Delta\bigl[-\mathcal Lu_k^{\Delta}-\lambda u_k^{\Delta}
 +F(u_k^{\Delta},u_{k-m}^{\Delta})\bigr]
 +h(u_k^{\Delta})x_k+H(u_k^{\Delta})\mathbb X_k,
 ~~~~ k\geq0,
 \end{aligned}
\end{equation}
with $u_k^\Delta=\xi(t_k)$ for $-m\leq k\leq0$.
Since the delay interval $[-\tau,0]$
contains the $m+1$ grid points $j\Delta$, $-m\leq j\leq0$,
the state of the delay
scheme at each discrete time $t_k$ is represented by the discrete history vector
$\mathbf u_k^\Delta
 :=
 (u_{k-m}^\Delta,\ldots,u_k^\Delta)
 \in U^{m+1}$.
This motivates the discrete history space
\[
 \Hh_\Delta=U^{m+1},
 \qquad
 \norm{\zeta}_{\Hh_\Delta}
 :=
 \max_{-m\leq j\leq0}\norm{\zeta_j},
\]
whose norm is the discrete counterpart of the supremum norm
on $\Hh=C([-\tau,0];\ell^2(\mathbb{Z}))$.
Let
$P_\Delta:\Hh\to\Hh_\Delta$ and
$J_\Delta:\Hh_\Delta\to\Hh$
denote the sampling and piecewise linear interpolation operators, respectively. The $k$-step
solution map is denoted by $\Phi_\Delta(k,\omega)$ and satisfies
\begin{equation}\label{eq:discrete-cocycle}
 \Phi_\Delta(k+\ell,\omega)
 =\Phi_\Delta(k,\theta_{\ell\Delta}\omega)
 \circ\Phi_\Delta(\ell,\omega).
\end{equation}
For fixed $\Delta>0$, a discrete random bounded family
$\{D_\Delta(\omega)\}_{\omega\in\Omega}\subset\Hh_\Delta$
is called tempered if its radius
$R_\Delta(\omega):=
\sup_{\zeta\in D_\Delta(\omega)}
\|\zeta\|_{\Hh_\Delta}$
satisfies, for every $c>0$,
\begin{equation}\label{eq:discrete-tempered}
e^{-ck\Delta}R_\Delta(\theta_{-k\Delta}\omega)
\rightarrow0
\qquad\text{as }k\to\infty.
\end{equation}

Let $\xi\in\Hh$, and let
$u=u(\,\cdot\,;\omega,\xi)$ be the solution of
\eqref{eq:vector} with initial history
$u(s)=\xi(s)$, $s\in[-\tau,0]$.
For every $k\geq0$, by the integral formulation of
\eqref{eq:vector}, we obtain that
\begin{equation}\label{eq5.1}
 \begin{aligned}
 u(t_{k+1})
 =u(t_k)
 +\Delta G\bigl(u(t_k),u(t_k-\tau)\bigr)
 +h\bigl(u(t_k)\bigr)x_k
 +H\bigl(u(t_k)\bigr)\mathbb X_k+\varrho_k,
 \end{aligned}
\end{equation}
where
\begin{align*}
\varrho_k&=
 \int_{t_k}^{t_{k+1}}
 \Bigl[
 G\bigl(u(r),u(r-\tau)\bigr)
 -
 G\bigl(u(t_k),u(t_k-\tau)\bigr)
 \Bigr]dr\\
&+
 \int_{t_k}^{t_{k+1}}
 h\bigl(u(r)\bigr)\,dx(r)
 -h\bigl(u(t_k)\bigr)x_k
 -H\bigl(u(t_k)\bigr)\mathbb X_k:=\varrho_k^{(d)}+\varrho_k^{(r)}.
\end{align*}

\begin{lemma}\label{lem5.1}
Let $T>0$, $S\subset\Hh$ be compact, and let
\(\beta>1/p\) be as in Assumption~\textbf{(A4)}. Fix
$\alpha\in(1/3,\beta)$.
Then, for every $\omega\in\Omega_0$, there exists a
constant $C_{S,T}(\omega)>0$, independent of $\Delta$,
such that, for all sufficiently small $\Delta=\tau/m$,
the remainder $\varrho_k$ in \eqref{eq5.1} satisfies
\begin{equation}\label{eq5.2}
 \norm{\varrho_k}
 \leq
 C_{S,T}(\omega)
 \left(
 \Delta^{1+\alpha}
 +\Delta^{3\alpha}
 +\Delta\omega_S(\Delta)
 \right),
\end{equation}
uniformly over \(\xi\in S\) and \(k\geq0\) satisfying
\(t_{k+1}\leq T\), where \[\omega_S(\delta)
 :=
 \sup_{\xi\in S}
 \sup_{r,s\in[-\tau,0],|r-s|\leq\delta}
 \norm{\xi(r)-\xi(s)}.\]
\end{lemma}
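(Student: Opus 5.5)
We split $\varrho_k=\varrho_k^{(d)}+\varrho_k^{(r)}$ and bound each piece separately, uniformly over $\xi\in S$ and $t_{k+1}\leq T$. First we collect uniform a priori bounds. Since $S$ is compact it is bounded, so Corollary~\ref{cor3.5} gives $\sup_{\xi\in S}\|u\|_{\infty,[-\tau,T]}\leq C_{S,T}(\omega)$. Lemma~\ref{lem3.4}, applied on the finitely many method-of-steps intervals and combined with Lemma~\ref{lem3.3}, gives bounds on $\tnorm{u}_{\pvar,[0,T]}$ and $\tnorm{R^u}_{\qvar,[0,T]}$.

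We upgrade these to H\"older-type bounds on short intervals. By (A4), $\mathbf x$ is $\beta$-H\"older on $[0,T]$, so $\|x_{s,t}\|\leq C(\omega)|t-s|^\beta$ and $\|\mathbb X_{s,t}\|\leq C(\omega)|t-s|^{2\beta}$. Moreover, the $p$-variation over $[s,t]$ is controlled by $C(\omega)|t-s|^{\beta}$ for $p$-var and by $|t-s|^{2\beta}$ for $q$-var, since $p\beta>1$. Redo the local estimates of Lemma~\ref{lem3.4}, namely \eqref{eq3.47} and the analogue for $R^u$ via the Doss--Sussmann representation of Lemma~\ref{lem3.2}, on $[s,t]\subset[0,T]$ with $\omega_{\mathbf x}(s,t)\leq\gamma_0$. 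This yields
\[
\tnorm{u}_{\pvar,[s,t]}\leq C_{S,T}(\omega)|t-s|^{\beta},\qquad \tnorm{R^u}_{\qvar,[s,t]}\leq C_{S,T}(\omega)|t-s|^{2\beta}.
\]
We work with $\alpha<\beta$ so that all constants absorb into $|t-s|^\alpha$-type bounds for $\Delta$ small.

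\textbf{Rough remainder.} Take $Y=h(u)$, with $Y'=Dh(u)h(u)$. Since $u'=h(u)$, we have $Y'(t_k)\mathbb X_k=H(u(t_k))\mathbb X_k$. Estimate \eqref{eq2.12} then bounds $\|\varrho_k^{(r)}\|$ by
\[
c_p\bigl(\tnorm{Y'}_{\pvar,[t_k,t_{k+1}]}\tnorm{\mathbb X}_{\qvar,[t_k,t_{k+1}]}+\tnorm{R^Y}_{\qvar,[t_k,t_{k+1}]}\tnorm{x}_{\pvar,[t_k,t_{k+1}]}\bigr).
\]
Lemma~\ref{lem3.1}, in the form \eqref{eq3.1}--\eqref{eq3.2}, reduces these seminorms to those of $u$ and $R^u$, so the whole term is $\leq C_{S,T}(\omega)\Delta^{3\beta}\leq C\Delta^{3\alpha}$.

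\textbf{Drift remainder.} The map $G$ is Lipschitz with constant $L_b+L_1$. Hence
\[
\|\varrho_k^{(d)}\|\leq C\Delta\sup_{r\in[t_k,t_{k+1}]}\bigl(\|u(r)-u(t_k)\|+\|u(r-\tau)-u(t_k-\tau)\|\bigr).
\]
For $r\geq0$ the increments are bounded by $\tnorm{u}_{\pvar,[t_k,r]}\leq C\Delta^{\beta}\leq C\Delta^\alpha$, which contributes $\Delta^{1+\alpha}$. When $t_k<\tau$ the delayed argument lies in $[-\tau,0]$. Because $\Delta=\tau/m$, the interval $[t_k-\tau,t_{k+1}-\tau]$ is contained entirely in $[-\tau,0]$, and there the increment is at most $\omega_S(\Delta)$; this contributes $\Delta\omega_S(\Delta)$. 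When $t_k\geq\tau$ the delayed argument lies in $[0,T]$ and the previous bound applies.

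\textbf{Main obstacle.} The main difficulty is obtaining the local H\"older-scale bounds on $\tnorm{u}_{\pvar}$ and $\tnorm{R^u}_{\qvar}$ uniformly over $\xi\in S$, since Lemma~\ref{lem3.4} is stated only globally with $\omega_{\mathbf x}$. Our plan is to use \eqref{eq3.47} and \eqref{eq3.49} on the single interval $[t_k,t_{k+1}]$. This interval is not refined for $\Delta$ small, because $\omega_{\mathbf x}(t_k,t_{k+1})\leq C(\omega)\Delta^{p\beta\wedge1}\to0$, which yields
\[
\tnorm{u}_{\pvar}\leq C(\rho+\Delta),\qquad \tnorm{R^u}_{\qvar}\leq C(\rho^2+\Delta),\qquad \rho\sim\Delta^\beta.
\]
The residual linear-in-$\Delta$ term in the $R^u$ bound is harmless. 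It enters the rough remainder only through the product with $\tnorm{x}_{\pvar}$, giving $\Delta^{1+\beta}\leq\Delta^{1+\alpha}$, which is already allowed in \eqref{eq5.2}.
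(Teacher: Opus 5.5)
Your proposal follows the same route as the paper's proof. The steps coincide: the split $\varrho_k=\varrho_k^{(d)}+\varrho_k^{(r)}$; the Lipschitz bound on $G$, with the case $t_k<\tau$ (where $[t_k-\tau,t_{k+1}-\tau]\subset[-\tau,0]$ and $\omega_S(\Delta)$ enters) separated from $t_k\geq\tau$; the sewing estimate \eqref{eq2.12} with $Y=h(u)$, $Y'=H(u)$, combined with Lemma~\ref{lem3.1}; and the one-step bounds $\tnorm{u}_{\pvar,[t_k,t_{k+1}]}\lesssim\Delta^\beta+\Delta$, $\tnorm{R^u}_{\qvar,[t_k,t_{k+1}]}\lesssim\Delta^{2\beta}+\Delta$ from the Doss--Sussmann representation. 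Your early claims $\tnorm{R^u}_{\qvar,[s,t]}\lesssim|t-s|^{2\beta}$ and ``the whole term is $\lesssim\Delta^{3\beta}$'' are false when $2\beta>1$, because the drift contributes order $|t-s|$. You correct this yourself in the last paragraph.

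One step does not work as written. In \eqref{eq3.47} and \eqref{eq3.49}, $\rho_j=\omega_{\mathbf x}(\tau_j,\tau_{j+1})^{1/p}$, and $\omega_{\mathbf x}$ contains the time term $(t-s)$. Hence $\rho\geq\Delta^{1/p}$ on $[t_k,t_{k+1}]$. Your own bound $\omega_{\mathbf x}(t_k,t_{k+1})\lesssim\Delta^{p\beta\wedge1}=\Delta$ then gives $\rho\asymp\Delta^{1/p}$, not $\Delta^\beta$. Citing those estimates verbatim yields only $\tnorm{u}_{\pvar}\lesssim\Delta^{1/p}$, hence $\norm{\varrho_k^{(r)}}+\norm{\varrho_k^{(d)}}\lesssim\Delta^{3/p}+\Delta^{1+1/p}+\Delta\omega_S(\Delta)$. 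That proves \eqref{eq5.2} only for $\alpha\leq1/p$, not for the nonempty range $\alpha\in(1/p,\beta)$.

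The fix is what the paper does in its first display. The $\rho$-contributions in \eqref{eq3.45} and \eqref{eq3.48} come only from the pure rough flow $\psi$, whose increments are $h(\zeta)x_{s,t}+H(\zeta)\mathbb X_{s,t}+R^\psi_{s,t}(\zeta)$. The flow does not see time, so these are controlled by the rough part $\tnorm{x}_{\pvar,[s,t]}^p+\tnorm{\mathbb X}_{\qvar,[s,t]}^q$ alone. By the $\beta$-H\"older regularity in (A4), that quantity is $\lesssim\Delta^{p\beta}$, which gives the $\Delta^\beta$ and $\Delta^{2\beta}$ scales. The time component of $\omega_{\mathbf x}$ is needed only to guarantee $\omega_{\mathbf x}(t_k,t_{k+1})\leq\gamma_0$, so that Lemma~\ref{lem3.2} applies. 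With this modification your argument is complete.
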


\begin{proof}
Let $s\in [t_{k}, t_{k+1}]$ and let $\psi_{t_k,s}$ denote the local rough flow generated by $d\psi=h(\psi)\,dx$. Set
$z(s):=\psi_{t_k,s}^{-1}(u(s))$.
By using Lemma \ref{lem3.2}, Corollary \ref{cor3.5}, the boundedness of $\mathcal L$, the linear
growth of $F$ and the
$\beta$-H\"{o}lder regularity of $\mathbf x$, we have
\begin{align*}
 &\norm{u(s)-u(t_{k})}
 \leq
 \norm{\psi_{t_{k},s}(z(s))-
 \psi_{t_{k},s}(z(t_{k}))}
 +
 \norm{\psi_{t_{k},s}
 (z(t_{k}))-z(t_{k})}\\
 &\leq C\|z(s)-z(t_{k})\|+
 \norm{h(z(t_k))}\norm{x_{t_k,s}}+
\norm{H(z(t_k))}\norm{\mathbb{X}_{t_k,s}}
+
\norm{R^\psi_{t_k,s}(z(t_k))}
 \\
 &\leq C_{S,T}(\omega)(s-t_{k})+
 C_{T}(\omega)(s-t_{k})^{\beta}.
\end{align*}
Then from  $\alpha<\beta<1$ and $\Delta\leq1$, we deduce that
\begin{align}\label{eq:positive-delay-increment}
 \sup_{s\in[t_k,t_{k+1}]}\norm{u(s)-u(t_k)}
 \leq C_{S,T}(\omega)\Delta^\alpha,
\end{align}

We next estimate the delayed increment. If $t_k\geq\tau$,
then
$[t_k-\tau,t_{k+1}-\tau]\subset[0,T]$.
Applying \eqref{eq:positive-delay-increment} on
$[t_{k-m},t_{k-m+1}]$ therefore yields
\begin{align}
\label{eq:positive-delay-increment-1}
 \sup_{s\in[t_k,t_{k+1}]}
 \norm{u(s-\tau)-u(t_k-\tau)}
 =
 \sup_{r\in[t_{k-m},t_{k-m+1}]}
 \norm{u(r)-u(t_{k-m})}
 \leq
 C_{S,T}(\omega)\Delta^\alpha.
\end{align}
If $t_k<\tau$, then $k\leq m-1$ and
$t_{k+1}\leq\tau$. Consequently,
$[t_k-\tau,t_{k+1}-\tau]
\subseteq[-\tau,0]$.
Moreover, in view of the definition of
$\omega_S$, it holds that
\begin{equation}
\label{eq:positive-delay-increment-2}
 \sup_{r\in[t_k,t_{k+1}]}
 \norm{u(r-\tau)-u(t_k-\tau)}
 \leq\omega_S(\Delta).
\end{equation}
Thanks to \eqref{eq:positive-delay-increment}-\eqref{eq:positive-delay-increment-2},
we obtain that
\begin{align}
\label{eq:positive-delay-increment-3}
 \norm{\varrho_k^{(d)}}
\leq
 \Delta\left(
 C_{S,T}(\omega)\Delta^\alpha
 +L_1\omega_S(\Delta)
 \right)
 \leq
 C_{S,T}(\omega)
 \left(
 \Delta^{1+\alpha}
 +\Delta\omega_S(\Delta)
 \right).
\end{align}
Applying the sewing estimate to the rough integral, and using the local $\beta$-H\"older regularity of $\mathbf{x}$ and \eqref{eq3.2},
we conclude that
\begin{align*}
 \norm{\varrho_k^{(r)}}
 &\leq C_p\Bigl(
 \tnorm{H(u)}_{p\text{-var},[t_k,t_{k+1}]}
 \tnorm{\mathbb X}_{q\text{-var},[t_k,t_{k+1}]}\\
 &
 +
 \tnorm{R^{h(u)}}_{q\text{-var},[t_k,t_{k+1}]}
 \tnorm{x}_{p\text{-var},[t_k,t_{k+1}]}
 \Bigr)
 \leq
 C_{S,T}(\omega)
 \bigl(\Delta^{3\alpha}
 +\Delta^{1+\alpha}\bigr),
\end{align*}
where in the last inequality we have used the local estimates $\tnorm{u}_{p\text{-var},[t_k,t_{k+1}]}
\leq C_{S,T}(\omega)
(\Delta^\beta+\Delta)$ and
$\tnorm{R^u}_{q\text{-var},[t_k,t_{k+1}]}
\leq C_{S,T}(\omega)(\Delta+
\Delta^{2\beta})$, which follow by applying arguments analogous to those used in the proof
of Lemma~\ref{lem3.4} on the interval
$I_k=[t_k,t_{k+1}]$.
Combining this estimate with
\eqref{eq:positive-delay-increment-3} proves
\eqref{eq5.2}.

\end{proof}

Let $z=(z_j)_{j\in\mathbb Z}$ be a $U$-valued grid path, where  $z_j=z(t_{j})\in U$ and $t_j=j\Delta$. Set $z_{i,j}=z_{j}-z_{i}$.
For $i<j$, the discrete $p$-variation of $z$ over the grid interval
$[i,j]$ is defined by
\[
 \norm{z}_{p\text{-var},\Delta;[i,j]}
 :=
 \left(
 \sup_{i=i_0<\cdots<i_n=j}
 \sum_{r=0}^{n-1}
 \norm{z_{i_r,i_{r+1}}}^p
 \right)^{1/p},
\]
where the supremum is taken over all integer partitions consisting
of grid indices.
Let $z'=(z_i')_{i\in\mathbb Z}$ be a sequence with
$z_i'\in\mathcal L(U,U)$. For $i<j$, define
\[
R^{z,\Delta}_{i,j}
:=
z_{i,j}-z_i'x_{t_i,t_j}.
\]
Its discrete $q$-variation over $[i,j]$ is defined by
\[
\norm{R^{z,\Delta}}_{q\text{-var},\Delta;[i,j]}
:=
\left(
\sup_{i=i_0<\cdots<i_n=j}
\sum_{r=0}^{n-1}
\norm{R^{z,\Delta}_{i_r,i_{r+1}}}^q
\right)^{1/q}.
\]
If $z'$ has finite discrete $p$-variation and
$R^{z,\Delta}$ has finite discrete $q$-variation, then $z'$ is called
a discrete Gubinelli derivative of $z$ with respect to $x$.

\begin{lemma}\label{lem5.3}
Let $\bar{\omega}$ be a continuous control on a fixed
interval $[a,b]$. For every $\gamma>0$, there exists
$\Delta_0=\Delta_0(\bar{\omega},\gamma)>0$ such that,
for every $0<\Delta\leq\Delta_0$ and any integers
$n_0<n_1$ satisfying
$a\leq n_0\Delta<n_1\Delta\leq b$,
there exist grid indices
$n_0=k_0<k_1<\cdots<k_N=n_1$ such that, with
$t_k:=k\Delta$,
\[
 \bar{\omega}(t_{k_j},t_{k_{j+1}})
 \leq\gamma,
 \qquad j=0,\ldots,N-1,
\]
and
\[
 N\leq
 2+2\gamma^{-1}
 \bar{\omega}(t_{n_0},t_{n_1}).
\]
\end{lemma}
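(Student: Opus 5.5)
The plan is a greedy construction on the grid, using uniform continuity of $\bar\omega$ to choose $\Delta_0$. Since $\bar\omega$ is continuous on the compact simplex $\{(s,t):a\leq s\leq t\leq b\}$ and vanishes on the diagonal, it is uniformly continuous there. Hence there exists $\Delta_0>0$ such that
\[
\bar\omega(s,t)\leq\gamma/2 \qquad\text{whenever } 0\leq t-s\leq\Delta_0 .
\]
Fix $\Delta\leq\Delta_0$ and grid indices $n_0<n_1$ as in the statement.

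Define the indices recursively. Set $k_0:=n_0$. Given $k_j<n_1$, let $k_{j+1}$ be the largest integer $k\in(k_j,n_1]$ with $\bar\omega(t_{k_j},t_k)\leq\gamma$. This set is nonempty, because $\bar\omega(t_{k_j},t_{k_j+1})\leq\gamma/2$ by the choice of $\Delta_0$. The sequence is strictly increasing, so the construction stops after finitely many steps at $k_N=n_1$. The first required property, $\bar\omega(t_{k_j},t_{k_{j+1}})\leq\gamma$ for each $j$, holds by construction.

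The main point is the bound on $N$. Take $j\leq N-2$, so that $k_{j+1}<n_1$. Maximality of $k_{j+1}$ gives $\bar\omega(t_{k_j},t_{k_{j+1}+1})>\gamma$. Superadditivity then yields
\[
\bar\omega(t_{k_j},t_{k_{j+1}})\geq \bar\omega(t_{k_j},t_{k_{j+1}+1})-\bar\omega(t_{k_{j+1}},t_{k_{j+1}+1})>\gamma-\gamma/2=\gamma/2 .
\]
Here a control is only superadditive, not subadditive, so I must justify the subtraction. Superadditivity gives $\bar\omega(s,u)+\bar\omega(u,v)\leq\bar\omega(s,v)$, which points the wrong way. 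I will therefore avoid it and argue differently. By the choice of $\Delta_0$ applied on the extended interval, uniform continuity in the right endpoint gives
\[
|\bar\omega(t_{k_j},t_{k_{j+1}+1})-\bar\omega(t_{k_j},t_{k_{j+1}})|\leq\gamma/2,
\]
since the two arguments differ by at most $\Delta$. This requires choosing $\Delta_0$ so that $|\bar\omega(s,t)-\bar\omega(s',t')|\leq\gamma/2$ whenever $|s-s'|+|t-t'|\leq\Delta_0$, which is uniform continuity on the compact simplex. With this choice, $\bar\omega(t_{k_j},t_{k_{j+1}})>\gamma/2$ for $j=0,\ldots,N-2$.

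Summing over these $N-1$ disjoint consecutive intervals and now using superadditivity in the correct direction gives
\[
(N-1)\frac{\gamma}{2}\leq\sum_{j=0}^{N-2}\bar\omega(t_{k_j},t_{k_{j+1}})\leq\bar\omega(t_{n_0},t_{n_1}).
\]
Hence $N\leq 1+2\gamma^{-1}\bar\omega(t_{n_0},t_{n_1})\leq 2+2\gamma^{-1}\bar\omega(t_{n_0},t_{n_1})$. The one delicate step is comparing $\bar\omega$ at consecutive grid points. I resolve it through the uniform continuity of $\bar\omega$ on the simplex, which is exactly where the requirement $\Delta\leq\Delta_0$ enters.
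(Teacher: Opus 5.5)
Your proof is correct and follows the route the paper indicates, namely a discrete greedy partition combined with superadditivity; the paper omits the details. Your use of uniform continuity of $\bar\omega$ on the compact simplex to compare $\bar\omega(t_{k_j},t_{k_{j+1}})$ with $\bar\omega(t_{k_j},t_{k_{j+1}+1})$ is exactly the omitted detail that justifies $\Delta_0(\bar\omega,\gamma)$, and it even gives the sharper bound $N\leq 1+2\gamma^{-1}\bar\omega(t_{n_0},t_{n_1})$. In a final write-up, delete the abandoned display that subtracts via superadditivity, since that inequality does not follow from it.
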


\begin{proof}
The assertions follow from
the discrete version of the greedy-partition argument used in the
proof of Lemma~\ref{lem3.4}, together with the superadditivity of
$\bar{\omega}$; the standard details are omitted.
\end{proof}

We next provide a discrete sewing lemma for approximately additive
two-index grid maps.

\begin{lemma}\label{lem5.4}
Let $\bar{\omega}$ be the continuous control on $[t_{n_0},t_{n_1}]$, and let
$A_{i,j}\in U$,
$n_0\leq i\leq j\leq n_1$,
be a two-index grid map satisfying $A_{i,i}=0$. Define
$\delta A_{i,j,k}:=
A_{i,k}-A_{i,j}-A_{j,k}$.
Suppose that there exist constants $\vartheta>1$ and $C_A\geq0$,
independent of the grid indices, such that
$\norm{\delta A_{i,j,k}}
\leq
C_A\bar{\omega}(t_i,t_k)^\vartheta$
for all $n_0\leq i<j<k\leq n_1$.
Then there exists a unique additive two-index grid map
$\mathcal I$ satisfying
$\mathcal I_{i,i+1}=A_{i,i+1}$,
$n_0\leq i<n_1$,
and
\begin{equation}\label{eq5.25}
\norm{\mathcal I_{i,j}-A_{i,j}}
\leq
C_\vartheta C_A
\bar{\omega}(t_i,t_j)^\vartheta,
\qquad n_0\leq i<j\leq n_1,
\end{equation}
where $C_\vartheta>0$ depends only on $\vartheta$.
\end{lemma}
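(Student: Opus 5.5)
The plan is to reproduce the continuous sewing argument on the finite grid by the classical Young--Lyons \emph{point-removal} scheme. Uniqueness is immediate. An additive grid map is determined by its values on consecutive indices: $\mathcal I_{i,j}=\sum_{l=i}^{j-1}\mathcal I_{l,l+1}$. Hence the only possible candidate is
\[
\mathcal I_{i,j}:=\sum_{l=i}^{j-1}A_{l,l+1}.
\]
This map is additive by construction and satisfies $\mathcal I_{i,i+1}=A_{i,i+1}$. So the whole content of the lemma is the estimate \eqref{eq5.25} for this explicit $\mathcal I$.

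To prove \eqref{eq5.25}, fix $n_0\le i<j\le n_1$ and start from the partition $\{i,i+1,\dots,j\}$ of grid indices. The Riemann sum over this partition equals $\mathcal I_{i,j}$. Now remove interior points one at a time until only $\{i,j\}$ remains, at which point the sum equals $A_{i,j}$. Removing a point $l$ with neighbours $l^-<l<l^+$ changes the sum by exactly $\delta A_{l^-,l,l^+}$. By hypothesis this change has norm at most $C_A\bar\omega(t_{l^-},t_{l^+})^\vartheta$.

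The key choice is which point to remove. Suppose the current partition has $r\ge 2$ intervals. By superadditivity of $\bar\omega$, the sum over interior points $l$ of $\bar\omega(t_{l^-},t_{l^+})$ is at most $2\bar\omega(t_i,t_j)$, because each interval is counted at most twice. Hence there is an interior point with
\[
\bar\omega(t_{l^-},t_{l^+})\le \frac{2}{r-1}\,\bar\omega(t_i,t_j),
\]
and we remove such a point. Summing the resulting errors over $r=j-i,\dots,2$ gives
\[
\norm{\mathcal I_{i,j}-A_{i,j}}\le C_A\,2^\vartheta\,\bar\omega(t_i,t_j)^\vartheta\sum_{r\ge2}(r-1)^{-\vartheta}=C_\vartheta C_A\,\bar\omega(t_i,t_j)^\vartheta ,
\]
with $C_\vartheta=2^\vartheta\zeta(\vartheta)$. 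The series converges precisely because $\vartheta>1$. The constant depends only on $\vartheta$, and not on the grid, $\Delta$, or the indices, which is the uniformity needed later.

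The only point needing care is the pigeonhole/superadditivity step. It requires $\bar\omega$ to be superadditive on the grid points, which holds since $\bar\omega$ is a control on $[t_{n_0},t_{n_1}]$, and the grid intervals in the partition are adjacent and non-overlapping. No limiting procedure is needed, since the grid is finite. Unlike the continuous sewing lemma, continuity of $\bar\omega$ plays no role here.
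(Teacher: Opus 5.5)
Your proposal is correct and follows the paper's proof essentially step for step: the same explicit additive extension $\mathcal I_{i,j}=\sum_{l=i}^{j-1}A_{l,l+1}$, the same Young point-removal scheme that deletes a point with $\bar\omega(t_{l^-},t_{l^+})\le\frac{2}{r-1}\bar\omega(t_i,t_j)$, and the same constant $C_\vartheta=2^\vartheta\sum_{n\geq1}n^{-\vartheta}$. One justification should be tightened. The bound $\sum_{l}\bar\omega(t_{l^-},t_{l^+})\le2\bar\omega(t_i,t_j)$ does not follow from ``each interval is counted at most twice'', because superadditivity only bounds $\bar\omega$ of a union from \emph{below}; instead, split the interior points into alternate ones, as in the paper's odd--even decomposition, so that each subfamily of intervals $[t_{l^-},t_{l^+}]$ is non-overlapping and superadditivity bounds each subfamily's sum by $\bar\omega(t_i,t_j)$.
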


\begin{proof}
The unique additive extension is
$\mathcal I_{i,j}=\sum_{r=i}^{j-1}A_{r,r+1}$ for
$i<j$.
Let $\pi=\{i=i_0<i_1<\cdots<i_m=j\}$
be a partition of the grid indices and set $A^\pi_{i,j}:=
 \sum_{\ell=0}^{m-1}
 A_{i_\ell,i_{\ell+1}}$.
The odd--even decomposition and
superadditivity of $\bar{\omega}$ give
$\sum_{r=1}^{m-1}
 \bar{\omega}(t_{i_{r-1}},t_{i_{r+1}})
 \leq
 2\bar{\omega}(t_i,t_j)$.
Consequently, there exists at least one
$r\in\{1,\ldots,m-1\}$ such that
$\bar{\omega}(t_{i_{r-1}},t_{i_{r+1}})
 \leq
 \frac{2}{m-1}\bar{\omega}(t_i,t_j)$.
Let $\pi'$ be the partition obtained from $\pi$ by deleting
the index $i_r$. Then
\begin{align*}
 \norm{A^\pi_{i,j}-A^{\pi'}_{i,j}}
 =\norm{\delta A_{i_{r-1},i_r,i_{r+1}}}
 \leq
 C_A
 \bar{\omega}(t_{i_{r-1}},t_{i_{r+1}})^\vartheta
 \leq C_A
 2^\vartheta
 (m-1)^{-\vartheta}
 \bar{\omega}(t_i,t_j)^\vartheta.
\end{align*}
Starting from the finest partition
$\pi_0=\{i<i+1<\cdots<j\}$
and repeatedly deleting an interior index until only the
endpoints $i$ and $j$ remain, we obtain that
\begin{align*}
 \norm{\mathcal I_{i,j}-A_{i,j}}
 \leq
 2^\vartheta C_A\bar{\omega}(t_i,t_j)^\vartheta
 \sum_{n=1}^{j-i-1}n^{-\vartheta}
 \leq
 2^\vartheta C_A\bar{\omega}(t_i,t_j)^\vartheta
 \sum_{n=1}^{\infty}n^{-\vartheta}.
\end{align*}
Since $\vartheta>1$, the last series converges. Hence
\eqref{eq5.25} follows with
$C_\vartheta
 :=
 2^\vartheta
 \sum_{n=1}^{\infty}n^{-\vartheta}$,
which depends only on $\vartheta$ and is independent of the
grid and the mesh size.
\end{proof}

\begin{corollary}\label{cor5.5}
Let $A$ and $\delta A$ be as in Lemma~\ref{lem5.4}, and let
$\bar{\omega}_1$ and $\bar{\omega}_2$ be continuous controls on
$[t_{n_0},t_{n_1}]$. Suppose that, for some $\alpha_1,\alpha_2>0$ with
$\alpha_1+\alpha_2>1$ and some constant $C_A\geq0$ independent of the
grid indices,
$\norm{\delta A_{i,j,k}}
\leq
C_A\bar{\omega}_1(t_i,t_j)^{\alpha_1}
\bar{\omega}_2(t_j,t_k)^{\alpha_2}$,
$n_0\leq i<j<k\leq n_1$.
Then
\[
\norm{\mathcal I_{i,j}-A_{i,j}}
\leq
C_{\alpha_1,\alpha_2}C_A
\bar{\omega}_1(t_i,t_j)^{\alpha_1}
\bar{\omega}_2(t_i,t_j)^{\alpha_2},
\qquad n_0\leq i<j\leq n_1,
\]
where $C_{\alpha_1,\alpha_2}>0$ depends only on $\alpha_1$ and $\alpha_2$.
\end{corollary}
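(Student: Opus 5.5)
The plan is to reduce Corollary~\ref{cor5.5} to Lemma~\ref{lem5.4}. The reduction uses a single auxiliary control and the standard fact that a product of powers of controls, with total exponent at least one, is again superadditive.

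\textbf{Step 1: the combined control.} Set $\vartheta:=\alpha_1+\alpha_2>1$ and define
\[
\bar\omega(s,t):=\bar\omega_1(s,t)^{\alpha_1/\vartheta}\,\bar\omega_2(s,t)^{\alpha_2/\vartheta}.
\]
This is continuous and vanishes on the diagonal. It is also superadditive. For $s\le r\le t$, apply H\"older's inequality with exponents $\vartheta/\alpha_1$ and $\vartheta/\alpha_2$ to get
\[
\bar\omega(s,r)+\bar\omega(r,t)\le \bigl(\bar\omega_1(s,r)+\bar\omega_1(r,t)\bigr)^{\alpha_1/\vartheta}\bigl(\bar\omega_2(s,r)+\bar\omega_2(r,t)\bigr)^{\alpha_2/\vartheta}.
\]
The right-hand side is at most $\bar\omega(s,t)$ by superadditivity and monotonicity of $\bar\omega_1$ and $\bar\omega_2$. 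Hence $\bar\omega$ is a continuous control on $[t_{n_0},t_{n_1}]$.

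\textbf{Step 2: checking the hypothesis of Lemma~\ref{lem5.4}.} For $i<j<k$, monotonicity of controls under interval inclusion gives $\bar\omega_1(t_i,t_j)\le\bar\omega_1(t_i,t_k)$ and $\bar\omega_2(t_j,t_k)\le\bar\omega_2(t_i,t_k)$. Therefore
\[
\norm{\delta A_{i,j,k}}\le C_A\,\bar\omega_1(t_i,t_k)^{\alpha_1}\bar\omega_2(t_i,t_k)^{\alpha_2}=C_A\,\bar\omega(t_i,t_k)^{\vartheta}.
\]
This is exactly the hypothesis of Lemma~\ref{lem5.4} with exponent $\vartheta>1$.

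\textbf{Step 3: conclusion.} Lemma~\ref{lem5.4} then gives
\[
\norm{\mathcal I_{i,j}-A_{i,j}}\le C_\vartheta C_A\,\bar\omega(t_i,t_j)^\vartheta=C_\vartheta C_A\,\bar\omega_1(t_i,t_j)^{\alpha_1}\bar\omega_2(t_i,t_j)^{\alpha_2}.
\]
Since $C_\vartheta=2^\vartheta\sum_n n^{-\vartheta}$, we may take $C_{\alpha_1,\alpha_2}:=C_{\alpha_1+\alpha_2}$, which depends only on $\alpha_1,\alpha_2$.

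\textbf{Main obstacle.} The only nontrivial point is the superadditivity of $\bar\omega$ in Step 1, and the H\"older argument above settles it.

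A degenerate case also needs a remark. If $\bar\omega(t_i,t_j)=0$ on some grid interval, the deletion argument in Lemma~\ref{lem5.4} still applies verbatim, since all bounds are zero there. So no separate treatment is needed, although I would note this explicitly in the proof.
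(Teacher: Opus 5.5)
Your proposal is correct and is essentially the paper's proof. The paper also sets $\vartheta=\alpha_1+\alpha_2$, forms the combined control $\bar\omega=\bar\omega_1^{\alpha_1/\vartheta}\bar\omega_2^{\alpha_2/\vartheta}$, uses H\"older's inequality to see that it is a control, bounds $\norm{\delta A_{i,j,k}}$ by $C_A\bar\omega(t_i,t_k)^\vartheta$, and applies Lemma~\ref{lem5.4}; you simply write out the H\"older superadditivity computation and the monotonicity step that the paper leaves implicit.
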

\begin{proof}
Set $\vartheta=\alpha_1+\alpha_2$ and
$\bar\omega
:=
\bar{\omega}_1^{\alpha_{1}/\vartheta}
\bar{\omega}_2^{\alpha_{2}/\vartheta}$.
By H\"older's inequality, we find that $\bar\omega$ is a continuous control, and thus
$\norm{\delta A_{i,j,k}}
\leq C_A\bar\omega(t_i,t_k)^\vartheta$.
Therefore the result follows immediately from Lemma~\ref{lem5.4}.
\end{proof}

\begin{lemma}\label{lem5.5}
Let $\omega_{\mathbf x}$ be the continuous control defined in
\eqref{eq2.4}, and let $M>0$. There exist constants
$\gamma_M\in(0,1]$ and $C_M>0$, independent of $\Delta$, such that
the following assertions hold.
Let $[k_0,k_1]$ be the grid block satisfying
$\omega_{\mathbf x}(t_{k_0},t_{k_1})\leq\gamma_M$.
Suppose that $u^\Delta=(u_j^\Delta)$ satisfies \eqref{eq:euler} for
$k_0\leq j<k_1$ and
$\max_{k_0-m\leq j\leq k_1}\|u_j^\Delta\|\leq M$.
Then, with discrete Gubinelli derivative
$(u^\Delta)'_j=h(u_j^\Delta)$ and remainder
$R^{u,\Delta}_{i,j}
 :=
 u_j^\Delta-u_i^\Delta
 -h(u_i^\Delta)x_{t_i,t_j}$,
 $k_0\leq i\leq j\leq k_1$,
we have
\begin{align}\label{eq5.26}
 \tnorm{u^\Delta}_{p\text{-var},\Delta;[k_0,k_1]}
 +
 \tnorm{R^{u,\Delta}}_{q\text{-var},\Delta;[k_0,k_1]}
 \leq
 C_M\omega_{\mathbf x}(t_{k_0},t_{k_1})^{1/p}.
\end{align}
Moreover, let $z^\Delta$ satisfy \eqref{eq:euler}, and let
$(r_j)_{j=k_0}^{k_1-1}\subset U$ be a sequence of perturbations. Suppose that
$y^\Delta$ satisfies the perturbed recursion
\[y_{j+1}^\Delta
 =y_j^\Delta
 +\Delta
 [-\mathcal Ly_j^\Delta-\lambda y_j^\Delta
 +F(y_j^\Delta,y_{j-m}^\Delta)\bigr]
 +h(y_j^\Delta)x_j
 +H(y_j^\Delta)\mathbb X_j+r_j,
 ~~k_0\leq j<k_1,\]
and
$\max_{k_0-m\leq j\leq k_1}
 \max\{\|y_j^\Delta\|,
 \|z_j^\Delta\|\}
 \leq M$.
Set $e_j:=y_j^\Delta-z_j^\Delta$,
$e'_j:=h(y_j^\Delta)-h(z_j^\Delta)$ and
$R^{e,\Delta}_{i,j}
 :=e_j-e_i-e'_i x_{t_i,t_j}$.
Then
\begin{align}\label{eq5.27}
 &\tnorm{e}_{p\text{-var},\Delta;[k_0,k_1]}
 +\tnorm{R^{e,\Delta}}_{q
 \text{-var},\Delta;[k_0,k_1]}\nonumber\\
 &\leq
 C_M\Bigl(
 \norm{e_{k_0}}
 +(t_{k_1}-t_{k_0})
 \max_{k_0-m\leq j\leq k_1-m}\norm{e_j}
 +\sum_{j=k_0}^{k_1-1}\norm{r_j}
 \Bigr).
\end{align}
\end{lemma}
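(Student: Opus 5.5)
We derive \eqref{eq5.26} and \eqref{eq5.27} from the local expansion of the scheme on a short grid block, and then sum over subintervals with the help of the discrete sewing lemma (Lemma~\ref{lem5.4}, Corollary~\ref{cor5.5}). This mirrors the continuous arguments of Lemmas~\ref{lem3.1} and~\ref{lem3.4}.

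\textbf{Step 1 (a priori estimate \eqref{eq5.26}).} For $k_0\leq i<j\leq k_1$ we telescope \eqref{eq:euler} and write
\[
R^{u,\Delta}_{i,j}
=\sum_{\ell=i}^{j-1}\Delta G(u_\ell^\Delta,u_{\ell-m}^\Delta)
+\sum_{\ell=i}^{j-1}\bigl(h(u_\ell^\Delta)-h(u_i^\Delta)\bigr)x_\ell
+\sum_{\ell=i}^{j-1}H(u_\ell^\Delta)\mathbb X_\ell .
\]
The drift sum is bounded by $C_M(t_j-t_i)\leq C_M\omega_{\mathbf x}(t_i,t_j)$ by \eqref{eq2.40}. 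For the two remaining sums we introduce the two-index germ $A_{i,j}:=h(u_i^\Delta)x_{t_i,t_j}+H(u_i^\Delta)\mathbb X_{t_i,t_j}$. By Chen's relation \eqref{eq2.1},
\[
\delta A_{i,\ell,j}=-\bigl(h(u^\Delta_\ell)-h(u^\Delta_i)-Dh(u_i^\Delta)h(u_i^\Delta)x_{t_i,t_\ell}\bigr)x_{t_\ell,t_j}-(H(u_\ell^\Delta)-H(u_i^\Delta))\mathbb X_{t_\ell,t_j}.
\]
Taylor expansion of $h$ as in Lemma~\ref{lem3.1} bounds the first bracket by $C_M(\|R^{u,\Delta}_{i,\ell}\|+\|u^\Delta_{i,\ell}\|^2)$ and the second by $C_M\|u^\Delta_{i,\ell}\|$.

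We then set up a bootstrap. Define
\[
\Theta_{i,j}:=\tnorm{u^\Delta}_{p\text{-var},\Delta;[i,j]}+\tnorm{R^{u,\Delta}}_{q\text{-var},\Delta;[i,j]}.
\]
With the controls $\bar\omega_1=\Theta^p+\tnorm{R}^q_{q\text{-var}}$ (restricted to grid blocks) and $\bar\omega_2=\omega_{\mathbf x}$, Corollary~\ref{cor5.5} gives
\[
\|R^{u,\Delta}_{i,j}\|\leq C_M\omega_{\mathbf x}(t_i,t_j)^{2/p}+C_M\bigl(\Theta_{i,j}+\Theta_{i,j}^2\bigr)\omega_{\mathbf x}(t_i,t_j)^{1/p}.
\]
Since the increments satisfy $u^\Delta_{i,j}=h(u_i^\Delta)x_{t_i,t_j}+R^{u,\Delta}_{i,j}$, taking variations yields
\[
\Theta\leq C_M\rho+C_M\rho^2+C_M(\Theta+\Theta^2)\rho,\qquad \rho:=\omega_{\mathbf x}(t_{k_0},t_{k_1})^{1/p}.
\]
We choose $\gamma_M$ so small that $C_M\gamma_M^{1/p}\leq1/4$.

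\textbf{Main obstacle.} The absorption argument above is only valid if $\Theta$ is already known to be small or finite, and the quadratic term $\Theta^2$ is the difficulty. Finiteness is automatic, since there are finitely many grid points. Smallness we obtain by a continuity argument in the right endpoint $j$: we increase $j$ one grid step at a time. A single step has size $\leq C_M(\Delta+\|x_\ell\|+\|\mathbb X_\ell\|)\leq C_M\omega_{\mathbf x}(t_\ell,t_{\ell+1})^{1/p}$, so $\Theta$ cannot jump past the threshold $1$. As long as $\Theta\leq1$, the inequality gives $\Theta\leq 2C_M\rho$, and this closes the induction. This yields \eqref{eq5.26} with constants independent of $\Delta$, because Lemma~\ref{lem5.4} has constants independent of the mesh.

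\textbf{Step 2 (stability estimate \eqref{eq5.27}).} We subtract the two recursions and use the same germ for $e$, namely
\[
A^e_{i,j}=\bigl(h(y_i^\Delta)-h(z_i^\Delta)\bigr)x_{t_i,t_j}+\bigl(H(y_i^\Delta)-H(z_i^\Delta)\bigr)\mathbb X_{t_i,t_j}.
\]
The drift difference is Lipschitz by \eqref{eq2.40}, which produces
\[
\Delta\sum_{\ell}\bigl(L_b\|e_\ell\|+L_1\|e_{\ell-m}\|\bigr)\leq C_M(t_{k_1}-t_{k_0})\bigl(\max_{j\leq k_1}\|e_j\|+\max_{j\leq k_1-m}\|e_{j}\|\bigr).
\]
The perturbations contribute $\sum_j\|r_j\|$. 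Bounding $\delta A^e$ via the difference estimate \eqref{eq3.3} (in discrete form, using the $C^4_b$ bounds of \textbf{(A3)}) gives terms linear in $\Theta^e$, where $\Theta^e$ is the analogue of $\Theta$ for $e$. The coefficients of these terms are multiplied by $\rho$ and by the a priori bounds from Step~1 for $y^\Delta$ and $z^\Delta$. The condition $C_M\gamma_M^{1/p}\leq1/4$ absorbs them, and no quadratic term appears because the problem is linear in $e$. Finally we bound $\max_{j\leq k_1}\|e_j\|\leq\|e_{k_0}\|+\Theta^e$ and absorb this as well, after reducing $\gamma_M$ further so that $C_M(t_{k_1}-t_{k_0})\leq1/4$. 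This gives \eqref{eq5.27}.
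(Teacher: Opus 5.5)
Your Step~1 is correct but argued differently from the paper. The paper proves the pointwise bounds $\norm{u^\Delta_{i,j}}\leq K_M\omega_{\mathbf x}(t_i,t_j)^{1/p}$ and $\norm{R^{u,\Delta}_{i,j}}\leq K_M\omega_{\mathbf x}(t_i,t_j)^{2/p}$ by induction on $j-i$. Since $\delta A^u_{a,b,c}$ only involves increments of $u^\Delta$ on $[a,b]$ with $b-a<j-i$, the induction hypothesis feeds directly into Lemma~\ref{lem5.4} on $[i,j]$. The quadratic term is absorbed by taking $c_M(K_M+K_M^2)\gamma_M^{1/p}\leq K_M/2$, and \eqref{eq5.26} then follows from superadditivity.

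Your bootstrap on the variation norm $\Theta$, with continuity in the right endpoint, also closes. It does, however, need two checks the paper's induction avoids:
\begin{itemize}
\item converting a pointwise sewing bound that contains products $\Theta_{i,j}\,\omega_{\mathbf x}(t_i,t_j)^{1/p}$ into a $q$-variation bound;
\item a one-step jump estimate for $\tnorm{R^{u,\Delta}}_{q\text{-var},\Delta}$, which includes the cross term $(h(u^\Delta_j)-h(u^\Delta_s))x_{t_j,t_{j+1}}$.
\end{itemize}
A simpler alternative to both routes is to linearize with the sup bound: $\norm{u^\Delta_{a,b}}^2\leq C\omega_{\mathbf x}(t_a,t_b)^{2/p}+C_M\norm{R^{u,\Delta}_{a,b}}$, because $\norm{u^\Delta_j}\leq M$. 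This makes the a priori inequality linear in $\tnorm{R^{u,\Delta}}_{q\text{-var},\Delta}$, so finiteness alone permits absorption. The same trick is what repairs Step~2.

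Step~2 has a genuine gap. You invoke \eqref{eq3.3} with ``the a priori bounds from Step~1 for $y^\Delta$ and $z^\Delta$''. But Step~1 applies only to exact solutions of \eqref{eq:euler}, i.e.\ to $z^\Delta$; about $y^\Delta$ the lemma assumes nothing except $\norm{y^\Delta_j}\leq M$.

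Re-running Step~1 for the perturbed recursion does not work as written. The one-step jumps contain $\norm{r_j}$ and the bound acquires $\sum_j\norm{r_j}$. Neither is small, so your continuity argument breaks. Even a bound of the form $C_M(\rho+\sum_j\norm{r_j})$, fed into \eqref{eq3.3}, gives a constant depending on $\sum_j\norm{r_j}$. Then $\gamma_M$ would depend on the perturbation, which the statement excludes.

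For the same reason, the claim that no quadratic term appears ``because the problem is linear in $e$'' is not right. The maps $h$ and $H$ are nonlinear, and after writing $y^\Delta=z^\Delta+e$ the defect $\delta A^e_{i,j,k}$ contains terms like $D^2h(\cdot)[e_{i,j},e_{i,j}]\,x_{t_j,t_k}$.

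The paper avoids this by bounding the defect directly as $\norm{\delta A^e_{i,j,k}}\leq C_M\max_{k_0\leq\ell\leq k_1}\norm{e_\ell}\,\omega_{\mathbf x}(t_i,t_k)^{3/p}+C_M\norm{R^{e,\Delta}_{i,j}}\,\omega_{\mathbf x}(t_j,t_k)^{1/p}$. This bound needs only the first-part bounds for $z^\Delta$, as follows.
\begin{itemize}
\item Set $R^{y}_{i,j}:=y^\Delta_{i,j}-h(y^\Delta_i)x_{t_i,t_j}$ and define $R^z$ likewise. Then $Dh(y^\Delta_i)R^{y}_{i,j}-Dh(z^\Delta_i)R^{z}_{i,j}=Dh(y^\Delta_i)R^{e,\Delta}_{i,j}+(Dh(y^\Delta_i)-Dh(z^\Delta_i))R^{z}_{i,j}$.
\item The quadratic pieces are linearized through $\norm{e_\ell}\leq 2M$, e.g.\ $\norm{e_{i,j}}^2\leq 2\norm{e_i'}^2\norm{x_{t_i,t_j}}^2+2\norm{R^{e,\Delta}_{i,j}}^2\leq C_M\norm{e_i}\,\omega_{\mathbf x}(t_i,t_j)^{2/p}+C_M\norm{R^{e,\Delta}_{i,j}}$.
\end{itemize}
With that defect bound in hand, the rest of your Step~2 is the paper's argument: Corollary~\ref{cor5.5} with the grid control $\tnorm{R^{e,\Delta}}^q_{q\text{-var},\Delta}$, then the drift and delay terms, then absorption via $\max_\ell\norm{e_\ell}\leq\norm{e_{k_0}}+\Theta^e$.
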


\begin{proof}
Write
\[
 G(u_i^\Delta, u_{i-m}^\Delta)
 :=
 -\mathcal Lu_i^\Delta-\lambda u_i^\Delta
 +F(u_i^\Delta,u_{i-m}^\Delta)
 ~~\mbox{and}~~
 A^u_{i,j}
 :=
 h(u_i^\Delta)x_{t_i,t_j}
 +H(u_i^\Delta)\mathbb X_{t_i,t_j}.
\]
Set $\delta A^u_{a,b,c}
 :=A^u_{a,c}-A^u_{a,b}-A^u_{b,c}$.
Then applying Taylor's formula to $h(u_b^\Delta)$ around
$u_a^\Delta$ and using Chen's relation \eqref{eq2.1}, we have
\begin{align}\label{eq5.28}
 \delta A^u_{a,b,c}
 &=\bigl[h(u_a^\Delta)-h(u_b^\Delta)\bigr]
x_{t_b,t_c}+
\bigl[H(u_a^\Delta)-H(u_b^\Delta)\bigr]
\mathbb X_{t_b,t_c}+
H(u_a^\Delta)
\bigl(x_{t_a,t_b}\otimes x_{t_b,t_c}\bigr)\nonumber\\
&=
 -Dh(u_a^\Delta)R^{u,\Delta}_{a,b}x_{t_b,t_c}
 -\mathcal Q^h_{a,b}x_{t_b,t_c}
 +[H(u_a^\Delta)-H(u_b^\Delta)]
 \mathbb X_{t_b,t_c},
\end{align}
where
$\mathcal Q^h_{a,b}
 :=
 h(u_b^\Delta)-h(u_a^\Delta)
 -Dh(u_a^\Delta)u_{a,b}^\Delta$ satisfies
 $\|\mathcal Q^h_{a,b}\|
 \leq c_M\|u_{a,b}^\Delta\|^2$.

We next prove by induction on the integer $j-i$ that, for every
$k_0\leq i<j\leq k_1$,
\begin{align}\label{eq5.29}
 \norm{u_{i,j}^\Delta}
 &\leq K_M\omega_{\mathbf x}(t_i,t_j)^{1/p}~~\mbox{and}~~
 \norm{R^{u,\Delta}_{i,j}}
 \leq K_M\omega_{\mathbf x}(t_i,t_j)^{2/p},
\end{align}
where the constant $K_M\geq1$ will be chosen below.
In view of \eqref{eq:euler} and $\max_{k_0-m\leq j\leq k_1}\|u_j^\Delta\|\leq M$, we have
\[
\begin{aligned}
\norm{u_{i,i+1}^\Delta}
&\leq
(c+\|u_i^\Delta\|+\|u_{i-m}^\Delta\|)
\Bigl(
\Delta+\norm{x_{t_i,t_{i+1}}}
+\norm{\mathbb X_{t_i,t_{i+1}}}
\Bigr)\\
&\leq
c_M\bigl(
\omega_{\mathbf x}(t_i,t_{i+1})+
\omega_{\mathbf x}(t_i,t_{i+1})^{1/p}+
\omega_{\mathbf x}(t_i,t_{i+1})^{2/p}
\bigr)\leq
c_M\omega_{\mathbf x}(t_i,t_{i+1})^{1/p},\\
\|R^{u,\Delta}_{i,i+1}\|&=\|\Delta
G(u_i^\Delta, u_{i-m}^\Delta)
 +H(u_i^\Delta)\mathbb X_{t_i,t_{i+1}}\|
\leq
c_M\omega_{\mathbf x}(t_i,t_{i+1})^{2/p}.
 \end{aligned}
\]
Thus \eqref{eq5.29} holds for $j-i=1$, provided that $K_M$ is
chosen sufficiently large.

For the induction step, fix $k_0\leq i<j\leq k_1$ with
$j-i\geq2$, and assume that \eqref{eq5.29} holds for every
$k_0\leq a<b\leq k_1$ satisfying $b-a<j-i$. For each triple $i\leq a<b<c\leq j$,
we have $b-a<j-i$. Hence, by \eqref{eq5.28}-\eqref{eq5.29} we deduce that
\begin{align*}
 \norm{\delta A^u_{a,b,c}}
 \leq
 c_M(K_M+K_M^2)
 \omega_{\mathbf x}(t_a,t_c)^{3/p}.
\end{align*}
Since this estimate holds for every triple in $[i,j]$, define
$\mathcal J^u_{i,j}
:=
\sum_{\ell=i}^{j-1}A^u_{\ell,\ell+1}$.
Applying Lemma~\ref{lem5.4} with exponent $3/p>1$, we obtain
\[
\norm{\mathcal J^u_{i,j}-A^u_{i,j}}
\leq
c_M(K_M+K_M^2)
\omega_{\mathbf x}(t_i,t_j)^{3/p}.
\]
Summing \eqref{eq:euler} over $\ell=i,\ldots,j-1$ and using the
definition of $R^{u,\Delta}$, we have
\[
 \begin{aligned}
 u_{i,j}^\Delta
 &=
 \sum_{\ell=i}^{j-1}\Delta
G(u_\ell^\Delta, u_{\ell-m}^\Delta)
 +A^u_{i,j}
 +\bigl(\mathcal J^u_{i,j}-A^u_{i,j}\bigr),\\
 R^{u,\Delta}_{i,j}
 &=
 \sum_{\ell=i}^{j-1}\Delta
 G(u_\ell^\Delta, u_{\ell-m}^\Delta)
 +H(u_i^\Delta)\mathbb X_{t_i,t_j}
 +\bigl(\mathcal J^u_{i,j}-A^u_{i,j}\bigr).
 \end{aligned}
\]
In view of
$\omega_{\mathbf x}(t_i,t_j)\leq\gamma_M\leq1$,
these identities imply
\begin{align*}
 \norm{u_{i,j}^\Delta}
 &\leq
 \left[c_M+c_M(K_M+K_M^2)\gamma_M^{2/p}\right]
 \omega_{\mathbf x}(t_i,t_j)^{1/p},\\
 \norm{R^{u,\Delta}_{i,j}}
 &\leq
 \left[c_M+c_M(K_M+K_M^2)\gamma_M^{1/p}\right]
 \omega_{\mathbf x}(t_i,t_j)^{2/p}.
\end{align*}
Choose
$K_M\geq\max\{1,2c_M\}$,
and then choose $\gamma_M\in(0,1]$ sufficiently small that
$c_M(K_M+K_M^2)\gamma_M^{1/p}
\leq\frac{K_M}{2}$.
Thus, \eqref{eq5.29} holds on $[i,j]$, which completes the
induction.
Moreover, using the superadditivity of $\omega_{\mathbf x}$
and $q=p/2$, we infer from \eqref{eq5.29} that
\begin{align*}
 \tnorm{u^\Delta}_{p\text{-var},\Delta;[k_0,k_1]}
 &\leq
 K_M\omega_{\mathbf x}(t_{k_0},t_{k_1})^{1/p},
 \\
 \tnorm{R^{u,\Delta}}_{q\text{-var},\Delta;[k_0,k_1]}
 &\leq
 K_M\omega_{\mathbf x}(t_{k_0},t_{k_1})^{2/p}.
\end{align*}
Since $\omega_{\mathbf x}(t_{k_0},t_{k_1})\leq1$,
their sum is bounded by
$2K_M\omega_{\mathbf x}(t_{k_0},t_{k_1})^{1/p}$.
This proves \eqref{eq5.26} with a constant independent
of $\Delta$.

For the second assertion, set
\[
 H_j:=H(y_j^\Delta)-H(z_j^\Delta),
 \qquad
 A^e_{i,j}:=
 e'_ix_{t_i,t_j}+H_i\mathbb X_{t_i,t_j},
\]
and write
\[X:=
 \tnorm{e}_{p\text{-var},\Delta;[k_0,k_1]}
 +
 \tnorm{R^{e,\Delta}}_{q\text{-var},\Delta;[k_0,k_1]}.
\]
By the similar arguments as in \eqref{eq5.28}, we obtain that for
$k_0\leq i<j<k\leq k_1$,
\begin{align*}
 \norm{\delta A^e_{i,j,k}}
 &=\norm{(e'_i-e'_j)x_{t_j,t_k}+
(H_i-H_j)\mathbb X_{t_j,t_k}+
H_i\bigl(x_{t_i,t_j}\otimes x_{t_j,t_k}\bigr)}\\
 &\leq
 C_M\max_{k_0\leq j\leq k_1}\norm{e_j}\omega_{\mathbf x}(t_i,t_k)^{3/p}
 +
 C_M\norm{R^{e,\Delta}_{i,j}}
 \omega_{\mathbf x}(t_j,t_k)^{1/p}.
\end{align*}
Since
$\|R^{e,\Delta}_{i,j}\|
 \leq
 \tnorm{R^{e,\Delta}}_{q\text{-var},
 \Delta;[i,j]}$
and
$(i,j)\mapsto
\tnorm{R^{e,\Delta}}_{q\text{-var},
\Delta;[i,j]}^q$
defines a grid control, in view of
Lemma \ref{lem5.4} and Corollary \ref{cor5.5}, we have
\begin{align}
 \tnorm{\sum_{\ell=i}^{j-1}
 A^e_{\ell,\ell+1}-A^e_{i,j}
 }_{q\text{-var},\Delta;[k_0,k_1]}
 &\leq
 C_M\max_{k_0\leq j\leq k_1}\norm{e_j}\omega_{\mathbf x}(t_{k_0},t_{k_1})^{3/p}
 \nonumber\\
 &+
 C_M\omega_{\mathbf x}(t_{k_0},t_{k_1})^{1/p}
 \tnorm{R^{e,\Delta}}_{q\text{-var},
\Delta;[k_0,k_1]}.
 \label{eq5.33-1}
\end{align}
By subtracting the Euler recursion \eqref{eq:euler} for
$z^\Delta$ from the perturbed recursion for $y^\Delta$,
summing over $\ell=i,\ldots,j-1$, and using the definitions
of $R^{e,\Delta}_{i,j}$ and $A^e_{i,j}$, we obtain that
\[
 \begin{aligned}
 R^{e,\Delta}_{i,j}
 ={}&
 \sum_{\ell=i}^{j-1}\Delta
 \Bigl[
 -\mathcal Le_\ell-\lambda e_\ell
 +F(y_\ell^\Delta,y_{\ell-m}^\Delta)
 -F(z_\ell^\Delta,z_{\ell-m}^\Delta)
 \Bigr]\\
 &+K_i\mathbb X_{t_i,t_j}
 +\bigg(\sum_{\ell=i}^{j-1}
 A^e_{\ell,\ell+1}-A^e_{i,j}\bigg)
 +\sum_{\ell=i}^{j-1}r_\ell.
 \end{aligned}
\]
It follows from \eqref{eq5.33-1} that
\begin{align}\label{eq5.33-2}
&\tnorm{R^{e,\Delta}}_{q
\text{-var},\Delta;[k_0,k_1]}
\leq
\sum_{j=k_0}^{k_1-1}
C_M\Delta\bigl(\norm{e_j}
+\norm{e_{j-m}}\bigr)
+C_M \max_{k_0\leq j\leq k_1}\norm{e_j}
\tnorm{\mathbb X}_{q
\text{-var},\Delta;[k_0,k_1]}\nonumber\\
&+C_M\max_{k_0\leq j\leq k_1}\norm{e_j}\omega_{\mathbf x}(t_{k_0},t_{k_1})^{3/p}
+
C_M\omega_{\mathbf x}(t_{k_0},t_{k_1})^{1/p}
 \tnorm{R^{e,\Delta}}_{q\text{-var},
\Delta;[k_0,k_1]}+
\sum_{j=k_0}^{k_1-1}\norm{r_j}\nonumber\\
&\leq C_{M}(t_{k_1}-t_{k_0})
\max_{k_0-m\leq j\leq k_1-m}\norm{e_j}+
C_M\max_{k_0\leq j\leq k_1}\norm{e_j}\omega_{\mathbf x}(t_{k_0},t_{k_1})^{1/p}\nonumber\\
&+C_M\omega_{\mathbf x}(t_{k_0},t_{k_1})^{1/p}
 \tnorm{R^{e,\Delta}}_{q\text{-var},
\Delta;[k_0,k_1]}+
\sum_{j=k_0}^{k_1-1}\norm{r_j}.
\end{align}
In view of
$e_{i,j}=e'_ix_{t_i,t_j}+
R^{e,\Delta}_{i,j}$,
$\norm{e'_i}\leq C_M\norm{e_i}$, and $q<p$, we have
\begin{align}\label{eq5.33-3}
\tnorm{e}_{p\text{-var},\Delta;[k_0,k_1]}
&\leq
C_M\max_{k_0\leq j\leq k_1}\norm{e_j}
\omega_{\mathbf x}(t_{k_0},t_{k_1})^{1/p}
+\tnorm{R^{e,\Delta}}_{q\text{-var},\Delta;[k_0,k_1]}.
\end{align}
Using
$\max_{k_0\leq j\leq k_1}\norm{e_j}\leq\norm{e_{k_0}}+X$,
from \eqref{eq5.33-2}-\eqref{eq5.33-3}
we conclude that
\[X
 \leq
 C_M\Bigl(
 \norm{e_{k_0}}
 +(t_{k_1}-t_{k_0})
 \max_{k_0-m\leq j\leq k_1-m}\norm{e_j}
 +\sum_{j=k_0}^{k_1-1}\norm{r_j}
 \Bigr)+
 C_M\omega_{\mathbf x}(t_{k_0},t_{k_1})^{1/p}X.\]
Then \eqref{eq5.27} follows by choosing $\gamma_M$
sufficiently small that $C_M\gamma_M^{1/p}\leq1/2$.

\end{proof}

\begin{lemma}\label{lem5.6}
Under the assumptions and notation of Lemma~\ref{lem5.5},
and after decreasing $\gamma_M$ if necessary, the following
estimate holds:
\begin{equation}\label{eq5.34}
\max_{k_0\leq k\leq k_1}\norm{e_k}
\leq
2\norm{e_{k_0}}
+C_M(t_{k_1}-t_{k_0})
\max_{k_0-m\leq j\leq k_1-m}\norm{e_j}
+C_M\sum_{j=k_0}^{k_1-1}\norm{r_j}.
\end{equation}
\end{lemma}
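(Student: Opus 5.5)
The plan is to derive \eqref{eq5.34} directly from the controlled estimate \eqref{eq5.27} of Lemma~\ref{lem5.5}, using the decomposition of the error increments. For $k_0\leq k\leq k_1$ we write
\[
 e_k=e_{k_0}+e_{k_0,k}.
\]
The increment is bounded by the discrete $p$-variation:
\[
 \norm{e_{k_0,k}}\leq\tnorm{e}_{p\text{-var},\Delta;[k_0,k_1]}.
\]
Inserting \eqref{eq5.27} then gives
\[
 \max_{k_0\leq k\leq k_1}\norm{e_k}
 \leq(1+C_M)\norm{e_{k_0}}
 +C_M(t_{k_1}-t_{k_0})\max_{k_0-m\leq j\leq k_1-m}\norm{e_j}
 +C_M\sum_{j=k_0}^{k_1-1}\norm{r_j}.
\]
This already has the required structure. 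The only defect is the coefficient $1+C_M$ in front of $\norm{e_{k_0}}$, which must be lowered to $2$.

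To get the sharper constant, I will not bound $e_{k_0,k}$ crudely. Instead I will use the controlled decomposition
\[
 e_{k_0,k}=e'_{k_0}x_{t_{k_0},t_k}+R^{e,\Delta}_{k_0,k}.
\]
The first term is at most $\norm{e'_{k_0}}\,\norm{x_{t_{k_0},t_k}}\leq C_h\norm{e_{k_0}}\,\omega_{\mathbf x}(t_{k_0},t_{k_1})^{1/p}$. The remainder is controlled by $\tnorm{R^{e,\Delta}}_{q\text{-var},\Delta;[k_0,k_1]}$.

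Next I need a bound on the remainder in which $\norm{e_{k_0}}$ carries a small factor. For this I revisit the estimate \eqref{eq5.33-2} from the proof of Lemma~\ref{lem5.5}. There the quantity $\max_j\norm{e_j}$ appears multiplied by $\omega_{\mathbf x}(t_{k_0},t_{k_1})^{1/p}$ (also covering the $\mathbb X$-term), plus the delay term, the perturbation sum, and a self-referential term that is absorbed. Writing $E:=\max_{k_0\leq k\leq k_1}\norm{e_k}$ and combining the pieces, I expect
\[
 E\leq\norm{e_{k_0}}+C_M\gamma_M^{1/p}E
 +C_M(t_{k_1}-t_{k_0})\max_{k_0-m\leq j\leq k_1-m}\norm{e_j}
 +C_M\sum_{j=k_0}^{k_1-1}\norm{r_j}.
\]
A subtlety is the term $\sum_\ell\Delta\norm{e_\ell}$ with $\ell$ in $[k_0,k_1]$ (the non-delayed part). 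It is bounded by $(t_{k_1}-t_{k_0})E\leq\gamma_M E$, because $t_{k_1}-t_{k_0}\leq\omega_{\mathbf x}(t_{k_0},t_{k_1})\leq\gamma_M$. So it is also absorbable.

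Finally I decrease $\gamma_M$ so that the total absorbable coefficient is at most $1/2$, i.e. $C_M\gamma_M^{1/p}+C_M\gamma_M\leq1/2$. Rearranging then gives
\[
 E\leq2\norm{e_{k_0}}+2C_M(t_{k_1}-t_{k_0})\max_{k_0-m\leq j\leq k_1-m}\norm{e_j}+2C_M\sum_{j=k_0}^{k_1-1}\norm{r_j},
\]
which is \eqref{eq5.34} after renaming $C_M$.

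The main obstacle is bookkeeping. Every occurrence of $E$, or of $\norm{e_{k_0}}$ beyond a single unit copy, must carry a factor that is a positive power of $\omega_{\mathbf x}(t_{k_0},t_{k_1})$. I will redo the sewing estimate for $A^e$ carefully: its $\delta A^e$ contains $H_i(x\otimes x)$ and $(H_i-H_j)\mathbb X$, each of which gains at least $\omega^{2/p}$. The $R^{e,\Delta}$ term gains $\omega^{1/p}$ via Corollary~\ref{cor5.5}. With these gains, absorption works with constants independent of $\Delta$.
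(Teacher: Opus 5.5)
Your proof is correct and is essentially the paper's argument. The paper writes $e_\ell-e_{k_0}$ as the sum of the drift, rough and perturbation sums, which is the same identity as your $e'_{k_0}x_{t_{k_0},t_\ell}+R^{e,\Delta}_{k_0,\ell}$. It bounds the rough part via the sewing remainder \eqref{eq5.33-1} combined with \eqref{eq5.27}, where you instead re-absorb directly in \eqref{eq5.33-2} while keeping $E$. In both versions $\norm{e_{k_0}}$ enters with coefficient one, every other occurrence of $E$ carries a factor $\omega_{\mathbf x}(t_{k_0},t_{k_1})^{1/p}$, and $\gamma_M$ is then shrunk to absorb.

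The only loose remark is in your last paragraph. The term $H_i(x_{t_i,t_j}\otimes x_{t_j,t_k})$ by itself only gains $\omega_{\mathbf x}^{2/p}$, which is below the sewing threshold. The $\omega_{\mathbf x}^{3/p}$ bound comes from its cancellation against the first-order part of $(e'_i-e'_j)x_{t_j,t_k}$. Since you take \eqref{eq5.33-2} as given, this does not affect your proof.
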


\begin{proof}
By subtracting the Euler recursion \eqref{eq:euler} for
$z^\Delta$ from the perturbed recursion for $y^\Delta$
and summing over $j=k_0,\ldots,\ell-1$, we obtain that
\begin{align}
 &e_\ell-e_{k_0}
 =\sum_{j=k_0}^{\ell-1}\Delta
 \bigl[-\mathcal Le_j-\lambda e_j
 +F(y_j^\Delta,y_{j-m}^\Delta)-F(z_j^\Delta,z_{j-m}^\Delta)\bigr]\notag\\
 &+\sum_{j=k_0}^{\ell-1}
 \bigl([h(y_j^\Delta)-h(z_j^\Delta)]x_j+[H(y_j^\Delta)-H(z_j^\Delta)]\mathbb X_j\bigr)
 +\sum_{j=k_0}^{\ell-1}r_j
 :=\mathcal{I}_{1}(\ell)+\mathcal{I}_{2}
 (\ell)
 +\mathcal{I}_{3}(\ell).
 \label{eq5.35}
\end{align}
By the boundedness of $\mathcal L$ and the Lipschitz
continuity of $F$, it holds that
\[\|\mathcal{I}_{1}(\ell)\|\leq c(t_{k_1}-t_{k_0})\max_{k_0\leq j\leq k_1}\norm{e_j}+L_1
(t_{k_1}-t_{k_0})
\max_{k_0-m\leq j\leq k_1-m}\norm{e_j}.\]
Using \eqref{eq5.27}, \eqref{eq5.33-1}, the Lipschitz
continuity of $h$ and $H$, and
$\omega_{\mathbf x}(t_{k_0},t_{k_1})\leq1$, we have
\begin{align*}
\max_{k_0\leq \ell\leq k_1}\|\mathcal{I}_{2}(\ell)\|
&+
\max_{k_0\leq \ell\leq k_1}\|\mathcal{I}_{3}(\ell)\|\leq
C_M\omega_{\mathbf x}(t_{k_0},t_{k_1})^{1/p}
\max_{k_0\leq j\leq k_1}\norm{e_j}\\
&+C_M(t_{k_1}-t_{k_0})
\max_{k_0-m\leq j\leq k_1-m}\norm{e_j}
+C_M\sum_{j=k_0}^{k_1-1}\norm{r_j}.
\end{align*}
Taking the maximum in \eqref{eq5.35} and using
$t_{k_1}-t_{k_0}\leq
\omega_{\mathbf x}(t_{k_0},t_{k_1})\leq1$, we deduce that
\begin{align*}
 \max_{k_0\leq j\leq k_1}\norm{e_j}
 &\leq
 \norm{e_{k_0}}
 +C_M\omega_{\mathbf x}(t_{k_0},t_{k_1})^{1/p}
 \max_{k_0\leq j\leq k_1}\norm{e_j}\\
 &+
 C_M(t_{k_1}-t_{k_0})
 \max_{k_0-m\leq j\leq k_1-m}\norm{e_j}
 +C_M\sum_{j=k_0}^{k_1-1}\norm{r_j}.
 \end{align*}
Then \eqref{eq5.34} follows by choosing $\gamma_M$
sufficiently small that $C_M\gamma_M^{1/p}\leq1/2$.
\end{proof}

\begin{proposition}\label{prop5.7}
Let $T>0$, $S\subset\Hh$ be compact, and let
\(\beta>1/p\) be as in Assumption~\textbf{(A4)}. Fix
$\omega\in\Omega_0$ and $\alpha\in(1/3,\beta)$.
Then there exists a
constant $C_{S,T}(\omega)>0$, independent of $\Delta$,
such that, for all sufficiently small $\Delta=\tau/m$,
\begin{align*}
 \sup_{\xi\in S}\max_{0\leq k\Delta\leq T}
 \norm{J_\Delta\Phi_\Delta(k,\omega)P_\Delta\xi
 -\Phi(k\Delta,\omega)\xi}_{\Hh}
 \leq C_{S,T}(\omega)
 \left(\omega_S(\Delta)+
 \Delta^\alpha+\Delta^{3\alpha-1}\right),
\end{align*}
where
$P_\Delta:\Hh\to\Hh_\Delta$ and
$J_\Delta:\Hh_\Delta\to\Hh$
denote the sampling and piecewise linear interpolation operators, respectively, $\Phi_\Delta(k,\omega)$ is the discrete cocycle satisfying
\eqref{eq:discrete-cocycle},
$\omega_S(\delta)
 :=
 \sup_{\xi\in S}
 \sup_{r,s\in[-\tau,0],|r-s|\leq\delta}
 \norm{\xi(r)-\xi(s)}$.
\end{proposition}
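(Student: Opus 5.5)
We treat the exact solution sampled on the grid, $y_k^\Delta:=u(t_k;\omega,\xi)$, as a perturbed Euler sequence. By \eqref{eq5.1} it satisfies the perturbed recursion of Lemma~\ref{lem5.5} with $r_k:=\varrho_k$. The Euler solution $z_k^\Delta:=u_k^\Delta$ starts from the same sampled history, so $e_j=0$ for $-m\le j\le0$.

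\textbf{Step 1: a priori bounds.} By Corollary~\ref{cor3.5} and compactness of $S$, there is an $M_0(\omega)$ with $\sup_{\xi\in S}\sup_{-\tau\le t\le T}\|u(t)\|\le M_0$. Set $M:=M_0+1$. The Euler iterates will be kept inside the ball of radius $M$ by a continuation argument: as long as $\max_j\|e_j\|\le1$, both sequences are bounded by $M$, so Lemmas~\ref{lem5.5}--\ref{lem5.6} apply with the constant $\gamma_M$.

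\textbf{Step 2: block partition and iteration.} Apply Lemma~\ref{lem5.3} with $\bar\omega=\omega_{\mathbf x}$ on $[0,T]$ and $\gamma=\gamma_M$, and adjoin the method-of-steps grid points $j m$. This gives grid blocks $[k_\ell,k_{\ell+1}]$, $\ell=0,\dots,N-1$, with $\omega_{\mathbf x}\le\gamma_M$ on each block. Their number satisfies $N\le C(1+\omega_{\mathbf x}(0,T))+T/\tau$, which is independent of $\Delta$. Put $E_\ell:=\max_{-m\le j\le k_\ell}\|e_j\|$. On each block, Lemma~\ref{lem5.6} gives
\[
E_{\ell+1}\le (2+C_M)E_\ell+C_M\sum_{j=k_\ell}^{k_{\ell+1}-1}\|\varrho_j\|,
\]
where we use that the delayed indices $j-m$ fall below $k_\ell$ because $t_{k_{\ell+1}}-t_{k_\ell}\le\tau$. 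Iterating over the $N$ blocks and using $E_0=0$ yields
\[
\max_{0\le k\Delta\le T}\|e_k\|\le C_{M,N}\sum_{k\Delta\le T}\|\varrho_k\|.
\]
Lemma~\ref{lem5.1} bounds each term by $C(\Delta^{1+\alpha}+\Delta^{3\alpha}+\Delta\omega_S(\Delta))$, and there are at most $T/\Delta$ terms. Hence
\[
\max_{0\le k\Delta\le T}\|e_k\|\le C_{S,T}(\omega)\bigl(\Delta^\alpha+\Delta^{3\alpha-1}+\omega_S(\Delta)\bigr).
\]
Since $\alpha>1/3$, the right-hand side tends to $0$. So for $\Delta$ small it is below $1$, which closes the continuation argument of Step 1. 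One can either apply the bootstrap block by block, or stop at the first index where $\|e_k\|>1$; the latter is legitimate because the one-step map is continuous.

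\textbf{Step 3: passing to the history norm.} The quantity in the statement is $\|J_\Delta\mathbf u_k^\Delta-u_{t_k}\|_{\Hh}$. We split it as
\[
\|J_\Delta\mathbf u_k^\Delta-J_\Delta P_\Delta u_{t_k}\|_{\Hh}+\|J_\Delta P_\Delta u_{t_k}-u_{t_k}\|_{\Hh}.
\]
The first term is at most $\max_j\|e_j\|$, because piecewise linear interpolation is a contraction in the sup norm. The second term is the interpolation error of $u$ on a window of length $\tau$. It is controlled by the modulus of continuity of $u$ on $[-\tau,T]$: on $[-\tau,0]$ this is $\omega_S(\Delta)$, and on $[0,T]$ it is $C_{S,T}\Delta^\alpha$ by \eqref{eq:positive-delay-increment}. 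Together with Step 2 this gives the asserted bound.

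\textbf{Main obstacle.} The delicate step is the block iteration in Step 2. We must make sure that the growth factor $2+C_M$ is applied only $N$ times with $N$ independent of $\Delta$, and that the delayed terms in Lemma~\ref{lem5.6} refer only to indices already controlled. Inserting the multiples of $m$ into the partition and capping block lengths at $\tau$ handles this. A second delicate point is the uniformity over $\xi\in S$ of the constant in Lemma~\ref{lem5.1}, which we take as given.
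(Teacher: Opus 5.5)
Your proposal is correct and follows essentially the same route as the paper. The grid error between the Euler iterates and the sampled exact solution (viewed as a perturbed Euler sequence with $r_k=\varrho_k$ via \eqref{eq5.1}) is propagated with Lemma~\ref{lem5.6} across a $\Delta$-independent number of small-control blocks refined at multiples of $\tau$; the argument is closed by a stopping index at error level $1$, the remainders are summed with Lemma~\ref{lem5.1}, and the result is transferred to the history norm through piecewise linear interpolation. Your single running-maximum recursion for $E_\ell$ is a compact version of the paper's induction over the delay intervals $[i\tau,(i+1)\tau]$. At the stopping index, what is really needed is not mere continuity of the one-step map but the fact that one Euler step from the ball of radius $M$ moves by at most $C_M(\Delta+\norm{x_k}+\norm{\mathbb X_k})$, which is uniformly small for small $\Delta$ (this is the paper's ``we can verify'' step).
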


\begin{proof}
Fix $\xi\in S$ and $\omega\in\Omega_0$. Let
$u(\cdot;\xi,\omega)$ denote the exact solution of
\eqref{eq:vector} with initial history $\xi$, and let
$(u_k^\Delta)_{k\geq-m}$ satisfy \eqref{eq:euler} with $u_k^\Delta=\xi(t_k)$,
$-m\leq k\leq0$.
For simplicity, write
$u(t_k):=u(t_k;\xi,\omega)$
and define the grid error by
$e_k:=u_k^\Delta-u(t_k)$ for
$k\geq-m$.
Since
$u_k^\Delta=\xi(t_k)=u(t_k)$ for $-m\leq k\leq0$,
we have $e_k=0$ on the initial grid. By the continuity of
the solution map and the compactness of
$[-\tau,T]\times S$, we have
\[
 M_T(\omega):=2+\sup_{\xi\in S}\sup_{-\tau\leq t\leq T}\norm{u(t;\xi,\omega)}<\infty.
\]

Define
$k_*:=\inf\left\{
0\leq k\leq\lfloor T/\Delta\rfloor:
\norm{e_k}\geq1
\right\}$,
with $k_*=\infty$ if the set is empty.
For $k<k_*$, we have
$\norm{u_k^\Delta}\leq M_T-1$.
In addition, we can verify that
$\norm{u_{k_*}^\Delta}\leq M_T(\omega)$ whenever $k_*\leq\lfloor T/\Delta\rfloor$.
Thus
\[\max_{-m\leq k\leq k_*\wedge\lfloor T/\Delta\rfloor}\|u_k^\Delta\|\leq M_T,
~~~~
\max_{-m\leq k\leq k_*\wedge\lfloor T/\Delta\rfloor}\|u(t_k)\|\leq M_T.
\]

Choose $0<\gamma\leq\gamma_{M_T}$, where $\gamma_{M_T}$
is the threshold in Lemma~\ref{lem5.5}. By
Lemma~\ref{lem5.3}, after inserting all points of the form
$i\tau$ lying in $[0,t_{k_*\wedge\lfloor T/\Delta\rfloor}]$,
the interval $[0,t_{k_*\wedge\lfloor T/\Delta\rfloor}]$ admits a
partition
$0=k_0<k_1<\cdots<k_N=k_*\wedge\lfloor T/\Delta\rfloor$
such that, with
$I_j=[t_{k_{j-1}},t_{k_j}]$,
\[
 \omega_{\mathbf x}(t_{k_{j-1}},t_{k_j})
 \leq\gamma,
 \qquad j=1,\ldots,N,
\]
and $N$ is bounded independently of $\Delta$.
Moreover, by Lemma \ref{lem5.6}, we obtain that
\begin{equation}\label{eq5.38}
\max_{k_{j-1}\leq k\leq k_j}\norm{e_k}
\leq
\begin{cases}
\begin{aligned}
&C|I_1|
\max_{k_0-m\leq k\leq k_1-m}\norm{e_k}
+C\displaystyle
\sum_{k=k_0}^{k_1-1}\norm{\varrho_k},
\end{aligned}
& j=1,\\[3ex]
\begin{aligned}
&2\max_{k_{j-2}\leq k\leq k_{j-1}}\norm{e_k}+C|I_j|
\max_{k_{j-1}-m\leq k\leq k_j-m}\norm{e_k}
+C\displaystyle\sum_{k=
k_{j-1}}^{k_j-1}\norm{\varrho_k},
\end{aligned}
& 2\leq j\leq N.
\end{cases}
\end{equation}

For $I_{j}\subset [0,\tau]$, we have
$\max_{k_{j-1}-m\leq k\leq k_j-m}\norm{e_k}=0$. Iterating \eqref{eq5.38} over all blocks contained in
$[0,\tau]$, we obtain that for every $j$ such that
$I_j\subset[0,\tau]$,
\[\max_{k_{j-1}\leq k\leq k_j}\norm{e_k}
 \leq
 C\sum_{\ell=1}^{j}2^{j-\ell}
 \sum_{k=k_{\ell-1}}^{k_\ell-
 1}\|\varrho_k\|.\]
Using Lemma~\ref{lem5.1} and the fact that $N$ is bounded independently of $\Delta$, we deduce that
 \begin{align*}
 \max_{0\leq k
 \leq\min\{m,k_*\wedge\lfloor T/\Delta\rfloor\}}\|e_k\|
 &\leq
 C_{S,T}(\omega)
 \sum_{k=0}^{\min\{m,k_*\wedge\lfloor T/\Delta\rfloor\}-1}\|\varrho_k\|\nonumber\\
 &\leq C_{S,T}(\omega)
 \bigl(\omega_S(\Delta)+\Delta^\alpha
 +\Delta^{3\alpha-1}\bigr).
\end{align*}
Assume inductively that, for some integer $i\geq1$,
\begin{align}\label{eq5.38-2}
 \max_{0\leq k\leq\min\{im,k_*\wedge\lfloor T/\Delta\rfloor\}}\norm{e_k}
 \leq
 C_{S,T,i}(\omega)\left(
 \omega_S(\Delta)+\Delta^\alpha+\Delta^{3\alpha-1}
 \right).
\end{align}
For every block contained in
$[i\tau,(i+1)\tau]$, its delayed indices lie in the
preceding interval $[(i-1)\tau,i\tau]$. Hence
$\max_{k_{j-1}-m\leq k\leq k_j-m}\norm{e_k}$ is bounded by the
right-hand side of \eqref{eq5.38-2}.
Iterating \eqref{eq5.38} over all blocks
$I_j\subset[i\tau,(i+1)\tau]$ and using \eqref{eq5.38-2} to control the delayed-error terms, we have
\begin{align*}
&\max_{0\leq k\leq\min\{(i+1)m,
k_*\wedge\lfloor T/\Delta\rfloor\}}\norm{e_k}
\leq
C\max_{0\leq k\leq\min\{im,
k_*\wedge\lfloor T/\Delta\rfloor\}}\norm{e_k}\\
&\quad
+C_{S,T,i}(\omega)
\sum_{I_j\subset[i\tau,(i+1)\tau]}|I_j|
\left(
 \omega_S(\Delta)+\Delta^\alpha+\Delta^{3\alpha-1}
 \right)
+C\sum_{k=im}^{\min\{(i+1)m,
k_*\wedge\lfloor T/\Delta\rfloor\}-1}\norm{\varrho_k}\\
&\leq
 C_{S,T,i+1}(\omega)\left(
 \omega_S(\Delta)+\Delta^\alpha+\Delta^{3\alpha-1}
 \right),
\end{align*}
where in the last inequality we have used Lemma~\ref{lem5.1}, $\sum_{I_j\subset[i\tau,(i+1)\tau]}
|I_j|\leq\tau$ and the fact that $N$ is bounded independently of $\Delta$.
This completes the induction step.
Since at most $\lceil T/\tau\rceil$ such
steps are required, we conclude that
\begin{equation}\label{eq5.39}
 \max_{0\leq k\leq k_*\wedge\lfloor T/\Delta\rfloor}\norm{e_k}
 \leq
 C_{S,T}(\omega)\left(
 \omega_S(\Delta)+\Delta^\alpha+\Delta^{3\alpha-1}
 \right).
\end{equation}
In view of $\alpha>1/3$, we have
$\omega_S(\Delta)+\Delta^\alpha+\Delta^{3\alpha-1}
 \to0$ as $\Delta\to0$. If
$k_*\leq\lfloor T/\Delta\rfloor$, then
\eqref{eq5.39} would give
$\norm{e_{k_*}}<1$ for all sufficiently small $\Delta$,
contradicting the definition of $k_*$. Consequently,
$k_*=\infty$, and hence
\begin{equation}\label{eq:grid-error}
 \max_{0\leq k\Delta\leq T}\norm{e_k}
 \leq
 C_{S,T}(\omega)\left(
 \omega_S(\Delta)+\Delta^\alpha+\Delta^{3\alpha-1}
 \right).
\end{equation}

Fix $0\leq k\Delta\leq T$ and
$\vartheta\in[-\tau,0]$, and set
$s=t_k+\vartheta$. Choose
$\ell\in\{k-m,\ldots,k-1\}$ such that
$s\in[t_\ell,t_{\ell+1}]$, and let
$\eta:=\frac{s-t_\ell}{\Delta}\in[0,1]$.
Recall that
$\mathbf u_k^\Delta
:=(u_{k-m}^\Delta,\ldots,u_k^\Delta)
\in U^{m+1}$
represents the discrete history of the delay scheme \eqref{eq:euler} at time $t_k$.
By the definition of $J_\Delta$,
\[
 (J_\Delta\mathbf u_k^\Delta)(\vartheta)
 =
 (1-\eta)u_\ell^\Delta+\eta u_{\ell+1}^\Delta.
\]
Therefore, using $u_\ell^\Delta=u(t_\ell)+e_\ell$ and
$u_{\ell+1}^\Delta=u(t_{\ell+1})+e_{\ell+1}$, we obtain that
 \begin{align*}
 \norm{(J_\Delta\mathbf u_k^\Delta)(\vartheta)-u(s)}
&\leq\max\{\norm{e_\ell},\norm{e_{\ell+1}}\}
 +\sup_{r,\tilde{r}\in[t_\ell,t_{\ell+1}]}
 \norm{u(r)-u(\tilde{r})}\\
 &\leq C_{S,T}(\omega)
 \left(\omega_S(\Delta)+\Delta^\alpha+\Delta^{3\alpha-1}\right),
\end{align*}
where in the last inequality we have used \eqref{eq:grid-error} and \eqref{eq:positive-delay-increment}.
Consequently,
\begin{align*}
&\sup_{\xi\in S}\max_{0\leq k\Delta\leq T}
 \norm{J_\Delta\Phi_\Delta(k,\omega)P_\Delta\xi
 -\Phi(k\Delta,\omega)\xi}_{\Hh}\\
&=\sup_{\xi\in S}\max_{0\leq k\Delta\leq T}\sup_{\vartheta\in[-\tau,0]}
\norm{(J_\Delta\mathbf u_k^\Delta)(\vartheta)-u(t_k+\vartheta)}
\leq C_{S,T}(\omega)
 \left(\omega_S(\Delta)+
 \Delta^\alpha+\Delta^{3\alpha-1}\right).
\end{align*}
This completes the proof.
\end{proof}

Let $\mu_k^\Delta$ solve the deterministic Euler recursion
\begin{equation}\label{eq:det-euler}
 \mu_{k+1}^\Delta
 =\mu_k^\Delta+\Delta
 \bigl[-\mathcal L\mu_k^\Delta-\lambda\mu_k^\Delta
 +F(\mu_k^\Delta,\mu_{k-m}^\Delta)\bigr].
\end{equation}
There exists a constant $C_*>0$, independent of
$\Delta$, such that
\begin{equation}\label{eq:G-square}
 \norm{-\mathcal L\mu_k^\Delta-\lambda\mu_k^\Delta
 +F(\mu_k^\Delta,\mu_{k-m}^\Delta)}^2
 \leq
 C_*
 \bigl(1+\norm{\mu_k^\Delta}^2+
 \norm{\mu_{k-m}^\Delta}^2\bigr).
\end{equation}
Recall from Assumption~\textbf{(A2)} that
$\lambda>L_0+L_1$, and fix $\sigma>0$ such that
$L_1e^{\sigma\tau}<\lambda-L_0$. Then there exist constants $\Delta_0>0$ and $c_0>0$
such that, for every $0<\Delta\leq\Delta_0$,
\begin{equation}
\label{eq:discrete-coercivity-constants}
 \lambda-L_0-C_*\Delta
 -
 (L_1+C_*\Delta)e^{\sigma\tau}
 \geq c_0,
 \qquad
 \frac{1-e^{-\sigma\Delta}}{\Delta}
 \geq\frac{\sigma}{2}.
\end{equation}

\begin{lemma}\label{lem5.8}
Let $\Delta_0$, $\sigma$, $C_*$, and $c_0$ be as specified
above, and set
$c:=\min\left\{c_0,\frac{\sigma}{2}\right\}$.
Let $m\in\mathbb N$ and $\Delta=\tau/m$ satisfy
$0<\Delta\leq\Delta_0$,
$\Delta\leq1$ and
$1-c\Delta\geq0$.
For the sequence $(\mu_k^\Delta)_{k\geq-m}$ defined above, set
\begin{equation}\label{eq:discrete-energy}
 \mathcal E_k^\Delta
 :=
 \norm{\mu_k^\Delta}^2
 +(L_1+C_*\Delta)e^{\sigma(\tau+\Delta)}
 \Delta
 \sum_{j=k-m}^{k-1}
 e^{-\sigma(t_k-t_j)}
 \norm{\mu_j^\Delta}^2,
 \qquad k\geq0.
\end{equation}
Then there exists a constant $C>0$, independent of $\Delta$,
such that
\begin{equation}\label{eq:discrete-deterministic-decay}
 \mathcal E_k^\Delta
 \leq
 e^{-ct_k}\mathcal E_0^\Delta+\frac{C}{c},
 \qquad k\geq0.
\end{equation}
\end{lemma}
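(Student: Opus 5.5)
The plan is to prove a one-step contraction inequality $\mathcal E_{k+1}^\Delta\leq(1-c\Delta)\mathcal E_k^\Delta+C\Delta$. Iterating it and using $(1-c\Delta)^k\leq e^{-ct_k}$ and $\sum_{j\ge0}(1-c\Delta)^j\Delta\leq 1/c$ then gives \eqref{eq:discrete-deterministic-decay}.

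\textbf{Step 1 (square of the recursion).} Write $G_k:=-\mathcal L\mu_k^\Delta-\lambda\mu_k^\Delta+F(\mu_k^\Delta,\mu_{k-m}^\Delta)$. From \eqref{eq:det-euler},
\[
\norm{\mu_{k+1}^\Delta}^2=\norm{\mu_k^\Delta}^2+2\Delta\ip{G_k}{\mu_k^\Delta}+\Delta^2\norm{G_k}^2 .
\]
The cross term is handled exactly as in \eqref{eq4.2-1}. Using $\ip{\mathcal L\mu}{\mu}\ge0$, \eqref{eq2.15} and Young's inequality,
\[
2\ip{G_k}{\mu_k^\Delta}\leq-(\lambda-L_0)\norm{\mu_k^\Delta}^2+L_1\norm{\mu_{k-m}^\Delta}^2+C .
\]
The quadratic term is bounded by \eqref{eq:G-square}, using $\Delta\le1$:
\[
\Delta^2\norm{G_k}^2\leq C_*\Delta^2\bigl(1+\norm{\mu_k^\Delta}^2+\norm{\mu_{k-m}^\Delta}^2\bigr).
\]
Together,
\[
\norm{\mu_{k+1}^\Delta}^2\leq\bigl(1-\Delta(\lambda-L_0-C_*\Delta)\bigr)\norm{\mu_k^\Delta}^2+\Delta(L_1+C_*\Delta)\norm{\mu_{k-m}^\Delta}^2+C\Delta .
\]

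\textbf{Step 2 (the delay sum).} Let $S_k:=\Delta\sum_{j=k-m}^{k-1}e^{-\sigma(t_k-t_j)}\norm{\mu_j^\Delta}^2$ and $\beta:=(L_1+C_*\Delta)e^{\sigma(\tau+\Delta)}$. Shifting the index gives
\[
S_{k+1}=e^{-\sigma\Delta}S_k+\Delta e^{-\sigma\Delta}\norm{\mu_k^\Delta}^2-\Delta e^{-\sigma(\tau+\Delta)}\norm{\mu_{k-m}^\Delta}^2 .
\]
Multiplying by $\beta$, the last term equals $-\Delta(L_1+C_*\Delta)\norm{\mu_{k-m}^\Delta}^2$. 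This exactly cancels the delayed term from Step 1, and that is why the weight carries $e^{\sigma(\tau+\Delta)}$. The new $\norm{\mu_k^\Delta}^2$ contribution is $\Delta\beta e^{-\sigma\Delta}=\Delta(L_1+C_*\Delta)e^{\sigma\tau}$. Hence
\[
\mathcal E_{k+1}^\Delta\leq\bigl(1-\Delta[\lambda-L_0-C_*\Delta-(L_1+C_*\Delta)e^{\sigma\tau}]\bigr)\norm{\mu_k^\Delta}^2+e^{-\sigma\Delta}\beta S_k+C\Delta .
\]

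\textbf{Step 3 (coercivity).} By \eqref{eq:discrete-coercivity-constants}, the bracket is at least $c_0\ge c$. Also $e^{-\sigma\Delta}=1-\Delta\frac{1-e^{-\sigma\Delta}}{\Delta}\le1-\frac{\sigma}{2}\Delta\le1-c\Delta$. Both coefficients are therefore at most $1-c\Delta$, which is nonnegative by hypothesis. This yields $\mathcal E_{k+1}^\Delta\le(1-c\Delta)\mathcal E_k^\Delta+C\Delta$ with $C$ independent of $\Delta$, and iteration finishes the proof.

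The only delicate point is the index bookkeeping in Step 2. One has to check that the window $\{k-m,\dots,k-1\}$ shifts correctly, with the term $j=k-m$ leaving and $j=k$ entering, and that the exponent $\tau+\Delta$ produces the exact cancellation. Everything else follows directly from Lemma~\ref{lem4.1}'s argument.
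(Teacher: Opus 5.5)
Your proposal is correct and follows essentially the same route as the paper. Both arguments square the Euler step and bound the cross and quadratic terms. Both use the shifted-window identity for the delay sum, in which the weight $e^{\sigma(\tau+\Delta)}$ exactly cancels the delayed term. Both then apply \eqref{eq:discrete-coercivity-constants} to get $\mathcal E_{k+1}^\Delta\le(1-c\Delta)\mathcal E_k^\Delta+C\Delta$, and iterate with $(1-c\Delta)^k\le e^{-ct_k}$.
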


\begin{proof}
Using \eqref{eq:G-square} and arguing as in
\eqref{eq4.2-1}, we have
\begin{align}
 \norm{\mu_{k+1}^\Delta}^2
 &=
 \norm{\mu_k^\Delta}^2
 +2\Delta
 \ip{
 -\mathcal L\mu_k^\Delta-\lambda\mu_k^\Delta
 +F(\mu_k^\Delta,\mu_{k-m}^\Delta)
 }{\mu_k^\Delta}\notag\\
 &\quad+
 \Delta^2
 \norm{
 -\mathcal L\mu_k^\Delta-\lambda\mu_k^\Delta
 +F(\mu_k^\Delta,\mu_{k-m}^\Delta)
 }^2\notag\\
 &\leq
 \bigl[1-(\lambda-L_0-C_*\Delta)\Delta\bigr]
 \norm{\mu_k^\Delta}^2
 +(L_1+C_*\Delta)\Delta
 \norm{\mu_{k-m}^\Delta}^2
 +C\Delta.
 \label{eq5.40}
\end{align}
Denote the second term on the right-hand side of
\eqref{eq:discrete-energy} by $M_k^\Delta$. It holds that
\begin{align}\label{eq5.41}
 M_{k+1}^\Delta-M_k^\Delta
 =
 -(1-e^{-\sigma\Delta})M_k^\Delta
 +(L_1+C_*\Delta)e^{\sigma\tau}\Delta
 \norm{\mu_k^\Delta}^2
 -
 (L_1+C_*\Delta)\Delta
 \norm{\mu_{k-m}^\Delta}^2.
\end{align}
Adding \eqref{eq5.40} and \eqref{eq5.41} yields
\begin{align}
 \mathcal E_{k+1}^\Delta-\mathcal E_k^\Delta
 &\leq
 -\Delta
 \bigl[
 \lambda-L_0-C_*\Delta
 -(L_1+C_*\Delta)e^{\sigma\tau}
 \bigr]
 \norm{\mu_k^\Delta}^2\notag\\
 &\quad-
 (1-e^{-\sigma\Delta})M_k^\Delta
 +C\Delta
 \leq
-c\Delta\mathcal E_k^\Delta+C\Delta,
 \label{eq5.42}
\end{align}
where we have used
\eqref{eq:discrete-coercivity-constants} and $0<c=
 \min\left\{c_0,\frac{\sigma}{2}\right\}$.
Iterating
\eqref{eq5.42} and using
$(1-c\Delta)^k
 \leq e^{-ck\Delta}
 =e^{-ct_k}$,
we obtain  \eqref{eq:discrete-deterministic-decay}.
\end{proof}

\begin{lemma}\label{lem5.10}
Let \(r>\tau\) and \(\varepsilon\in(0,1/4)\), and let $\Delta_0$ be as specified above, with
$\Delta_0\leq(\tau\wedge1)/2$. For \(\Delta=\tau/m\in(0,\Delta_0]\),
let \(u^\Delta=(u_k^\Delta)\) and
\(\mu^\Delta=(\mu_k^\Delta)\) satisfy
\eqref{eq:euler} and \eqref{eq:det-euler}, respectively,
with the same initial history \(\xi\in\Hh\). Then there
exist constants \(C_r>0\) and \(C_{r,\varepsilon}>0\),
independent of \(\Delta\) and \(\xi\), such that
\begin{align*}
 \max_{0\leq k\Delta\leq r}
 \norm{u_k^\Delta-\mu_k^\Delta}
 \leq
 C_r\varepsilon\bigl(1+\norm{\xi}_{\Hh}\bigr)
 +C_{r,\varepsilon}
 \left(
 1+\norm{x}_{\pvar,[0,r]}^{p}
 +\norm{\mathbb X}_{\qvar,[0,r]}^{q}
 \right).
\end{align*}
\end{lemma}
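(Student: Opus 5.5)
The plan is to transcribe the continuous argument of Lemma~\ref{lem4.2} to the grid, with Lemma~\ref{lem5.6} playing the role of the Doss--Sussmann Gronwall step. First I obtain an a priori bound for $\mu^\Delta$: by Lemma~\ref{lem5.8}, $\max_{-m\leq k,\ k\Delta\leq r}\norm{\mu_k^\Delta}\leq C_r(1+\norm\xi_{\Hh})$, uniformly in $\Delta$. I then view $\mu^\Delta$ as a \emph{perturbed} compensated Euler solution in the sense of Lemma~\ref{lem5.5}. Its perturbation is
\[
r_j=-h(\mu_j^\Delta)x_j-H(\mu_j^\Delta)\mathbb X_j,
\]
which is not small. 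For this reason I do not feed $r_j$ into \eqref{eq5.34} directly. Instead I reverse the roles: $y^\Delta=u^\Delta$ satisfies the rough Euler recursion, and the deterministic recursion for $\mu^\Delta$ is compared to it blockwise.

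Concretely, fix $\gamma_\varepsilon\leq\gamma_M$ with $\gamma_\varepsilon^{1/p}\leq\varepsilon$. Using Lemma~\ref{lem5.3}, I partition $[0,r]$ into grid blocks $[k_{j-1},k_j]$ with $\omega_{\mathbf x}\leq\gamma_\varepsilon$, adjoin the points $i\tau$, and refine so that each block has length at most $\tau\wedge1$. The number of blocks satisfies
\[
N\leq C_{r,\varepsilon}\bigl(1+\tnorm{x}_{\pvar,[0,r]}^p+\tnorm{\mathbb X}_{\qvar,[0,r]}^q\bigr),
\]
as in \eqref{eq4.6}. On each block I write $e_k=u_k^\Delta-\mu_k^\Delta$, and summing both recursions gives
\[
e_\ell-e_{k_{j-1}}=\sum_{i}\Delta\bigl[G(u_i^\Delta,u_{i-m}^\Delta)-G(\mu_i^\Delta,\mu_{i-m}^\Delta)\bigr]+\sum_i\bigl(h(u_i^\Delta)x_i+H(u_i^\Delta)\mathbb X_i\bigr).
\]
The first sum is handled by Lipschitz bounds. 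The second sum is the discrete rough integral $\mathcal J^u$. By Lemma~\ref{lem5.4}, as in the proof of \eqref{eq5.26}, its norm over a block is at most $C_M\omega_{\mathbf x}(t_{k_{j-1}},t_{k_j})^{1/p}\leq C_M\rho_j$ with $\rho_j\leq\varepsilon$.

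\textbf{Main obstacle.} The constant $C_M$ from Lemma~\ref{lem5.5} depends on a bound $M$ on $\max\norm{u_k^\Delta}$, and $\gamma_M$ depends on $M$ as well, which would make the result non-uniform in $\xi$. I would avoid this exactly as in Lemma~\ref{lem4.2}. The vector field $h$ is bounded with bounded derivatives (\textbf{(A3)}), so the terms in the proof of \eqref{eq5.29} involving $h,H,Dh$ do not depend on $M$. Only the drift contributes growth, and it enters through the factor $\Delta\,\norm{G}\leq C(1+M)\,(t_{k_j}-t_{k_{j-1}})$. I would therefore redo the induction \eqref{eq5.29} only for the rough part $A^u$, with constants depending on $C_h$ alone. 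The increment $R^{u,\Delta}$ contains the drift sum, which is bounded by $C(1+\norm{u}_\infty)\omega_{\mathbf x}$. Inserted into $\delta A^u$ via $Dh$, this contributes $C(1+\norm{u}_\infty)\omega_{\mathbf x}^{1+1/p}$, which is absorbable because it multiplies the block length. This step is the delicate one and I expect the bulk of the work there.

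With this in hand, set $K_j:=\max_{-m\leq k\leq k_j}\norm{e_k}$. Since each block has length at most $\tau$, the delayed indices lie before $k_{j-1}$. Hence
\[
K_j\leq(1+C|I_j|)K_{j-1}+C\rho_j+C_r\rho_j|I_j|(1+\norm\xi_{\Hh}),
\]
where the last term comes from $\mu^\Delta$ entering the drift comparison through $\norm{u}\leq\norm{\mu}+\norm{e}$. Starting from $K_0=0$, the discrete Gronwall inequality with $\sum_j|I_j|=r$ gives
\[
\max_k\norm{e_k}\leq C_r\varepsilon(1+\norm\xi_{\Hh})+C_r\varepsilon N.
\]
Inserting the bound on $N$ yields the claim, with constants independent of $\Delta$ and $\xi$.
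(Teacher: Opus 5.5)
Your blockwise skeleton (partition of $[0,r]$, the recursion $K_j\leq(1+C|I_j|)K_{j-1}+C\rho_j+C_r\rho_j|I_j|(1+\norm{\xi}_{\Hh})$, counting blocks) is the same as the paper's. The partition step, however, has a genuine gap. Lemma~\ref{lem5.3} produces grid blocks with $\omega_{\mathbf x}\leq\gamma_\varepsilon$ only for $\Delta\leq\Delta_0(\omega_{\mathbf x},\gamma_\varepsilon)$, and this threshold depends on the noise realization and on $\varepsilon$. The lemma must instead hold for \emph{every} $\Delta=\tau/m\in(0,\Delta_0]$, with the deterministic $\Delta_0$ fixed by the coercivity conditions and with constants free of $\omega$. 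Proposition~\ref{prop5.11} needs exactly this, because it applies the estimate at all shifts $\theta_{-(j+1)r}\omega$, where no common random step-size threshold exists. For a fixed admissible $\Delta$, a single step may carry $\omega_{\mathbf x}(t_j,t_{j+1})>\gamma_\varepsilon$ (e.g.\ a large increment $x_j$). Then no grid block containing that step has small control, so neither your sewing bound nor $\rho_j\leq\varepsilon$ is available there.

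The missing idea is the paper's set $\mathcal B$ of bad steps, those with $\omega_{\mathbf x}(t_j,t_{j+1})>\gamma_\varepsilon/2$. Keep each as a singleton block and estimate it crudely by $\norm{e_{j+1}}\leq(1+C\Delta)\norm{e_j}+C\Delta\norm{e_{j-m}}+C(\norm{x_j}+\norm{\mathbb X_j})$, which uses only the boundedness of $h$ and $H$. Superadditivity gives $\#\mathcal B\leq2\gamma_\varepsilon^{-1}\omega_{\mathbf x}(0,r)$. H\"older then gives $\sum_{j\in\mathcal B}(\norm{x_j}+\norm{\mathbb X_j})\leq C_{r,\varepsilon}(1+\omega_{\mathbf x}(0,r))$, which goes into the $C_{r,\varepsilon}$ term. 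The greedy partition is run only on the remaining runs of good steps.

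On the good blocks your route differs from the paper's. The paper does not redo the discrete sewing induction. It pulls $u^\Delta$ back with the continuous flow of Lemma~\ref{lem3.2}, $z_j=\psi^{-1}_{t_{j_\ell},t_j}(u_j^\Delta)$, and uses the one-step flow defect $\varepsilon^{\mathrm{fl}}_j$. This defect is bounded by $C\omega_{\mathbf x}(t_j,t_{j+1})^{3/p}$ uniformly in the state because $h\in C_b^4$. Independence from $M$ is then automatic, and the drift enters only through $\Delta\widetilde G_j$, which yields the $\rho_\ell|I_\ell|(1+\norm{\xi}_{\Hh})$ term directly.

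Your direct sewing with drift tracking can be made to work, but the drift-induced sewing error must come out in product form $(t-s)\,\omega_{\mathbf x}(s,t)^{1/p}$. You get this from Corollary~\ref{cor5.5} with the two controls $t-s$ and $\omega_{\mathbf x}$. If, as you propose, you bound the drift sum by $\omega_{\mathbf x}$, you obtain $C(1+\norm{u}_\infty)\omega_{\mathbf x}(I_j)^{1+1/p}$. Summed over blocks, and through $\norm{u}_\infty\leq\norm{\mu}_\infty+\norm{e}_\infty$, this produces a cross term $\varepsilon(1+\norm{\xi}_{\Hh})\omega_{\mathbf x}(0,r)$ and a Gronwall factor $e^{C\varepsilon\omega_{\mathbf x}(0,r)}$. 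The statement allows neither, and either would destroy the deterministic contraction factor $\eta_0$ in Proposition~\ref{prop5.11}.
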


\begin{proof}

\textbf{Step~1.}
Let $\gamma_0$ be as in Lemma~\ref{lem3.2}.
Choose $\gamma_\varepsilon\in(0,\gamma_0]$ such that
$r\gamma_\varepsilon^{1/p}\leq\varepsilon$
and $\gamma_\varepsilon<1$.
Set $t_{j}=j\Delta$ and
\[\mathcal B:=
 \left\{
 0\leq j<\lfloor r/\Delta\rfloor:
 \omega_{\mathbf x}(t_j,t_{j+1})
 >\gamma_\varepsilon/2
 \right\}.\]
By the superadditivity of $\omega_{\mathbf x}$, we have
$\sum_{j\in\mathcal B}
 \omega_{\mathbf x}(t_j,t_{j+1})
 \leq \omega_{\mathbf x}(0,r)$.
Consequently,
\[\#\mathcal B
 \leq 2\gamma_\varepsilon^{-1}\omega_{\mathbf x}(0,r).\]
Retain each step indexed by $\mathcal B$ as a separate
block, and apply the discrete greedy partition at level
$\gamma_\varepsilon$ to each remaining consecutive run
of grid steps. Subdivide the resulting greedy blocks
at delay-grid points $n\tau$ and, if necessary, at
additional grid points so that their lengths do not
exceed $\tau\wedge1$. We thus obtain a partition
\[
 0=j_0<j_1<\cdots<j_N=\lfloor r/\Delta\rfloor,
\]
such that each block
$I_\ell=[t_{j_\ell},t_{j_{\ell+1}}]$
is either a single step indexed by $\mathcal B$
or a block indexed by $\ell\in\mathcal G$, where
$\mathcal G$ denotes the set of indices of the
refined greedy blocks, satisfying
\[
 |I_\ell|\leq\tau\wedge1,
 \qquad
 \rho_\ell:=
 \omega_{\mathbf x}(t_{j_\ell},t_{j_{\ell+1}})^{1/p}
 \leq\gamma_\varepsilon^{1/p},
 \qquad \ell\in\mathcal G.
\]
By the discrete greedy-partition argument, the
superadditivity of $\omega_{\mathbf x}$, and
the bound on $\#\mathcal B$, we conclude that
\begin{equation}\label{eq5.44}
 N
 \leq C_{r,\varepsilon}
 \left(
 1+\tnorm{x}_{\pvar,[0,r]}^p
 +\tnorm{\mathbb X}_{\qvar,[0,r]}^q
 \right).
\end{equation}
By \eqref{eq2.40}, \eqref{eq:discrete-deterministic-decay}
and $\mathcal E_0^\Delta\leq C\norm{\xi}_{\Hh}^{2}$, we further obtain that
\begin{equation}\label{eq5.45}
 \max_{0\leq j\Delta\leq r}\norm{\mu_j^\Delta}
 \leq C_r(1+\norm{\xi}_{\Hh})
 ~~~\mbox{and}~~~
 \sum_{j=j_\ell}^{j_{\ell+1}-1}
 \norm{\mu_{j+1}^\Delta-\mu_j^\Delta}
 \leq
 C_r(1+\norm{\xi}_{\Hh})
 (t_{j_{\ell+1}}-t_{j_\ell}).
\end{equation}

\textbf{Step~2.}
Fix $\ell\in\mathcal G$ and define
$z_j
 :=
 \psi_{t_{j_\ell},t_j}^{-1}(u_j^\Delta)$ for
$j\in [j_\ell,j_{\ell+1}]$.
For \(j_\ell\leq j<j_{\ell+1}\), let
\begin{equation}\label{eq5.46}
 \varepsilon_j^{\mathrm{fl}}
 :=
 u_j^\Delta+h(u_j^\Delta)x_j
 +H(u_j^\Delta)\mathbb X_j
 -\psi_{t_j,t_{j+1}}(u_j^\Delta).
\end{equation}
Applying the sewing estimate \eqref{eq2.12} to
\eqref{eq5.46}, together with the uniform flow
bounds in Lemma~\ref{lem3.2}, gives
\begin{align}
\label{eq5.47}
&\norm{\varepsilon_j^{\mathrm{fl}}}
=\norm{\int_{t_j}^{t_{j+1}}
h(\psi_{t_j,r}(u_j^\Delta))\,d\mathbf x_r-h(u_j^\Delta)x_j
 -H(u_j^\Delta)\mathbb X_j}\nonumber\\
 &\leq
C_p\Bigl(
 \tnorm{R^{h(\psi_{t_j,\cdot}(u_j^\Delta))}}_{q\text{-var},[t_j,t_{j+1}]}
 \tnorm{x}_{p\text{-var},[t_j,t_{j+1}]}
 +
 \tnorm{H(\psi_{t_j,\cdot}(u_j^\Delta))}_{p\text{-var},[t_j,t_{j+1}]}
 \tnorm{\mathbb X}_{q\text{-var},[t_j,t_{j+1}]}
\Bigr)\nonumber\\
 &\leq
 C\omega_{\mathbf x}(t_j,t_{j+1})^{3/p}.
\end{align}

Let $\Psi_{j+1}
 :=
 \psi_{t_{j_\ell},t_{j+1}}^{-1}$.
Then $
 \Psi_{j+1}
 \bigl(\psi_{t_j,t_{j+1}}(u_j^\Delta)\bigr)
 =z_j$.
 By \eqref{eq:euler} and \eqref{eq5.46}, we find that
\[
 u_{j+1}^\Delta
 =
 \psi_{t_j,t_{j+1}}(u_j^\Delta)
 +\Delta G(u_j^\Delta,u_{j-m}^\Delta)
 +\varepsilon_j^{\mathrm{fl}}.
\]
Consequently,
\begin{align}
\label{eq5.48}
 &z_{j+1}-z_j
=
 \Psi_{j+1}\Bigl(
 \psi_{t_j,t_{j+1}}(u_j^\Delta)
 +\Delta G(u_j^\Delta,u_{j-m}^\Delta)
 +\varepsilon_j^{\mathrm{fl}}
 \Bigr)
 -
 \Psi_{j+1}\Bigl(
 \psi_{t_j,t_{j+1}}(u_j^\Delta)
 \Bigr)\nonumber\\
 &=\Delta\int_0^1
 D\Psi_{j+1}
 \Bigl(
 \psi_{t_j,t_{j+1}}(u_j^\Delta)
 +\theta\bigl[
 \Delta G(u_j^\Delta,u_{j-m}^\Delta)
 +\varepsilon_j^{\mathrm{fl}}
 \bigr]
 \Bigr)
 G(u_j^\Delta,u_{j-m}^\Delta)d\theta
 \nonumber\\
 &~~+\int_0^1
 D\Psi_{j+1}
 \Bigl(
 \psi_{t_j,t_{j+1}}(u_j^\Delta)
 +\theta\bigl[
 \Delta G(u_j^\Delta,u_{j-m}^\Delta)
 +\varepsilon_j^{\mathrm{fl}}
 \bigr]
 \Bigr)
 \varepsilon_j^{\mathrm{fl}}\,d\theta
 :=\Delta\widetilde G_j+d_j.
\end{align}
Thanks to \eqref{eq3.5}-\eqref{eq3.6}, \eqref{eq2.40} and
\eqref{eq5.47}, it holds that
\begin{align}
 \norm{\widetilde G_j-G(z_j,u_{j-m}^\Delta)}
 &\leq
 C\rho_\ell
 \bigl(1+\norm{z_j}+\norm{u_{j-m}^\Delta}\bigr),
 \nonumber\\
 \norm{d_j}
 &\leq
 C\omega_{\mathbf x}(t_j,t_{j+1})^{3/p}.
 \label{eq5.50}
\end{align}
Since \(3/p>1\) and \(\omega_{\mathbf x}\) is
superadditive,
\begin{align}\label{eq5.51}
 \sum_{j=j_\ell}^{j_{\ell+1}-1}\norm{d_j}
 \leq
 C\sum_{j=j_\ell}^{j_{\ell+1}-1}
 \omega_{\mathbf x}(t_j,t_{j+1})^{3/p}
 \leq
 C\omega_{\mathbf x}
 (t_{j_\ell},t_{j_{\ell+1}})^{3/p}
 =
 C\rho_\ell^3.
\end{align}
Subtracting \eqref{eq:det-euler} from
\eqref{eq5.48},
using \eqref{eq5.45},
\eqref{eq5.50}, and the Lipschitz
continuity of \(G\), we have
\begin{align}\label{eq5.52}
 \norm{z_{j+1}-\mu_{j+1}^\Delta}
 &\leq
 \norm{z_j-\mu_j^\Delta}
 +\Delta\norm{
 \widetilde G_j
 -G(\mu_j^\Delta,\mu_{j-m}^\Delta)}+
 \|d_j\|\\
 &\leq (1+C\Delta)\norm{z_j-\mu_j^\Delta}
 +C\Delta\norm{u_{j-m}^\Delta-
 \mu_{j-m}^\Delta}
 +C_r\Delta\rho_\ell (1+\norm{\xi}_{\Hh})
 +\norm{d_j}.\nonumber
\end{align}
Since \(|I_\ell|\leq\tau=m\Delta\), we have $j-m\leq j_\ell$ for \(j_\ell\leq j<j_{\ell+1}\). Notice that $\|u_j^\Delta-\mu_j^\Delta\|
 \leq
 \|z_j-\mu_j^\Delta\|+C\rho_\ell$,
\[
\|u_{j-m}^\Delta-\mu_{j-m}^\Delta\|\leq \max_{-m\leq i\leq j_\ell}
 \norm{u_i^\Delta-\mu_i^\Delta}~~
 \mbox{and}~~
 \|z_{j_\ell}-\mu_{j_\ell}^\Delta\|
\leq \max_{-m\leq i\leq j_\ell}
 \norm{u_i^\Delta-\mu_i^\Delta}.
\]
Iterating \eqref{eq5.52} over
\(I_\ell\) and using \eqref{eq5.51}, we obtain that
\begin{equation}\label{eq5.53}
\max_{-m\leq i\leq j_{\ell+1}}
 \norm{u_i^\Delta-\mu_i^\Delta}
 \leq
 (1+C|I_\ell|)\max_{-m\leq i\leq j_\ell}
 \norm{u_i^\Delta-\mu_i^\Delta}
 +C\rho_\ell
 +C_r\rho_\ell|I_\ell|(1+\norm{\xi}_{\Hh}).
\end{equation}

\textbf{Step~3.}
For a single step indexed by $j\in\mathcal B$,
subtracting \eqref{eq:det-euler} from
\eqref{eq:euler}, and using the Lipschitz continuity
of $G$ and Assumption \textbf{(A3)},
we have
\begin{align*}
 \norm{u_{j+1}^\Delta-\mu_{j+1}^\Delta}
 \leq
 (1+C\Delta)\norm{u_j^\Delta
 -\mu_j^\Delta}
 +
 C\Delta
 \norm{u_{j-m}^\Delta-\mu_{j-m}^\Delta}
 +C\bigl(\norm{x_j}+\norm{\mathbb X_j}\bigr).
\end{align*}
Hence, after increasing $C$ if necessary,
\begin{equation}\label{eq5.53-1}
 \max_{-m\leq i\leq j+1}
 \norm{u_i^\Delta-\mu_i^\Delta}
 \leq
 (1+C\Delta)
 \max_{-m\leq i\leq j}
 \norm{u_i^\Delta-\mu_i^\Delta}
 +C\bigl(\norm{x_j}+\norm{\mathbb X_j}\bigr).
\end{equation}
Since $u^\Delta$ and $\mu^\Delta$ have the same
initial history, iterating \eqref{eq5.53} and
\eqref{eq5.53-1} over all blocks and using
$\sum_{\ell=0}^{N-1}|I_\ell|\leq r$, we obtain that
\begin{align*}
 \max_{0\leq j\Delta\leq r}
 \norm{u_j^\Delta-\mu_j^\Delta}
 &\leq
 C_r(1+\norm{\xi}_{\Hh})
 \sum_{\ell\in\mathcal G}\rho_\ell|I_\ell|
 +C_r\sum_{\ell\in\mathcal G}\rho_\ell
 +
 C_r\sum_{j\in\mathcal B}
 \bigl(\norm{x_j}+\norm{\mathbb X_j}\bigr)\\
  &\leq
 C_r\varepsilon(1+\norm{\xi}_{\Hh})
 +
 C_{r,\varepsilon}
 \left(
 1+\tnorm{x}_{\pvar,[0,r]}^{p}
 +\tnorm{\mathbb X}_{\qvar,[0,r]}^{q}
 \right),
\end{align*}
where we have used \eqref{eq5.44},
\[
 \sum_{\ell\in\mathcal G}\rho_\ell|I_\ell|
 \leq r\gamma_\varepsilon^{1/p}\leq\varepsilon,
 \qquad
 \sum_{\ell\in\mathcal G}\rho_\ell
 \leq\gamma_\varepsilon^{1/p}N,
\]
and, by H\"older's inequality and the bound on
$\#\mathcal B$,
\begin{align*}
 \sum_{j\in\mathcal B}
 \bigl(\norm{x_j}+\norm{\mathbb X_j}\bigr)
 &\leq
 (\#\mathcal B)^{1-1/p}
 \tnorm{x}_{\pvar,[0,r]}
 +
 (\#\mathcal B)^{1-1/q}
 \tnorm{\mathbb X}_{\qvar,[0,r]}
 \leq
 C_{r,\varepsilon}
 \bigl(1+\omega_{\mathbf x}(0,r)\bigr).
\end{align*}
The same estimate holds when $\mathcal B$ is empty.
This completes the proof.

\end{proof}

The following proposition establishes the discrete counterparts of
Proposition~\ref{prop4.3}, Theorem~\ref{thm4.6}, and
Lemma~\ref{lem4.7}.

\begin{proposition}\label{prop5.11}
Let $\Delta_0$ be as specified above.
There exist an integer $M\geq2$ and a constant
$\eta_0\in(0,1)$ such that,
for every admissible $\Delta=\tau/m\in(0,\Delta_0]$,
$\omega\in\Omega_0$ and
$\zeta\in\Hh_\Delta$,
\begin{equation}\label{eq:discrete-contraction}
 \norm{\Phi_\Delta(n_r,\omega)\zeta}_{\Hh_\Delta}^2
 \leq
 \eta_0\norm{\zeta}_{\Hh_\Delta}^2+
 P_r(\omega),
\end{equation}
where \(r:=M\tau\), $n_r:=\frac r\Delta=Mm$, $P_{r}(\omega):=C_{r,\varepsilon}
 \left(
 1+\norm{x}_{\pvar,[0,r]}^{2p}
 +\norm{\mathbb X}_{\qvar,[0,r]}^{2q}
 \right)$, $\varepsilon\in(0,1/4)$ is fixed sufficiently small.
Consequently, the discrete cocycles \(\Phi_\Delta\)
admit measurable closed tempered pullback absorbing
balls \(B_\Delta(\omega)\) and positively invariant
pathwise pullback absorbing hulls
\(\widehat B_\Delta(\omega)\) satisfying
\begin{equation}
\label{eq:uniform-discrete-radius}
 \sup_{
 0<\Delta\leq\Delta_0}
 \sup_{\zeta\in\widehat B_\Delta(\omega)}
 \norm{\zeta}_{\Hh_\Delta}^2
 \leq R_*(\omega),
\end{equation}
where \(R_*\) is a nonnegative tempered random variable
independent of \(\Delta\).
\end{proposition}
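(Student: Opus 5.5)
The plan mirrors the continuous argument of Proposition~\ref{prop4.3}, Theorem~\ref{thm4.6} and Lemma~\ref{lem4.7}, replacing each continuous ingredient by its discrete counterpart, with constants uniform in $\Delta$.

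\textbf{Step 1: the contraction \eqref{eq:discrete-contraction}.} Let $\zeta\in\Hh_\Delta$. Denote by $u^\Delta$ the Euler solution of \eqref{eq:euler} with initial history $\zeta$, and by $\mu^\Delta$ the deterministic Euler solution of \eqref{eq:det-euler} with the same history. (Lemma~\ref{lem5.10} is stated for $\xi\in\Hh$, but its proof uses only the grid values, so it applies with $\|\zeta\|_{\Hh_\Delta}$ in place of $\|\xi\|_{\Hh}$.) Since $\mathcal E_0^\Delta\leq(1+C\tau)\|\zeta\|_{\Hh_\Delta}^2$, Lemma~\ref{lem5.8} gives, for every grid index $k$ with $t_k\in[r-\tau,r]$,
\[
\|\mu_k^\Delta\|^2\leq Ce^{-c(r-\tau)}\|\zeta\|_{\Hh_\Delta}^2+C/c .
\]
Combining this with Lemma~\ref{lem5.10} yields
\[
\|u_k^\Delta\|\leq C e^{-c(r-\tau)/2}\|\zeta\|_{\Hh_\Delta}+C_r\varepsilon\|\zeta\|_{\Hh_\Delta}+C_{r,\varepsilon}\bigl(1+\tnorm{x}^p_{\pvar,[0,r]}+\tnorm{\mathbb X}^q_{\qvar,[0,r]}\bigr).
\]
We square this bound and take the maximum over $k\in\{n_r-m,\dots,n_r\}$. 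We then choose $M$ (so $r=M\tau$) large enough that $Ce^{-c(M-1)\tau}<1/4$, and afterwards $\varepsilon$ small enough that $C_r\varepsilon^2<1/4$. This gives \eqref{eq:discrete-contraction} with $\eta_0=3/4$, say. Since $c$, $C_*$ and all constants in Lemmas~\ref{lem5.8} and \ref{lem5.10} are independent of $\Delta$, the choices of $M$, $\varepsilon$, $\eta_0$ and $P_r$ are uniform in $\Delta\in(0,\Delta_0]$. Using that $r$ is a multiple of $\tau$ guarantees that $n_r=Mm$ is an integer for every admissible $\Delta$.

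\textbf{Step 2: absorbing balls and hulls.} Stationarity in (A4) gives $\Phi_\Delta(n_r,\theta_{-n_r\Delta}\omega)=\Phi_\Delta(n_r,\theta_{-r}\omega)$, and $P_r$ depends on $\omega$ only through the rough path on $[0,r]$. We define
\[
R_0(\omega):=2+2\sum_{j\geq0}\eta_0^jP_r(\theta_{-(j+1)r}\omega),
\]
which is independent of $\Delta$, and is tempered with continuous orbit map by exactly the argument of Theorem~\ref{thm4.6}. Iterating \eqref{eq:discrete-contraction} in pullback form absorbs discrete tempered families along the times $nr$. The intermediate times $\ell\Delta\in[0,r)$ require a one-block growth bound, $\|\Phi_\Delta(\ell,\theta_{-\ell\Delta}\omega)\zeta\|^2_{\Hh_\Delta}\leq C^\#(\omega)(1+\|\zeta\|^2_{\Hh_\Delta})$, uniform in $\Delta$.

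This growth bound is the main obstacle, since Corollary~\ref{cor3.5} is a continuous-time statement. First I would obtain it by combining Lemma~\ref{lem5.10} on $[0,r]$ with the bound on $\mu^\Delta$ from Lemma~\ref{lem5.8}. This gives $\max_{k\Delta\leq r}\|u_k^\Delta\|\leq C(1+\|\zeta\|)+C_{r,\varepsilon}(1+\omega_{\mathbf x}(0,r))$, uniformly in $\Delta$. Applied on the shifted path, the constant is controlled by $\tnorm{\mathbf x(\omega)}_{\pvar,[-r,r]}$, which is tempered with continuous orbit map.

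We then set $\tilde B_\Delta(\omega):=\{\|\zeta\|^2_{\Hh_\Delta}\leq R_0(\omega)\}$. As in Lemma~\ref{lem4.7}, the relation $R_0(\theta_r\omega)\geq\eta_0R_0(\omega)+P_r(\omega)$ gives $\Phi_\Delta(n_r,\omega)\tilde B_\Delta(\omega)\subset\tilde B_\Delta(\theta_r\omega)$. The hull $\widehat B_\Delta(\omega)$ is defined as the closure of $\bigcup_{0\leq\ell\leq n_r}\Phi_\Delta(\ell,\theta_{-\ell\Delta}\omega)\tilde B_\Delta(\theta_{-\ell\Delta}\omega)$. Positive invariance and pullback absorption then follow verbatim from the discrete cocycle property \eqref{eq:discrete-cocycle}. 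The growth bound yields \eqref{eq:uniform-discrete-radius} with
\[
R_*(\omega):=1+C\bigl(1+R^\#_{0,r}(\omega)\bigr)\bigl(1+\tnorm{\mathbf x(\omega)}^{2p}_{\pvar,[-r,r]}\bigr),
\]
which is tempered by Lemma~\ref{lem4.3} and does not depend on $\Delta$. Measurability follows as in Theorem~\ref{thm3.9}, since $\Phi_\Delta$ is a continuous function of finitely many increments $(x_k,\mathbb X_k)$.
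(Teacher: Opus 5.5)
Your proposal is correct and follows the paper's proof essentially step for step. The contraction comes from Lemma~\ref{lem5.8} together with Lemma~\ref{lem5.10}, choosing $M$ first and then $\varepsilon$. The $\Delta$-uniform one-block growth bound is obtained from the same two lemmas. The absorbing ball, its invariance via $R(\theta_r\omega)\geq\eta_0R(\omega)+P_r(\omega)$, the closed hull, and the tempered bound $R_*$ via Lemma~\ref{lem4.3} are constructed just as in the paper.

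The differences are cosmetic. The paper applies Lemma~\ref{lem5.10} to $\xi=J_\Delta\zeta$, so that $P_\Delta\xi=\zeta$ and $\|\xi\|_{\Hh}=\|\zeta\|_{\Hh_\Delta}$, whereas you observe that its proof only sees grid values; this amounts to the same thing. You also compose the intermediate $\ell$ steps after the $nr$-block, as in Lemma~\ref{lem4.7}, rather than before it.
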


\begin{proof}
Fix an admissible $\Delta$ and $\zeta\in\Hh_\Delta$.
Apply Lemma~\ref{lem5.10} with initial history
$\xi=J_\Delta\zeta$, for which
$P_\Delta\xi=\zeta$ and
$\|\xi\|_{\Hh}=\|\zeta\|_{\Hh_\Delta}$.
Since \(t_k\geq r-\tau\) for \(n_r-m\leq k\leq n_r\), by
\eqref{eq:discrete-deterministic-decay} we have
\[\norm{\mu_{n_r+\cdot}^\Delta}_{\Hh_\Delta}
 =
 \max_{-m\leq \iota\leq0}
 \norm{\mu_{n_r+\iota}^\Delta}
 =\max_{n_r-m\leq k\leq n_r}
 \norm{\mu_{k}^\Delta}
 \leq C e^{-c(r-\tau)/2}\norm\zeta_{
 \Hh_\Delta}+C,\]
where we used
\((\mathcal E_0^\Delta)^{1/2}
\leq C(1+\norm{\zeta}_{\Hh_\Delta})\).
Combining this with Lemma~\ref{lem5.10}, we obtain that
\begin{align*}
 \norm{u_{n_r+\cdot}^\Delta}_{\Hh_\Delta}
 &\leq
 \norm{\mu_{n_r+\cdot}^\Delta}_{\Hh_\Delta}
 +
 \max_{0\leq k\Delta\leq r}
 \norm{u_k^\Delta-\mu_k^\Delta}\\
 &\leq
 \left[Ce^{-c(r-\tau)/2}+C_r\varepsilon\right]
 \norm{\zeta}_{\Hh_\Delta}
 +C_{r,\varepsilon}
 \left(
 1+\norm{x}_{\pvar,[0,r]}^{p}
 +\norm{\mathbb X}_{\qvar,[0,r]}^{q}
 \right).
\end{align*}
Choose \(M\geq2\) sufficiently large, with \(r=M\tau\), and
then choose \(\varepsilon\in(0,1/4)\) sufficiently small so that
$Ce^{-c(r-\tau)/2}+C_r
\varepsilon<1$.
Then there exists a constant $\eta_0\in(0,1)$ such that
$\|u_{n_r+\cdot}^\Delta\|_{\Hh_\Delta
}^2\leq \eta_0\norm{\zeta}_{\Hh_\Delta}^2
 +P_{r}(\omega)$,
which proves \eqref{eq:discrete-contraction}, since
$u_{n_r+\cdot}^\Delta=
\Phi_\Delta(n_r,\omega)\zeta$.

For the fixed \(r\) and \(\varepsilon\) chosen above,
it follows from \eqref{eq5.45} and Lemma~\ref{lem5.10} that there exists a positive tempered random variable \(C_r^*\), independent
of \(\Delta\) and \(\zeta\), whose orbit map
\(s\mapsto C_r^*(\theta_s\omega)\) is continuous, such that
\begin{align}
&\max_{0\leq k\Delta\leq r}
\norm{\Phi_\Delta(k,\omega)\zeta}_{\Hh_\Delta}^{2}
=
\max_{0\leq k\Delta\leq r}
\norm{u_{k+\cdot}^\Delta}_{\Hh_\Delta}^{2}
\nonumber\\
&\leq
2\max_{-m\leq j\leq n_r}
\norm{\mu_j^\Delta}^{2}
+
2\max_{0\leq j\leq n_r}
\norm{u_j^\Delta-\mu_j^\Delta}^{2}
\leq
C_r^*(\omega)
\left(
1+\norm{\zeta}_{\Hh_\Delta}^{2}
\right).
\label{eq5.58}
\end{align}
Define
\begin{align}
 R_{\mathrm d,*}(\omega)
 &:=2+2\sum_{j=0}^{\infty}\eta_0^j
 P_r(\theta_{-(j+1)r}\omega),
 \label{eq5.55}\\
 B_\Delta(\omega)
 &:=\{\zeta\in\Hh_\Delta:\norm\zeta_{\Hh_\Delta}^2\leq R_{\mathrm d,*}(\omega)\}.\nonumber
\end{align}
An argument analogous to that in the proof of
Theorem~\ref{thm4.6} shows that \(R_{\mathrm d,*}\) is
measurable and tempered and that
\(s\mapsto R_{\mathrm d,*}(\theta_s\omega)\) is continuous.
Let \(D_\Delta\) be an arbitrary discrete tempered family and
fix \(\omega\in\Omega_0\). For \(n\geq1\), take
$\zeta\in D_\Delta(\theta_{-nr}\omega)$.
By repeatedly applying \eqref{eq:discrete-contraction}, we obtain
\begin{align}
&\norm{
\Phi_\Delta(nn_r,\theta_{-nr}\omega)\zeta
}_{\Hh_\Delta}^{2}
\leq
\eta_0^n\norm{\zeta}_{\Hh_\Delta}^{2}
+
\sum_{j=0}^{n-1}
\eta_0^j
P_{r}\bigl(\theta_{-(j+1)r}\omega\bigr).
\label{eq:block-iteration}
\end{align}
Since \(D_\Delta\) is tempered and \(\eta_0\in(0,1)\), we have
\[
\eta_0^n
\sup_{\zeta\in D_\Delta(\theta_{-nr}\omega)}
\norm{\zeta}_{\Hh_\Delta}^{2}
\longrightarrow0
\qquad\text{as }n\to\infty.
\]
By the definition of \(R_{\mathrm d,*}\),
\begin{align*}
\sum_{j=0}^{n-1}
\eta_0^jP_{r}\bigl(\theta_{-(j+1)r}\omega\bigr)
\leq
\frac{R_{\mathrm d,*}(\omega)-2}{2}.
\end{align*}
It therefore follows from \eqref{eq:block-iteration} that there exists \(N_{D_\Delta}(\omega)\in\mathbb N\) such that for
every \(n\geq N_{D_\Delta}(\omega)\),
\begin{align*}
\sup_{\zeta\in D_\Delta(\theta_{-nr}\omega)}
\norm{
\Phi_\Delta(nn_r,\theta_{-nr}\omega)\zeta
}_{\Hh_\Delta}^{2}
\leq
R_{\mathrm d,*}(\omega).
\end{align*}
Consequently,
$\Phi_\Delta(nn_r,\theta_{-nr}\omega)
D_\Delta(\theta_{-nr}\omega)
\subseteq B_\Delta(\omega)$
for all sufficiently large \(n\).

For any $\iota\in\mathbb N_0$, write
$\iota=nn_r+\ell$, where $0\leq\ell<n_r$.
For $\zeta\in D_\Delta(\theta_{-\iota\Delta}\omega)$,
in view of the identity $\iota\Delta=nr+\ell\Delta$,
the cocycle property and \eqref{eq5.58}, we have
\begin{align*}
 &\Phi_\Delta(\iota,\theta_{-\iota\Delta}\omega)\zeta
 =
 \Phi_\Delta(nn_r,\theta_{-nr}\omega)
 \Phi_\Delta(\ell,\theta_{-\iota\Delta}\omega)\zeta,\\
 &\|\Phi_\Delta(\ell,\theta_{-\iota\Delta}\omega)
 \zeta\|_{\Hh_\Delta}^2
 \leq
 C_r^*(\theta_{-\iota\Delta}\omega)
 \bigl(1+\|\zeta\|_{\Hh_\Delta}^2\bigr).
\end{align*}
Applying \eqref{eq:block-iteration} yields
\begin{align*}
 \sup_{\zeta\in D_\Delta(\theta_{-\iota\Delta}\omega)}
 \|\Phi_\Delta(\iota,\theta_{-\iota\Delta}\omega)
 \zeta\|_{\Hh_\Delta}^2
\leq
 \eta_0^n C_r^*(\theta_{-\iota\Delta}\omega)
 \left(
 1+
 \sup_{\zeta\in D_\Delta(\theta_{-\iota\Delta}\omega)}
 \|\zeta\|_{\Hh_\Delta}^2
 \right)
 +\frac{R_{\mathrm d,*}(\omega)-2}{2}.
\end{align*}
By the temperedness of $C_r^*$ and $D_\Delta$,
the first term on the right-hand side tends to
zero as $n\to\infty$ for each fixed remainder
$\ell$. Since there are only finitely many
remainders, $B_\Delta$ pullback absorbs $D_\Delta$
for all sufficiently large integer times.

Moreover, \eqref{eq5.55} gives
$\eta_0R_{\mathrm d,*}(\omega)+P_{r}(\omega)
 \leq R_{\mathrm d,*}(\theta_r\omega)$.
It follows from \eqref{eq:discrete-contraction} that
\begin{equation}\label{eq5.57-00}
\Phi_\Delta(n_r,\omega)B_\Delta(\omega)
 \subseteq B_\Delta(\theta_r\omega).
 \end{equation}
Define
\begin{equation}\label{eq5.57}
 \widehat B_\Delta(\omega)
 :=
 \overline{
 \bigcup_{\ell=0}^{n_r-1}
 \Phi_\Delta(\ell,\theta_{-\ell\Delta}\omega)
 B_\Delta(\theta_{-\ell\Delta}\omega)
 }^{\Hh_\Delta}.
\end{equation}
By \eqref{eq5.57-00}, the cocycle identity, and the
pullback absorbing property of \(B_\Delta\), the set
\(\widehat B_\Delta\) defined in \eqref{eq5.57} is
positively invariant and pullback absorbing.
Finally, set
\begin{align*}
 R_*(\omega)
 :=
 2+
 2\left(
 \sup_{0\leq s\leq r}
 C_{r}^{*}(\theta_{-s}\omega)
 \right)
 \left(
 1+\sup_{0\leq s\leq r}
 R_{\mathrm d,*}(\theta_{-s}\omega)
 \right).
\end{align*}
In view of Lemma~\ref{lem4.3}, \(R_*\) is measurable and
tempered, and \(s\mapsto R_*(\theta_s\omega)\) is
continuous.
For \(0\leq\ell<n_r\) and
\(\zeta\in B_\Delta(\theta_{-\ell\Delta}\omega)\), by \eqref{eq5.58} we have
\begin{align*}
 &\norm{
 \Phi_\Delta
 \bigl(\ell,\theta_{-\ell\Delta}\omega\bigr)\zeta
 }_{\Hh_\Delta}^2
 \leq
 C_{r}^{*}(\theta_{-\ell\Delta}\omega)
 \left(1+\norm{\zeta}_{\Hh_\Delta}^2
 \right)
\leq
 C_{r}^{*}(\theta_{-\ell\Delta}\omega)
 \left(
 1+R_{\mathrm d,*}(\theta_{-\ell\Delta}\omega)
 \right)
 \leq R_*(\omega).
\end{align*}
Passing to the closure in \eqref{eq5.57} and then taking the
supremum over all admissible step sizes
\(\Delta=\tau/m\in(0,\Delta_0]\), we obtain
\eqref{eq:uniform-discrete-radius}.
\end{proof}

The following lemma provides a discrete sewing estimate for the
accumulated high-mode rough terms on a small grid block, uniformly
in \(K\) and \(\Delta\).

\begin{lemma}\label{lem5.11}
Let \(u^\Delta\) and the grid block \([k_0,k_1]\) satisfy the
hypotheses of Lemma~\ref{lem5.5}. For
\(k_0\leq i\leq j\leq k_1\), set
\[
 \Xi^K_{i,j}
 :=
 2I_K(u_i^\Delta)x_{t_i,t_j}
 +2I_K'(u_i^\Delta)\mathbb X_{t_i,t_j},
\]
where \(I_K\) and \(I_K'\) are given in
\eqref{eq4.36-1} and \eqref{eq4.36}, respectively.
Then there exists a constant \(C_M>0\), independent of
\(K\) and \(\Delta\), such that
\begin{align}
 &\max_{k_0\leq\ell\leq k_1}
 \left|
 \sum_{j=k_0}^{\ell-1}
 \left(
 2I_K(u_j^\Delta)x_j
 +2I_K'(u_j^\Delta)\mathbb X_j
 \right)
 -\Xi^K_{k_0,\ell}
 \right|
 \notag\\
 &\quad\leq
 C_M\Bigl[
 \omega_{\mathbf x}(t_{k_0},t_{k_1})^{2/p}
 \vartheta_{\lfloor K/2\rfloor}^{(1)}(t_{k_0},t_{k_1})^{1/p}
 +
 \omega_{\mathbf x}(t_{k_0},t_{k_1})^{1/p}
 \vartheta_{\lfloor K/2\rfloor}^{(2)}(t_{k_0},t_{k_1})^{1/q}
 \Bigr],
 \label{eq:discrete-sewing-tail}
\end{align}
where
$\vartheta_K^{(1)}(s,t)
 :=
 \norm{Q_Kx}_{\pvar,[s,t]}^p$
and
$\vartheta_K^{(2)}(s,t)
 :=
 \norm{(Q_K\otimes I)\mathbb X}_{\qvar,[s,t]}^q
 +
 \norm{(I\otimes Q_K)\mathbb X}_{\qvar,[s,t]}^q$.
\end{lemma}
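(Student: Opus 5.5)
The plan is to apply the discrete sewing lemma (Lemma~\ref{lem5.4} together with Corollary~\ref{cor5.5}) to the two-index grid map $A_{i,j}:=\Xi^K_{i,j}$ on the block $[k_0,k_1]$. Its additive extension is $\mathcal I_{k_0,\ell}=\sum_{j=k_0}^{\ell-1}\Xi^K_{j,j+1}$. This sum is exactly the discrete accumulated rough term on the left-hand side of \eqref{eq:discrete-sewing-tail}. The estimate will therefore follow once we bound $\delta\Xi^K_{a,b,c}$ for $k_0\le a<b<c\le k_1$ by products of powers of controls that carry the tail quantities $\vartheta^{(1)}_{\lfloor K/2\rfloor}$ and $\vartheta^{(2)}_{\lfloor K/2\rfloor}$.

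First, copy the computation \eqref{eq4.43} in the proof of Lemma~\ref{lem4.13}, using Chen's relation \eqref{eq2.1}. This gives
\[
\delta\Xi^K_{a,b,c}=-2R^{I_K,\Delta}_{a,b}\,x_{t_b,t_c}+2\bigl(I_K'(u_a^\Delta)-I_K'(u_b^\Delta)\bigr)\mathbb X_{t_b,t_c},
\]
where $R^{I_K,\Delta}_{a,b}:=I_K(u_b^\Delta)-I_K(u_a^\Delta)-I_K'(u_a^\Delta)(x_{t_a,t_b}\otimes\cdot)$ is the discrete remainder. Next we use the localization identities stated after \eqref{eq4.36}. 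Since $C_K=C_KQ_K$ and $Q_K=Q_KQ_{\lfloor K/2\rfloor}$, we have $I_K(v)z=I_K(v)Q_{\lfloor K/2\rfloor}z$ and $I_K'(v)=I_K'(v)(Q_{\lfloor K/2\rfloor}\otimes Q_{\lfloor K/2\rfloor})$. These identities hold pointwise in $v$, so they pass to differences and to the remainder. Consequently $x_{t_b,t_c}$ may be replaced by $Q_{\lfloor K/2\rfloor}x_{t_b,t_c}$, and $\mathbb X_{t_b,t_c}$ by $(Q_{\lfloor K/2\rfloor}\otimes Q_{\lfloor K/2\rfloor})\mathbb X_{t_b,t_c}$. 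By \eqref{eq4.42}, the norm of the latter is at most $\vartheta^{(2)}_{\lfloor K/2\rfloor}(t_b,t_c)^{1/q}$.

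Second, bound the discrete coefficients on the block, uniformly in $K$ and $\Delta$, using Lemma~\ref{lem5.5}. Since $\|C_K\|\le1$ and (A3) holds, $I_K$ and $I_K'$ are $C^1$ in $v$ with bounds depending only on $M$. Arguing as in \eqref{eq4.40-0}, we get
\[
\|R^{I_K,\Delta}_{a,b}\|\le C_M\bigl(\|R^{u,\Delta}_{a,b}\|+\|u^\Delta_{a,b}\|^2\bigr)\le C_M\,\omega_{\mathbf x}(t_a,t_b)^{2/p}
\]
and $\|I_K'(u_a^\Delta)-I_K'(u_b^\Delta)\|\le C_M\|u^\Delta_{a,b}\|\le C_M\,\omega_{\mathbf x}(t_a,t_b)^{1/p}$. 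Both bounds use the pointwise estimates \eqref{eq5.29} established inside the proof of Lemma~\ref{lem5.5}. Combining the two steps gives
\[
\|\delta\Xi^K_{a,b,c}\|\le C_M\,\omega_{\mathbf x}(t_a,t_b)^{2/p}\,\vartheta^{(1)}(t_b,t_c)^{1/p}+C_M\,\omega_{\mathbf x}(t_a,t_b)^{1/p}\,\vartheta^{(2)}(t_b,t_c)^{1/q}.
\]
Both exponent sums equal $3/p>1$, and $\vartheta^{(1)},\vartheta^{(2)}$ are continuous superadditive controls, since they are $p$- and $q$-th powers of variation seminorms.

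The main obstacle is that Corollary~\ref{cor5.5} is stated for a single product term, whereas here $\delta\Xi^K$ is a sum of two such terms. I would handle this by splitting $\Xi^K=\Xi^{(1)}+\Xi^{(2)}$, with $\Xi^{(1)}_{i,j}:=2I_K(u_i^\Delta)x_{t_i,t_j}$ and $\Xi^{(2)}_{i,j}:=2I_K'(u_i^\Delta)\mathbb X_{t_i,t_j}$. This split does not work directly, because $\delta\Xi^{(1)}$ alone contains the non-small cross term $I_K'(x\otimes x)$. I would therefore instead rerun the deletion argument of Lemma~\ref{lem5.4} with the combined control $\bar\omega:=\bar\omega_1+\bar\omega_2$, where $\bar\omega_\kappa:=\omega_{\mathbf x}^{\alpha_\kappa/\vartheta}(\vartheta^{(\kappa)})^{\beta_\kappa/\vartheta}$. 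Alternatively, I would note that the deletion step only needs $\sum_r\|\delta\Xi^K_{i_{r-1},i_r,i_{r+1}}\|$ to be small: choosing the deleted point to minimize the sum of the two terms, and using Hölder's inequality on each term separately, gives a decay of order $(m-1)^{-3/p}$ times the sum of the two products. The maximal-$\ell$ statement then follows because each $[k_0,\ell]\subset[k_0,k_1]$ and controls are monotone. Finally, the constant $C_M$ is independent of $K$ and $\Delta$, because $\|C_K\|\le1$ and the constants of Lemma~\ref{lem5.5} and of the sewing lemma are independent of both.
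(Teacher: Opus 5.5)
Your proposal is correct and follows essentially the same route as the paper. It uses the same Chen-relation identity for $\delta\Xi^K$, the same localization of the increments to $Q_{\lfloor K/2\rfloor}$, the bounds from \eqref{eq5.29}, and discrete sewing with total exponent $3/p>1$; your combined control $\bar\omega_1+\bar\omega_2$ in Lemma~\ref{lem5.4} is exactly what makes the paper's phrase ``apply Corollary~\ref{cor5.5} separately to the two terms'' rigorous, since, as you correctly observe, splitting $\Xi^K$ itself would leave the non-summable cross term in $\delta\Xi^{(1)}$.
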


\begin{proof}
For $k_0\leq i<j<k \leq k_1$, by Chen's relation \eqref{eq2.1} and the controlled-path
expansion of \(I_K(u^\Delta)\), we have
\begin{align}\label{eq5.60}
 \delta\Xi^K_{i,j,k}
 =&-2R^{I_K,\Delta}_{i,j}x_{t_j,t_k}
 +2\bigl(I_K'(u_i^\Delta)-I_K'(u_j^\Delta)\bigr)
 \mathbb X_{t_j,t_k}.
\end{align}
Arguing as in \eqref{eq4.39}-\eqref{eq4.40-0} and using \eqref{eq5.29}, we obtain that
\begin{align*}
 \norm{R^{I_K,\Delta}_{i,j}}
 &\leq
 C(1+M)
 \left(
 \norm{R_{i,j}^{u,\Delta}}
 +\norm{u_{i,j}^\Delta}^{2}
 \right)
 \leq
 C_M\omega_{\mathbf x}(t_i,t_j)^{2/p},\\
 \norm{I_K'(u_j^\Delta)-I_K'(u_i^\Delta)}
 &\leq
 C(1+M)\norm{u_{i,j}^\Delta}
 \leq
 C_M\omega_{\mathbf x}(t_i,t_j)^{1/p}.
\end{align*}
By the definitions of \(I_K\) and \(I_K'\), the increments
\(x_{t_j,t_k}\) and \(\mathbb X_{t_j,t_k}\) in \eqref{eq5.60} may be replaced by
\(Q_{\lfloor K/2\rfloor}x_{t_j,t_k}\) and
\((Q_{\lfloor K/2\rfloor}\otimes I)\mathbb X_{t_j,t_k}\),
respectively. Consequently,
\begin{align*}
 |\delta\Xi^K_{i,j,k}|
 \leq
 C_{M}
 \omega_{\mathbf x}(t_i,t_j)^{2/p}
 \vartheta_{\lfloor K/2\rfloor}^{(1)}(t_j,t_k)^{1/p}
 +
 C_{M}
 \omega_{\mathbf x}(t_i,t_j)^{1/p}
 \vartheta_{\lfloor K/2\rfloor}^{(2)}(t_j,t_k)^{1/q}.
\end{align*}
Since $q=p/2$, both product terms have total sewing exponent
$3/p>1$. Applying the product-form discrete sewing estimate in
Corollary~\ref{cor5.5} separately to the two terms on
$[k_0,\ell]$, using the monotonicity of the controls, and then
taking the maximum over $k_0\leq\ell\leq k_1$ yields
\eqref{eq:discrete-sewing-tail}.
\end{proof}

We next introduce the discrete counterpart of the tail
Lyapunov--Krasovskii functional in \eqref{eq4.34-3}.
For $K\geq1$ and $k\geq0$, set
\begin{equation}
\label{eq:discrete-tail-functional}
 \Psi_{K,k}^\Delta
 :=
 V_K(u_k^\Delta)
 +(L_1+C_*\Delta)e^{\sigma(\tau+\Delta)}
 \Delta
 \sum_{j=k-m}^{k-1}
 e^{-\sigma(t_k-t_j)}V_K(u_j^\Delta),
\end{equation}
where $V_K$ is defined in \eqref{eq4.31}, and $C_*$
is the constant appearing in \eqref{eq:G-square}.

\begin{lemma}\label{lem5.12}
Let \(u^\Delta\) and the grid block \([k_0,k_1]\) satisfy the
hypotheses of Lemma~\ref{lem5.5}. Then
\begin{align*}
 \Psi_{K,k_1}^\Delta
 \leq{}&
 e^{-c(t_{k_1}-t_{k_0})}
 \Psi_{K,k_0}^\Delta
 +
 C\sum_{j=k_0}^{k_1-1}
 e^{-c(t_{k_1}-t_{j+1})}
 \Delta\left(
 K^{-1}\norm{u_j^\Delta}^2
 +\norm{Q_KF_0}^2
 \right)\\
 &+
 C\left(
 \vartheta_{\lfloor K/2\rfloor}^{(1)}
 (t_{k_0},t_{k_1})^{1/p}
 +
 \vartheta_{\lfloor K/2\rfloor}^{(2)}
 (t_{k_0},t_{k_1})^{1/q}
 \right),
\end{align*}
where $0<c\leq\min
 \{c_0,\sigma/2\}$, and \(C>0\) is independent of \(K\) and \(\Delta\).
\end{lemma}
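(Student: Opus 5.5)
We will derive a one-step inequality for $\Psi^\Delta_{K,k}$ in the spirit of Lemma~\ref{lem5.8}, now with $V_K$ in place of $\|\cdot\|^2$. We then iterate it over the block $[k_0,k_1]$ and control the accumulated rough terms with the discrete tail sewing estimate of Lemma~\ref{lem5.11}.

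\textbf{Step 1 (one-step expansion).} Write $u_{k+1}^\Delta=u_k^\Delta+\Delta G_k+A_k$, where $G_k:=G(u_k^\Delta,u_{k-m}^\Delta)$ and $A_k:=h(u_k^\Delta)x_k+H(u_k^\Delta)\mathbb X_k$. Since $V_K$ is quadratic,
\[
V_K(u_{k+1}^\Delta)=V_K(u_k^\Delta)+2\Delta\ip{G_k}{C_Ku_k^\Delta}+2\ip{A_k}{C_Ku_k^\Delta}+\ip{C_K(\Delta G_k+A_k)}{\Delta G_k+A_k}.
\]
Lemma~\ref{lem4.8} bounds the drift part as in Lemma~\ref{lem4.11}:
\[
2\Delta\ip{G_k}{C_Ku_k^\Delta}\leq\Delta\bigl[-(\lambda-L_0)V_K(u_k^\Delta)+L_1V_K(u_{k-m}^\Delta)+CK^{-1}\|u_k^\Delta\|^2+C\|Q_KF_0\|^2\bigr].
\]
For the quadratic term we use $\|S_K\mathcal Lu\|\leq\|\mathcal L\|V_K(u)^{1/2}+CK^{-1}\|u\|$ and the diagonal structure of $F$. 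This gives $\Delta^2\|S_KG_k\|^2\leq C_*\Delta^2(V_K(u_k^\Delta)+V_K(u_{k-m}^\Delta))+C\Delta^2(K^{-2}M^2+\|Q_KF_0\|^2)$, and these contributions are absorbed by the $C_*\Delta$ margins built into \eqref{eq:discrete-coercivity-constants}. The cross terms $2\Delta\ip{C_KG_k}{A_k}$ and $\ip{C_KA_k}{A_k}$ remain. The latter, to leading order, is $J_K(u_k^\Delta)(x_k,x_k)$. Since $\mathbf x$ is weakly geometric, $x_k\otimes x_k=2\operatorname{Sym}\mathbb X_k$, so this term combines with $2\ip{H(u_k^\Delta)\mathbb X_k}{C_Ku_k^\Delta}$ to give exactly $2I_K'(u_k^\Delta)\mathbb X_k$ by \eqref{eq4.36}. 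Hence the rough part per step is $2I_K(u_k^\Delta)x_k+2I_K'(u_k^\Delta)\mathbb X_k+\epsilon_k$. Using \eqref{eq5.29}, the error satisfies $|\epsilon_k|\leq C_M(\omega_{\mathbf x}(t_k,t_{k+1})^{3/p}+\Delta\,\omega_{\mathbf x}(t_k,t_{k+1})^{1/p})$. The difficulty is that this error must also be small in $K$, so we track that $\epsilon_k$ only involves high modes, e.g.\ terms like $\ip{C_Kh\,Q_{\lfloor K/2\rfloor}x_k}{H\mathbb X_k}$. Each such term carries a factor $\|Q_{\lfloor K/2\rfloor}x_k\|$ or $\|(Q_{\lfloor K/2\rfloor}\otimes I)\mathbb X_k\|$, times $\omega_{\mathbf x}^{1/p}$ or $\omega_{\mathbf x}^{2/p}$. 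Summing these by H\"older's inequality and superadditivity yields bounds by $\vartheta^{(1)}(t_{k_0},t_{k_1})^{1/p}$ and $\vartheta^{(2)}(t_{k_0},t_{k_1})^{1/q}$.

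\textbf{Step 2 (delay functional).} Exactly as in \eqref{eq5.41}, adding the increment of the memory sum with weight $(L_1+C_*\Delta)e^{\sigma(\tau+\Delta)}$ cancels the $V_K(u_{k-m}^\Delta)$ term. This yields
\[
\Psi^\Delta_{K,k+1}\leq(1-c\Delta)\Psi^\Delta_{K,k}+C\Delta(K^{-1}\|u_k^\Delta\|^2+\|Q_KF_0\|^2)+2I_K(u_k^\Delta)x_k+2I_K'(u_k^\Delta)\mathbb X_k+\epsilon_k .
\]

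\textbf{Step 3 (iteration and rough sum).} Iterating with $(1-c\Delta)\leq e^{-c\Delta}$ gives the first two terms of the claim. The rough terms appear as $\sum_j e^{-c(t_{k_1}-t_{j+1})}(2I_K x_j+2I_K'\mathbb X_j)$. We remove the weights by discrete summation by parts: the weights are monotone with total variation at most $1$, so the weighted sum is bounded by $2\max_\ell|\sum_{j=k_0}^{\ell-1}(\cdots)|$. Each partial sum is then controlled by $|\Xi^K_{k_0,\ell}|$ plus the right-hand side of Lemma~\ref{lem5.11}. Because $I_K(u)=I_K(u)Q_{\lfloor K/2\rfloor}$ and $I_K'=I_K'(Q\otimes Q)$, we have $|\Xi^K_{k_0,\ell}|\leq C_M(\vartheta^{(1)}_{\lfloor K/2\rfloor}(t_{k_0},t_{k_1})^{1/p}+\vartheta^{(2)}_{\lfloor K/2\rfloor}(t_{k_0},t_{k_1})^{1/q})$. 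Since $\omega_{\mathbf x}\leq\gamma_M\leq1$ on the block, the constant is independent of $K$ and $\Delta$.

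\textbf{Main obstacle.} The main obstacle is Step 1: the second-order terms of the quadratic expansion must be identified with the $I_K'$ compensation via weak geometricity, and the leftover $\epsilon_k$ must be shown to carry a projected increment factor, not merely to be of order $\omega_{\mathbf x}^{3/p}$. If this fails, we would fall back to putting $\sum_j|\epsilon_k|\leq C_M\omega_{\mathbf x}(t_{k_0},t_{k_1})^{3/p}$ into a separately summed term, which, however, does not vanish as $K\to\infty$.
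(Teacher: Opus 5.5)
Your proposal is correct and follows essentially the same route as the paper's proof. The shared steps are: the one-step expansion of $V_K$ along the Euler step; Lemma~\ref{lem4.8} for the drift, with the $\Delta^2$-term absorbed by the $C_*\Delta$ margins; cancellation of $V_K(u_{j-m}^\Delta)$ by the memory sum; weak geometricity to produce $2I_K(u_j^\Delta)x_j+2I_K'(u_j^\Delta)\mathbb X_j$; H\"older summation of the leftover terms through the projected factors $Q_{\lfloor K/2\rfloor}x_j$ and $(Q_{\lfloor K/2\rfloor}\otimes I)\mathbb X_j$; and Lemma~\ref{lem5.11} together with discrete Abel summation for the weighted rough sum.

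The obstacle you flag does not arise, precisely because $C_Kh(u)=C_Kh(u)Q_{\lfloor K/2\rfloor}$ and $C_KH(u)=C_KH(u)(Q_{\lfloor K/2\rfloor}\otimes I)$. Two small corrections: keep $K^{-2}\norm{u_j^\Delta}^2$ rather than $K^{-2}M^2$, and note that the weights on the signed rough terms are $(1-c\Delta)^{k_1-j-1}$ rather than exponentials, which the Abel summation handles equally well.
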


\begin{proof}
Set
\[
 b_j:=-\mathcal Lu_j^\Delta-\lambda u_j^\Delta
 +F(u_j^\Delta,u_{j-m}^\Delta)~~
 \mbox{and}~~
 r_j^\Delta
 :=h(u_j^\Delta)x_j+H(u_j^\Delta)\mathbb X_j.
\]
Then the Euler recursion \eqref{eq:euler} can be written as
$u_{j+1}^\Delta=u_j^\Delta+\Delta b_j+r_j^\Delta$.
Since \(C_K\) is self-adjoint and
\(V_K(u)=\ip{u}{C_Ku}\), it follows that
\begin{equation}\label{eq5.61}
\begin{aligned}
 V_K(u_{j+1}^\Delta)-V_K(u_j^\Delta)
 ={}&
 2\Delta\ip{b_j}{C_Ku_j^\Delta}
 +\Delta^2\ip{b_j}{C_Kb_j}\\
 &+
 2\ip{r_j^\Delta}{C_Ku_j^\Delta}
 +2\Delta\ip{b_j}{C_Kr_j^\Delta}
 +\ip{r_j^\Delta}{C_Kr_j^\Delta}.
\end{aligned}
\end{equation}
An argument analogous to that used in the derivation of
\eqref{eq4.34} yields
\begin{align}\label{eq5.62}
 2\Delta\ip{b_j}{C_Ku_j^\Delta}
 &\leq-(\lambda-L_{0})\Delta V_K(u_j^\Delta)
 +L_{1}\Delta V_K(u_{j-m}^\Delta)
 \nonumber\\
 &+\frac{\nu C_\chi\Delta\|u_j^\Delta\|^2 }{K}
 \bigg(\sup_{i}\sum_{\ell}
 w_{i\ell}|i-\ell|\bigg)
 +\frac{\Delta\norm{Q_KF_0}^2}
 {\lambda-L_{0}-L_{1}}.
\end{align}
Using \(C_K=S_K^2\), \(S_K=S_KQ_K\),
the localized Lipschitz estimate for \(F\), and
Lemma~\ref{lem4.8}, we obtain
\begin{align}\label{eq5.63}
 &\Delta^2\ip{b_j}{C_Kb_j}
 =\Delta^2\norm{S_Kb_j}^2\nonumber\\
 &\leq4\Delta^2\left(\norm{S_K\mathcal Lu_j^\Delta}^{2}
 +\lambda^{2}\norm{S_Ku_j^\Delta}^{2}
 +\norm{S_K(F(u_j^\Delta,
 u_{j-m}^\Delta)-F_0)}^{2}
 +\norm{S_KF_0}^{2}\right)
 \nonumber\\
 &\leq C\Delta^2\left(
 V_K(u_j^\Delta)+V_K(u_{j-m}^\Delta)
 +K^{-2}\norm{u_j^\Delta}^2
 +\norm{Q_KF_0}^2\right).
\end{align}
Here and below, \(C>0\) is independent of \(K\) and
\(\Delta\).
Denote the second term on the right-hand side of
\eqref{eq:discrete-tail-functional} by $M_{K,j}^\Delta$. Then
$\Psi_{K,j}^\Delta=
 V_K(u_j^\Delta)+M_{K,j}^\Delta$.
Arguing as in \eqref{eq5.41}, we see that
\begin{align}\label{eq5.64}
 M_{K,j+1}^\Delta-M_{K,j}^\Delta
=&-(1-e^{-\sigma\Delta})M_{K,j}^\Delta
 +(L_1+C_*\Delta)e^{\sigma\tau}
 \Delta V_K(u_j^\Delta)
 \nonumber\\
 &-(L_1+C_*\Delta)\Delta V_K(u_{j-m}^\Delta).
\end{align}
Substituting \eqref{eq5.62} and \eqref{eq5.63} into
\eqref{eq5.61} and then adding \eqref{eq5.64}, we find
that the terms involving \(V_K(u_{j-m}^\Delta)\) cancel.
Using \eqref{eq:discrete-coercivity-constants}, \(\Delta\leq1\), \(K\geq1\), and enlarging $C_*$ once if necessary, we arrive at
\begin{align}\label{eq5.65}
 \Psi_{K,j+1}^\Delta-\Psi_{K,j}^\Delta
 \leq{}&
 -\Delta\Bigl[
 \lambda-L_0-C_*\Delta
 -(L_1+C_*\Delta)e^{\sigma\tau}
 \Bigr]V_K(u_j^\Delta)
 \nonumber\\
 &-
 (1-e^{-\sigma\Delta})M_{K,j}^\Delta
 +
 C\Delta\left(
 K^{-1}\norm{u_j^\Delta}^{2}
 +\norm{Q_KF_0}^{2}
 \right)
 +
 \mathcal R_{K,j}^\Delta
 \nonumber\\
 \leq{}&
 -c\Delta\Psi_{K,j}^\Delta
 +
 C\Delta\left(
 K^{-1}\norm{u_j^\Delta}^{2}
 +\norm{Q_KF_0}^{2}
 \right)
 +
 \mathcal R_{K,j}^\Delta,
\end{align}
where $\mathcal R_{K,j}^\Delta:=
2\langle r_j^\Delta, C_Ku_j^\Delta\rangle
 +2\Delta\langle b_j, C_Kr_j^\Delta\rangle
 +\langle r_j^\Delta, C_Kr_j^\Delta\rangle$ and $0<c\leq\min
 \{c_0,\sigma/2\}$.

By the definition of \(r_j^\Delta\), the
self-adjointness of \(C_K\), the symmetry of \(J_K\),
the weakly geometric property of \(\mathbf x\), and
\eqref{eq4.36-1}-\eqref{eq4.36}, we obtain that
\begin{align} \label{eq5.66}
 \mathcal R_{K,j}^\Delta
 ={}&
 2\ip{h(u_j^\Delta)x_j}{C_Ku_j^\Delta}
 +2\ip{H(u_j^\Delta)\mathbb X_j}{C_Ku_j^\Delta}
 +\ip{h(u_j^\Delta)x_j}
 {C_Kh(u_j^\Delta)x_j}
 \nonumber\\
 &+
 2\Delta\ip{b_j}{C_Kh(u_j^\Delta)x_j}
 +2\Delta\ip{b_j}
 {C_KH(u_j^\Delta)\mathbb X_j}
 \nonumber\\
 &+
 2\ip{h(u_j^\Delta)x_j}
 {C_KH(u_j^\Delta)\mathbb X_j}
 +\ip{H(u_j^\Delta)\mathbb X_j}
 {C_KH(u_j^\Delta)\mathbb X_j}
 \nonumber\\
 ={}&
 2I_K(u_j^\Delta)x_j
 +2I_K'(u_j^\Delta)\mathbb X_j
 +r_{K,j}^\Delta,
\end{align}
where
\begin{align}\label{eq5.67}
 r_{K,j}^\Delta
 ={}&
 2\Delta\ip{b_j}{C_Kh(u_j^\Delta)x_j}
 +\ip{2\Delta b_j+ 2 h(u_j^\Delta)x_j+ H(u_j^\Delta)\mathbb X_j}
 {C_KH(u_j^\Delta)\mathbb X_j}.
\end{align}

Notice that $C_Kh(u_j^\Delta)x_j=C_Kh(
u_j^\Delta)
Q_{\lfloor K/2\rfloor}x_j$ and
$C_KH(u_j^\Delta)\mathbb X_j
 =C_KH(u_j^\Delta)
 \bigl((Q_{\lfloor K/2\rfloor}\otimes I)\mathbb X_j\bigr)$.
Thanks to H\"older's inequality, it holds that
\begin{align}
 \sum_{j=k_0}^{k_1-1}\Delta
 \norm{Q_{\lfloor K/2\rfloor}x_j}
 &\leq (t_{k_1}-t_{k_0})^{1-1/p}
 \Delta^{1/p}\norm{Q_{\lfloor K/2\rfloor}x}_{
 \pvar,[t_{k_0},t_{k_1}]},\notag\\
 \sum_{j=k_0}^{k_1-1}
 \Delta\norm{(Q_{\lfloor K/2\rfloor}\otimes I)\mathbb X_j}
 &\leq C\Delta^{1/q}
 \norm{(Q_{\lfloor K/2\rfloor}\otimes I)\mathbb X}_{\qvar,[t_{k_0},t_{k_1}]},\notag\\
 \sum_{j=k_0}^{k_1-1}
 \norm{Q_{\lfloor K/2\rfloor}x_j}\norm{\mathbb X_j}
 &\leq
 \norm{Q_{\lfloor K/2\rfloor}x}_{\pvar,[t_{k_0},t_{k_1}]}
 \norm{\mathbb X}_{\qvar,[t_{k_0},t_{k_1}]},\notag\\
 \sum_{j=k_0}^{k_1-1}
 \norm{(Q_{\lfloor K/2\rfloor}\otimes I)\mathbb X_j}\norm{\mathbb X_j}
 &\leq
 \norm{(Q_{\lfloor K/2\rfloor}\otimes I)\mathbb X}_{\qvar,[t_{k_0},t_{k_1}]}
 \norm{\mathbb X}_{\qvar,[t_{k_0},t_{k_1}]}.
 \label{eq5.68}
\end{align}
Hence, by \eqref{eq5.67}-\eqref{eq5.68} and $\max_{k_0-m\leq j\leq k_1}\|u_j^\Delta\|\leq M$, we have
\begin{equation}\label{eq5.69}
 \max_{k_0\leq\ell\leq k_1}
 \left|\sum_{j=k_0}^{\ell-1}r_{K,j}^\Delta\right|
 \leq \sum_{j=k_0}^{k_1-1}|r_{K,j}^\Delta|
 \leq
 C_{M}
 \bigg(\vartheta_{\lfloor K/2\rfloor}^{(1)}
 (t_{k_0},t_{k_1})^{1/p}
 +
 \vartheta_{\lfloor K/2\rfloor}^{(2)}
 (t_{k_0},t_{k_1})^{1/q}\bigg).
\end{equation}
Applying Lemma~\ref{lem5.11} to the first two terms on the
right-hand side of \eqref{eq5.66} and invoking
\eqref{eq5.69}, we deduce that
\begin{align}\label{eq5.70}
 &\max_{k_0\leq\ell\leq k_1}
 \left|\sum_{j=k_0}^{\ell-1}\mathcal R_{K,j}^\Delta\right|
\leq
\max_{k_0\leq\ell\leq k_1}\left|
\sum_{j=k_0}^{\ell-1}
\left(
2I_K(u_j^\Delta)x_j
+2I_K'(u_j^\Delta)\mathbb X_j
\right)
-\Xi_{k_0,\ell}^K
\right|
\nonumber\\
&+
\max_{k_0\leq\ell\leq k_1}\left|\Xi_{k_0,\ell}^K\right|
+
\max_{k_0\leq\ell\leq k_1}\left|
\sum_{j=k_0}^{\ell-1}r_{K,j}^\Delta
\right|\leq C_{M} \bigg(\vartheta_{\lfloor K/2\rfloor}^{(1)}(t_{k_0},t_{k_1})^{1/p}
 +
 \vartheta_{\lfloor K/2\rfloor}^{(2)}
 (t_{k_0},t_{k_1})^{1/q}\bigg).
\end{align}
where we have used
$|\Xi_{k_0,\ell}^K|
\leq
C (
\|Q_{\lfloor K/2\rfloor}x_{t_{k_0},t_\ell}\|
+\|(Q_{\lfloor K/2\rfloor}\otimes I)
\mathbb X_{t_{k_0},t_\ell}\|
+\|(I\otimes Q_{\lfloor K/2\rfloor})
\mathbb X_{t_{k_0},t_\ell}\|)$
and $\omega_{\mathbf{x}}(t_{k_0},
t_{k_1})\leq\gamma_M$.
Iterating \eqref{eq5.65} yields
\begin{equation}\label{eq5.71}
\begin{aligned}
 \Psi_{K,k_1}^\Delta
 \leq{}&
 (1-c\Delta)^{k_1-k_0}
 \Psi_{K,k_0}^\Delta\\
 &+
 \sum_{j=k_0}^{k_1-1}
 (1-c\Delta)^{k_1-j-1}
 \left[
 C\Delta\left(
 K^{-1}\norm{u_j^\Delta}^2
 +\norm{Q_KF_0}^2
 \right)
 +\mathcal R_{K,j}^\Delta
 \right].
\end{aligned}
\end{equation}
For \(k_0\leq j\leq k_1-1\), define
$\varpi_j^\Delta
 :=
 (1-c\Delta)^{k_1-j-1}$ and
$S_\ell
 :=
 \sum_{j=k_0}^{\ell-1}\mathcal R_{K,j}^\Delta$.
The discrete Abel summation formula gives
\begin{align}\label{eq5.72}
 \left|\sum_{j=k_0}^{k_1-1}
 \varpi_j^\Delta\mathcal R_{K,j}^\Delta\right|
 \leq\left|\varpi_{k_1-1}^\Delta S_{k_1}\right|
 +\left|\sum_{j=k_0}^{k_1-2}
 (\varpi_j^\Delta-\varpi_{j+1}^\Delta)S_{j+1}\right|
 \leq2\max_{k_0\leq\ell\leq k_1}|S_\ell|.
\end{align}
Indeed, since \(0\leq1-c\Delta\leq1\), the sequence
\(\{\varpi_j^\Delta\}_{j=k_0}^{k_1-1}\) is nondecreasing
in \(j\),
$\varpi_{k_1-1}^\Delta=1$ and
$\sum_{j=k_0}^{k_1-2}|
 \varpi_j^\Delta-\varpi_{j+1}^\Delta|
 =1-\varpi_{k_0}^\Delta
 \leq1$.
Finally, substituting \eqref{eq5.70} into
\eqref{eq5.72} and then using the resulting estimate in
\eqref{eq5.71}, together with
$(1-c\Delta)^n\leq e^{-cn\Delta}$ for $n\geq0$, we obtain the asserted estimate.
\end{proof}

We next establish a discrete counterpart of
Theorem~\ref{thm4.15} for the absorbing families
$\widehat B_\Delta$, uniformly over all sufficiently small
admissible step sizes.

\begin{theorem}\label{thm5.13}
For every \(\varepsilon>0\) and
\(\omega\in\Omega_0\), there exist
\(\widetilde K=\widetilde K(\varepsilon,\omega)\in\mathbb N\), \(\widetilde T=\widetilde T(\varepsilon,\omega)>0\), and \(\Delta_0=\Delta_0(\omega)>0\)
such that, for every admissible step size $\Delta=\frac{\tau}{m}\leq\Delta_0$, with
$m\in\mathbb N$,
and every \(k\in\mathbb N_0\) satisfying
\(k\Delta\geq \widetilde T\), we have
\begin{align*}
 \sup_{\zeta\in
 \widehat B_\Delta(\theta_{-k\Delta}\omega)}
 \max_{-m\leq j\leq0}
 \norm{
 Q_{2\widetilde K}
 \bigl(
 \Phi_\Delta(k,\theta_{-k\Delta}\omega)\zeta
 \bigr)_j
 }^2
 <\varepsilon.
\end{align*}
\end{theorem}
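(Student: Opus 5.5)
We mimic the proof of Theorem~\ref{thm4.15} at the discrete level, with every constant controlled uniformly in $\Delta$. Fix $\omega$ and $\zeta\in\widehat B_\Delta(\theta_{-k\Delta}\omega)$, and write $u^\Delta_j$ for the Euler trajectory driven by $\theta_{-k\Delta}\omega$, shifted so that the final time is $0$. By the positive invariance in Proposition~\ref{prop5.11}, every discrete history segment along the trajectory at time $s\in[-k\Delta,0]$ lies in $\widehat B_\Delta(\theta_s\omega)$. Hence
\[
\|u_j^\Delta\|^2\leq R_*(\theta_{t_j}\omega),
\]
and $R_*$ is tempered and independent of $\Delta$, by \eqref{eq:uniform-discrete-radius}. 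On each unit interval $[-i-1,-i]$ we then set $M_i(\omega):=(\sup_{|s|\leq 2\tau+2}R_*(\theta_{s-i}\omega))^{1/2}$. This is tempered by Lemma~\ref{lem4.3}, and it serves as the bound $M$ required by Lemma~\ref{lem5.5}.

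\textbf{Step~1 (block partition).} Fix an integer $N$, to be chosen later, and cover the grid on $[-N,0]$ by blocks. On the unit interval indexed by $i$, use Lemma~\ref{lem5.3} with threshold $\gamma_{M_i}$ and control $\omega_{\mathbf x}$ to obtain grid blocks $[k_0,k_1]$ satisfying $\omega_{\mathbf x}\leq\gamma_{M_i}$. The number of blocks is at most $2+2\gamma_{M_i}^{-1}\omega_{\mathbf x}(-i-1,-i)$, which is a tempered bound independent of $\Delta$ once $\Delta\leq\Delta_0(\omega)$. Here $\Delta_0(\omega)$ comes from Lemma~\ref{lem5.3} applied on $[-N,0]$. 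This is the reason $\Delta_0$ must depend on $\omega$, and I expect to need some care so that it does not depend on $N$. My first attempt is to apply Lemma~\ref{lem5.3} unit interval by unit interval and take the minimum over the intervals that are actually used. Since $N$ is chosen before $\Delta_0$, only finitely many intervals enter.

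\textbf{Step~2 (iterate the tail estimate).} Apply Lemma~\ref{lem5.12} on each block and chain the estimates using $e^{-c(\cdot)}$ and the semigroup property. The result is
\[
\Psi^\Delta_{K,0}\leq e^{-cN}\Psi^\Delta_{K,-N/\Delta}+C\sum_i e^{-ci}\bigl(K^{-1}R^\#_i+\|Q_KF_0\|^2\bigr)+C\sum_i e^{-ci}\,n_i\,\Gamma_{\lfloor K/2\rfloor}(\theta_{-i-1}\omega;[0,1])^{\min(1,\cdot)},
\]
where $n_i$ is the number of blocks in the $i$-th unit interval. For the last term, note that the block-wise $\vartheta^{(1)}$ and $\vartheta^{(2)}$ are dominated by the corresponding seminorms over the whole unit interval, because $\vartheta^{(1)},\vartheta^{(2)}$ are superadditive. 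The factor $n_i C_{M_i}$ is tempered. The first term is bounded by $e^{-cN}(1+L_1\tau e^{\sigma\tau}+\cdots)R_*(\theta_{-N}\omega)$, since $\Psi^\Delta_K$ is dominated by the discrete history norm. This term is small for large $N$, uniformly in $\Delta$ and $K$. The same bound applies with the endpoint $0$ replaced by any $j\in[-m,0]$, as in \eqref{eq4.56}.

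\textbf{Step~3 (choice of $K$ and conclusion).} The sum in the second term converges uniformly in $\Delta$ and tends to $0$ as $K\to\infty$, because $\|Q_KF_0\|\to0$. The third sum is dominated by $\sum_i e^{-ci}\Lambda(\omega)^2e^{2\varsigma i}$, exactly as in \eqref{eq4.59}, and each of its terms tends to $0$ by \textbf{(A5)}. Dominated convergence then gives the limit $0$, uniformly in $\Delta$, since the dominating sequence does not involve $\Delta$. We choose $\widetilde K$ first and then $N$, put $\widetilde T:=N$, and take $\Delta_0$ from Step~1. Finally \eqref{eq4.32} gives $\|Q_{2\widetilde K}u\|^2\leq V_{\widetilde K}(u)\leq\Psi^\Delta_{\widetilde K}$, which yields the claim.

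The main obstacle is Step~2: the constants $C_M$ in Lemmas~\ref{lem5.5}, \ref{lem5.11} and \ref{lem5.12} have to grow at most polynomially in $M_i$, so that after multiplying by $e^{-ci}$ they remain summable via temperedness.
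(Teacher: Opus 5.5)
\textbf{The gap is the quantifier on $\Delta_0$.} In your Step~1, the admissible step-size range comes from Lemma~\ref{lem5.3} applied on $[-N,0]$, taking the minimum over the unit intervals used. So it depends on $N$. But $N$ has to be chosen depending on $\varepsilon$, to make $e^{-cN}R_*(\theta_{-N}\omega)$ small. You therefore prove the tail bound only for $\Delta\leq\Delta_0(\varepsilon,\omega)$, whereas the theorem asserts a single $\Delta_0(\omega)$ that works for every $\varepsilon$.

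Your ``first attempt'' does not repair this. The minimum over finitely many intervals still depends on $N$. The infimum over all unit intervals can be zero, because Lemma~\ref{lem5.3} needs every single grid step to satisfy $\omega_{\mathbf x}\leq\gamma_{M_i}$, and $\gamma_{M_i}$ degenerates as $M_i$ grows.

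This matters downstream. Theorem~\ref{thm5.14} needs, for one \emph{fixed} admissible $\Delta$, the tail bound for every $\varepsilon>0$. Your argument gives nothing at that $\Delta$ once $\varepsilon$ is so small that $\Delta_0(\varepsilon,\omega)<\Delta$.

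\textbf{What is missing is a fixed-$\Delta$ tail estimate plus a finiteness observation.} The estimate must need no smallness of $\omega_{\mathbf x}$ on individual grid steps. The observation is that the leftover step sizes $\tau/m\in(\Delta_0(\varepsilon,\omega),\Delta_0]$ form a finite set.

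The paper writes $u_{j+1}^\Delta=(u_j^\Delta+\Delta b_j)+r_j^\Delta$. It applies the deterministic one-step tail estimate to the first part and Young's inequality to the cross term with $S_Kr_j^\Delta$, obtaining
\[
\Psi_{K,j+1}^\Delta\leq\Bigl(1-\frac{c\Delta}{2}\Bigr)\Psi_{K,j}^\Delta+C\Delta\bigl(K^{-1}\norm{u_j^\Delta}^2+\norm{Q_KF_0}^2\bigr)+\frac{C}{\Delta}\Bigl[\norm{Q_{\lfloor K/2\rfloor}x_j}^2+\norm{(Q_{\lfloor K/2\rfloor}\otimes I)\mathbb X_j}^2\Bigr].
\]
For fixed $\Delta$ the factor $C/\Delta$ is harmless. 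Temperedness of $R_*$, \textbf{(A5)} and dominated convergence then give $K_\Delta$ and $T_\Delta$. One takes maxima over this finite set together with the values from the small-$\Delta$ argument. Alternatively, you could isolate grid steps with large control as single-step blocks, as in the proof of Lemma~\ref{lem5.10}, and bound $\mathcal R^\Delta_{K,j}$ on them directly; this also removes the restriction.

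\textbf{A secondary point on ordering.} Choosing $\widetilde K$ before $N$ is what forces you to dominate an infinite series, and hence to verify polynomial growth of $C_M$ and $\gamma_M^{-1}$ in $M$, which you leave unchecked. Since $N$ depends on $\varepsilon$ anyway, this buys nothing. The paper fixes the window first, with $L=M\tau$ a multiple of $\tau$ so that $-L$ is a grid point (integer times generally are not, since $\Delta=\tau/m$). It uses the single bound $M_L(\omega)=1+\sup_{-L\leq s\leq 0}R_*(\theta_s\omega)^{1/2}$ and only then picks $K$, so it needs nothing beyond finiteness of the window constants.
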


\begin{proof}
Fix $\varepsilon>0$ and $\omega\in\Omega_0$.
Choose $\Delta_0>0$, independently of $\varepsilon$,
so that \eqref{eq:discrete-coercivity-constants}
and \eqref{eq:uniform-discrete-radius} hold
for every admissible $\Delta\leq\Delta_0$.
We may assume that $\Delta_0\leq1$.

\textbf{Step~1.}
Let $L=M\tau>\tau+1$, with $M\in\mathbb N$, be
sufficiently large, to be specified below. For $k\Delta\geq L$ and
$\zeta\in\widehat B_\Delta(\theta_{-k\Delta}\omega)$,
we have
$\mathbf u_j^\Delta
:=
(u_{j-m}^\Delta,\ldots,u_j^\Delta)
=
\Phi_\Delta(j+k,
\theta_{-k\Delta}\omega)\zeta$ for $-k\leq j\leq0$.
By the positive invariance of $\widehat B_\Delta$, it holds that
$\mathbf u_j^\Delta
\in
\widehat B_\Delta(\theta_{t_j}\omega)$ for
$-k\leq j\leq0$.
Hence, by \eqref{eq:uniform-discrete-radius},
$\|\mathbf u_j^\Delta\|_{\Hh_\Delta}^2
\leq
R_*(\theta_{t_j}\omega)$.
Set
\[
 M_L(\omega)
 :=
 1+\sup_{-L\leq s\leq0}
 R_*(\theta_s\omega)^{1/2}.
\]
The continuity of
$s\mapsto R_*(\theta_s\omega)$ implies that
$M_L(\omega)<\infty$, and $M_L(\omega)$ is independent of
$\Delta$ and $\zeta$.
Let $\gamma_{M_L(\omega)}$ be the threshold in
Lemma~\ref{lem5.5}. Since $L=M\tau$ and
$\Delta=\tau/m$, the point $-L$ belongs to every
admissible grid. By Lemma~\ref{lem5.3}, there exists
$\Delta_L(\omega)>0$ such that, for every admissible
$\Delta\leq\min\{\Delta_L(\omega),
\Delta_0\}$, there are grid indices
$-L/\Delta:=k_0<k_1<\cdots<k_N=0$,
whose blocks satisfy
$\omega_{\mathbf x}
 (t_{k_{\ell-1}},t_{k_\ell})
 \leq\gamma_{M_L(\omega)}$.
Refining the partition if necessary, we may also
assume that $|I_\ell|\leq\tau\wedge1$, where
$I_\ell=[t_{k_{\ell-1}},t_{k_\ell}]$.
The refined number of blocks satisfies $N\leq N_L(\omega)$ for some finite bound
$N_L(\omega)$ independent of $\Delta$, $k$,
and $\zeta$.

Fix $i\in\{-m,\ldots,0\}$.
Restrict the preceding partition to $[-L,t_i]$
and include $i$ as its final index. Denote the
resulting partition by
\[
 -L/\Delta=r_0<r_1<\cdots<r_J=i,
 \qquad J\leq N.
\]
For every block
$I_\ell=[t_{r_{\ell-1}},t_{r_\ell}]$, $\ell=1,\ldots,J$, it holds that
\begin{align*}
\max_{r_{\ell-1}-m\leq j\leq r_\ell}
\|u_j^\Delta\|
&\leq
\max_{r_{\ell-1}\leq j\leq r_\ell}
\|\mathbf u_j^\Delta\|_{\Hh_\Delta}
\leq
M_L(\omega).
\end{align*}
Iterating the estimate of Lemma~\ref{lem5.12} yields
\begin{align*}
 \Psi_{K,i}^\Delta
 \leq{}&
 e^{-c(t_i+L)}\Psi_{K,-L/\Delta}^\Delta
 +
 C\sum_{j=-L/\Delta}^{i-1}
 e^{-c(t_i-t_{j+1})}\Delta
 \left(
 K^{-1}\norm{u_j^\Delta}^2
 +\norm{Q_KF_0}^2
 \right)
 \\
 &+
 C_{M_L(\omega)}
 \sum_{\ell=1}^{J}
 e^{-c(t_i-t_{r_\ell})}
 \left[
 \vartheta_{\lfloor K/2\rfloor}^{(1)}
 (t_{r_{\ell-1}},t_{r_\ell})^{1/p}
 +
 \vartheta_{\lfloor K/2\rfloor}^{(2)}
 (t_{r_{\ell-1}},t_{r_\ell})^{1/q}
 \right]
 \\
 \leq{}&
Ce^{-c(L-\tau)}R_*(\theta_{-L}\omega)
 +
 CL\left(
 K^{-1}M_L(\omega)^2+\norm{Q_KF_0}^2
 \right)
 \notag\\
 &+
 C_{M_L(\omega)}
 \left[
 N_L(\omega)^{1-1/p}
 \vartheta_{\lfloor K/2\rfloor}^{(1)}(-L,0)^{1/p}
 +
 N_L(\omega)^{1-1/q}
 \vartheta_{\lfloor K/2\rfloor}^{(2)}(-L,0)^{1/q}
 \right],
\end{align*}
where we have used $\Psi_{K,-L/\Delta}^\Delta\leq
C\|\mathbf u_{-L/\Delta}^\Delta\|_{\Hh_\Delta}^2$, H\"older's inequality and the
superadditivity of the tail controls $\vartheta_{\lfloor K/2\rfloor}^{(1)}$ and $\vartheta_{\lfloor K/2\rfloor}^{(2)}$.

Taking the maximum over $-m\leq i\leq0$, we find that
\begin{align}
 \max_{-m\leq i\leq0}\Psi_{K,i}^\Delta
 \leq{}&
 Ce^{-c(L-\tau)}R_*(\theta_{-L}\omega)
 +
 C_L(\omega)
 \left(K^{-1}+\norm{Q_KF_0}^2\right)
 \notag\\
 &+
 C_L(\omega)
 \left[
 \vartheta_{\lfloor K/2\rfloor}^{(1)}(-L,0)^{1/p}
 +
 \vartheta_{\lfloor K/2\rfloor}^{(2)}(-L,0)^{1/q}
 \right],
 \label{eq:finite-window-tail}
\end{align}
where $C_L(\omega)$ is finite and independent of
$K$, $\Delta$, $k$, and $\zeta$.
 Since $R_*$ is tempered, we may now fix $L=M\tau$
sufficiently large that
$Ce^{-c(L-\tau)}R_*(\theta_{-L}\omega)
 <\frac{\varepsilon}{2}$.
For this fixed $L$, in view of  Assumption \textbf{(A5)}, we have
\[
 \vartheta_{\lfloor K/2\rfloor}^{(1)}(-L,0)^{1/p}
 +
 \vartheta_{\lfloor K/2\rfloor}^{(2)}(-L,0)^{1/q}
 \longrightarrow0
 \qquad\text{as }K\to\infty.
 \]
Together with $K^{-1}\to0$ and
$\norm{Q_KF_0}\to0$, this allows us to choose
$K_0\in\mathbb N$ sufficiently large that the
last two terms in \eqref{eq:finite-window-tail}
have sum less than $\varepsilon/2$.

Set $T_0:=L$. For every admissible
$\Delta\leq\min\{\Delta_L(\omega),
\Delta_0\}$, every $k$ satisfying
$k\Delta\geq T_0$, and every
$\zeta\in\widehat B_\Delta(\theta_{-k\Delta}\omega)$,
it follows from \eqref{eq4.32} and
\eqref{eq:discrete-tail-functional} that
\begin{align*}
 &\max_{-m\leq j\leq0}
 \norm{
 Q_{2K_0}
 \bigl(
 \Phi_\Delta(k,\theta_{-k\Delta}\omega)\zeta
 \bigr)_j
 }^2=
 \max_{-m\leq j\leq0}\norm{Q_{2K_0}u_j^\Delta}^2
 \leq
 \max_{-m\leq j\leq0}V_{K_0}(u_j^\Delta)
 \leq
 \max_{-m\leq j\leq0}\Psi_{K_0,j}^\Delta.
\end{align*}
The right-hand side has a common upper bound strictly
less than $\varepsilon$, independent of $\zeta$.
Thus, taking the supremum over $\zeta$ gives the
required tail estimate whenever
$\Delta\leq\min\{\Delta_L(\omega),
\Delta_0\}$
and $k\Delta\geq T_0$.

\textbf{Step~2.}
We establish a tail estimate for each fixed admissible
$\Delta\leq\Delta_0$, without requiring small rough-path
control on individual grid steps.
Write
\[
 b_j:=-\mathcal Lu_j^\Delta-\lambda u_j^\Delta
      +F(u_j^\Delta,u_{j-m}^\Delta),
 \qquad
 r_j^\Delta:=h(u_j^\Delta)x_j
      +H(u_j^\Delta)\mathbb X_j,
\]
and set $v_{j+1}^\Delta:=u_j^\Delta+\Delta b_j$.
Then $u_{j+1}^\Delta=v_{j+1}^\Delta+r_j^\Delta$.
Denote the second term on the right-hand side of  \eqref{eq:discrete-tail-functional} by
$M_{K,j}^\Delta$. By the similar arguments as in \eqref{eq5.65}, we have
\begin{align}
 V_K(v_{j+1}^\Delta)+M_{K,j+1}^\Delta
 \leq
 (1-c\Delta)\Psi_{K,j}^\Delta+
 C\Delta
 \left(
 K^{-1}\norm{u_j^\Delta}^2+\norm{Q_KF_0}^2
 \right),
 \label{eq:deterministic-tail-step}
\end{align}
where $c\leq\min\{c_0,\sigma/2\}$.
Notice that $1-c\Delta\leq1$ and
\[
 (1+c\Delta/2)(1-c\Delta)
 =
 1-c\Delta/2-c^2\Delta^2/2
 \leq1-c\Delta/2,
\]
Using the identity
$V_K(u)=\norm{S_Ku}^2$, Young's inequality and \eqref{eq:deterministic-tail-step}, we deduce that
\begin{align}
\label{eq:perturbed-tail-step}
 \Psi_{K,j+1}^\Delta
 &=
 V_K(v_{j+1}^\Delta+r_j^\Delta)
 +M_{K,j+1}^\Delta\nonumber\\
 &=
 V_K(v_{j+1}^\Delta)+2\langle S_Kv_{j+1}^\Delta,S_Kr_j^\Delta\rangle
 +\norm{S_Kr_j^\Delta}^2+M_{K,
 j+1}^\Delta\nonumber\\
 &\leq
 (1+c\Delta/2)
 \left[
 V_K(v_{j+1}^\Delta)+M_{K,j+1}^\Delta
 \right]+
 (1+2/(c\Delta))\norm{S_Kr_j^\Delta}^2
 \nonumber\\
&\leq
 (1-c\Delta/2)\Psi_{K,j}^\Delta
 +C\Delta
 \left(
 K^{-1}\norm{u_j^\Delta}^2+\norm{Q_KF_0}^2
 \right)\nonumber\\
 &~~~~+\frac C\Delta
 \left[
 \norm{Q_{\lfloor K/2\rfloor}x_j}^2
 +
 \norm{(Q_{\lfloor K/2\rfloor}\otimes I)\mathbb X_j}^2\right].
\end{align}
Fix this $\Delta$, let $k\Delta\geq\tau$. In view of $\Psi_{K,-k}^\Delta
 \leq C R_*(\theta_{-k\Delta}\omega)$,
iterating \eqref{eq:perturbed-tail-step} from $-k$
to each $i\in\{-m,\ldots,0\}$, using
$(1-c\Delta/2)^n\leq e^{-cn\Delta/2}$, and setting
$j=-\nu-1$ in the sums, we obtain
\begin{align}
 &\max_{-m\leq i\leq0}\Psi_{K,i}^\Delta
 \leq
 Ce^{-c(k\Delta-\tau)/2}
 R_*(\theta_{-k\Delta}\omega)
 +\frac{C\Delta e^{c\tau/2}}{K}
 \sum_{\nu=0}^{\infty}e^{-c\nu\Delta/2}
 R_*(\theta_{-(\nu+1)\Delta}\omega)
 +C\norm{Q_KF_0}^2
 \notag\\
 &~~+\frac{Ce^{c\tau/2}}{\Delta}
\sum_{\nu=0}^{\infty}e^{-c\nu\Delta/2}
 \left[
 \norm{Q_{\lfloor K/2\rfloor}
 x_{-(\nu+1)\Delta,-\nu\Delta}}^2
 \right.
 \left.
 +
 \norm{(Q_{\lfloor K/2\rfloor}\otimes I)
 \mathbb X_{-(\nu+1)\Delta,-\nu\Delta}}^2
 \right],
 \label{eq:fixed-step-tail-series}
\end{align}
where we used $-i\Delta\leq\tau$ and
$\Delta\sum_{\nu=0}^{\infty}
e^{-c\nu\Delta/2}
 =\frac{\Delta}{1-e^{-c\Delta/2}}
 \leq\frac{e^{c\Delta_0/2}}{c/2}$.

For this fixed $\Delta$, by the temperedness of $R_*$
and the rough-path variation norms on bounded
intervals, we find that both series in
\eqref{eq:fixed-step-tail-series} converge.
Assumption \textbf{(A5)} and dominated convergence
therefore imply that the final series in
\eqref{eq:fixed-step-tail-series} tends to zero
as $K\to\infty$.
Together with $K^{-1}\to0$ and
$\norm{Q_KF_0}\to0$, this allows us to choose
$K_\Delta\in\mathbb N$ so that the last three
terms in \eqref{eq:fixed-step-tail-series}
have sum less than $\varepsilon/2$.
Since $R_*$ is tempered, we may then choose
$T_\Delta\geq\tau$ so that the first term is less
than $\varepsilon/2$ whenever $k\Delta\geq T_\Delta$.
By \eqref{eq4.32} and \eqref{eq:discrete-tail-functional}, it follows that
\begin{align*}
 \sup_{\zeta\in
 \widehat B_\Delta(\theta_{-k\Delta}\omega)}
 \max_{-m\leq j\leq0}
 \left\|
 Q_{2K_\Delta}
 \bigl(
 \Phi_\Delta(k,\theta_{-k\Delta}\omega)\zeta
 \bigr)_j
 \right\|^2
 <\varepsilon
\end{align*}
whenever $k\Delta\geq T_\Delta$.

\textbf{Step~3.}
The remaining admissible step sizes form the finite set
\[
 \mathscr S
 :=
 \left\{\Delta=\tau/m:
 \Delta_L(\omega)<\Delta\leq\Delta_0,\
 m\in\mathbb N\right\}.
\]
Indeed, their corresponding integers satisfy
$\tau/\Delta_0\leq m<\tau/\Delta_L(\omega)$.
Set
\[
 \widetilde K
 :=
 \max\bigl(
 \{K_0\}\cup\{K_\Delta:\Delta\in\mathscr S\}
 \bigr),
 \qquad
 \widetilde T
 :=
 \max\bigl(
 \{T_0\}\cup\{T_\Delta:\Delta\in\mathscr S\}
 \bigr).
\]
Combining the tail estimates from Steps~1 and~2 and using
$\norm{Q_{2\widetilde K}u}\leq\norm{Q_{2K}u}$
for $\widetilde K\geq K$, we obtain the required estimate
for every admissible $\Delta\leq\Delta_0$ and
$k\Delta\geq\widetilde T$.

\end{proof}

\begin{theorem}\label{thm5.14}
For every sufficiently small admissible \(\Delta\), the
cocycle \(\Phi_\Delta\) possesses a unique pullback attractor
\(\mathcal A_\Delta\) with respect to the physical-time
tempered universe defined by
\eqref{eq:discrete-tempered}. Moreover, for every
\(\omega\in\Omega_0\),
$\mathcal A_\Delta(\omega)
 \subseteq
 \widehat B_\Delta(\omega)$.
\end{theorem}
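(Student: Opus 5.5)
We will apply the standard existence theorem for random pullback attractors, \cite[Proposition~2.7]{BatesLuWang2009}, to the discrete cocycle $\Phi_\Delta$ on $\Hh_\Delta$ indexed by $k\in\mathbb N_0$. As in Theorem~\ref{thm:attractor}, this requires three ingredients: continuity and measurability of $\Phi_\Delta$, a closed measurable tempered pullback absorbing set, and pullback asymptotic compactness. The inclusion $\mathcal A_\Delta(\omega)\subseteq\widehat B_\Delta(\omega)$ will then follow from the attractor formula.

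\textbf{Continuity and measurability.} For fixed $\omega$, one step of \eqref{eq:euler} is a composition of continuous maps of the history vector. Indeed, $\mathcal L$ is bounded, $F$ is Lipschitz, and $h,H$ are $C^1$ by \textbf{(A3)}, while $x_k(\omega)$ and $\mathbb X_k(\omega)$ are measurable in $\omega$. Hence $\zeta\mapsto\Phi_\Delta(k,\omega)\zeta$ is continuous, and $(\omega,\zeta)\mapsto\Phi_\Delta(k,\omega)\zeta$ is measurable. The cocycle identity is \eqref{eq:discrete-cocycle}, and it follows from the shift relation in \textbf{(A4)}.

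\textbf{Absorbing set.} By Proposition~\ref{prop5.11}, for every admissible $\Delta\le\Delta_0$ the ball $B_\Delta(\omega)$ is closed, measurable and tempered in the sense of \eqref{eq:discrete-tempered}, and it pullback absorbs every discrete tempered family. The hull $\widehat B_\Delta$ is positively invariant and absorbing, and its radius is bounded by $R_*$. Temperedness of $B_\Delta$ follows from temperedness of $R_{\mathrm d,*}$.

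\textbf{Asymptotic compactness.} This is the main step. Fix $\omega$, a tempered family $D_\Delta$, integers $k_n\to\infty$ and points $\zeta_n\in D_\Delta(\theta_{-k_n\Delta}\omega)$. We must show that $\Phi_\Delta(k_n,\theta_{-k_n\Delta}\omega)\zeta_n$ has a convergent subsequence.

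First we reduce to initial data in the hull. Choose $k'$ so that $\Phi_\Delta(k',\theta_{-(k_n)\Delta}\omega\cdot)$ maps into the hull: precisely, the absorbing property gives $\Phi_\Delta(k_n-\ell,\theta_{-k_n\Delta}\omega)\zeta_n\in\widehat B_\Delta(\theta_{-\ell\Delta}\omega)$ for each fixed $\ell$ and all large $n$. We take $\ell=\ell(\varepsilon)$ with $\ell\Delta\ge\widetilde T$.

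Next, by the cocycle identity, the points of interest lie in $\Phi_\Delta(\ell,\theta_{-\ell\Delta}\omega)\widehat B_\Delta(\theta_{-\ell\Delta}\omega)$. Theorem~\ref{thm5.13} then gives a uniform tail bound
\[
\max_j\|Q_{2\widetilde K}(\cdot)_j\|^2<\varepsilon .
\]
A diagonal argument along $\ell\to\infty$ and $\varepsilon=2^{-m}$ makes this valid along the whole sequence.

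The head components $P_{2\widetilde K}(\cdot)_j$, for $-m\le j\le0$, live in the finite-dimensional space $(P_{2\widetilde K}U)^{m+1}$. They are bounded by $R_*(\omega)^{1/2}$ because the points lie in $\widehat B_\Delta(\omega)$. Hence Bolzano--Weierstrass gives a convergent subsequence of the heads. Since $\Hh_\Delta$ consists of finitely many copies of $U$, no equicontinuity argument is needed, in contrast with Lemma~\ref{lem4.16}.

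Combining tails and heads exactly as in Theorem~\ref{thm4.18} yields a Cauchy subsequence, which converges in $\Hh_\Delta$. The only delicate point is that Theorem~\ref{thm5.13} is stated for data in $\widehat B_\Delta(\theta_{-k\Delta}\omega)$, so the reduction above through the absorbing time must be made explicit. I expect this reduction to be the main obstacle.

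\textbf{Conclusion.} \cite[Proposition~2.7]{BatesLuWang2009} gives a unique attractor
\[
\mathcal A_\Delta(\omega)=\bigcap_{s\ge0}\overline{\bigcup_{k\Delta\ge s}\Phi_\Delta(k,\theta_{-k\Delta}\omega)B_\Delta(\theta_{-k\Delta}\omega)}.
\]
Since $B_\Delta$ is tempered and is absorbed by $\widehat B_\Delta$, the union over large $k$ is contained in the closed set $\widehat B_\Delta(\omega)$. Therefore $\mathcal A_\Delta(\omega)\subseteq\widehat B_\Delta(\omega)$.
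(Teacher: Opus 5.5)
Your proposal is correct and follows essentially the same route as the paper. It combines absorption from Proposition~\ref{prop5.11} with the uniform lattice tails of Theorem~\ref{thm5.13}, and it makes explicit the reduction to data in the hull, which the paper leaves implicit. It then uses finite-dimensional compactness of the heads in the finite product $\Hh_\Delta=U^{m+1}$ and a Cauchy-subsequence argument. The only difference is cosmetic: you build $\mathcal A_\Delta$ from $B_\Delta$ via \cite[Proposition~2.7]{BatesLuWang2009} and get $\mathcal A_\Delta(\omega)\subseteq\widehat B_\Delta(\omega)$ because the closed hull absorbs $B_\Delta$, whereas the paper takes the pullback $\omega$-limit set of $\widehat B_\Delta$ directly and uses its closedness and positive invariance.
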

\begin{proof}
Fix a sufficiently small admissible \(\Delta\), and let
\(D_\Delta\) be any tempered random bounded family.
Thanks to Proposition~\ref{prop5.11} and Theorem~\ref{thm5.13},
we find that the sets
$\Phi_\Delta(k,\theta_{-k\Delta}\omega)
 D_\Delta(\theta_{-k\Delta}\omega)$
are eventually bounded and have uniformly vanishing
lattice tails at every history node. Since \(m=\tau/\Delta<\infty\), the history space
\(\Hh_\Delta=(\ell^2)^{m+1}\) is a finite product of copies
of \(\ell^2\). Applying the compactness criterion in
\(\ell^2\) to each history component and then using a finite
diagonal argument, we show that \(\Phi_\Delta\) is
pullback asymptotically compact in \(\Hh_\Delta\).
Define the pullback \(\omega\)-limit set of
\(\widehat B_\Delta\) by
\[
\mathcal A_\Delta(\omega)
:=
\bigcap_{N\in\mathbb N_0}
\overline{
\bigcup_{k\geq N}
\Phi_\Delta(k,\theta_{-k\Delta}\omega)
\widehat B_\Delta(\theta_{-k\Delta}\omega)
}^{\,\Hh_\Delta}.
\]
By the continuity of \(\Phi_\Delta\), the pullback absorbing
property of \(\widehat B_\Delta\), and the pullback
asymptotic compactness established above, we conclude that \(\mathcal A_\Delta\) is a compact, strictly
invariant family that pullback attracts every tempered
random bounded family. Hence, it is the unique pullback
attractor of \(\Phi_\Delta\).
Moreover, since \(\widehat B_\Delta\) is closed and
positively invariant, we obtain that
$\mathcal A_\Delta(\omega)
\subseteq
\widehat B_\Delta(\omega)$.
\end{proof}

\begin{lemma}\label{lem5.15}
Let \(\Delta_0>0\) be sufficiently small so that, for
every admissible step size
\(\Delta=\tau/m\in(0,\Delta_0]\), the pullback attractor
\(\mathcal A_\Delta\) given by
Theorem~\ref{thm5.14} exists. Then, for every
\(\omega\in\Omega_0\), the set
$\bigcup_{0<\Delta\leq\Delta_0}
 J_\Delta\mathcal A_\Delta(\omega)$
is relatively compact in $\Hh$.
\end{lemma}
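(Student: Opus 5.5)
The plan is to verify the three ingredients of the Arzel\`a--Ascoli criterion in $\Hh=C([-\tau,0];\ell^2(\mathbb Z))$ for the family $\mathcal K(\omega):=\bigcup_{0<\Delta\leq\Delta_0}J_\Delta\mathcal A_\Delta(\omega)$, uniformly in $\Delta$:
\begin{enumerate}
\item[(a)] a uniform bound in $\Hh$;
\item[(b)] uniformly vanishing lattice tails at every $\vartheta\in[-\tau,0]$;
\item[(c)] a common modulus of continuity in $\vartheta$.
\end{enumerate}
By (a) and (b), for each $\vartheta$ the set $\{z(\vartheta):z\in\mathcal K(\omega)\}$ is relatively compact in $\ell^2$: it is bounded, and its tails are uniformly small, so the finite-dimensional parts $P_{2K}$ are bounded. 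Together with (c), the Arzel\`a--Ascoli theorem then gives relative compactness, as in the proof of Theorem~\ref{thm4.18}. Throughout I use that $J_\Delta$ produces convex combinations of two neighbouring grid values. Hence $\norm{J_\Delta\zeta}_{\Hh}=\norm{\zeta}_{\Hh_\Delta}$, and $\norm{Q_R(J_\Delta\zeta)(\vartheta)}\leq\max_j\norm{Q_R\zeta_j}$.

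For (a), Theorem~\ref{thm5.14} gives $\mathcal A_\Delta(\omega)\subseteq\widehat B_\Delta(\omega)$, and \eqref{eq:uniform-discrete-radius} then bounds $\mathcal K(\omega)$ by $R_*(\omega)^{1/2}$. For (b), I will use strict invariance. Every $\zeta\in\mathcal A_\Delta(\omega)$ can be written as $\zeta=\Phi_\Delta(k,\theta_{-k\Delta}\omega)\zeta'$ with $\zeta'\in\mathcal A_\Delta(\theta_{-k\Delta}\omega)\subseteq\widehat B_\Delta(\theta_{-k\Delta}\omega)$ and $k\Delta\geq\widetilde T$. Theorem~\ref{thm5.13} then yields $\max_j\norm{Q_{2\widetilde K}\zeta_j}^2<\varepsilon$ for all admissible $\Delta\leq\Delta_0(\omega)$, the threshold of that theorem. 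This threshold may be smaller than the $\Delta_0$ in the statement, so I treat the remaining admissible step sizes separately. They form a finite set, and for each of them $J_\Delta\mathcal A_\Delta(\omega)$ is compact, being the continuous image of a compact set. A finite union of compact sets is compact, so it suffices to treat $\Delta\leq\Delta_0(\omega)$.

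The main step is (c), equicontinuity uniform in $\Delta$; this is where I expect the real work. I again write $\zeta\in\mathcal A_\Delta(\omega)$ as the endpoint of a pullback Euler trajectory $(u_j^\Delta)_{j\geq -L/\Delta-m}$ started in $\widehat B_\Delta(\theta_{-L}\omega)$, with $L=M\tau\geq2\tau$ fixed. As in Step~1 of Theorem~\ref{thm5.13}, positive invariance gives $\max_{-L/\Delta-m\leq j\leq 0}\norm{u_j^\Delta}\leq M_L(\omega)$, independently of $\Delta$ and $\zeta$.

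By uniform continuity of $\omega_{\mathbf x}$ on $[-L,0]^2$, there is $\delta_1(\omega)>0$ such that $|t-s|\leq\delta_1$ implies $\omega_{\mathbf x}(s,t)\leq\gamma_{M_L(\omega)}$. For grid indices $i<j$ in $[-m,0]$ with $t_j-t_i\leq\delta_1$, the block $[i,j]$ therefore satisfies the hypotheses of Lemma~\ref{lem5.5}, the delayed indices staying inside the controlled window. The estimate \eqref{eq5.29} from its proof then gives
\[
\norm{u_j^\Delta-u_i^\Delta}\leq K_{M_L}\,\omega_{\mathbf x}(t_i,t_j)^{1/p}\leq C(\omega)(t_j-t_i)^{\beta},
\]
where the last step uses the local $\beta$-H\"older property in \textbf{(A4)}.

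It remains to pass from grid values to the interpolant $J_\Delta\zeta$. Take $\vartheta_1<\vartheta_2$ with $\vartheta_2-\vartheta_1=\delta$. If both points lie in one cell $[t_\ell,t_{\ell+1}]$, the increment of $J_\Delta\zeta$ is
\[
\frac{\delta}{\Delta}\norm{u_{\ell+1}^\Delta-u_\ell^\Delta}\leq C(\omega)\,\delta\,\Delta^{\beta-1}\leq C(\omega)\delta^\beta .
\]
Otherwise, I split at the grid points and combine two within-cell increments with one grid increment, which gives a bound $3C(\omega)\delta^\beta$. Hence $\rho(\delta):=3C(\omega)\delta^\beta$ for $\delta\leq\delta_1$ is a common modulus for $\mathcal K(\omega)$, independent of $\Delta$. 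With (a)--(c) established, the Arzel\`a--Ascoli theorem concludes the proof.
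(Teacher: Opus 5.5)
Your overall strategy works and is a genuinely different route from the paper's, but one inequality in step (c) is false as written. The bound $K_{M_L}\,\omega_{\mathbf x}(t_i,t_j)^{1/p}\leq C(\omega)(t_j-t_i)^{\beta}$ cannot hold. The control \eqref{eq2.4} contains the drift term $t-s$, so $\omega_{\mathbf x}(s,t)^{1/p}\geq (t-s)^{1/p}$, and $(t-s)^{1/p}/(t-s)^{\beta}\to\infty$ as $t-s\to0$ because $\beta>1/p$.

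What the local $\beta$-H\"older property in \textbf{(A4)} actually gives, using $\beta p>1$ to sum over partitions, is $\tnorm{x}_{\pvar,[s,t]}^p+\tnorm{\mathbb X}_{\qvar,[s,t]}^q\leq C(\omega)(t-s)^{\beta p}$. Hence $\omega_{\mathbf x}(s,t)\leq C(\omega)(t-s)$ for $t-s\leq1$, and \eqref{eq5.29} yields grid increments of order $(t_j-t_i)^{1/p}$.

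The repair is harmless. Within a cell you get $\frac{\delta}{\Delta}C\Delta^{1/p}=C\delta^{1/p}(\delta/\Delta)^{1-1/p}\leq C\delta^{1/p}$, since $\delta\leq\Delta$. The common modulus is therefore $\rho(\delta)=3C(\omega)\delta^{1/p}$ rather than $3C(\omega)\delta^{\beta}$.

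Note also that applying \eqref{eq5.29} to a single cell requires the cell itself to satisfy $\omega_{\mathbf x}\leq\gamma_{M_L}$, i.e.\ $\Delta\leq\delta_1$. The finitely many admissible $\Delta\in(\delta_1,\Delta_0]$ should be added to your exceptional finite set, or handled by the trivial intra-cell bound $\frac{\delta}{\Delta}\,2M_L\leq 2M_L\delta/\delta_1$.

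With that fix, here is how your argument compares with the paper's. The paper proves the same bound (a) and the same tail estimate (b), via $\mathcal A_\Delta(\omega)\subseteq\widehat B_\Delta(\omega)$, strict invariance and Theorem~\ref{thm5.13}. For equicontinuity, however, it only shows $\norm{(J_\Delta\zeta)(\vartheta_2)-(J_\Delta\zeta)(\vartheta_1)}\leq C(\omega)\,\omega_{\mathbf x}(\vartheta_1-\Delta,\vartheta_2+\Delta)^{1/p}$, obtained by enlarging $[\vartheta_1,\vartheta_2]$ to the enclosing grid interval. Since $\omega_{\mathbf x}(\vartheta_1-\Delta,\vartheta_2+\Delta)\geq2\Delta$, this is small only when both $|\vartheta_2-\vartheta_1|$ and $\Delta$ are small. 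It is thus only an asymptotic equicontinuity along $\Delta_n\to0$.

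That is why the paper argues with sequences. It separates bounded $m_n$, where a constant-step subsequence lies in the single compact set $J_{\Delta_*}\mathcal A_{\Delta_*}(\omega)$, from $\Delta_n\downarrow0$, where it uses Arzel\`a--Ascoli on the $P_{2K}$-parts, the tail bound and a diagonal argument.

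Your use of the interpolation factor $\delta/\Delta$ inside a cell gives a genuine modulus of continuity uniform in $\Delta$. After discarding finitely many step sizes, you can therefore apply the classical Arzel\`a--Ascoli criterion (equicontinuity plus pointwise relative compactness) to the whole union at once. Your finite exceptional set plays the role of the paper's bounded-$m_n$ case. You also make explicit, through $\delta_1$, the block-smallness hypothesis needed to invoke \eqref{eq5.29}, which the paper leaves implicit.
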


\begin{proof}
By the strict invariance of \(\mathcal A_\Delta\) under
\(\Phi_\Delta\), for every
\(\zeta_\Delta\in\mathcal A_\Delta(\omega)\) and
integer \(k\geq1\), there exists
$\widetilde{\zeta}_{\Delta,k}
 \in
 \mathcal A_\Delta(\theta_{-k\Delta}\omega)$
such that
$\zeta_\Delta
 =
 \Phi_\Delta(k,\theta_{-k\Delta}\omega)
 \widetilde{\zeta}_{\Delta,k}$.
Since
\(\mathcal A_\Delta(\omega)\subseteq
\widehat B_\Delta(\omega)\), by
\eqref{eq:uniform-discrete-radius} we have
\begin{equation}\label{eq:uniform-attractor-bound}
 \sup_{0<\Delta\leq\Delta_0}
 \sup_{\zeta_\Delta\in\mathcal A_\Delta(\omega)}
 \norm{\zeta_\Delta}_{\Hh_\Delta}^2
 \leq R_*(\omega).
 \end{equation}
Taking \(k\Delta\) sufficiently large and applying
Theorem~\ref{thm5.13}, we obtain that, for every
\(\varepsilon>0\) and \(\omega\in\Omega_0\), there exist
\(K=K(\varepsilon,\omega)\in\mathbb N\) and
\(\Delta_\varepsilon\in (0,\Delta_0]\) such that
\begin{equation}
\label{eq:uniform-attractor-tail}
 \sup_{0<\Delta\leq\Delta_\varepsilon}
 \sup_{\zeta_\Delta\in\mathcal A_\Delta(\omega)}
 \max_{-m\leq j\leq0}
 \norm{Q_{2K}(\zeta_\Delta)_j}^2
 <\varepsilon.
\end{equation}
For
\(-\tau\leq\vartheta_1<\vartheta_2\leq0\), thanks to the piecewise
linearity of \(J_\Delta\), \eqref{eq5.29}
and the superadditivity of \(\omega_{\mathbf x}\), we show that
\begin{equation}\label{eq:discrete-equicontinuity}
 \norm{
 \bigl(J_\Delta\zeta_\Delta\bigr)(\vartheta_2)
 -
 \bigl(J_\Delta\zeta_\Delta\bigr)(\vartheta_1)
 }
 \leq
 C(\omega)
 \omega_{\mathbf x}
 (\vartheta_1-\Delta,
 \vartheta_2+\Delta)^{1/p},
\end{equation}
where \(C(\omega)\) is independent of \(\Delta\) and
\(\zeta_\Delta\). Since \(\omega_{\mathbf x}\) is a
continuous control, the right-hand side of
\eqref{eq:discrete-equicontinuity} converges to zero
uniformly as
\(|\vartheta_2-\vartheta_1|+\Delta\to0\).

Consider any sequence of admissible steps
$\Delta_n=\tau/m_n$ and
$\zeta_{\Delta_n}\in\mathcal A_{\Delta_n}(\omega)$.
 If \((m_n)_{n\geq1}\) is bounded, then there exist
\(m_*\in\mathbb N\) and a subsequence \((m_{n_\ell})\)
such that \(m_{n_\ell}=m_*\), and hence
\(\Delta_{n_\ell}=\Delta_*:=\tau/m_*\).
Using the compactness of \(\mathcal A_{\Delta_*}(\omega)\) and
the continuity of \(J_{\Delta_*}\),
we obtain that
\(J_{\Delta_*}\mathcal A_{\Delta_*}(\omega)\) is compact in
\(\Hh\); hence
\((J_{\Delta_{n_\ell}}\zeta_{\Delta_{n_\ell}})\)
admits a convergent subsequence in \(\Hh\).

If \((m_n)_{n\geq1}\) is unbounded, then, after passing to a
subsequence and relabeling, we may assume that
\(\Delta_n=\tau/m_n\downarrow0\). By \eqref{eq:uniform-attractor-tail}, we find that, for every
\(\varepsilon>0\), there exists \(K=K(\varepsilon,\omega)\)
such that, for all sufficiently large \(n\),
\begin{equation}
\label{eq:discrete-equicontinuity-1}
\norm{Q_{2K}J_{\Delta_n}\zeta_{\Delta_n}}_{\Hh}
\leq\max_{-m_{n}\leq j\leq0}
 \norm{Q_{2K}(\zeta_{\Delta_n})_j} <\varepsilon^{\frac{1}{2}}.
\end{equation}
On the other hand,
by \eqref{eq:uniform-attractor-bound} and
\eqref{eq:discrete-equicontinuity}, the sequence
$(P_{2K}J_{\Delta_n}
\zeta_{\Delta_n})_{n\geq1}$
is uniformly bounded and equicontinuous. Since
\(P_{2K}\ell^2\) is finite dimensional, the
Arzel\`a--Ascoli theorem implies that this sequence is
relatively compact in \(\Hh\).
Combining the relative compactness of $(P_{2K}J_{\Delta_n}
\zeta_{\Delta_n})_{n\geq1}$
with the uniform tail estimate \eqref{eq:discrete-equicontinuity-1}, a standard diagonal
argument shows that there exists a subsequence
\((n_\ell)_{\ell\geq1}\) such that
$(J_{\Delta_{n_\ell}}
\zeta_{\Delta_{n_\ell}}
)_{\ell\geq1}$
converges in \(\Hh\).
Therefore every sequence in
\(\bigcup_{0<\Delta\leq\Delta_0}
J_\Delta\mathcal A_\Delta(\omega)\) has a convergent
subsequence, which proves the assertion.
\end{proof}

\begin{theorem}
\label{thm5.16}
Let \(\mathcal A\) be the pullback attractor given by
Theorem~\ref{thm:attractor}, and, for every sufficiently
small admissible step size \(\Delta\), let
\(\mathcal A_\Delta\) be the discrete pullback attractor
given by Theorem~\ref{thm5.14}. Then, for every
\(\omega\in\Omega_0\),
\begin{equation}\label{eq:euler-usc}
 \dist_{\Hh}\bigl(
 J_\Delta\mathcal A_\Delta(\omega),
 \mathcal A(\omega)
 \bigr)
 \rightarrow 0
 \qquad\text{as }\Delta=\frac{\tau}{m}\rightarrow0.
\end{equation}
\end{theorem}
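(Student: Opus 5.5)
We argue by contradiction, following the standard upper-semicontinuity scheme based on finite-time convergence, a uniform bound, and collective compactness. Suppose \eqref{eq:euler-usc} fails for some \(\omega\in\Omega_0\). Then there exist \(\varepsilon_0>0\), admissible steps \(\Delta_n=\tau/m_n\to0\), and points \(\zeta_n\in\mathcal A_{\Delta_n}(\omega)\) such that
\[
\dist_{\Hh}\bigl(J_{\Delta_n}\zeta_n,\mathcal A(\omega)\bigr)\geq\varepsilon_0
\]
for every \(n\). By Lemma~\ref{lem5.15} we may pass to a subsequence along which \(J_{\Delta_n}\zeta_n\to y\) in \(\Hh\). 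The goal is to show \(y\in\mathcal A(\omega)\), which gives the contradiction.

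\textbf{Step 1 (backward preimages).} Fix \(T=L\tau\) with \(L\in\mathbb N\), so that \(T/\Delta_n=Lm_n\) is an integer. By the strict invariance of \(\mathcal A_{\Delta_n}\) there exist \(\widetilde\zeta_n\in\mathcal A_{\Delta_n}(\theta_{-T}\omega)\) with
\[
\zeta_n=\Phi_{\Delta_n}(Lm_n,\theta_{-T}\omega)\widetilde\zeta_n .
\]
Lemma~\ref{lem5.15} is pathwise in \(\omega\), so applying it at the sample point \(\theta_{-T}\omega\) shows that \(\{J_{\Delta_n}\widetilde\zeta_n\}\) is relatively compact. Passing to a further subsequence, \(J_{\Delta_n}\widetilde\zeta_n\to\widetilde y\) in \(\Hh\). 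By \eqref{eq:uniform-discrete-radius}, \(\|\widetilde y\|_{\Hh}^2\leq R_*(\theta_{-T}\omega)\).

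\textbf{Step 2 (finite-time convergence).} Let \(S\) be the compact set \(\{\widetilde y\}\cup\{J_{\Delta_n}\widetilde\zeta_n\}_n\). Since \(P_\Delta J_\Delta=\mathrm{Id}\), Proposition~\ref{prop5.7} at \(\theta_{-T}\omega\) gives
\[
\bigl\|J_{\Delta_n}\zeta_n-\Phi(T,\theta_{-T}\omega)J_{\Delta_n}\widetilde\zeta_n\bigr\|_{\Hh}\leq C_{S,T}\bigl(\omega_S(\Delta_n)+\Delta_n^\alpha+\Delta_n^{3\alpha-1}\bigr)\to0 .
\]
The modulus \(\omega_S(\Delta_n)\) tends to zero because \(S\) is compact in \(\Hh\) and hence uniformly equicontinuous. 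Continuity of \(\Phi(T,\theta_{-T}\omega)\) (Theorem~\ref{thm3.9}) then yields \(y=\Phi(T,\theta_{-T}\omega)\widetilde y\).

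\textbf{Step 3 (conclusion).} Since \(L\) is arbitrary, for every \(T=L\tau\) we have \(y\in\Phi(T,\theta_{-T}\omega)D(\theta_{-T}\omega)\). Here
\[
D(\omega'):=\{\xi\in\Hh:\|\xi\|_{\Hh}^2\leq R_*(\omega')\}
\]
is tempered, because \(R_*\) is tempered. The diagonal subsequences are allowed to depend on \(L\); this is harmless since \(y\) is fixed. Because \(\mathcal A\) pullback attracts \(D\), we get
\[
\dist_{\Hh}\bigl(y,\mathcal A(\omega)\bigr)\leq\dist_{\Hh}\bigl(\Phi(T,\theta_{-T}\omega)D(\theta_{-T}\omega),\mathcal A(\omega)\bigr)\to0
\]
as \(L\to\infty\). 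Compactness of \(\mathcal A(\omega)\) gives \(y\in\mathcal A(\omega)\), contradicting \(\dist(y,\mathcal A(\omega))\geq\varepsilon_0\).

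\textbf{Main obstacle.} The delicate point is Step 2. Proposition~\ref{prop5.7} requires a \emph{fixed} compact set of initial histories, whereas here the initial data vary with \(n\). The way around this is to take \(S\) to be the convergent sequence together with its limit, which is compact, and to use that \(\omega_S(\Delta)\to0\) for compact \(S\). One must also check that the threshold ``\(\Delta\) sufficiently small'' in Proposition~\ref{prop5.7} depends only on \(S,T,\omega\), so it applies uniformly along the sequence. A secondary point is that Lemma~\ref{lem5.15} needs the uniform tail estimates at the shifted sample point \(\theta_{-T}\omega\); this holds because Theorem~\ref{thm5.13} and the bound \eqref{eq:uniform-discrete-radius} are stated for every \(\omega\in\Omega_0\).
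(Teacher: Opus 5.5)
Your proposal is correct and follows essentially the same route as the paper. Both arguments run by contradiction, use Lemma~\ref{lem5.15} at $\omega$ and at $\theta_{-M\tau}\omega$ together with backward preimages from strict invariance, apply Proposition~\ref{prop5.7} to the compact set formed by the convergent sequence and its limit, and finish with pullback attraction of the tempered ball of radius $R_*$. The only cosmetic difference is that the paper fixes a complete discrete orbit and extracts a single diagonal subsequence valid for all $M$, whereas you let the subsequence depend on $L$; this is harmless, as you note, because the limit $y$ is fixed.
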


\begin{proof}
Fix \(\omega\in\Omega_0\) and suppose that
\eqref{eq:euler-usc} fails. Then there exist
\(\Delta_n\downarrow0\),
\(\eta_n\in\mathcal A_{\Delta_n}(\omega)\) and
\(\varepsilon_0>0\) such that
\begin{equation}\label{eq:euler-usc-contradiction}
 \inf_{\chi\in\mathcal A(\omega)}
 \norm{J_{\Delta_n}\eta_n-\chi}_{\Hh}
 \geq\varepsilon_0.
\end{equation}
By Lemma~\ref{lem5.15}, after passing to a subsequence
and relabeling, there exists \(\eta\in\Hh\) such that
\begin{equation}\label{eq:an-limit}
 J_{\Delta_n}\eta_n\longrightarrow\eta
 \qquad\text{in }\Hh.
\end{equation}
Let \(m_n=\tau/\Delta_n\). By the strict invariance of
\(\mathcal A_{\Delta_n}\), we choose a
complete discrete orbit
\(\zeta_n:\mathbb Z\to\Hh_{\Delta_n}\) satisfying
$\zeta_n(0)=\eta_n$ and
$\zeta_n(k)\in
 \mathcal
 A_{\Delta_n}(\theta_{k\Delta_n}\omega)$,
$k\in\mathbb Z$.
For \(M\in\mathbb N\), set \(T_M=M\tau\) and
$\eta_{n,M}:=\zeta_n(-Mm_n)
 \in
 \mathcal A_{\Delta_n}(\theta_{-T_M}\omega)$.
Applying Lemma~\ref{lem5.15} at each
\(\theta_{-T_M}\omega\) and using a diagonal argument, we
find that there exist a common subsequence and \(\eta_M\in\Hh\) such that
\begin{equation}\label{eq:preimage-limits}
 J_{\Delta_n}\eta_{n,M}\longrightarrow\eta_M
 \qquad\text{in }\Hh
\end{equation}
for every fixed \(M\).
Note that
\(\eta_{n,0}=\eta_n\) and \(\eta_0=\eta\).
For each fixed \(M\), the set
$\mathcal K_M
:=
\{J_{\Delta_n}\eta_{n,M}:n\in\mathbb N\}
\cup\{\eta_M\}$
is compact in \(\Hh\). By
\(P_{\Delta_n}J_{\Delta_n}\eta_{n,M}=\eta_{n,M}\),
Proposition~\ref{prop5.7}, \eqref{eq:preimage-limits} and
the continuity of \(\Phi\), we obtain that
\[
J_{\Delta_n}\eta_n
=
J_{\Delta_n}\Phi_{\Delta_n}
\bigl(Mm_n,\theta_{-T_M}\omega\bigr)\eta_{n,M}
\longrightarrow
\Phi\bigl(T_M,\theta_{-T_M}\omega\bigr)\eta_M.
\]
Comparing this with \eqref{eq:an-limit} yields
\begin{equation}\label{eq:limit-preimage}
\eta
=
\Phi\bigl(T_M,\theta_{-T_M}\omega\bigr)\eta_M.
\end{equation}
Moreover, in view of $\eta_{n,M}\in
 \mathcal A_{\Delta_n}(\theta_{-T_M}\omega)
 \subseteq
 \widehat B_{\Delta_n}(\theta_{-T_M}\omega)$, \eqref{eq:uniform-discrete-radius} and
\eqref{eq:preimage-limits}, we have
$\norm{\eta_M}_{\Hh}^2
 \leq R_*(\theta_{-T_M}\omega)$.
Define the random ball
$D_*(\omega)
:=
\left\{
\xi\in\Hh:
\|\xi\|_{\Hh}^2\leq R_*(\omega)
\right\}$.
Since \(R_*\) is tempered, it holds that \(D_*\in\mathcal D(\Hh)\) and
\(\eta_M\in D_*(\theta_{-T_M}\omega)\). Therefore, it follows from
\eqref{eq:limit-preimage} and the pullback-attracting
property of \(\mathcal A\) that
\begin{align*}
\dist_{\Hh}\bigl(\{\eta\},\mathcal A(\omega)\bigr)
&\leq
\dist_{\Hh}\Bigl(
\Phi(T_M,\theta_{-T_M}\omega)
D_*(\theta_{-T_M}\omega),
\mathcal A(\omega)
\Bigr)
\longrightarrow0
\end{align*}
as \(M\to\infty\). Since \(\mathcal A(\omega)\) is closed,
we conclude that \(\eta\in\mathcal A(\omega)\).
Taking account of
\eqref{eq:an-limit}, this contradicts
\eqref{eq:euler-usc-contradiction} and completes the proof.
\end{proof}

Let \(P_N\) and \(Q_N\) be the projections introduced in
\eqref{eq4.32-0}, and let
$\mathbf x^N
:=
\bigl(P_Nx,(P_N\otimes P_N)\mathbb X\bigr)$
be the projected rough path. Consider the spatially
truncated equation
\begin{align}\label{eq-truncated}
du_N(t)
={}&
\Bigl[
-P_N\mathcal LP_Nu_N(t)
-\lambda u_N(t)
+P_NF\bigl(u_N(t),u_N(t-\tau)\bigr)
\Bigr]dt
+
P_Nh\bigl(u_N(t)\bigr)P_N\,d\mathbf x_t^N,
\end{align}
with initial history
$u_N(s)=P_N\xi(s)$ for
$s\in[-\tau,0]$ and \(\xi\in\Hh\).
Since the projected coefficients satisfy the original
Lipschitz and regularity bounds uniformly in \(N\), the
arguments used in Theorems~\ref{thm3.8}--\ref{thm3.9} show that
\eqref{eq-truncated} admits a unique global controlled
solution \(u_N\) and generates a continuous cocycle \(\Phi_N\) on
\(\Hh_N:=C([-\tau,0];P_NU)\).

\begin{proposition}\label{prop5.16}
Let \(T>0\), \(\omega\in\Omega_0\), and let
\(S\subset\Hh\) be compact. Then
\begin{align*}
 \sup_{\xi\in S}\sup_{0\leq t\leq T}
 \norm{
 \Phi_N(t,\omega)P_N\xi-\Phi(t,\omega)\xi
 }_{\Hh}
 \longrightarrow0
 \qquad\text{as }N\to\infty.
\end{align*}
\end{proposition}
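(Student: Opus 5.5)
We compare $u_N=u_N(\cdot;\omega,P_N\xi)$ with $u=u(\cdot;\omega,\xi)$ on $[-\tau,T]$, using the same machinery as in Step~3 of Theorem~\ref{thm3.8}: a local rough-flow Doss--Sussmann transformation on a common greedy partition, followed by the method of steps over the delay intervals $[j\tau,(j+1)\tau]$. Since $\|\Phi_N(t,\omega)P_N\xi-\Phi(t,\omega)\xi\|_{\Hh}\le\|u_N-u\|_{\infty,[-\tau,T]}$, it suffices to prove $\sup_{\xi\in S}\|u_N-u\|_{\infty,[-\tau,T]}\to0$. On $[-\tau,0]$ we have $\|P_N\xi-\xi\|_{\Hh}=\sup_\vartheta\|Q_N\xi(\vartheta)\|\to0$ uniformly on the compact set $S$ by Dini's theorem, because $\xi\mapsto\sup_\vartheta\|Q_N\xi(\vartheta)\|$ is continuous and decreases to $0$ pointwise.

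\textbf{Step 1 (uniform bounds).} The projected coefficients satisfy (A1)--(A3) with the same constants, and $\omega_{\mathbf x^N}\le\omega_{\mathbf x}$. Hence Corollary~\ref{cor3.5} gives $\sup_N\sup_{\xi\in S}\|u_N\|_{\infty,[-\tau,T]}\le M(\omega)<\infty$. We fix a single partition $0=t_0<\dots<t_n=T$, containing the points $j\tau$, on which $\omega_{\mathbf x}(t_j,t_{j+1})\le\gamma_0$; this bound then holds for every $\mathbf x^N$ as well.

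\textbf{Step 2 (flow convergence).} Let $\psi^N_{t_j,r}$ be the flow of $dy=P_Nh(y)P_N\,d\mathbf x^N$, and $\psi_{t_j,r}$ the flow of $dy=h(y)\,d\mathbf x$. We claim that $\sup_{r\in I_j}\sup_{\zeta\in K}\|\psi^N_{t_j,r}(\zeta)-\psi_{t_j,r}(\zeta)\|\to0$ for every compact $K\subset U$. To prove this, we write $\psi^N(\zeta)=P_N\psi^N(P_N\zeta)+Q_N\zeta$. The truncated system is then the RDE with vector field $\tilde h_N(y):=P_Nh(P_Ny)P_N$ driven by $\mathbf x$, because $(P_N\otimes P_N)\mathbb X$ is exactly the lift of $P_Nx$ paired with $P_N$ in both slots. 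The vector fields $\tilde h_N$ are uniformly $C^4_b$ and converge to $h$ in $C^3$ uniformly on compact sets. This convergence is strong, not in operator norm, so the It\^o--Lyons continuity cannot be quoted verbatim.

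This is the main obstacle. We would handle it by running the Lemma~\ref{lem3.2} fixed-point and sewing argument for the difference $\psi^N-\psi$ along a fixed trajectory. The error terms are of the form $(\tilde h_N-h)(\psi(\zeta))$ and $D(\tilde h_N-h)(\psi)h(\psi)$ composed with the controlled path $\psi_{t_j,\cdot}(\zeta)$. Their sup norms tend to $0$ because the trajectory set $\{\psi_{t_j,r}(\zeta):r\in I_j,\zeta\in K\}$ is compact in $U$ and $P_N\to\mathrm{Id}$ uniformly on compacts. Their variation norms stay bounded, and interpolating between these two bounds gives a rough-integral error that tends to $0$. The small-$\rho$ absorption of Lemma~\ref{lem3.2} then closes the estimate.

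\textbf{Step 3 (Doss--Sussmann comparison and delay induction).} On $I_j$ we set $z_N=(\psi^N)^{-1}(u_N)$ and $z=\psi^{-1}(u)$. Subtracting the ODEs \eqref{eq3.55} gives
\[
\|\dot z_N-\dot z\|\le C\|z_N-z\|+C\|u_N(\cdot-\tau)-u(\cdot-\tau)\|+\epsilon_N(r),
\]
where $\epsilon_N$ collects $\|(P_N\mathcal LP_N-\mathcal L)u\|$, $\|Q_NF(u,u(\cdot-\tau))\|$, and the flow-derivative differences. Each of these tends to $0$ uniformly over $\xi\in S$, because the set $\{u(r;\omega,\xi):r\in[-\tau,T],\xi\in S\}$ is compact by Theorem~\ref{thm3.8}, and the derivative differences are handled by the Step 2 analogue for $D\psi$. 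Gronwall's inequality on each $I_j$, iterated over the finitely many blocks and delay steps as in \eqref{eq4.9}, yields the claim.
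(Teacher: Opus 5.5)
Your Step~2 has a gap. You claim that the sup norms of the error integrands $(\tilde h_N-h)(\psi)$ and $D(\tilde h_N-h)(\psi)h(\psi)$ tend to $0$ because the trajectory set is compact. This is false in general.

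By the diagonal structure, $\tilde h_N(y)=P_Nh(P_Ny)P_N=P_Nh(y)$. Hence $(\tilde h_N-h)(y)=-Q_Nh(y)=-h(y)Q_N$, whose norm in $\mathcal L(U)$ is $\sup_{|i|>N}|h_i(y_i)|$. Compactness of the set of arguments $\psi_{t_j,r}(\zeta)$ only forces $y_i\to0$ uniformly in $i$, so this norm is essentially $\sup_{|i|>N}|h_i(0)|$, which need not vanish. For instance, $h_i\equiv1$ (that is, $h(u)=\operatorname{Id}_U$) is allowed by \textbf{(A3)} and gives norm $1$ for every $N$. Similarly, $Q_NDh(y)[h(y)\,\cdot\,]$ need not be small.

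Compactness controls $h(P_Ny)-h(y)$, but not the projection $Q_N$ standing outside $h$. This is exactly the strong-versus-norm issue you flagged one sentence earlier. The interpolation you describe therefore starts from a sup bound that does not go to zero, and it yields nothing. The same objection applies to the ``Step~2 analogue for $D\psi$'' that you invoke in Step~3 for the flow-derivative differences. In the example $h=\operatorname{Id}_U$ one has $\psi^N_{s,r}(\zeta)-\psi_{s,r}(\zeta)=-Q_Nx_{s,r}$: the smallness lives in the driver, not in the integrand.

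To repair Step~2 you must move $Q_N$ onto the driver. The error integral is $-\int h(\psi)Q_N\,d\mathbf x$, and its Gubinelli derivative factors through $Q_N\otimes Q_N$. Lemma~\ref{lem4.13} then bounds it by $\tnorm{Q_Nx}_{\pvar,I_j}$ and $\tnorm{(Q_N\otimes\operatorname{Id}_U)\mathbb X}_{\qvar,I_j}$, which vanish by \textbf{(A5)}. Without \textbf{(A5)}, you can interpolate to some $p'\in(p,3)$, using that $\sup_{s,t}\norm{Q_Nx_{s,t}}\to0$ on the compact set of increments, and likewise for $(Q_N\otimes\operatorname{Id}_U)\mathbb X$. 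You would then have to redo the same argument for the derivative flows.

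The paper sidesteps all of this by comparing $u_N$ with $P_Nu$ instead of $u$. Two identities make this work: $P_Nh(u)=P_Nh(P_Nu)$, and $\int P_Nh(u_N)P_N\,d\mathbf x^N=\int P_Nh(u_N)\,d\mathbf x$. Consequently, $u_N$ and $P_Nu$ solve delay equations with the same rough coefficient $y\mapsto P_Nh(y)$ against $\mathbf x$ and the same initial history $P_N\xi$. The only discrepancy is the additive drift $P_N\mathcal LQ_Nu$, which satisfies $\norm{P_N\mathcal LQ_Nu}\leq\norm{\mathcal L}\norm{Q_Nu}$.

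A single local flow, with bounds uniform in $N$, and the Gronwall argument of Lemma~\ref{lem4.2} then give $\norm{u_N-P_Nu}_{\infty,[0,T]}\leq C_{S,T}(\omega)\int_0^T\norm{Q_Nu(s)}\,ds$. Meanwhile $\norm{u-P_Nu}=\norm{Q_Nu}\to0$ uniformly, by compactness of $\{u_t(\omega,\xi):0\leq t\leq T,\ \xi\in S\}$. No convergence of flows or of flow derivatives is needed.
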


\begin{proof}
Let \(u_N\) be the solution of \eqref{eq-truncated}
with initial history \(P_N\xi\), and let \(u\) be the
solution of \eqref{eq:vector} with initial history \(\xi\).
Set
$e_N:=u_N-P_Nu$.
In view of \(u_N=P_Nu_N\), we have
\begin{align*}
 de_N
 &=\bigl[-P_N\mathcal Le_N-\lambda e_N
 +P_N(F(u_N,u_N(\cdot-\tau))-F(u,u(\cdot-\tau)))
 +P_N\mathcal LQ_Nu\bigr]dt\notag\\
 &\quad+P_N(h(u_N)-h(u))\,d\mathbf x,
\end{align*}
where we have used the diagonal structure of \(h\) and the
definition of \(\mathbf x^N\) to rewrite the projected
rough integral with respect to \(\mathbf x\).
Note that
$\norm{P_N\mathcal LQ_Nu}
\leq
\norm{\mathcal L}\norm{Q_Nu}$.
By the coordinatewise structure of \(F\) and
the diagonal structure of \(h\), it holds that
$P_NF(u,v)=P_NF(P_Nu,P_Nv)$ and
$P_Nh(u)=P_Nh(P_Nu)$. Therefore, arguing as in the proof of
Lemma~\ref{lem4.2} on a finite greedy partition of
\([0,T]\), and then iterating the resulting local
estimates, we obtain that
\begin{equation}\label{eq:trunc-error-estimate}
 \norm{e_N}_{\infty,[0,T]}
 \leq
 C_{S,T}(\omega)
 \left(
 \norm{Q_N\xi}_{\Hh}
 +\int_0^T\norm{Q_Nu(s)}\,ds
 \right),
\end{equation}
where \(C_{S,T}(\omega)\) is independent of \(N\).
The compactness of the solution image
$\{u_t(\omega,\xi):0\leq t\leq T,\ \xi\in S\}\subset\Hh$ implies
\[
\sup_{\xi\in S}\sup_{0\leq t\leq T}\norm{Q_Nu_t(\omega,\xi)}_{\Hh}
 \rightarrow0.
\]
Combining this with \eqref{eq:trunc-error-estimate} and
$\norm{u_N-u}\leq\norm{e_N}+\norm{Q_Nu}$, we obtain the assertion.
\end{proof}

For each \(N\geq1\), the cocycle generated by the
projected equation \eqref{eq-truncated} possesses a
pullback attractor \(\mathcal A_N\) on \(\Hh_N\), as
follows by repeating the proof of
Theorem~\ref{thm:attractor}. Similarly, for every
\(N\geq1\) and every sufficiently small admissible step
size \(\Delta\), the compensated Euler scheme associated
with \eqref{eq-truncated} possesses a pullback attractor
\(\mathcal A_{N,\Delta}\), by the same argument as in the
proof of Theorem~\ref{thm5.14}.

\begin{lemma}\label{lem5.17}
For each \(N\geq1\), let \(\mathcal A_N\) denote the
pullback attractor of the projected equation \eqref{eq-truncated}.
Let \(\Delta_0>0\) be sufficiently small so that, for
every \(N\geq1\) and every admissible step size
\(\Delta=\tau/m\in(0,\Delta_0]\), the corresponding
compensated Euler scheme of \eqref{eq-truncated} possesses a pullback attractor
\(\mathcal A_{N,\Delta}\). Then, for every
\(\omega\in\Omega_0\), both sets
\[
 \bigcup_{N\geq1}\mathcal A_N(\omega)
 \quad\text{and}\quad
 \bigcup_{N\geq1,\;
 0<\Delta\leq\Delta_0}
 J_\Delta\mathcal A_{N,\Delta}(\omega)
\]
are relatively compact in \(\Hh\).
\end{lemma}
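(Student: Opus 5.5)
Both relative-compactness claims will be proved by the same three-ingredient argument used for Theorem~\ref{thm4.18} and Lemma~\ref{lem5.15}: a uniform bound, a uniform lattice-tail estimate, and uniform equicontinuity in the history variable. Arzel\`a--Ascoli then applies on the finite-dimensional range of $P_{2K}$, and a diagonal argument finishes. The new requirement is that every constant be independent of $N$, and for the second set jointly of $(N,\Delta)$.

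\textbf{Step~1: uniform absorbing radii.} Because $P_N$ is an orthogonal projection commuting with the diagonal maps, the truncated coefficients satisfy the same bounds. In particular, $\langle P_N\mathcal L P_N u,u\rangle\geq 0$, $\|P_N\mathcal LP_N\|\leq\|\mathcal L\|$, $P_NF$ obeys \eqref{eq2.15}--\eqref{eq2.16}, and $P_Nh(\cdot)P_N$ satisfies \textbf{(A3)} with the same $C_h$. Moreover, $\tnorm{\mathbf x^N}_{\pvar,I}\leq\tnorm{\mathbf x}_{\pvar,I}$, so $\omega_{\mathbf x^N}\leq\omega_{\mathbf x}$. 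Consequently Lemmas~\ref{lem3.2}, \ref{lem4.1}, \ref{lem4.2} and Proposition~\ref{prop4.3} hold with $N$-independent constants, and $P_{r,\varepsilon}$ may be taken to be the same random variable for all $N$. The same holds for Lemmas~\ref{lem5.8}, \ref{lem5.10} and Proposition~\ref{prop5.11} in the discrete case. The absorbing hulls therefore satisfy
\[
\sup_{N\geq1}\sup_{\xi\in\widehat B_N(\omega)}\|\xi\|_{\Hh}^2\leq\widehat R(\omega),
\qquad
\sup_{N,\Delta}\sup_{\zeta\in\widehat B_{N,\Delta}(\omega)}\|\zeta\|_{\Hh_\Delta}^2\leq R_*(\omega),
\]
with $\widehat R,R_*$ tempered and independent of $N$ and $\Delta$. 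Since each attractor lies in its absorbing hull, this gives the uniform bound.

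\textbf{Step~2: uniform tails.} We repeat Theorem~\ref{thm4.15} for $\Phi_N$ and Theorem~\ref{thm5.13} for the truncated Euler scheme, tracking $N$. Lemma~\ref{lem4.8} applies to $P_N\mathcal LP_N$ because $C_K$ and $P_N$ commute, and $P_N\mathcal LP_N$ has weights $w_{ij}\mathbf 1_{|i|,|j|\leq N}$, which still satisfy \textbf{(A1)} with no larger constants. The rough tail term is handled by Lemma~\ref{lem4.14} (respectively Lemma~\ref{lem5.11}) with $Q_{\lfloor K/2\rfloor}P_Nx$ in place of $Q_{\lfloor K/2\rfloor}x$. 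Since $\Gamma_K$ for $\mathbf x^N$ is dominated by $\Gamma_K$ for $\mathbf x$, the dominated-convergence step in \eqref{eq:GK-zero} is uniform in $N$. This yields $K(\varepsilon,\omega)$ such that
\[
\sup_{N}\sup_{\xi\in\mathcal A_N(\omega)}\sup_{\vartheta}\|Q_{2K}\xi(\vartheta)\|^2<\varepsilon,
\]
and the analogous discrete bound uniform in $N$ and in $\Delta\leq\Delta_\varepsilon$. Invariance of the attractors transfers the pullback estimate to attractor points, exactly as in Lemma~\ref{lem5.15}.

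\textbf{Step~3: uniform equicontinuity and conclusion.} For the continuous case, Lemma~\ref{lem4.16} gives a modulus $\rho_\omega$ that depends only on $\widehat R^{\#}$ and on $\mathbf x$ over $[-\tau,0]$; by Step~1 both are $N$-uniform. For the discrete case, \eqref{eq:discrete-equicontinuity} with $C(\omega)$ independent of $N$ and $\Delta$ does the same job. The case split of Lemma~\ref{lem5.15}, bounded versus unbounded $m_n$, is then reused. In the bounded case we have finitely many $\Delta$ values but possibly infinitely many $N$; this is still fine, because for fixed $\Delta$ the tail bound of Step~2 of Theorem~\ref{thm5.13} is itself $N$-uniform.

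\textbf{Main obstacle.} I expect the hardest part to be Step~2 in the discrete regime where $\Delta$ is not small relative to $\omega_{\mathbf x}$, namely Step~2 of Theorem~\ref{thm5.13}. There I would verify that the series in \eqref{eq:fixed-step-tail-series} is dominated by its $N$-free version, using $\|Q_KP_Nx_j\|\leq\|Q_Kx_j\|$. Once that domination holds, the result follows by a uniform choice of $K$ over the finite set $\mathscr S$.
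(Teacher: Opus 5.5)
Your proposal is correct and follows essentially the paper's route: the paper likewise argues that, since $P_N$ is a contraction commuting with the cutoffs and the projected rough-path tails are dominated by the original ones, the absorbing, lattice-tail and history-equicontinuity estimates hold uniformly in $N$ (and in $\Delta$), and then invokes Arzel\`a--Ascoli on finite-dimensional projections together with the sequence argument of Lemma~\ref{lem5.15}; your write-up is more explicit, notably in handling the bounded-$m_n$ case with varying $N$ through the $N$-uniform fixed-step tail bound rather than through compactness of a single attractor. One small correction: $P_N\mathcal LP_N$ is not exactly the operator with weights $w_{ij}\mathbf 1_{\{|i|,|j|\leq N\}}$, because it also carries the nonnegative diagonal term $\nu\sum_{|j|>N}w_{ij}$ at sites $|i|\leq N$; your commutation argument, $\ip{P_N\mathcal LP_Nu}{C_Ku}=\ip{\mathcal LP_Nu}{C_KP_Nu}$ and $[S_K,P_N\mathcal LP_N]=P_N[S_K,\mathcal L]P_N$, already gives Lemma~\ref{lem4.8} for the truncated operator, so nothing else changes.
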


\begin{proof}
Since the projections \(P_N\) are contractions and commute with the spatial cutoff operators, the projected coefficients satisfy the original
bounds uniformly in \(N\), and the projected
rough-path tails are bounded by the original ones,
we find that
all the absorbing, lattice-tail, and
history-equicontinuity estimates used in the proof of
Lemma~\ref{lem5.15} remain valid
for the projected equations \eqref{eq-truncated} and their
compensated Euler schemes, with constants independent
of \(N\) and,  in the
discrete case, of \(\Delta\), over a common step-size
range independent of \(N\).

Moreover,
the relative compactness of
\(\bigcup_{N\geq1}\mathcal A_N(\omega)\) in \(\Hh\)
follows from the uniform lattice-tail estimate
and the Arzel\`a--Ascoli theorem applied to
finite-dimensional projections.
The relative compactness of
\(\bigcup_{N\geq1,\;0<\Delta\leq\Delta_0}
J_\Delta\mathcal A_{N,\Delta}(\omega)\)
in \(\Hh\) follows by repeating the sequence argument
of Lemma~\ref{lem5.15}, using the estimates uniform in \(N\).
We omit the details.
\end{proof}

\begin{theorem}\label{thm5.18}
Let \(\mathcal A\) be the pullback attractor given by
Theorem~\ref{thm:attractor}, and, for each \(N\geq1\),
let \(\mathcal A_N\) denote the pullback attractor of the projected equation \eqref{eq-truncated} on \(\Hh_N\). Then for every \(\omega\in\Omega_0\), we have
\begin{equation}\label{eq:trunc-usc}
 \dist_{\Hh}\bigl(
 \mathcal A_N(\omega),\mathcal A(\omega)
 \bigr)
 \longrightarrow0
 \qquad\text{as }N\to\infty.
\end{equation}
\end{theorem}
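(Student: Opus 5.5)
We follow the contradiction argument of Theorem~\ref{thm5.16}, with the finite-time convergence of Proposition~\ref{prop5.7} replaced by Proposition~\ref{prop5.16}, and Lemma~\ref{lem5.15} replaced by the first half of Lemma~\ref{lem5.17}.

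Fix \(\omega\in\Omega_0\) and suppose \eqref{eq:trunc-usc} fails. Then there are \(N_n\to\infty\), \(\eta_n\in\mathcal A_{N_n}(\omega)\) and \(\varepsilon_0>0\) with \(\dist_{\Hh}(\eta_n,\mathcal A(\omega))\geq\varepsilon_0\). Here \(\Hh_N\) is regarded as a closed subspace of \(\Hh\).

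By Lemma~\ref{lem5.17}, after passing to a subsequence, \(\eta_n\to\eta\) in \(\Hh\). Strict invariance of \(\mathcal A_{N_n}\) gives, for each \(M\in\mathbb N\) and \(T_M=M\tau\), a preimage
\[
\eta_{n,M}\in\mathcal A_{N_n}(\theta_{-T_M}\omega)
\quad\text{with}\quad
\eta_n=\Phi_{N_n}(T_M,\theta_{-T_M}\omega)\eta_{n,M}.
\]
Applying Lemma~\ref{lem5.17} at each sample point \(\theta_{-T_M}\omega\), together with a diagonal argument, yields a common subsequence along which \(\eta_{n,M}\to\eta_M\) in \(\Hh\) for every \(M\).

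Since \(\eta_{n,M}\in\Hh_{N_n}\), we have \(P_{N_n}\eta_{n,M}=\eta_{n,M}\). The set \(S_M:=\{\eta_{n,M}\}_n\cup\{\eta_M\}\) is compact. Hence Proposition~\ref{prop5.16}, applied at \(\theta_{-T_M}\omega\) on \(S_M\), gives
\[
\|\Phi_{N_n}(T_M,\theta_{-T_M}\omega)\eta_{n,M}-\Phi(T_M,\theta_{-T_M}\omega)\eta_{n,M}\|_{\Hh}\to0.
\]
The continuity of \(\Phi(T_M,\theta_{-T_M}\omega)\) from Theorem~\ref{thm3.9} then gives \(\eta_n\to\Phi(T_M,\theta_{-T_M}\omega)\eta_M\). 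Comparing limits, \(\eta=\Phi(T_M,\theta_{-T_M}\omega)\eta_M\) for every \(M\).

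The remaining ingredient, and the main obstacle, is a radius bound uniform in \(N\): we need a tempered random variable \(R_{\rm tr}\), independent of \(N\), with \(\sup_N\sup_{\zeta\in\mathcal A_N(\omega)}\|\zeta\|^2_{\Hh}\leq R_{\rm tr}(\omega)\). We would obtain it by rerunning Lemmas~\ref{lem4.1}--\ref{lem4.2}, Proposition~\ref{prop4.3}, Theorem~\ref{thm4.6} and Lemma~\ref{lem4.7} for \eqref{eq-truncated}. Three facts make the constants \(N\)-independent: \(P_N\mathcal LP_N\) is nonnegative with norm at most \(\|\mathcal L\|\); \(P_NF\) and \(P_NhP_N\) satisfy \textbf{(A2)}--\textbf{(A3)} with the same constants; and \(\tnorm{\mathbf x^N}_{\pvar,I}\leq\tnorm{\mathbf x}_{\pvar,I}\), because \(P_N\) and \(P_N\otimes P_N\) are contractions on \(U\) and \(U\widehat\otimes_\pi U\). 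Consequently \(\eta\), \(r\), \(P_{r,\varepsilon}\) and \(\widehat R\) can be taken identical for all \(N\), and \(\mathcal A_N(\omega)\subset\widehat B_N(\omega)\) lies in a ball of radius \(\widehat R(\omega)\). This uniformity is exactly what is asserted (with details omitted) in the proof of Lemma~\ref{lem5.17}, so we invoke it there.

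It then follows that \(\|\eta_M\|_{\Hh}^2\leq\widehat R(\theta_{-T_M}\omega)\), that is, \(\eta_M\in D(\theta_{-T_M}\omega)\) for the tempered ball \(D(\omega):=\{\xi:\|\xi\|_{\Hh}^2\leq\widehat R(\omega)\}\in\mathcal D(\Hh)\). The pullback attraction of \(\mathcal A\) from Theorem~\ref{thm:attractor} gives
\[
\dist_{\Hh}(\eta,\mathcal A(\omega))\leq\dist_{\Hh}\bigl(\Phi(T_M,\theta_{-T_M}\omega)D(\theta_{-T_M}\omega),\mathcal A(\omega)\bigr)\to0
\quad\text{as }M\to\infty.
\]
Since \(\mathcal A(\omega)\) is closed, \(\eta\in\mathcal A(\omega)\). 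But \(\eta_n\to\eta\), contradicting \(\dist_{\Hh}(\eta_n,\mathcal A(\omega))\geq\varepsilon_0\).
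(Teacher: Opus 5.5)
Your proposal is correct and follows the paper's own argument step by step: contradiction, extraction of a limit $\eta$ via Lemma~\ref{lem5.17}, preimages from the strict invariance of $\mathcal A_{N}$, the identification $\eta=\Phi(M\tau,\theta_{-M\tau}\omega)\eta_M$ via Proposition~\ref{prop5.16} and continuity of $\Phi$, an $N$-uniform tempered radius bound obtained by rerunning the absorbing-set construction, and finally pullback attraction and closedness of $\mathcal A(\omega)$. You only make explicit the compact-set application of Proposition~\ref{prop5.16}, using $P_{N_n}\eta_{n,M}=\eta_{n,M}$, which the paper leaves to ``a diagonal argument as in the proof of Theorem~\ref{thm5.16}''.
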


\begin{proof}
Fix \(\omega\in\Omega_0\) and suppose, to the contrary,
that \eqref{eq:trunc-usc} fails. Then there exist
\(\varepsilon_0>0\), \(N_j\to\infty\), and
\(\eta_j\in\mathcal A_{N_j}(\omega)\) such that
$\inf_{\chi\in\mathcal A(\omega)}
 \norm{\eta_j-\chi}_{\Hh}
 \geq\varepsilon_0$.
By Lemma~\ref{lem5.17}, after passing to a subsequence,
there exists \(\eta\in\Hh\) such that
$\eta_j\rightarrow\eta$
in $\Hh$.
Repeating the absorbing-set argument in the proof of
Lemma~\ref{lem4.7} for the projected equations \eqref{eq-truncated}, with
constants independent of \(N\), we obtain a tempered
random variable \(\tilde R\) such that
\[
\sup_{N\geq1}\sup_{\xi\in\mathcal A_N(\omega)}
\norm{\xi}_{\Hh}^{2}\leq\tilde R(\omega).
\]
Set
\(D_*(\omega):=
\{\xi\in\Hh:\norm{\xi}_{\Hh}^{2}\leq\tilde R(\omega)\}\).
By the strict invariance of each \(\mathcal A_{N_j}\)
under \(\Phi_{N_j}\),
Lemma~\ref{lem5.17}, Proposition~\ref{prop5.16}, and a
diagonal argument as in the proof of
Theorem~\ref{thm5.16}, for each \(M\geq1\) there exists
\(\xi_M\in D_*(\theta_{-M\tau}\omega)\) such that
$\eta=\Phi(M\tau,\theta_{-M
\tau}\omega)\xi_M$.
The pullback-attracting property and closedness of
\(\mathcal A(\omega)\) imply that
\(\eta\in\mathcal A(\omega)\).
Consequently,
\[
\varepsilon_0
\leq\inf_{\chi\in\mathcal A(\omega)}
\norm{\eta_j-\chi}_{\Hh}
\leq\norm{\eta_j-\eta}_{\Hh}
\rightarrow0.
\]
This contradicts \(\varepsilon_0>0\) and proves
\eqref{eq:trunc-usc}.
\end{proof}

\begin{corollary}\label{cor:joint-usc}
Let \(\mathcal A\) be the pullback attractor given by
Theorem~\ref{thm:attractor}. For each \(N\geq1\) and
each sufficiently small admissible step size \(\Delta\),
let \(\mathcal A_{N,\Delta}\) denote the pullback
attractor of the compensated Euler scheme associated
with the projected equation \eqref{eq-truncated}. Then, for every \(\omega\in\Omega_0\),
\[\dist_{\Hh}\bigl(
 J_\Delta\mathcal A_{N,\Delta}(\omega),
 \mathcal A(\omega)
 \bigr)
 \longrightarrow0
 \qquad\text{as }
 N\to\infty,\quad
 \Delta=\frac{\tau}{m}\to0.\]
\end{corollary}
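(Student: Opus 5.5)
We argue by contradiction, following the template of Theorems~\ref{thm5.16} and~\ref{thm5.18}. Fix \(\omega\in\Omega_0\) and suppose the joint convergence fails. Then there exist \(\varepsilon_0>0\), sequences \(N_n\to\infty\) and admissible \(\Delta_n=\tau/m_n\to0\), and points \(\eta_n\in\mathcal A_{N_n,\Delta_n}(\omega)\) such that
\[
\inf_{\chi\in\mathcal A(\omega)}\norm{J_{\Delta_n}\eta_n-\chi}_{\Hh}\geq\varepsilon_0 .
\]
By Lemma~\ref{lem5.17}, the union \(\bigcup_{N,\Delta}J_\Delta\mathcal A_{N,\Delta}(\omega)\) is relatively compact. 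Passing to a subsequence, we obtain \(J_{\Delta_n}\eta_n\to\eta\) in \(\Hh\).

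Next, we use the strict invariance of \(\mathcal A_{N_n,\Delta_n}\) to choose complete discrete orbits \(\zeta_n\) with \(\zeta_n(0)=\eta_n\). For each \(M\in\mathbb N\), set \(\eta_{n,M}:=\zeta_n(-Mm_n)\in\mathcal A_{N_n,\Delta_n}(\theta_{-M\tau}\omega)\). Applying Lemma~\ref{lem5.17} at \(\theta_{-M\tau}\omega\) and a diagonal argument, we obtain a common subsequence with \(J_{\Delta_n}\eta_{n,M}\to\eta_M\) for every \(M\). The uniform radius bound, the projected analogue of \eqref{eq:uniform-discrete-radius} (uniform in \(N\) and \(\Delta\), as asserted in the proof of Lemma~\ref{lem5.17}), gives \(\norm{\eta_M}_{\Hh}^2\leq \tilde R(\theta_{-M\tau}\omega)\) for a tempered \(\tilde R\) independent of \(N\) and \(\Delta\).

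The key step is the joint finite-time convergence. We need to show that, for the compact set \(\mathcal K_M=\{J_{\Delta_n}\eta_{n,M}\}\cup\{\eta_M\}\),
\[
J_{\Delta_n}\Phi_{N_n,\Delta_n}(Mm_n,\theta_{-M\tau}\omega)\eta_{n,M}\to\Phi(M\tau,\theta_{-M\tau}\omega)\eta_M .
\]
We split the difference by the triangle inequality into two parts. The first is the error between the truncated Euler scheme and the truncated continuous cocycle \(\Phi_{N_n}\). It is controlled by Proposition~\ref{prop5.7} applied to \eqref{eq-truncated}. Its constant \(C_{S,T}(\omega)\) is independent of \(N\), because the projected coefficients satisfy the same bounds uniformly in \(N\), \(\tnorm{\mathbf x^N}\leq\tnorm{\mathbf x}\), and the local a priori bound \(M_T\) can be taken uniform in \(N\) via Corollary~\ref{cor3.5}. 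The second part is \(\Phi_{N_n}(M\tau)P_{N_n}\xi-\Phi(M\tau)\xi\), which tends to zero uniformly over compact \(S\) by Proposition~\ref{prop5.16}. One subtlety is that the initial histories \(J_{\Delta_n}\eta_{n,M}\) lie in \(P_{N_n}U\) and already equal their projections. The other is that Proposition~\ref{prop5.7} is stated for a fixed \(\xi\in S\), so we use the continuity of \(\Phi\) together with \(J_{\Delta_n}\eta_{n,M}\to\eta_M\) to pass to the limit. The \(\omega_S(\Delta)\) term is harmless because \(S=\mathcal K_M\) is compact, hence equicontinuous.

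I expect this uniformity in \(N\) of Proposition~\ref{prop5.7} to be the main obstacle. I would check it by rerunning Lemmas~\ref{lem5.1}, \ref{lem5.5} and~\ref{lem5.6} for the projected system and noting that every constant depends only on \(C_h\), \(L_b\) and \(\omega_{\mathbf x}\), where \(\omega_{\mathbf x^N}\leq\omega_{\mathbf x}\) since \(P_N\) is a contraction.

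Granting this, we get \(\eta=\Phi(M\tau,\theta_{-M\tau}\omega)\eta_M\) with \(\eta_M\in D_*(\theta_{-M\tau}\omega)\), where \(D_*(\omega)=\{\xi:\norm{\xi}_{\Hh}^2\leq\tilde R(\omega)\}\in\mathcal D(\Hh)\). The pullback attraction of \(\mathcal A\) (Theorem~\ref{thm:attractor}) gives \(\dist(\eta,\mathcal A(\omega))\to0\) as \(M\to\infty\). Since \(\mathcal A(\omega)\) is closed, \(\eta\in\mathcal A(\omega)\), which contradicts \(\norm{J_{\Delta_n}\eta_n-\eta}_{\Hh}\to0\).
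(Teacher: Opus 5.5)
Your proposal is correct and follows essentially the same route as the paper. The paper's proof likewise asserts an \(N\)-uniform version of Proposition~\ref{prop5.7}, combines it with Proposition~\ref{prop5.16}, Lemma~\ref{lem5.17} and the common tempered absorbing bounds, and then runs the contradiction and complete-orbit argument of Theorems~\ref{thm5.16} and~\ref{thm5.18}; you have written out the details the paper omits, and, like the paper, you rely on the claimed \(N\)-independence of the constants and step-size thresholds in Proposition~\ref{prop5.7}. One small correction: Proposition~\ref{prop5.7} is already uniform over \(\xi\in S\), not stated for a fixed \(\xi\), and this is exactly what your choice \(S=\mathcal K_M\) uses.
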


\begin{proof}

Repeating the proof of Proposition~\ref{prop5.7},
with constants and step-size thresholds independent
of \(N\), we obtain that
for every compact \(S\subset\Hh\)
and every \(T>0\),
\[
 \sup_{N\geq1}\sup_{\xi\in S}
 \max_{0\leq k\Delta\leq T}
 \left\|
 J_\Delta\Phi_{N,\Delta}(k,\omega)P_\Delta P_N\xi
 -
 \Phi_N(k\Delta,\omega)P_N\xi
 \right\|_{\Hh}
 \longrightarrow0
 \qquad\text{as }\Delta\to0.
\]
Combining this convergence with
Proposition~\ref{prop5.16}, Lemma~\ref{lem5.17},
and the common tempered absorbing bounds,
we obtain the assertion by arguing as in the proofs
of Theorems~\ref{thm5.16} and \ref{thm5.18}. Here we omit the details.
\end{proof}

\medskip
\noindent\textbf{Data availability.} No data were generated or analysed in this study.

\medskip
\noindent\textbf{Conflict of interest.} The authors declare that they have no conflict of interest.

\medskip
\noindent\textbf{Declaration of AI assistance.}
During the preparation of this manuscript, the authors
used AI to improve the language and clarity
of expression. The authors reviewed and edited the
AI-assisted content and take full responsibility for
the final manuscript.

\end{document}